\documentclass{amsart}
\usepackage{amsmath, amsthm, amssymb}
\usepackage{bbm}
\usepackage{graphicx}
\usepackage{tikz}
\usetikzlibrary{positioning, shapes.geometric, arrows.meta, backgrounds}
\usepackage[hypertexnames=false]{hyperref}
\usepackage{subcaption}
\usepackage{wrapfig}
\renewcommand\thesubfigure{(\Alph{subfigure})}
\usepackage{booktabs}
\usepackage{makecell}

\theoremstyle{plain}
\newtheorem{theorem}{Theorem}[section]
\newtheorem{lemma}[theorem]{Lemma}
\newtheorem{proposition}[theorem]{Proposition}
\newtheorem{corollary}[theorem]{Corollary}
\newtheorem*{theorem*}{Theorem}
\newtheorem{namedtheoreminner}{Theorem}
\newenvironment{namedtheorem}[1]{%
  \renewcommand{\thenamedtheoreminner}{#1}%
  \begin{namedtheoreminner}%
}{%
  \end{namedtheoreminner}%
}

\theoremstyle{definition}
\newtheorem*{rexample}{Example~\ref{ex:running}}
\newtheorem*{rexample2}{Example~\ref{ex:running2}}
\newtheorem{definition}[theorem]{Definition}  
\newtheorem{example}[theorem]{Example}
\newtheorem{remark}[theorem]{Remark}

\DeclareMathOperator{\Aut}{Aut}
\DeclareMathOperator{\Conv}{Conv}
\DeclareMathOperator{\Cay}{Cay}
\DeclareMathOperator{\Cone}{Cone}
\DeclareMathOperator{\re}{Re}
\DeclareMathOperator{\Id}{Id}

\DeclareMathOperator{\diag}{diag}
\DeclareMathOperator{\Tr}{Tr}
\DeclareMathOperator{\End}{End}
\DeclareMathOperator{\GL}{GL}
\DeclareMathOperator{\Hom}{Hom}
\DeclareMathOperator{\res}{res}
\DeclareMathOperator{\ind}{ind}

\newcommand{\E}{\mathcal{E}}
\newcommand{\Om}{\mathcal{O}}
\newcommand{\set}[1]{{\{#1\}}}
\newcommand{\abs}[1]{{\vert #1\vert}}
\newcommand{\paren}[1]{{\left( #1\right)}}
\newcommand{\R}{\mathbb{R}}
\newcommand{\Z}{\mathbb{Z}}
\newcommand{\C}{\mathbb{C}}
\newcommand{\Q}{\mathbb{Q}}
\newcommand{\HH}{\mathbb{H}}
\newcommand{\s}{\mathcal{S}}
\newcommand{\one}{\mathbbm{1}}

\title{Conformal Rigidity of Graphs: \\Subdifferentials and Orbit-Isometries}

\author{Andrew Niu}
\address{Department of Mathematics, University of Washington, Seattle WA 98195}
\email{amniu@uw.edu}

\begin{document}
\begin{abstract}
    A connected undirected graph $G = (V,E)$ is lower conformally rigid if uniform edge weights maximize the second smallest Laplacian eigenvalue $\lambda_2(w)$ over all normalized edge weights $w$, and upper conformally rigid if uniform edge weights minimize the largest eigenvalue $\lambda_n(w)$ over all normalized edge weights; G is conformally rigid if it is lower or upper conformally rigid. This paper establishes a new framework for conformal rigidity through the language of subdifferentials, unifying the variational perspective on eigenvalue optimization with the geometry of edge-isometric spectral embeddings, which are known to characterize conformal rigidity. This subdifferential framework lends itself naturally to techniques of symmetry reduction that motivate the notion of an orbit-isometric embedding --- a weaker condition than edge-isometry that accounts for the symmetries of $G$ while remaining sufficient for conformal rigidity. The notion opens the door to tools from representation theory: for a large class of graphs, including all vertex-transitive ones, we show that conformal rigidity is certified by a single eigenvector, resolving an open question and explaining the conformal rigidity of previously unexplained graphs. This extra structure enables a new, algebraically exact certification method for conformal rigidity, bypassing the numerical difficulties of prior approaches. In many cases, the problem reduces to a check of linear feasibility, and in general, to solving a system of quadratic equations via Gr\"{o}bner bases.
\end{abstract}
\maketitle

\section{Introduction}
\subsection{Introduction}
Let $G = (V, E)$ be a finite, connected, undirected graph on $n= \abs{V}$ vertices $\set{v_1, \ldots, v_n}$.  A well-studied object associated to the graph $G$ is the graph Laplacian given by $L = D - A \in \mathbb{R}^{n \times n}$ where $ D = \diag \set{\deg(v_1), \ldots, \deg(v_n)}$ is the degree matrix and $A$ is the adjacency matrix of $G$ given by
\[A_{ij} =
\begin{cases}
1, & \text{if } ij \in E \\
0, & \text{otherwise.}
\end{cases}\]
Viewed as a quadratic form, the Laplacian satisfies 
\begin{equation*}
    \langle \varphi, L \varphi \rangle = \varphi^T L \varphi = \sum_{uv \in E} {(\varphi(u) - \varphi(v))}^2
\end{equation*}
for functions $\varphi: V \to \R$ on the vertices of $G$. This expression, sometimes referred to as the \textit{Dirichlet energy}, immediately implies that $L$ is positive semidefinite and $L \one = 0$, where $\one$ is the constant vector. Thus, $L$ has eigenvalues $0 = \lambda_1 \leq \lambda_2 \leq \cdots \leq \lambda_n$, and  $\lambda_2 > 0$ if and only if $G$ is connected. By the Rayleigh-Ritz theorem~\cite[\S 4.2]{horn}, we can characterize $\lambda_2$ and $\lambda_n$ variationally by
\[\lambda_2 = \min_{\substack{\|\varphi\|=1, \\ \langle \one, \varphi \rangle = 0}} \langle \varphi, L \varphi \rangle \,\,\,\textup{ and } \,\,\, \lambda_n = \max_{\|\varphi\| = 1} \langle \varphi, L \varphi \rangle.\]
The second eigenvalue $\lambda_2$ governs the global connectivity of $G$; small values of $\lambda_2$ indicate the presence of bottlenecks or weakly connected regions, whereas larger values correspond to more robust connectivity across $G$. From a dynamical perspective, we can consider a system of heat diffusion~\cite{sun} or a consensus protocol~\cite{olfati} defined by the differential equation $\dot{\varphi}(t) = - L \varphi(t)$. In this setting, $\lambda_2$ determines the asymptotic rate of convergence to the constant vector on $G$, and a larger $\lambda_2$ is equivalent to faster energy or information spread. On the opposite end of the spectrum, the largest eigenvalue $\lambda_n$ represents the most oscillatory function on $G$ and serves as a measure of the bipartiteness of $G$. In the context of dynamics, $\lambda_n$ roughly measures the stability or smoothness of the network with lower values of $\lambda_n$ corresponding to better network stability. For example, $\lambda_n$ controls the largest admissible time delay in the average-consensus problem~\cite[Theorem 10]{olfati}. We refer to~\cite{chung, godsil} for a general survey of graph Laplacians.

Now that we have established that larger values of $\lambda_2$ and smaller values of $\lambda_n$ are desirable for network performance, a natural problem to consider is how we may redistribute weights to optimize these spectral values for a fixed network topology given by $G = (V, E)$. This leads us to the definition of the \textit{weighted Laplacian}: if we assign nonnegative weights $w: E \to \R$ to each edge, then the weighted Laplacian is $L(w) = D(w) - A(w)$ where $D(w)$ is the weighted degree matrix $D(w)_{uu} = \sum_{uv \in E} w_{uv}$,
and $A(w)$ is the weighted adjacency matrix
\[A(w)_{uv} =
\begin{cases}
w_{uv}, & \text{if } uv \in E \\
0, & \text{otherwise.}
\end{cases}\]
Similar to before, the quadratic form induced by $L(w)$ is
\[\langle \varphi, L(w) \varphi \rangle = \sum_{uv \in E} w_{uv} {(\varphi(u) - \varphi(v))}^2,\]
so $L(w)$ is positive semidefinite with $0 = \lambda_1(w) \leq \lambda_2(w) \leq \cdots \leq \lambda_n(w)$. Since the spectrum scales linearly with the weights $w$, it makes sense to normalize the edge weights to satisfy $\sum_{uv \in E} w_{uv} = \abs{E}$.
We can interpret this normalization as having a fixed budget of resource allocation for our network $G$, and maximizing $\lambda_2(w)$ yields the fastest-mixing and densest representation of our network, while minimizing $\lambda_n(w)$ corresponds to  the most stable representation of our network. These optimization problems have been extensively studied in various contexts~\cite{fiedler, goring2, goring1, olfati, osting, sun}.

\subsection{Conformal Rigidity} In this paper, we consider a structural analogue to the previous optimization problems: which graphs $G$ are so intrinsically structured such that the uniform weighting $w=\one$ cannot be improved upon to further increase $\lambda_2(w)$ or decrease $\lambda_n(w)$? To make this question precise, we use $L = L(\one)$ and $\lambda_k = \lambda_k(\one), 1 \leq k \leq n$ to denote the Laplacian and its eigenvalues for the uniform weighting on a fixed graph $G$.

\begin{definition}\label{def:conformal-rigidity}
    We say a graph $G = (V, E)$ is \textit{lower conformally rigid} if for all possible edge weights $w_{uv} \geq 0$ normalized to $ \sum_{uv \in E} w_{uv} = \abs{E}$, 
    \[\lambda_2(w) \leq \lambda_2 := \lambda_2(\one).\]
    In this paper, we say $G$ is \textit{upper conformally rigid} if among all normalized edge weights $w' \geq 0$,
    \[\lambda_n(w') \geq \lambda_n := \lambda_n(\one).\] 
    We call $G$ \textit{conformally rigid} if it is lower OR upper conformally rigid, specifying which type when necessary.
\end{definition}

The notion of conformal rigidity was first introduced in~\cite{steinerberger} and further explored in~\cite{gouveia}. In these papers the authors call a graph $G$ conformally rigid if and only if it is both lower AND upper conformally rigid. These papers demonstrate that the property of being both lower and upper conformally rigid is extremely rare and can only be expected in the presence of high symmetry (in some suitable sense) in $G$; in fact, the same is true for graphs that satisfy either condition alone, which is why we broaden the definition to include graphs that  are lower OR upper conformally rigid. Perhaps the best way to begin to understand what makes a graph $G$ conformally rigid is to study what properties we want to \textit{avoid}. For $\lambda_2(\one)$ to be maximal, no perturbation of edge weights should  increase $\lambda_2(w)$. This means that there can be no weak links or bottlenecks in $G$. If there were, we could increase the weights on these edges to increase $\lambda_2(w)$.

\begin{example}\label{ex:barbell}
    Let $G$ be the barbell graph given by two copies of $K_3$ connected by a single edge. Then  $\lambda_2 = (5-\sqrt{17})/2 \approx 0.438$ where the vertices are labeled as in Figure~\ref{fig:barbell}. Clearly the edge $cd$ of the barbell is a bottleneck in $G$. Thus, if we move more weight to this edge, we can increase $\lambda_2(w)$. Shifting the weight so that $w_{ab} = w_{ef} = 0$ and $w_{cd} = 3$ yields\[\lambda_2(w) = \frac{9- \sqrt{57}}{2} \approx 0.725 > \lambda_2.\]
    
\end{example}
\begin{figure}[h]
    \centering
    \begin{subfigure}[c]{0.48\textwidth}
        \centering
        \begin{tikzpicture}[
                scale=0.8,
                vertex/.style={circle, draw, fill=blue!75, inner sep=1.2pt},
                edge label/.style={font=\small, fill=white, inner sep=1pt}
            ]
            \useasboundingbox (-0.3, -0.45) rectangle (5.75, 2.45);
            \node[vertex, label= above left: $a$] (A) at (0,2) {};
            \node[vertex, label= below left: $b$] (B) at (0,0) {};
            \node[vertex, label= above: $c$] (C) at (1.732, 1) {};

            \node[vertex, label= above: $d$] (D) at (3.732,1) {};
            \node[vertex, label= below right: $e$] (E) at (5.464, 0) {};
            \node[vertex, label= above right: $f$] (F) at (5.464, 2) {};

            \draw[thick] (A)--(B) node[midway, right, xshift=2pt, edge label] {$1$};
            \draw[thick] (A)--(C) node[midway, above right, xshift = 1pt, yshift=1pt, edge label] {$1$};
            \draw[thick] (B)--(C) node[midway, below right, xshift = 1pt, yshift=-1pt, edge label] {$1$};

            \draw[thick] (D)--(E) node[midway, below left, xshift = -1pt, yshift=-1pt, edge label] {$1$};
            \draw[thick] (D)--(F) node[midway, above left, xshift = -1pt, yshift=1pt, edge label] {$1$};
            \draw[thick] (E)--(F) node[midway, left, xshift=-2pt, edge label] {$1$};

            \draw[thick] (C)--(D) node[midway, above, yshift=2pt, edge label] {$1$};
        \end{tikzpicture}
        \caption{$\lambda_2(\one) \approx 0.438$}\label{fig:barbell}
    \end{subfigure}
    \hfill
    \begin{subfigure}[c]{0.48\textwidth}
        \centering
        \begin{tikzpicture}[
                scale=0.8,
                vertex/.style={circle, draw, fill=blue!75, inner sep=1.2pt},
                edge label/.style={font=\small, fill=white, inner sep=1pt}
            ]
            \useasboundingbox (-0.3, -0.45) rectangle (5.75, 2.45);
            \node[vertex] (A) at (0,2) {};
            \node[vertex] (B) at (0,0) {};
            \node[vertex] (C) at (1.732, 1) {};

            \node[vertex] (D) at (3.732,1) {};
            \node[vertex] (E) at (5.464, 0) {};
            \node[vertex] (F) at (5.464, 2) {};

            \draw[thick, opacity=0.25] (A)--(B) node[midway, right, xshift=2pt, edge label, opacity=1] {$0$};
            \draw[thick, opacity=0.25] (E)--(F) node[midway, left, xshift=-2pt, edge label, opacity=1] {$0$};

            \draw[thick] (A)--(C) node[midway, above right, xshift=1pt, yshift=1pt, edge label] {$1$};
            \draw[thick] (B)--(C) node[midway, below right, xshift=1pt, yshift=-1pt, edge label] {$1$};
            \draw[thick] (D)--(E) node[midway, below left, xshift=-1pt, yshift=-1pt, edge label] {$1$};
            \draw[thick] (D)--(F) node[midway, above left, xshift=-1pt, yshift=1pt, edge label] {$1$};

            \draw[line width=3pt] (C)--(D) node[midway, above, yshift=4pt, edge label] {$3$};
        \end{tikzpicture}
        \caption{$\lambda_2(w) \approx 0.725$}\label{fig:barbell-shifted}
    \end{subfigure}
    \caption{The barbell graph with uniform edge weights (left) and a weight redistribution that strictly increases $\lambda_2$ (right). This shows that the barbell graph is not lower conformally rigid.}\label{fig:non-examples}
\end{figure}

This example suggests that for a graph $G$ to be lower conformally rigid, there cannot be any asymmetries in the density of the graph; otherwise we could shift weight from denser regions to sparser regions. Thus, for $G$ to be lower conformally rigid, every edge should ``look'' the same to $\lambda_2$. We will see in \S~\ref{sec:subdifferential} that the notion of an \textit{edge-isometric spectral embedding} (defined in \S~\ref{sec:main-results}) gives a precise characterization of lower conformal rigidity.

\begin{example}\label{ex:friendship}
    Let $G$ be the friendship graph $F_3$ with center vertex $a$ and outer vertices $b, c, d, e, f, g$ as shown in Figure~\ref{fig:friendship-weights}. We see that $\lambda_n = 7$ is simple with eigenvector 
    $\varphi^T = \begin{pmatrix}
        -6 & 1 & 1 & 1 & 1 & 1 & 1
    \end{pmatrix}$.
    Since $\lambda_n$ measures the most oscillatory function on $G$, the eigenvector $\varphi$ tries to place neighbors as far apart as possible. The central hub $a$ is connected to every other vertex, so it gets pushed far from all the others. This forces each friendship pair $\set{b,c}$, $\set{d,e}$, $\set{f,g}$ to take the same value, and the rim edges $bc$, $de$, $fg$ carry no energy. Thus, moving weight from the spoke edges to the underloaded rim edges decreases $\lambda_n(w)$: setting $w_{av} = 0.5$ for $v \in \set{b,c,d,e,f,g}$ and $w_{bc} = w_{de} = w_{fg} = 2$ gives $\lambda_n(w) = 4.5 < 7 = \lambda_n$.
\end{example}

\begin{figure}[h]
    \centering
    \begin{subfigure}[c]{0.48\textwidth}
        \centering
        \begin{tikzpicture}[
                scale=0.8,
                vertex/.style={circle, draw, fill=blue!75, inner sep=1.2pt},
                edge label/.style={font=\small, fill=white, inner sep=1pt}
            ]
            \useasboundingbox (-2.8,-2.6) rectangle (2.8,2.6);
            \node[vertex, label=below:$a$] (A) at (0,0) {};
            \node[vertex, label={above right:$b$}] (B) at (1, 1.732) {};
            \node[vertex, label={above left:$c$}]  (C) at (-1, 1.732) {};
            \node[vertex, label={left:$d$}]         (D) at (-2, 0) {};
            \node[vertex, label={below left:$e$}]   (E) at (-1,-1.732) {};
            \node[vertex, label={below right:$f$}]  (F) at (1,-1.732) {};
            \node[vertex, label={right:$g$}]        (G) at (2, 0) {};
            \draw[thick] (A)--(B) node[midway, right,  xshift=2pt,  edge label] {$1$};
            \draw[thick] (A)--(C) node[midway, left,   xshift=-2pt, edge label] {$1$};
            \draw[thick] (A)--(D) node[midway, above,  yshift=2pt,  edge label] {$1$};
            \draw[thick] (A)--(E) node[midway, left,   xshift=-2pt, edge label] {$1$};
            \draw[thick] (A)--(F) node[midway, right,  xshift=2pt,  edge label] {$1$};
            \draw[thick] (A)--(G) node[midway, above,  yshift=2pt, edge label] {$1$};
            \draw[thick] (B)--(C) node[midway, above, yshift=2pt,  edge label] {$1$};
            \draw[thick] (D)--(E) node[midway, left,  xshift=-2pt, edge label] {$1$};
            \draw[thick] (F)--(G) node[midway, right, xshift=2pt,  edge label] {$1$};
        \end{tikzpicture}
        \caption{$\lambda_n(\one) = 7$}\label{fig:friendship-before}
    \end{subfigure}
    \hfill
    \begin{subfigure}[c]{0.48\textwidth}
        \centering
        \begin{tikzpicture}[
                scale=0.8,
                vertex/.style={circle, draw, fill=blue!75, inner sep=1.2pt},
                edge label/.style={font=\small, fill=white, inner sep=1pt}
            ]
            \useasboundingbox (-2.8,-2.6) rectangle (2.8,2.6);
            \node[vertex] (A) at (0,0) {};
            \node[vertex] (B) at (1, 1.732) {};
            \node[vertex] (C) at (-1, 1.732) {};
            \node[vertex] (D) at (-2, 0) {};
            \node[vertex] (E) at (-1,-1.732) {};
            \node[vertex] (F) at (1,-1.732) {};
            \node[vertex] (G) at (2, 0) {};
            \draw[line width=0.5pt] (A)--(B) node[midway, right,  xshift=2pt,  edge label, opacity=1] {$0.5$};
            \draw[line width=0.5pt] (A)--(C) node[midway, left,   xshift=-2pt, edge label, opacity=1] {$0.5$};
            \draw[line width=0.5pt] (A)--(D) node[midway, above,  yshift=2pt,  edge label, opacity=1] {$0.5$};
            \draw[line width=0.5pt] (A)--(E) node[midway, left,   xshift=-2pt, edge label, opacity=1] {$0.5$};
            \draw[line width=0.5pt] (A)--(F) node[midway, right,  xshift=2pt,  edge label, opacity=1] {$0.5$};
            \draw[line width=0.5pt] (A)--(G) node[midway, above,  yshift=2pt, edge label, opacity=1] {$0.5$};
            \draw[line width=2pt] (B)--(C) node[midway, above, yshift=4pt,  edge label] {$2$};
            \draw[line width=2pt] (D)--(E) node[midway, left,  xshift=-4pt, edge label] {$2$};
            \draw[line width=2pt] (F)--(G) node[midway, right, xshift=4pt,  edge label] {$2$};
        \end{tikzpicture}
        \caption{$\lambda_n(w) = 4.5$}\label{fig:friendship-after}
    \end{subfigure}
    \caption{The friendship graph $F_3$ with uniform edge weights (left) and a weight redistribution that strictly decreases $\lambda_n$ (right). This shows that $F_3$ is not upper conformally rigid.}\label{fig:friendship-weights}
\end{figure}

As in the case of lower conformal rigidity, every edge of $G$ must look the same to $\lambda_n$ to be upper conformally rigid. Since $\lambda_n$ measures the most oscillatory function on $G$, roughly speaking, we do not want hubs or asymmetries in the degrees of vertices. Otherwise, the corresponding eigenvector(s) will concentrate stress on edges incident to vertices of high degree as we saw in the previous example.

\begin{figure}
    \centering
    \begin{subfigure}[b]{0.31\textwidth}
        \centering
    \resizebox{\linewidth}{!}{%
    \begin{tikzpicture}[
        scale=1.45,
        vertex/.style={circle, draw, fill=blue!75, inner sep=1.5pt},
        every edge/.style={draw, line width=0.6pt},
    ]
        \useasboundingbox (-1.5, -1.5) rectangle (1.5, 1.5);
        \node[vertex] (0) at (0.8944, 0.0058) {};
        \node[vertex] (1) at (1.1869, 0.0061) {};
        \node[vertex] (2) at (-1.1912, -0.0418) {};
        \node[vertex] (3) at (-0.9058, -0.0391) {};
        \node[vertex] (4) at (-0.0011, 0.0015) {};
        \node[vertex] (5) at (-0.0154, 1.0873) {};
        \node[vertex] (6) at (0.0267, -1.0846) {};
        \node[vertex] (7) at (0.9053, -0.4596) {};
        \node[vertex] (8) at (0.8990, 0.5000) {};
        \node[vertex] (9) at (-0.9001, -0.5277) {};
        \node[vertex] (10) at (-0.9177, 0.4674) {};
        \draw (0)--(7) (0)--(8) (1)--(7) (1)--(8) (2)--(9) (2)--(10);
        \draw (3)--(9) (3)--(10) (4)--(7) (4)--(8) (4)--(9) (4)--(10);
        \draw (5)--(7) (5)--(8) (5)--(9) (5)--(10) (6)--(7) (6)--(8);
        \draw (6)--(9) (6)--(10);
    \end{tikzpicture}%
    }%
        \caption{\textsc{HoG 32752}}
    \end{subfigure}%
    \hfill%
    \begin{subfigure}[b]{0.31\textwidth}
        \centering
    \resizebox{\linewidth}{!}{%
    \begin{tikzpicture}[
        scale=1.45,
        vertex/.style={circle, draw, fill=blue!75, inner sep=1.5pt},
        every edge/.style={draw, line width=0.2pt},
    ]
        \useasboundingbox (-1.5, -1.5) rectangle (1.5, 1.5);
        \coordinate (0) at (0.836, 0.000);
        \coordinate (1) at (-0.428, -0.730);
        \coordinate (2) at (-0.428, 0.730);
        \coordinate (3) at (0.689, 0.475);
        \coordinate (4) at (-0.840, -0.126);
        \coordinate (5) at (-0.624, 0.573);
        \coordinate (6) at (-0.624, -0.573);
        \coordinate (7) at (0.689, -0.475);
        \coordinate (8) at (-0.840, 0.126);
        \coordinate (9) at (-0.608, -1.042);
        \coordinate (10) at (0.744, -0.941);
        \coordinate (11) at (-0.889, -0.818);
        \coordinate (12) at (0.988, -0.678);
        \coordinate (13) at (-1.091, 0.522);
        \coordinate (14) at (0.433, -1.120);
        \coordinate (15) at (1.143, -0.355);
        \coordinate (16) at (0.433, 1.120);
        \coordinate (17) at (1.197, 0.000);
        \coordinate (18) at (-1.197, 0.179);
        \coordinate (19) at (-1.091, -0.522);
        \coordinate (20) at (0.988, 0.678);
        \coordinate (21) at (-0.274, -1.173);
        \coordinate (22) at (-0.889, 0.818);
        \coordinate (23) at (1.143, 0.355);
        \coordinate (24) at (0.798, 0.248);
        \coordinate (25) at (0.056, -0.840);
        \coordinate (26) at (0.301, 0.784);
        \coordinate (27) at (0.056, 0.840);
        \coordinate (28) at (0.301, -0.784);
        \coordinate (29) at (0.798, -0.248);
        \coordinate (30) at (-0.194, 0.821);
        \coordinate (31) at (0.518, 0.659);
        \coordinate (32) at (-0.766, -0.365);
        \coordinate (33) at (0.518, -0.659);
        \coordinate (34) at (-0.194, -0.821);
        \coordinate (35) at (-0.766, 0.365);
        \coordinate (36) at (0.083, -1.200);
        \coordinate (37) at (0.083, 1.200);
        \coordinate (38) at (0.744, 0.941);
        \coordinate (39) at (-1.197, -0.179);
        \coordinate (40) at (-0.608, 1.042);
        \coordinate (41) at (-0.274, 1.173);
        \draw (0)--(1) (0)--(2) (0)--(3) (0)--(4) (0)--(5) (0)--(6);
        \draw (0)--(7) (0)--(8) (0)--(9) (0)--(10) (0)--(11) (0)--(12);
        \draw (0)--(13) (0)--(14) (0)--(15) (0)--(16) (0)--(17) (0)--(18);
        \draw (0)--(19) (0)--(20) (1)--(2) (1)--(9) (1)--(10) (1)--(11);
        \draw (1)--(18) (1)--(19) (1)--(20) (1)--(24) (1)--(25) (1)--(26);
        \draw (1)--(30) (1)--(31) (1)--(32) (1)--(36) (1)--(37) (1)--(38);
        \draw (1)--(39) (1)--(40) (1)--(41) (2)--(12) (2)--(13) (2)--(14);
        \draw (2)--(15) (2)--(16) (2)--(17) (2)--(27) (2)--(28) (2)--(29);
        \draw (2)--(33) (2)--(34) (2)--(35) (2)--(36) (2)--(37) (2)--(38);
        \draw (2)--(39) (2)--(40) (2)--(41) (3)--(6) (3)--(8) (3)--(9);
        \draw (3)--(12) (3)--(15) (3)--(16) (3)--(17) (3)--(19) (3)--(20);
        \draw (3)--(21) (3)--(23) (3)--(25) (3)--(26) (3)--(32) (3)--(34);
        \draw (3)--(35) (3)--(36) (3)--(39) (3)--(41) (4)--(6) (4)--(7);
        \draw (4)--(11) (4)--(13) (4)--(15) (4)--(16) (4)--(17) (4)--(18);
        \draw (4)--(20) (4)--(21) (4)--(22) (4)--(24) (4)--(26) (4)--(31);
        \draw (4)--(33) (4)--(35) (4)--(38) (4)--(39) (4)--(40) (5)--(7);
        \draw (5)--(8) (5)--(10) (5)--(14) (5)--(15) (5)--(16) (5)--(17);
        \draw (5)--(18) (5)--(19) (5)--(22) (5)--(23) (5)--(24) (5)--(25);
        \draw (5)--(30) (5)--(33) (5)--(34) (5)--(37) (5)--(40) (5)--(41);
        \draw (6)--(11) (6)--(13) (6)--(14) (6)--(16) (6)--(18) (6)--(19);
        \draw (6)--(20) (6)--(21) (6)--(23) (6)--(27) (6)--(29) (6)--(30);
        \draw (6)--(31) (6)--(34) (6)--(37) (6)--(39) (6)--(41) (7)--(10);
        \draw (7)--(12) (7)--(14) (7)--(17) (7)--(18) (7)--(19) (7)--(20);
        \draw (7)--(21) (7)--(22) (7)--(27) (7)--(28) (7)--(30) (7)--(32);
        \draw (7)--(35) (7)--(36) (7)--(39) (7)--(40) (8)--(9) (8)--(12);
        \draw (8)--(13) (8)--(15) (8)--(18) (8)--(19) (8)--(20) (8)--(22);
        \draw (8)--(23) (8)--(28) (8)--(29) (8)--(31) (8)--(32) (8)--(33);
        \draw (8)--(38) (8)--(40) (8)--(41) (9)--(10) (9)--(11) (9)--(12);
        \draw (9)--(13) (9)--(17) (9)--(18) (9)--(21) (9)--(24) (9)--(25);
        \draw (9)--(26) (9)--(27) (9)--(28) (9)--(32) (9)--(33) (9)--(34);
        \draw (9)--(40) (10)--(11) (10)--(12) (10)--(14) (10)--(16) (10)--(20);
        \draw (10)--(23) (10)--(24) (10)--(25) (10)--(26) (10)--(27) (10)--(29);
        \draw (10)--(30) (10)--(33) (10)--(35) (10)--(39) (11)--(13) (11)--(14);
        \draw (11)--(15) (11)--(19) (11)--(22) (11)--(24) (11)--(25) (11)--(26);
        \draw (11)--(28) (11)--(29) (11)--(31) (11)--(34) (11)--(35) (11)--(41);
        \draw (12)--(15) (12)--(19) (12)--(20) (12)--(21) (12)--(24) (12)--(27);
        \draw (12)--(28) (12)--(29) (12)--(30) (12)--(34) (12)--(35) (12)--(37);
        \draw (12)--(38) (13)--(16) (13)--(18) (13)--(20) (13)--(22) (13)--(25);
        \draw (13)--(27) (13)--(28) (13)--(29) (13)--(32) (13)--(33) (13)--(35);
        \draw (13)--(36) (13)--(37) (14)--(17) (14)--(18) (14)--(19) (14)--(23);
        \draw (14)--(26) (14)--(27) (14)--(28) (14)--(29) (14)--(31) (14)--(33);
        \draw (14)--(34) (14)--(36) (14)--(38) (15)--(16) (15)--(17) (15)--(21);
        \draw (15)--(24) (15)--(25) (15)--(29) (15)--(30) (15)--(31) (15)--(33);
        \draw (15)--(36) (15)--(38) (15)--(41) (16)--(22) (16)--(25) (16)--(26);
        \draw (16)--(27) (16)--(30) (16)--(32) (16)--(34) (16)--(37) (16)--(38);
        \draw (16)--(39) (17)--(23) (17)--(24) (17)--(26) (17)--(28) (17)--(31);
        \draw (17)--(32) (17)--(35) (17)--(36) (17)--(37) (17)--(40) (18)--(21);
        \draw (18)--(24) (18)--(28) (18)--(30) (18)--(32) (18)--(33) (18)--(37);
        \draw (18)--(38) (18)--(39) (18)--(41) (19)--(22) (19)--(25) (19)--(29);
        \draw (19)--(31) (19)--(32) (19)--(34) (19)--(36) (19)--(37) (19)--(39);
        \draw (19)--(40) (20)--(23) (20)--(26) (20)--(27) (20)--(30) (20)--(31);
        \draw (20)--(35) (20)--(36) (20)--(38) (20)--(40) (20)--(41) (21)--(23);
        \draw (21)--(24) (21)--(25) (21)--(27) (21)--(28) (21)--(30) (21)--(31);
        \draw (21)--(33) (21)--(34) (21)--(36) (21)--(39) (22)--(25) (22)--(26);
        \draw (22)--(28) (22)--(29) (22)--(30) (22)--(32) (22)--(34) (22)--(35);
        \draw (22)--(38) (22)--(40) (23)--(24) (23)--(26) (23)--(27) (23)--(29);
        \draw (23)--(31) (23)--(32) (23)--(33) (23)--(35) (23)--(37) (23)--(41);
        \draw (24)--(29) (24)--(31) (24)--(32) (24)--(34) (24)--(35) (24)--(37);
        \draw (24)--(38) (24)--(39) (25)--(30) (25)--(31) (25)--(33) (25)--(35);
        \draw (25)--(36) (25)--(37) (25)--(40) (26)--(30) (26)--(32) (26)--(33);
        \draw (26)--(34) (26)--(36) (26)--(38) (26)--(41) (27)--(31) (27)--(32);
        \draw (27)--(33) (27)--(34) (27)--(37) (27)--(38) (27)--(40) (28)--(30);
        \draw (28)--(31) (28)--(34) (28)--(35) (28)--(36) (28)--(37) (28)--(41);
        \draw (29)--(30) (29)--(32) (29)--(33) (29)--(35) (29)--(36) (29)--(38);
        \draw (29)--(39) (30)--(36) (30)--(37) (30)--(38) (30)--(41) (31)--(36);
        \draw (31)--(37) (31)--(38) (31)--(40) (32)--(36) (32)--(37) (32)--(38);
        \draw (32)--(39) (33)--(36) (33)--(39) (33)--(40) (33)--(41) (34)--(38);
        \draw (34)--(39) (34)--(40) (34)--(41) (35)--(38) (35)--(39) (35)--(40);
        \draw (35)--(41) (36)--(39) (36)--(40) (36)--(41) (37)--(39) (37)--(41);
        \draw (38)--(40) (38)--(41) (39)--(40) (39)--(41) (40)--(41);
        \foreach \v in {0,1,...,41} {\node[vertex] at (\v) {};}
    \end{tikzpicture}%
    }%
        \caption{\textsc{HoG 52603}}\label{subfig:52603}
    \end{subfigure}%
    \hfill%
    \begin{subfigure}[b]{0.31\textwidth}
        \centering
    \resizebox{\linewidth}{!}{%
    \begin{tikzpicture}[
        scale=1.45,
        vertex/.style={circle, draw, fill=blue!75, inner sep=1.2pt},
        every edge/.style={draw, line width=0.5pt},
    ]
        \useasboundingbox (-1.5, -1.5) rectangle (1.5, 1.5);
        \node[vertex] (0) at (0.0000, -1.2000) {};
        \node[vertex] (1) at (0.7053, -0.9708) {};
        \node[vertex] (2) at (-0.7053, -0.9708) {};
        \node[vertex] (3) at (0.0000, -0.6000) {};
        \node[vertex] (4) at (-0.7053, 0.9708) {};
        \node[vertex] (5) at (0.0000, 0.6000) {};
        \node[vertex] (6) at (0.7053, 0.9708) {};
        \node[vertex] (7) at (0.0000, 1.2000) {};
        \node[vertex] (8) at (-1.1413, 0.3708) {};
        \node[vertex] (9) at (-0.3527, 0.4854) {};
        \node[vertex] (10) at (1.1413, 0.3708) {};
        \node[vertex] (11) at (0.5706, -0.1854) {};
        \node[vertex] (12) at (0.3527, 0.4854) {};
        \node[vertex] (13) at (-0.5706, -0.1854) {};
        \node[vertex] (14) at (-1.1413, -0.3708) {};
        \node[vertex] (15) at (-0.5706, 0.1854) {};
        \node[vertex] (16) at (-0.3527, -0.4854) {};
        \node[vertex] (17) at (0.5706, 0.1854) {};
        \node[vertex] (18) at (1.1413, -0.3708) {};
        \node[vertex] (19) at (0.3527, -0.4854) {};
        \draw (0)--(1) (0)--(2) (0)--(3) (1)--(18) (1)--(19) (2)--(14);
        \draw (2)--(16) (3)--(15) (3)--(17) (4)--(7) (4)--(8) (4)--(9);
        \draw (5)--(7) (5)--(11) (5)--(13) (6)--(7) (6)--(10) (6)--(12);
        \draw (8)--(14) (8)--(15) (9)--(16) (9)--(17) (10)--(17) (10)--(18);
        \draw (11)--(16) (11)--(18) (12)--(15) (12)--(19) (13)--(14) (13)--(19);
    \end{tikzpicture}%
    }%
        \caption{\textsc{HoG 1036}}
    \end{subfigure}
    \caption{A lower conformally rigid graph (A), an upper conformally rigid graph (B), and a lower and upper conformally rigid graph (C) (also known as the Desargues graph).}\label{fig:cr-examples}
\end{figure}

Now that we know what local obstructions prevent conformal rigidity, we present a few examples of conformally rigid graphs from the \textsc{House of Graphs (HoG)} database~\cite{hog} shown in Figure~\ref{fig:cr-examples}. In this figure and throughout the paper, we provide \textsc{HoG} IDs of any graph we use, which can be easily referenced with the online database. Each of these three graphs visually exhibits a high degree of symmetry as our earlier discussion suggested: no reweighting can exploit any asymmetries because there are none to exploit. It seems that the restrictiveness of conformal rigidity serves as a filter for many graphs that satisfy other nice structural properties. In particular, many conformally rigid graphs are extremal with respect to some combinatorial property, which we will discuss in detail in \S~\ref{sec:main-results}. Thus, it would be nice to identify conformally rigid graphs as these could serve as candidates for graphs that are interesting in another light.

We emphasize, however, that not only is it difficult to find conformally rigid graphs, it is difficult to even show that a given graph is conformally rigid. To prove Examples~\ref{ex:barbell} and~\ref{ex:friendship} were not conformally rigid, we only needed to find a single reweighting that increased $\lambda_2(w)$ or decreased $\lambda_n(w)$. In contrast, to prove conformal rigidity of any of the graphs in Figure~\ref{fig:cr-examples} we need to show that no reweighting away from $w = \one$ can improve the relevant eigenvalue. This highlights two distinct difficulties that any theory of conformal rigidity must address. The first is the \textit{search problem}: as with any highly structured class of graphs, conformally rigid graphs are extremely rare and there is no obvious structural filter for finding them. The second is the \textit{certification problem}: given a graph $G$ suspected to be conformally rigid, how does one rigorously prove it is actually conformally rigid? This amounts to showing $w= \one$ solves an eigenvalue optimization problem that depends on the structure of the graph $G$ in a nontrivial way. 

Clearly the certification problem precedes the search problem. If we compare the search problem for conformally rigid graphs to finding a needle in a haystack, then the certification problem is akin to being unable to easily distinguish between a needle and a piece of hay. Thus, our paper primarily develops theory for conformal rigidity that addresses the certification problem.

\subsection{Known Results} Prior to our results, conformal rigidity was hard to verify rigorously for most conformally rigid graphs. It was known that specific families, such as edge-transitive graphs~\cite[Proposition 2.1]{steinerberger} and 1-walk regular graphs~\cite[Theorem 3.2]{gouveia} are always both lower and upper conformally rigid. Outside of these classes, however, no other known structural results guarantee conformal rigidity. Any other known example had to be certified on a case-by-case basis, typically involving manual inspection or ad-hoc techniques depending on the structure of each individual graph. The paper~\cite{gouveia} leveraged the connection between symmetry and conformal rigidity to help simplify the certification of certain highly symmetric graphs --- vertex-transitive and Cayley graphs to be specific --- but without a theoretical framework explaining when or why this simplification is possible.

We address these issues by building a robust framework for conformal rigidity embedded in classical theory. On the theoretical side, subdifferential analysis and representation theory provide the backbone; on the computational side, semidefinite programming, symmetry reduction, and real algebraic geometry supply the tools for certification. This framework anchors the simplified method of certification developed in~\cite{gouveia} to the geometry of the spectral functions $\lambda_2(w)$ and $\lambda_n(w)$ while also guaranteeing that it works for all vertex-transitive graphs and even a larger class of graphs than previously considered. In particular, our framework subsumes the certification of five ``sporadic'' conformally rigid graphs in~\cite{gouveia} that required the previously mentioned ad-hoc techniques outside the main theory of that paper, bringing them under a single unified treatment.

Our results serve as a starting point for deeper open problems in conformal rigidity. With certification now well-understood for large classes of conformally rigid graphs, attention can shift toward the search problem and toward a more fundamental question: what causes conformal rigidity? Is there a structural or combinatorial property that characterizes it, or are some graphs conformally rigid for no deeper reason than the arithmetic of their spectra? The five sporadic examples illustrate this tension: our framework leverages symmetry to certify their conformal rigidity, yet still offers no explanation for why they are conformally rigid. One promising direction is the observed connection between conformal rigidity and extremal combinatorial properties. Many conformally rigid graphs appear to be extremal in some sense, and making this connection rigorous could shed light on conformal rigidity through the lens of extremal graph theory, and conversely, offer new perspectives on extremal problems through the spectral viewpoint.

\subsection{Organization of the Paper} In \S~\ref{sec:main-results}, we give an overview of our main results with examples on how to apply them. \S~\ref{sec:subdifferential} develops our main framework for conformal rigidity using subdifferential theory. We also explain how edge-isometric spectral embeddings can be certified using semidefinite programming. \S~\ref{sec:perturbation-theory} revisits the main characterization through a perturbation theory lens, giving a variational intuition for the edge-isometry condition as an equilibrium: no weight perturbation can simultaneously increase (decrease) the energy of every eigenvector in the second or last eigenspace. In \S~\ref{sec:sym} we apply techniques of symmetry reduction to characterize conformal rigidity at the level of edge orbits. In \S~\ref{sec:rep-theory} we use representation theory to show that the conformal rigidity of vertex-transitive graphs can be certified by a single eigenvector, answering a question posed in~\cite{gouveia}. This enables exact computational certification through the use of Gr\"{o}bner bases. \S~\ref{sec:polyhedral-cone} specializes to a class of graphs --- including all abelian Cayley graphs --- where exact certification reduces to linear feasibility.

\setcounter{secnumdepth}{1}
\section{Summary of Main Results}\label{sec:main-results}
In this section, we will summarize our results at a high level, grouped  by the section that they are proven in. A running example demonstrates each theorem.

\subsection{\S~\ref{sec:subdifferential} Subdifferentials and Conformal Rigidity} 
In \S~\ref{sec:subdifferential}, we give our first characterization of conformal rigidity, which applies to any graph $G$ with no assumptions of symmetry. We first establish the following notation:

\begin{definition}\label{def:ell}
    For a function $\varphi: V \to \R$, we denote its \textit{edge-energy vector} by 
    \begin{equation}
        \ell(\varphi) = \paren{{(\varphi(u) - \varphi(v))}^2}_{uv \in E}
        \label{eq:ell}.
    \end{equation}    
\end{definition}
We think of the $uv$-coordinate $\ell(\varphi)_{uv}$ as the energy or squared length of the edge $uv \in E$ under the vector $\varphi$. Our first theorem states that a graph $G$ is conformally rigid if and only if we can find a collection of eigenvectors in the eigenspace $\E_\lambda$ for $\lambda = \lambda_2, \lambda_n$ whose total edge-energy is constant across each edge.

\begin{namedtheorem}{\ref{thm:cr-subgradient}}
    A graph $G$ is lower (upper) conformally rigid if and only if there exist nonzero eigenvectors $\varphi_1, \ldots, \varphi_r \in \E_{\lambda_2}$ ($\E_{\lambda_n}$), coefficients $a_i \geq 0$, and a constant $c >0$ such that for all $uv \in E$,
    \begin{equation}\label{eq:edge-iso}
      c = \sum_{i=1}^r a_i {(\varphi_i(u) - \varphi_i(v))}^2 = \sum_{i=1}^r a_i \ell(\varphi_i)_{uv}.
    \end{equation}
\end{namedtheorem}

The proof of this theorem relies on the fact that the spectral functions $-\lambda_2(w)$ and $\lambda_n(w)$ are convex with respect to $w$. Applying subdifferential theory to these functions yields the above result. This theorem recovers the known result that conformal rigidity is equivalent to the existence of \textit{edge-isometric spectral embeddings} of $G$ on the eigenspaces $\E_\lambda$ where $\lambda = \lambda_2, \lambda_n$. 

\begin{definition}
    Let $\lambda > 0$ be a Laplacian eigenvalue of $G$ with multiplicity $d$, and let $\E_\lambda \cong \R^d$ be the corresponding eigenspace. For eigenvectors $\varphi_1, \ldots, \varphi_r \in \E_\lambda$, let $P= \begin{pmatrix}
        \varphi_1 & \cdots & \varphi_r
    \end{pmatrix} \in \R^{n \times r}$. A \textit{spectral embedding of $G$ on $\E_\lambda$} is the collection $\mathcal{P} = \set{p_v}_{v\in V} \subseteq \R^r$ where $p_v^T$ is the $v^{\textrm{th}}$ row of $P$.
\end{definition}

\begin{definition}\label{def:edge-iso}
    We call a spectral embedding $\mathcal{P}$ \textit{edge-isometric} if there exists some $c > 0$ such that $\|p_u - p_v\| = c$ for all $uv \in E$.
\end{definition}

Geometrically, we can visualize a spectral embedding by mapping each vertex $v$ of $G$ to its corresponding vector $p_v \in \R^r$. An edge-isometric spectral embedding is one in which the distance between two vertices sharing an edge is the same (positive) value for all edges in $G$. The term ``embedding'' is a bit of a misnomer as the map does not have to be injective as we will see in the following example.

\begin{namedtheorem}{\ref{thm:edge-iso}}\cite[Proposition 4.3]{steinerberger}
    A graph $G$ is lower (upper) conformally rigid if and only if $G$ has an edge-isometric embedding $\mathcal{P}$ on $\E_{\lambda_2}$ ($\E_{\lambda_n})$.
\end{namedtheorem}

\begin{example}\label{ex:running}
    Let $G = \Cay(\Z_{21}, \set{1,6})$, shown in Figure~\ref{subfig:circulant}. This is an example of a Cayley graph $\Cay(\Gamma,S)$ where $\Gamma$ is a finite group, and $S \subseteq \Gamma$ is a \textit{generating set}. The vertices of $\Cay(\Gamma,S)$ are given by the elements of $\Gamma$ with edges given by $(g , g \circ s)$ for $g \in \Gamma$ and $s \in S$. Since we care about undirected graphs, we assume $S$ is symmetric i.e. $s \in S \implies s^{-1} \in S$. The graph $G$ is both lower and upper conformally rigid as shown in~\cite[Proposition 5.7]{gouveia}.
    
    \definecolor{vcolA}{HTML}{DFF3FF}
    \definecolor{vcolB}{HTML}{A7E3FF}
    \definecolor{vcolC}{HTML}{5BC1FF}
    \definecolor{vcolD}{HTML}{2F86FF}
    \definecolor{vcolE}{HTML}{1E5FD0}
    \definecolor{vcolF}{HTML}{4A9BFF}
    \definecolor{vcolG}{HTML}{9EDCFF}
    \begin{figure}[h]
        \centering
        \begin{subfigure}[c]{0.31\textwidth}
            \centering
            \begin{tikzpicture}[
                scale=0.72,
                vertex/.style={circle, draw, fill=black, inner sep=1.3pt},
            ]
            \path[use as bounding box] (-2.8,-2.8) rectangle (2.8,2.8);
            \def\r{2.0}
            \def\rlabel{2.35}
            \foreach \k/\col in {
                0/vcolA, 1/vcolB, 2/vcolC, 3/vcolD, 4/vcolE, 5/vcolF, 6/vcolG,
                7/vcolA, 8/vcolB, 9/vcolC, 10/vcolD, 11/vcolE, 12/vcolF, 13/vcolG,
                14/vcolA, 15/vcolB, 16/vcolC, 17/vcolD, 18/vcolE, 19/vcolF, 20/vcolG} {
                \pgfmathsetmacro{\angle}{360*\k/21}
                \node[vertex, fill=\col] (V\k) at ({\r*cos(\angle)}, {\r*sin(\angle)}) {};
                \node[font=\tiny] at ({\rlabel*cos(\angle)}, {\rlabel*sin(\angle)}) {$\k$};
            }
            \foreach \i in {0,...,20} {
                \pgfmathtruncatemacro{\j}{mod(\i+1,21)}
                \draw (V\i) -- (V\j);
            }
            \foreach \i in {0,...,20} {
                \pgfmathtruncatemacro{\j}{mod(\i+6,21)}
                \draw (V\i) -- (V\j);
            }
            \end{tikzpicture}
            \subcaption{$\Cay(\Z_{21},  \set{1,6})$}\label{subfig:circulant}
        \end{subfigure}
        \hfill%
        \begin{subfigure}[c]{0.34\textwidth}
            \centering
            \begin{tikzpicture}[
                scale=0.72,
                vertex/.style={circle, draw, fill=black, inner sep=1.5pt},
            ]
            \path[use as bounding box] (-2.8,-2.8) rectangle (2.8,2.8);
            \def\r{2.0}
            \def\rlabel{2.8}
            \foreach \k/\col in {0/vcolA, 1/vcolB, 2/vcolC, 3/vcolD, 4/vcolE, 5/vcolF, 6/vcolG} {
                \pgfmathsetmacro{\angle}{360*\k/7}
                \node[vertex, fill=\col] (V\k) at ({\r*cos(\angle)}, {\r*sin(\angle)}) {};
            }
            \foreach \i in {0,...,6} {
                \pgfmathtruncatemacro{\j}{mod(\i+1,7)}
                \draw (V\i) -- (V\j);
            }
            \end{tikzpicture}
            \subcaption{$\lambda_2$ embedding}\label{subfig:lambda2}
        \end{subfigure}%
        \hfill%
        \begin{subfigure}[c]{0.34\textwidth}
            \centering
            \begin{tikzpicture}[
                scale=0.72,
                vertex/.style={circle, draw, fill=black, inner sep=1.3pt},
            ]
            \path[use as bounding box] (-2.8,-2.8) rectangle (2.8,2.8);
            \def\r{2.0}
            \def\rlabel{2.8}
            \foreach \k/\col in {0/vcolA, 1/vcolF, 2/vcolD, 3/vcolB, 4/vcolG, 5/vcolE, 6/vcolC} {
                \pgfmathsetmacro{\angle}{360*\k/7}
                \node[vertex, fill=\col] (V\k) at ({\r*cos(\angle)}, {\r*sin(\angle)}) {};
            }
            \foreach \i in {0,...,6} {
                \pgfmathtruncatemacro{\j}{mod(\i+3,7)}
                \draw (V\i) -- (V\j);
            }
            \end{tikzpicture}
            \subcaption{$\lambda_n$ embedding}\label{subfig:lambdan}
        \end{subfigure}
        \caption{$\Cay(\Z_{21},  \set{1,6})$ and edge-isometric embeddings on $\E_{\lambda_2}$ and $\E_{\lambda_n}$.}\label{fig:circulant}
    \end{figure}
    To certify lower conformal rigidity, we must find eigenvectors $\varphi_i \in \E_{\lambda_2}$ such that~\eqref{eq:edge-iso} holds for all $m=42$ edges. One can verify that $\lambda_2 = 4(1 - \cos \paren{2 \pi/7}) $ with multiplicity $2$. The following eigenvectors
    \[\varphi_1 = \paren{\cos \paren{\frac{2\pi j}{7}}}_{j=0}^{20}, \  \varphi_2 = \paren{\sin \paren{\frac{2\pi j}{7}}}_{j=0}^{20}.\]
    satisfy~\eqref{eq:edge-iso} where $c=2\bigl(1 - \cos(2\pi/7)\bigr) \approx 0.753$ and thus give an edge-isometric embedding with $P = \begin{pmatrix}
        \varphi_1 & \varphi_2
    \end{pmatrix}$, on $\E_{\lambda_2}$. The embedding gives a 3-to-1 map onto the regular heptagon shown in Figure~\ref{subfig:lambda2}.
    Similarly, $\lambda_n = 4(1- \cos\paren{6 \pi/7})$ with multiplicity 2. The eigenvectors
    \[\varphi_1' = \paren{\cos \paren{\frac{6\pi j}{7}}}_{j=0}^{20}, \  \varphi_2' = \paren{\sin \paren{\frac{6\pi j}{7}}}_{j=0}^{20},\]
    satisfy~\eqref{eq:edge-iso} for $\lambda_n$ with $c = 2(1-\cos \paren{\frac{6\pi}{7}}) \approx 3.802$. The corresponding edge-isometric embedding on $\E_{\lambda_n}$ gives a 3-to-1 vertex map on the regular star-shaped heptagon shown in Figure~\ref{subfig:lambdan}.
\end{example}

Edge-isometric embeddings formalize the notion that every edge in $G$ must look the same to $\lambda_2$ or $\lambda_n$. Equation~\eqref{eq:edge-iso} says that we can balance the energy across all edges with a collection of eigenvectors $\varphi_1, \ldots, \varphi_r$. This corresponds to all edge lengths being the same in the corresponding spectral embedding. At this stage, we do not explain how we find these eigenvectors, but one can imagine the difficulty of finding an unspecified number of $r$ eigenvectors satisfying $\abs{E}$ equations in general.

\subsection{\S~\ref{sec:perturbation-theory} A Perturbation Theory Perspective}

In \S~\ref{sec:perturbation-theory}, we give an alternative proof of Theorem~\ref{thm:cr-subgradient} that provides variational intuition for the edge-isometry condition. Suppose we have eigenvectors $\varphi_1, \ldots, \varphi_r \in \E_{\lambda_2}$ satisfying the condition in Theorem~\ref{thm:cr-subgradient}. We show that for any perturbation of weights $w = \one + y$ with $\sum_{uv} y_{uv} = 0$, there exists $1 \leq i \leq r$ such that $\langle \varphi_i, L(w) \varphi_i \rangle \leq \langle \varphi_i, L(\one) \varphi_i \rangle = \lambda_2$, which implies $\lambda_2(w) \leq \lambda_2$. In other words, we may be able to perturb the weights $w$ to increase the energy of some eigenvector in $\E_{\lambda_2}$, but we cannot do so simultaneously for all eigenvectors in $\E_{\lambda_2}$. Conversely, if no collection of eigenvectors satisfies Theorem~\ref{thm:cr-subgradient}, we use a separating hyperplane argument to show that there does exist a direction of perturbation $y$ that increases the energy of every eigenvector in $\E_{\lambda_2}$ simultaneously. A careful but routine analytical argument shows that a small enough step in this direction increases $\lambda_2(w)$, which shows $G$ is not lower conformally rigid. The same arguments apply to $\lambda_n$ and upper conformal rigidity.

\subsection{\S~\ref{sec:sym} Symmetry Reduction}\label{subsec:symmetry-reduction} 

In \S~\ref{sec:sym}, we assume that we are working with a graph $G$ that contains a large group of automorphisms $\Psi$. Recall that a graph automorphism is a permutation $\sigma$ of the vertices $V$ such that $(u,v) \in E$ if and only if $(\sigma(u), \sigma(v)) \in E$. If we denote $\Om_1, \ldots, \Om_s$ as the edge orbits of $G$ under the action of $\Psi$, we can introduce the orbit-level analogue of $\ell(\varphi)$:
\begin{definition}\label{def:orbit-energy}
    Let $G=(V,E)$ be a connected graph, and let $\Psi \leq \Aut(G)$ denote a group of automorphisms with edge orbits $\Om_1, \ldots, \Om_s$. For a function $\varphi: V \to \R$, we denote its \textit{orbit-energy vector} by
    \[
        \ell_\Psi(\varphi) = \paren{\sum_{uv \in \Om_i} (\varphi(u) - \varphi(v))^2}_{i=1}^s \in \R^s.
    \]
\end{definition}
We think of the coordinate $\ell_\Psi(\varphi)_i$ as the total energy of $\varphi$ on edges in the orbit $\Om_i$. We then get the symmetry reduced version of Theorem~\ref{thm:cr-subgradient}.
\begin{namedtheorem}{\ref{thm:sym-subgradient}}
    A graph $G$ with a group of automorphisms $\Psi \leq \Aut(G)$ is lower (upper) conformally rigid if and only if there exist nonzero eigenvectors $\varphi_1, \ldots, \varphi_r \in \E_{\lambda_2}$ ($\E_{\lambda_n}$), coefficients $a_i \geq 0$, and $c > 0$ such that for all $i=1, \ldots, s$
    \begin{equation}\label{eq:orbit-iso}
      c \cdot \abs{\Om_i} = \sum_{j=1}^r a_j \sum_{uv \in \Om_i}(\varphi_j(u)-\varphi_j(v))^2 = \sum_{j=1}^r a_j \ell_\Psi(\varphi_j)_i.
    \end{equation}
\end{namedtheorem}
We prove this by showing that we only need to consider weights $w$ that are invariant under the group action $\Psi$, and then we can apply an identical subdifferential argument as in the previous section. By applying symmetry reduction, we only need to find a collection of eigenvectors satisfying $s$ equations instead of $\abs{E}$ equations. When $\Psi$ is large, $s$ can be a lot smaller than $\abs{E}$.
\begin{figure}[h]
        \centering
        \begin{tikzpicture}[vertex/.style={circle, draw, fill=black, inner sep=1.3pt}, scale=0.75]
        \def\r{2}
        \def\rlabel{2.42}
        \def\sep{3.9}

        \foreach \k in {0,...,20} {
            \pgfmathsetmacro{\angle}{360*\k/21}
            \node[vertex] (L\k) at ({-\sep + \r*cos(\angle)}, {\r*sin(\angle)}) {};
            \node[font=\tiny] at ({-\sep + \rlabel*cos(\angle)}, {\rlabel*sin(\angle)}) {$\k$};
        }
        \foreach \i in {0,...,20} {
            \pgfmathtruncatemacro{\j}{mod(\i+1,21)}
            \draw[blue, thick] (L\i) -- (L\j);
        }
        \foreach \i in {0,...,20} {
            \pgfmathtruncatemacro{\j}{mod(\i+6,21)}
            \draw[orange, thick] (L\i) -- (L\j);
        }

        \foreach \k in {0,...,20} {
            \pgfmathsetmacro{\angle}{360*\k/21}
            \pgfmathtruncatemacro{\newlabel}{mod(\k-7+21,21)}
            \node[vertex] (R\k) at ({\sep + \r*cos(\angle)}, {\r*sin(\angle)}) {};
            \node[font=\tiny] at ({\sep + \rlabel*cos(\angle)}, {\rlabel*sin(\angle)}) {$\newlabel$};
        }
        \foreach \i in {0,...,20} {
            \pgfmathtruncatemacro{\j}{mod(\i+1,21)}
            \draw[blue, thick] (R\i) -- (R\j);
        }
        \foreach \i in {0,...,20} {
            \pgfmathtruncatemacro{\j}{mod(\i+6,21)}
            \draw[orange, thick] (R\i) -- (R\j);
        }

        \draw[->, >=stealth, thick]
            (-1.1, 0)
            to[bend left=25]
            node[above, midway] {$+7$}
            (1.1, 0);

        \end{tikzpicture}
        \caption{The map $i \mapsto i+7 \pmod{21}$ is a graph automorphism of $G = \Cay(\Z_{21}, \set{1,6})$. Edge orbits are labeled by color.}\label{fig:automorphism}
    \end{figure}
We introduce the notion of an \textit{orbit-isometric} spectral embedding as the symmetry reduced analogue of edge-isometric embeddings.
\begin{definition}\label{def:orbit-iso}
    We call a spectral embedding $\mathcal{P}$ \textit{orbit-isometric} with respect to $\Psi$ if there exists some $c > 0$ such that 
    \begin{equation}\label{eq:orbit-isometry}
        \frac{1}{\abs{\Om_i}}\sum_{uv \in \Om_i} \|p_u - p_v \|^2 = c
    \end{equation} 
    for all edge orbits $\Om_1, \ldots, \Om_s$ of $\Psi$. Equivalently, a spectral embedding $\mathcal{P}$ is orbit-isometric if the average squared length of an edge in each orbit is constant.
\end{definition}

Instead of balancing edge-energy of eigenvectors, we just need the average energy per edge in each orbit to be constant. This leads to the symmetry reduced version of Theorem~\ref{thm:edge-iso}.

\begin{namedtheorem}{\ref{thm:orbit-iso}}
    A graph $G$ with a group of automorphisms $\Psi \leq \Aut(G)$ is lower (upper) conformally rigid if and only if there exists an orbit-isometric embedding $\mathcal{P}$ on $\E_{\lambda_2}$ ($\E_{\lambda_n}$).
\end{namedtheorem}

\begin{rexample}[continued]\label{rex:2}
    By construction, any element $k \in \Z_{21}$ acts on $G$ by graph automorphisms through addition: $i \mapsto i+k$. Figure~\ref{fig:automorphism} illustrates this for $k=7$. Under the action of $\Z_{21}$, we have two edge orbits of size 21 given by the edge type $s =1,6$. Explicitly, $\Om_1 = \set{(i, i+1) : 0 \leq i \leq 20}$ (colored in blue) and $\Om_6 = \set{(i, i+6) : 0 \leq i \leq 20}$ (colored in orange). If we denote $\Om = (\abs{\Om_1}, \abs{\Om_6})$ we need to find eigenvectors $\varphi_1, \ldots \varphi_r \in \E_{\lambda_2}$ satisfying 
    \[\sum_{i=1}^r a_i \ell_{\Z_{21}}(\varphi_i) = c \cdot \Om = c \cdot (21,21)\]
    for some $ c \in \R$. Instead of finding a collection of eigenvectors satisfying 42 constraints, we now just have to satisfy 2 constraints. Here, the single eigenvector $\varphi_1 = \paren{\cos \paren{2\pi j/7}}_{j=0}^{20}$ satisfies~\eqref{eq:orbit-iso} with $c=\lambda_2/2$.
    We illustrate the length of each edge in the corresponding spectral embedding $P = \begin{pmatrix}
        \varphi_1
    \end{pmatrix}$ on $\E_{\lambda_2}$ in Figure~\ref{fig:orbit-iso}. Even though the length of edges may vary in this embedding, the total squared length in each orbit is the same. Another way to see this is that this 1-dimensional embedding is just the projection of the heptagon embedding in Figure~\ref{subfig:lambda2} onto the $x$-axis. Since each edge in the heptagon is covered by 3 edges in $\Om_1$ and 3 edges in $\Om_6$, both orbits have the same total energy. One can verify with the same logic that $\begin{pmatrix}
        \varphi_1'
    \end{pmatrix}$ gives an orbit-isometric embedding on $\E_{\lambda_n}$. Again, it is still not obvious how to find an orbit-isometric embedding in general, but it is clearly easier to find than an edge-isometric embedding. 
\begin{figure}
    \centering
    \begin{tikzpicture}[scale=1.0]

    \def\vsep{0.30}
    \def\xO{1.2}
    \def\xT{5.2}

    \def\la{0.3765}  
    \def\lb{0.8460}  
    \def\lc{0.6784}  
    \def\ld{0.0000}  

    \def\sc{3}

    \node[above] at (\xO, 21*\vsep + 0.1) {$\Om_1$};
    \foreach \idx/\a/\b/\hl in {
        0/0/1/\la,
        1/7/8/\la,
        2/14/15/\la,
        3/1/2/\lb,
        4/8/9/\lb,
        5/15/16/\lb,
        6/2/3/\lc,
        7/9/10/\lc,
        8/16/17/\lc,
        9/3/4/\ld,
        10/10/11/\ld,
        11/17/18/\ld,
        12/4/5/\lc,
        13/11/12/\lc,
        14/18/19/\lc,
        15/5/6/\lb,
        16/12/13/\lb,
        17/19/20/\lb,
        18/20/0/\la,
        19/6/7/\la,
        20/13/14/\la
    } {
        \pgfmathsetmacro{\y}{(20 - \idx)*\vsep}
        \pgfmathsetmacro{\hls}{\hl*\sc/2}
        \draw[blue, thick] ({\xO - \hls}, \y) -- ({\xO + \hls}, \y);
        \fill[black] ({\xO - \hls}, \y) circle (1.2pt);
        \fill[black] ({\xO + \hls}, \y) circle (1.2pt);
        \node[left, font=\tiny] at ({\xO - \hls - 0.05}, \y) {$\a$};
        \node[right, font=\tiny] at ({\xO + \hls + 0.05}, \y) {$\b$};
    }

    \node[above] at (\xT, 21*\vsep + 0.1) {$\Om_6$};
    \foreach \idx/\a/\b/\hl in {
        0/0/6/\la,
        1/7/13/\la,
        2/14/20/\la,
        3/6/12/\lb,
        4/13/19/\lb,
        5/20/5/\lb,
        6/12/18/\lc,
        7/19/4/\lc,
        8/5/11/\lc,
        9/18/3/\ld,
        10/4/10/\ld,
        11/11/17/\ld,
        12/3/9/\lc,
        13/10/16/\lc,
        14/17/2/\lc,
        15/9/15/\lb,
        16/16/1/\lb,
        17/2/8/\lb,
        18/15/0/\la,
        19/1/7/\la,
        20/8/14/\la
    } {
        \pgfmathsetmacro{\y}{(20 - \idx)*\vsep}
        \pgfmathsetmacro{\hls}{\hl*\sc/2}
        \draw[orange, thick] ({\xT - \hls}, \y) -- ({\xT + \hls}, \y);
        \fill[black] ({\xT - \hls}, \y) circle (1.2pt);
        \fill[black] ({\xT + \hls}, \y) circle (1.2pt);
        \node[left, font=\tiny] at ({\xT - \hls - 0.05}, \y) {$\a$};
        \node[right, font=\tiny] at ({\xT + \hls + 0.05}, \y) {$\b$};
    }

    \end{tikzpicture}
    \caption{Edge lengths of $\Cay(\Z_{21}, \set{1,6})$ under the orbit-isometric embedding given by $\begin{pmatrix}\varphi_1\end{pmatrix}$.}
    \label{fig:orbit-iso}
\end{figure}
\end{rexample}

\subsection{\S~\ref{sec:rep-theory} Representation Theory}
The edge-isometric condition in~\eqref{eq:edge-iso} and the orbit-isometric condition in~\eqref{eq:orbit-iso} both require finding a collection of eigenvectors $\varphi_1,  \ldots, \varphi_r$ in the appropriate eigenspace whose weighted total energy is constant across each edge or orbit. One main difficulty --- apart from actually finding the correct eigenvectors --- is that we do not know a priori the number $r$ of eigenvectors we might need. Symmetry reduction expectedly reduces the complexity of this search. In our example, we needed $r=2$ eigenvectors to satisfy the $\abs{E} = 42$ edge-constraints on $G$. After applying symmetry reduction, we only needed $r=1$ eigenvector to satisfy the $s=2$ edge orbit constraints.

If we somehow knew that we could find a solution with $r=1$, the search for a certificate would be considerably simpler: rather than searching for an unspecified number of eigenvectors, we could just solve a system of quadratic equations on $\E_{\lambda_2}$ ($\E_{\lambda_n}$). In \S~\ref{sec:rep-theory}, we apply techniques from representation theory to show that under certain conditions on $G$ and $\Psi$, we can guarantee such a solution for an orbit-isometric embedding.

\begin{namedtheorem}{\ref{thm:vertex-transitive}}
    If $G$ is vertex-transitive with respect to $\Psi \leq \Aut(G)$, then $G$ is lower (upper) conformally rigid if and only if there exists one eigenvector $\varphi \in \E_{\lambda_2} (\E_{\lambda_n})$ such that for all edge orbits $\Om_1, \ldots, \Om_s$,
    \[\abs{\Om_i} = \sum_{uv \in \Om_i} (\varphi(u) - \varphi(v))^2 = \ell_\Psi(\varphi)_i.\]
\end{namedtheorem}

\begin{rexample}[continued]
    $G$ is vertex-transitive with respect to $\Z_{21}$: looking at Figure~\ref{fig:automorphism}, it is clear any vertex can be rotated to any other vertex. Rephrasing the previous theorem in another way, this means that $G$ is lower conformally rigid if and only if $c \Om \in \set{\ell_{\Z_{21}}(\varphi) : \varphi \in \E_{\lambda_2}}$. Thus, we are justified in only checking the orbit-energy vectors of individual eigenvectors to certify conformal rigidity. As we saw in the previous example, $\ell_{\Z_{21}}(\varphi_1) = (21,21) = \Om$.
\end{rexample}

This theorem follows from a more general, representation-theoretic result that depends on the decomposition of $\E_{\lambda}$ into real irreducible representations. In \S~\ref{subsec:vertex-transitive}, we justify why the more general result typically holds for highly symmetric graphs that are not necessarily vertex-transitive. This resolves the question whether the necessary condition for conformal rigidity given in Theorem 2.3 in~\cite{steinerberger} is also sufficient --- it is --- and certifies the conformal rigidity of the sporadic graphs in~\cite{gouveia}. 

\subsection{\S~\ref{sec:polyhedral-cone} Symmetry Adapted Bases and Block-Diagonalization} For a general vertex-transitive graph, finding the certifying eigenvector $\varphi$ guaranteed in the previous section requires solving a system of quadratic equations on $\E_\lambda$. When $G$ is an abelian Cayley graph, the set of orbit-energy vectors is completely determined by a small number of eigenvectors. 
\begin{namedtheorem}{\ref{thm:abelian-cayley}}
    Let $\Gamma$ be an abelian group and $G = \Cay(\Gamma, S)$ be a Cayley graph. The set $\ell_\Psi(\E_{\lambda}) = \set{\ell_\Psi(\varphi) : \varphi \in \E_\lambda}$
    is a polyhedral cone.
\end{namedtheorem}

The above statement again follows from a more general representation-theoretic result, Theorem~\ref{thm:polyhedral-cone}, which depends on the decomposition of $\E_\lambda$ into real irreducibles. We remark that a version of Theorem~\ref{thm:abelian-cayley} is proven in~\cite[Theorem 5.4]{gouveia} for complex eigenvectors.

\begin{rexample}[continued]
    Since $\Cay(\Z_{21}, \set{1,6})$ is an abelian Cayley graph, the preceding theorem applies. Since $\E_{\lambda_2}$ is only comprised of one irreducible real representation, $\ell_{\Z_{21}}(\E_{\lambda_2})$ is the ray generated by $\ell_{\Z_{21}}(\varphi_1)$. We will justify this fully in \S~\ref{sec:polyhedral-cone}. This means that the orbit-energy of any vector in $\E_{\lambda_2}$ is a scalar multiple of $\ell_{\Z_{21}}(\varphi_1)$, so any choice of eigenvector gives a 1-dimensional orbit-isometric embedding on $\E_{\lambda_2}$. Similarly, $\ell_{\Z_{21}}(\E_{\lambda_n})$ is also a ray generated by $\ell_{\Z_{21}}(\varphi_1')$.
\end{rexample}

\begin{figure}[ht]
\centering
\begin{tikzpicture}[
    scale=0.85,
    vertex/.style={circle, draw, fill=black, inner sep=1.3pt},
]
\def\r{2}
\def\rlabel{2.35}

\foreach \k in {0,...,11} {
    \pgfmathsetmacro{\angle}{30*\k}
    \node[vertex] (V\k) at ({\r*cos(\angle)}, {\r*sin(\angle)}) {};
    \node at ({\rlabel*cos(\angle)}, {\rlabel*sin(\angle)}) {$\k$};
}

\foreach \i in {0,...,11} {
    \pgfmathtruncatemacro{\j}{mod(\i+2,12)}
    \draw[thick, blue] (V\i) -- (V\j);
}

\foreach \i in {0,...,11} {
    \pgfmathtruncatemacro{\j}{mod(\i+3,12)}
    \draw[thick, orange] (V\i) -- (V\j);
}

\end{tikzpicture}
\caption{$G = \Cay(\mathbb{Z}_{12}, \{2,3\})$ with color-coded edge orbits.}\label{fig:running2}
\end{figure}

Our running example is a bit too simple to see how to use Theorem~\ref{thm:abelian-cayley} since any choice of eigenvector could certify lower and upper conformal rigidity. We now go over a slightly more complicated example.

\begin{example}\label{ex:running2}
    Let $G$ be the lower conformally rigid circulant $\Cay(\Z_{12}, \set{2,3})$ (\textsc{HoG 33319}) shown in Figure~\ref{fig:running2}. This is vertex-transitive with respect to $\Z_{12}$ with edge orbits $\Om_2$ and $\Om_3$ corresponding to the two generators; both contain 12 edges. One can check $\lambda_2 = 3$ with multiplicity 6 with a basis given by 
    \[\varphi^k_1 = \paren{\cos \paren{\frac{\pi jk}{6}}}_{j=0}^{11}, \  \varphi^k_2 = \paren{\sin \paren{\frac{\pi jk}{6}}}_{j=0}^{11}\]
    for $k = 1,4,5$. Since $G$ is an abelian Cayley graph, we know that the set of orbit-energy distributions
    $\ell_{\Z_{12}}(\E_{\lambda_2}) =\left\{\ell_{\Z_{12}}(\varphi) : \varphi \in \E_{\lambda_2} \right\} \subseteq \R^2$ is a convex polyhedral cone. We will later show that this cone has extreme rays generated by 
    \[\ell_{\Z_{12}}(\varphi_1^1) = (6,12), \ \ell_{\Z_{12}}(\varphi_1^4) = (18,0), \  \ell_{\Z_{12}}(\varphi_1^5) =  (6,12).\]
    In particular, any orbit-energy distribution must be a conical combination
    \[a_1\ell_{\Z_{12}}(\varphi_1^1) + a_2\ell_{\Z_{12}}(\varphi_1^4) + a_3 \ell_{\Z_{12}}(\varphi_1^5) = a_1\cdot (6,12) + a_2 \cdot(18,0) + a_3\cdot (6,12)\]
    where the coefficients $a_i \geq 0$ are all nonnegative. Moreover, representation theory will tell us that the $\varphi_1^k$ diagonalize $\ell_{\Z_{12}}$, that is,
    \[ \ell_{\Z_{12}}(a_1\varphi_1^1+a_2\varphi_1^4+a_3\varphi_1^5) = a_1^2\ell_{\Z_{12}}(\varphi_1^1) + a_2^2\ell_{\Z_{12}}(\varphi_1^4) + a_3^2 \ell_{\Z_{12}}(\varphi_1^5).\]
    \begin{table}[h]
    \centering
    \renewcommand{\arraystretch}{1.4}
    \begin{tabular}{p{0.24\textwidth}|p{0.34\textwidth}|p{0.34\textwidth}}
    \toprule
    \textbf{Result} & \textbf{Unsymmetrized} & \textbf{Symmetrized} \\
    \midrule
    Theorems~\ref{thm:cr-subgradient}, \ref{thm:sym-subgradient}
    & $c\one \in \Conv(\ell(\E_\lambda))$
    & $c\Om \in \Conv(\ell_\Psi(\E_\lambda))$ \\
    \midrule
    Theorems~\ref{thm:edge-iso}, \ref{thm:orbit-iso}
    & Edge-isometric embedding on $\E_\lambda$
    & Orbit-isometric embedding on $\E_\lambda$ \\
    \midrule
    & Satisfies $|E|$ edge constraints
    & Satisfies $s \ll |E|$ orbit constraints \\
    \midrule
    Theorem~\ref{thm:vertex-transitive}
    & Typically need $> 1$ eigenvector to certify
    & Vertex-transitive $\Rightarrow$ 1 eigenvector needed for certificate \\
    \midrule
    Theorem~\ref{thm:abelian-cayley}
    & $\ell(\mathcal{E}_\lambda)$ typically not convex
    & Abelian Cayley $\Rightarrow$ $\ell_\Psi(\mathcal{E}_\lambda)$ is a polyhedral cone \\
    \bottomrule
    \end{tabular}
    \caption{Comparison of unsymmetrized and symmetrized  methods for certifying conformal rigidity for $\lambda= \lambda_2, \lambda_n$.}
    \label{tab:comparison}
    \end{table}

    The payoff is immediate: certifying lower conformal rigidity reduces to the polyhedral feasibility problem of finding $a_i \geq 0$ and $c > 0$ such that
    \begin{align*}
        (a_1^2+ a_3^2) \cdot (6,12) + a_2^2 \cdot (18,0) = c \cdot (12,12).
    \end{align*}
    This system is underconstrained and easy to solve by hand. Setting $c=1$, $a_1^2 = 1$, $a_2^2 = \frac{1}{3}$, $a_3^2 = 0$ gives
    \[ \ell_{\Z_{12}}\paren{\varphi_1^1 + \frac{1}{\sqrt{3}}\varphi_1^4} = \ell_{\Z_{12}}(\varphi_1^1) + \frac{1}{3} \ell_{\Z_{12}}(\varphi_1^4) = (6,12) + \frac{1}{3}(18,0) = (12,12) = \Om,\]
    which gives an orbit-isometric embedding of $G$ on $\E_{\lambda_2}$.
\end{example}

We summarize the full comparison of certifying conformal rigidity in the unsymmetrized and symmetrized cases in Table~\ref{tab:comparison}.

\subsection{Towards Extremal Graph Theory}\label{subsec:extremality} We close this section with a heuristic explanation of why conformally rigid graphs select for graphs with special properties.

\begin{wrapfigure}{R}{0.35\textwidth}
  \centering
  \begin{subfigure}[b]{\linewidth}
    \renewcommand\thesubfigure{(1)}
    \centering
    \begin{tikzpicture}[
        vertex/.style={circle, draw, fill=blue!75, inner sep=1.3pt},
        scale=1.44,
      ]
      \node[vertex] (A0)  at ( 0.6008, -0.1610) {};
      \node[vertex] (A1)  at (-0.6008,  0.1610) {};
      \node[vertex] (A2)  at (-0.4398, -0.4398) {};
      \node[vertex] (A3)  at (-0.1610,  0.6008) {};
      \node[vertex] (A4)  at (-0.6008, -0.1610) {};
      \node[vertex] (A5)  at ( 1.0392, -0.6000) {};
      \node[vertex] (A6)  at ( 0.6000,  1.0392) {};
      \node[vertex] (A7)  at ( 0.0000, -1.2000) {};
      \node[vertex] (A8)  at ( 1.2000,  0.0000) {};
      \node[vertex] (A9)  at (-0.4398,  0.4398) {};
      \node[vertex] (A10) at (-1.0392, -0.6000) {};
      \node[vertex] (A11) at ( 0.1610,  0.6008) {};
      \node[vertex] (A12) at ( 0.6000, -1.0392) {};
      \node[vertex] (A13) at (-0.6000,  1.0392) {};
      \node[vertex] (A14) at (-0.1610, -0.6008) {};
      \node[vertex] (A15) at ( 1.0392,  0.6000) {};
      \node[vertex] (A16) at ( 0.6008,  0.1610) {};
      \node[vertex] (A17) at (-1.0392,  0.6000) {};
      \node[vertex] (A18) at (-0.6000, -1.0392) {};
      \node[vertex] (A19) at ( 0.4398,  0.4398) {};
      \node[vertex] (A20) at ( 0.1610, -0.6008) {};
      \node[vertex] (A21) at ( 0.4398, -0.4398) {};
      \node[vertex] (A22) at (-1.2000,  0.0000) {};
      \node[vertex] (A23) at ( 0.0000,  1.2000) {};
      \begin{pgfonlayer}{background}
      \draw[thick] (A0)--(A1) (A0)--(A2) (A0)--(A3) (A0)--(A4) (A0)--(A5) (A0)--(A6) (A0)--(A7) (A0)--(A8) (A0)--(A9);
      \draw[thick] (A1)--(A16) (A1)--(A17) (A1)--(A18) (A1)--(A19) (A1)--(A20) (A1)--(A21) (A1)--(A22) (A1)--(A23);
      \draw[thick] (A2)--(A3) (A2)--(A10) (A2)--(A11) (A2)--(A12) (A2)--(A16) (A2)--(A17) (A2)--(A18) (A2)--(A19);
      \draw[thick] (A3)--(A13) (A3)--(A14) (A3)--(A15) (A3)--(A20) (A3)--(A21) (A3)--(A22) (A3)--(A23);
      \draw[thick] (A4)--(A7) (A4)--(A10) (A4)--(A11) (A4)--(A13) (A4)--(A16) (A4)--(A19) (A4)--(A21) (A4)--(A22);
      \draw[thick] (A5)--(A8) (A5)--(A10) (A5)--(A12) (A5)--(A14) (A5)--(A17) (A5)--(A19) (A5)--(A21) (A5)--(A23);
      \draw[thick] (A6)--(A9) (A6)--(A11) (A6)--(A12) (A6)--(A15) (A6)--(A18) (A6)--(A19) (A6)--(A22) (A6)--(A23);
      \draw[thick] (A7)--(A12) (A7)--(A14) (A7)--(A15) (A7)--(A17) (A7)--(A18) (A7)--(A20) (A7)--(A23);
      \draw[thick] (A8)--(A11) (A8)--(A13) (A8)--(A15) (A8)--(A16) (A8)--(A18) (A8)--(A20) (A8)--(A22);
      \draw[thick] (A9)--(A10) (A9)--(A13) (A9)--(A14) (A9)--(A16) (A9)--(A17) (A9)--(A20) (A9)--(A21);
      \draw[thick] (A10)--(A15) (A10)--(A18) (A10)--(A20) (A10)--(A22) (A10)--(A23);
      \draw[thick] (A11)--(A14) (A11)--(A17) (A11)--(A20) (A11)--(A21) (A11)--(A23);
      \draw[thick] (A12)--(A13) (A12)--(A16) (A12)--(A20) (A12)--(A21) (A12)--(A22);
      \draw[thick] (A13)--(A17) (A13)--(A18) (A13)--(A19) (A13)--(A23);
      \draw[thick] (A14)--(A16) (A14)--(A18) (A14)--(A19) (A14)--(A22);
      \draw[thick] (A15)--(A16) (A15)--(A17) (A15)--(A19) (A15)--(A21);
      \draw[thick] (A16)--(A23);
      \draw[thick] (A17)--(A22);
      \draw[thick] (A18)--(A21);
      \draw[thick] (A19)--(A20);
      \end{pgfonlayer}
    \end{tikzpicture}
    \subcaption{\textsc{HoG 52605} (lower)}
  \end{subfigure}

  \centering
  \begin{subfigure}[b]{\linewidth}
    \renewcommand\thesubfigure{(2)}
    \centering
    \begin{tikzpicture}[
        vertex/.style={circle, draw, fill=blue!75, inner sep=1.3pt},
        scale=0.40,
      ]
      \node[vertex] (D0) at ( 0.0000, -1.0000) {};
      \node[vertex] (D1) at ( 0.0000, -2.0000) {};
      \node[vertex] (D2) at ( 0.8660, -0.5000) {};
      \node[vertex] (D3) at (-0.8660, -0.5000) {};
      \node[vertex] (D4) at ( 0.0000,  3.0000) {};
      \node[vertex] (D5) at ( 3.4641, -3.0000) {};
      \node[vertex] (D6) at (-3.4641, -3.0000) {};
      \begin{pgfonlayer}{background}
      \draw[thick] (D0)--(D1) (D0)--(D2) (D0)--(D3);
      \draw[thick] (D1) -- (D5) -- (D6) -- (D1);
      \draw[thick] (D2) -- (D4) -- (D5) -- (D2);
      \draw[thick] (D3) -- (D4) -- (D6) -- (D3);
      \end{pgfonlayer}
    \end{tikzpicture}
    \subcaption{\textsc{HoG 1306} (upper)}
  \end{subfigure}
\end{wrapfigure}
\noindent Of the roughly 7000 graphs in the House of Graphs (HoG) database~\cite{hog} that are either lower or upper conformally rigid, many expectedly satisfy nice regularity properties, such as edge- and vertex-transitivity and the weaker notion of vertex- and edge-girth regularity~\cite{goedgebeur,jajcay}. Perhaps more surprisingly, many of these graphs demonstrate some sort of combinatorial extremality: these include extremal Ramsey graphs, $k$-critical graphs, $k$-degenerate graphs, Tur\'an type graphs, and cages. We display six such graphs throughout this section.

The first graph (\textsc{HoG 52605}) is lower conformally rigid and is an extremal construction providing the best known lower bound for the book Ramsey number $R(B_2, B_{10})$~\cite{lidicky}. The second graph (\textsc{HoG 1306}) is the Mycielskian $M(K_3)$, which is 4-critical: its chromatic number is $4$ but removing any edge (or vertex) decreases the chromatic number. This follows from the fact that the Mycielskian of a $k$-critical graph is $(k+1)$-critical~\cite[Exercise 5.2.9]{west}. The third graph (\textsc{HoG 1811}) is both lower and upper conformally rigid; it is a maximal triangle-free graph that occurs as a triangle Ramsey number $R(K_3, G)$ for a graph $G$ of order 9~\cite{brandt} and has the remarkable property that each edge is contained in 33 pairwise distinct shortest cycles~\cite{goedgebeur}. The fourth graph, $K_2 + 10K_1$ (\textsc{HoG 32866}), is upper conformally rigid and is a maximal 2-degenerate graph that is also integral~\cite{bickle,zwierzynski}. The fifth graph is the Tur\'an graph $T(13,3) = K_{4,4,5}$ (\textsc{HoG 51333}), which maximizes the number of edges and triangles in a $K_4$ free graph; it is upper conformally rigid. The last graph (\textsc{HoG 34214}) is both lower and upper conformally rigid. It is a cubic integral graph~\cite{bussemaker}, attains the maximum algebraic connectivity for its diameter~\cite{exoo}, and is a (3; 6, 5)-girth-diameter cage~\cite{cambie}.

We can informally explain why conformal rigidity detects extremal graphs through the heuristic that \textit{asymmetry is inefficient}. This heuristic manifests in the dual notions of \textit{saturation} and \textit{criticality}.

A saturation extremal problem asks for the densest graph that avoids some local forbidden structure. A classic example is Tur\'an's Theorem, which states that the complete $r$-partite graph $T(n,r)$ maximizes the number of edges in a graph with $n$ 
\begin{wrapfigure}{L}{0.35\textwidth}
  \centering
  \begin{subfigure}[b]{\linewidth}
    \renewcommand\thesubfigure{(3)}
    \centering
    \begin{tikzpicture}[
        vertex/.style={circle, draw, fill=blue!75, inner sep=1.3pt},
        scale=1.71,
      ]
      \node[vertex] (C0)  at (-0.4858, -0.3687) {};
      \node[vertex] (C1)  at (-0.5008,  0.3481) {};
      \node[vertex] (C2)  at (-0.2135,  0.9412) {};
      \node[vertex] (C3)  at (-0.8541,  0.5909) {};
      \node[vertex] (C4)  at (-0.9560, -0.1306) {};
      \node[vertex] (C5)  at (-0.3805, -0.8870) {};
      \node[vertex] (C6)  at ( 0.3436, -0.9801) {};
      \node[vertex] (C7)  at ( 0.8502, -0.4562) {};
      \node[vertex] (C8)  at ( 0.2005, -0.5760) {};
      \node[vertex] (C9)  at ( 0.8292,  0.4939) {};
      \node[vertex] (C10) at ( 0.1763,  0.5839) {};
      \node[vertex] (C11) at ( 0.2981,  0.9949) {};
      \node[vertex] (C12) at (-0.4196,  0.8688) {};
      \node[vertex] (C13) at ( 0.6098,  0.0127) {};
      \node[vertex] (C14) at ( 0.6966,  0.6676) {};
      \node[vertex] (C15) at ( 0.7260, -0.6359) {};
      \node[vertex] (C16) at ( 1.0383,  0.0239) {};
      \node[vertex] (C17) at (-0.8259, -0.6297) {};
      \node[vertex] (C18) at (-0.1712, -0.9495) {};
      \node[vertex] (C19) at (-0.9611,  0.0878) {};
      \begin{pgfonlayer}{background}
      \draw[thick] (C0)--(C1) (C0)--(C2) (C0)--(C3) (C0)--(C4) (C0)--(C5) (C0)--(C6) (C0)--(C7) (C0)--(C8);
      \draw[thick] (C1)--(C9) (C1)--(C10) (C1)--(C11) (C1)--(C12) (C1)--(C17) (C1)--(C18) (C1)--(C19);
      \draw[thick] (C2)--(C9) (C2)--(C10) (C2)--(C11) (C2)--(C12) (C2)--(C17) (C2)--(C18) (C2)--(C19);
      \draw[thick] (C3)--(C9) (C3)--(C10) (C3)--(C11) (C3)--(C12) (C3)--(C17) (C3)--(C18) (C3)--(C19);
      \draw[thick] (C4)--(C9) (C4)--(C10) (C4)--(C11) (C4)--(C12) (C4)--(C17) (C4)--(C18) (C4)--(C19);
      \draw[thick] (C5)--(C13) (C5)--(C14) (C5)--(C15) (C5)--(C16) (C5)--(C17) (C5)--(C18) (C5)--(C19);
      \draw[thick] (C6)--(C13) (C6)--(C14) (C6)--(C15) (C6)--(C16) (C6)--(C17) (C6)--(C18) (C6)--(C19);
      \draw[thick] (C7)--(C13) (C7)--(C14) (C7)--(C15) (C7)--(C16) (C7)--(C17) (C7)--(C18) (C7)--(C19);
      \draw[thick] (C8)--(C13) (C8)--(C14) (C8)--(C15) (C8)--(C16) (C8)--(C17) (C8)--(C18) (C8)--(C19);
      \draw[thick] (C9)--(C13) (C9)--(C14) (C9)--(C15) (C9)--(C16);
      \draw[thick] (C10)--(C13) (C10)--(C14) (C10)--(C15) (C10)--(C16);
      \draw[thick] (C11)--(C13) (C11)--(C14) (C11)--(C15) (C11)--(C16);
      \draw[thick] (C12)--(C13) (C12)--(C14) (C12)--(C15) (C12)--(C16);
      \end{pgfonlayer}
    \end{tikzpicture}
    \subcaption{\textsc{HoG 1811} (both)}
  \end{subfigure}
  \centering
  \begin{subfigure}[b]{\linewidth}
    \renewcommand\thesubfigure{(4)}
    \centering
    \begin{tikzpicture}[
        vertex/.style={circle, draw, fill=blue!75, inner sep=1.3pt},
        scale=1.62,
      ]
      \node[vertex] (E0)  at ( 0.8938, -0.4719) {};
      \node[vertex] (E1)  at ( 1.0000,  0.1438) {};
      \node[vertex] (E2)  at ( 0.7245,  0.7034) {};
      \node[vertex] (E3)  at ( 0.1731,  0.9951) {};
      \node[vertex] (E4)  at (-0.4452,  0.9073) {};
      \node[vertex] (E5)  at (-0.8937,  0.4719) {};
      \node[vertex] (E6)  at (-0.9999, -0.1439) {};
      \node[vertex] (E7)  at (-0.7243, -0.7035) {};
      \node[vertex] (E8)  at (-0.1729, -0.9951) {};
      \node[vertex] (E9)  at ( 0.4452, -0.9073) {};
      \node[vertex] (E10) at (-0.0483,  0.0410) {};
      \node[vertex] (E11) at ( 0.0484, -0.0411) {};
      \begin{pgfonlayer}{background}
      \draw[thick] (E0)--(E10) (E0)--(E11);
      \draw[thick] (E1)--(E10) (E1)--(E11);
      \draw[thick] (E2)--(E10) (E2)--(E11);
      \draw[thick] (E3)--(E10) (E3)--(E11);
      \draw[thick] (E4)--(E10) (E4)--(E11);
      \draw[thick] (E5)--(E10) (E5)--(E11);
      \draw[thick] (E6)--(E10) (E6)--(E11);
      \draw[thick] (E7)--(E10) (E7)--(E11);
      \draw[thick] (E8)--(E10) (E8)--(E11);
      \draw[thick] (E9)--(E10) (E9)--(E11);
      \draw[thick] (E10)--(E11);
      \end{pgfonlayer}
    \end{tikzpicture}
    \subcaption{\textsc{HoG 32866} (upper)}
  \end{subfigure}
\end{wrapfigure}

\noindent vertices while containing no $r$-clique $K_r$. In this setting, if a graph $G$ contains no $K_r$ but has two regions of asymmetric density, we can add edges to the less dense region while still avoiding a $K_r$. Zykov symmetrization~\cite{zykov} (see~\cite[Theorem 1.2.4]{zhao} for a modern reference) --- one of the many proofs of Tur\'an's Theorem --- formalizes this by showing that if $G$ has two non-adjacent vertices $u,v$ with $\deg(u) > \deg(v)$, one can replace $v$ with a copy of $u$ (containing all the same neighbors) to increase the number of edges without increasing the clique number. This theme pervades many of the extremal examples in the House of Graphs database, including the six graphs displayed in this section. For example, maximal $k$-degenerate graphs maximize the number of edges while ensuring that every subgraph contains a vertex of degree at most $k$. Similarly, maximal Ramsey graphs give the largest integer $n = R(H, H') - 1$ such that the edges of $K_n$ can be two-colored without a monochromatic $H$ or $H'$; maximality forces every vertex to be at capacity when avoiding $H$ and $H'$.

\begin{wrapfigure}{R}{0.35\textwidth}
  \centering
  \begin{subfigure}[b]{\linewidth}
    \renewcommand\thesubfigure{(5)}
    \centering
    \begin{tikzpicture}[
        vertex/.style={circle, draw, fill=blue!75, inner sep=1.3pt},
        scale=1.98,
      ]
      \node[vertex] (B0)  at ( 0.7699, -0.6594) {};
      \node[vertex] (B1)  at ( 0.6594,  0.7699) {};
      \node[vertex] (B2)  at ( 0.0000,  0.0000) {};
      \node[vertex] (B3)  at (-0.7699,  0.6594) {};
      \node[vertex] (B4)  at (-0.6594, -0.7699) {};
      \node[vertex] (B5)  at ( 0.1505,  0.8170) {};
      \node[vertex] (B6)  at (-0.7825, -0.2765) {};
      \node[vertex] (B7)  at ( 0.2765, -0.7825) {};
      \node[vertex] (B8)  at ( 0.7825,  0.2765) {};
      \node[vertex] (B9)  at (-0.2765,  0.7825) {};
      \node[vertex] (B10) at (-0.8170,  0.1505) {};
      \node[vertex] (B11) at (-0.1505, -0.8170) {};
      \node[vertex] (B12) at ( 0.8170, -0.1505) {};
      \begin{pgfonlayer}{background}
      \draw[thick] (B0)--(B5) (B0)--(B10) (B0)--(B11) (B0)--(B12);
      \draw[thick] (B1)--(B5) (B1)--(B10) (B1)--(B11) (B1)--(B12);
      \draw[thick] (B2)--(B5) (B2)--(B10) (B2)--(B11) (B2)--(B12);
      \draw[thick] (B3)--(B5) (B3)--(B10) (B3)--(B11) (B3)--(B12);
      \draw[thick] (B4)--(B5) (B4)--(B10) (B4)--(B11) (B4)--(B12);
      \draw[thick] (B0)--(B6) (B0)--(B7) (B0)--(B8) (B0)--(B9);
      \draw[thick] (B1)--(B6) (B1)--(B7) (B1)--(B8) (B1)--(B9);
      \draw[thick] (B2)--(B6) (B2)--(B7) (B2)--(B8) (B2)--(B9);
      \draw[thick] (B3)--(B6) (B3)--(B7) (B3)--(B8) (B3)--(B9);
      \draw[thick] (B4)--(B6) (B4)--(B7) (B4)--(B8) (B4)--(B9);
      \draw[thick] (B5)--(B6) (B5)--(B7) (B5)--(B8) (B5)--(B9);
      \draw[thick] (B6)--(B10) (B6)--(B11) (B6)--(B12);
      \draw[thick] (B7)--(B10) (B7)--(B11) (B7)--(B12);
      \draw[thick] (B8)--(B10) (B8)--(B11) (B8)--(B12);
      \draw[thick] (B9)--(B10) (B9)--(B11) (B9)--(B12);
      \end{pgfonlayer}
    \end{tikzpicture}
    \subcaption{\textsc{HoG 51333} (upper)}
  \end{subfigure}
  \centering
  \begin{subfigure}[b]{\linewidth}
    \renewcommand\thesubfigure{(6)}
    \centering
    \begin{tikzpicture}[
        vertex/.style={circle, draw, fill=blue!75, inner sep=1.3pt},
        scale=1.62,
      ]
      \node[vertex] (F0)  at (-0.9744,  0.0000) {};
      \node[vertex] (F1)  at (-0.5847,  0.7795) {};
      \node[vertex] (F2)  at (-0.5847, -0.7795) {};
      \node[vertex] (F3)  at (-0.5847,  0.0000) {};
      \node[vertex] (F4)  at ( 0.5847, -0.7795) {};
      \node[vertex] (F5)  at ( 0.5847,  0.0000) {};
      \node[vertex] (F6)  at ( 0.5847,  0.7795) {};
      \node[vertex] (F7)  at ( 0.9744,  0.0000) {};
      \node[vertex] (F8)  at ( 0.1948, -0.9744) {};
      \node[vertex] (F9)  at ( 0.1948, -0.5847) {};
      \node[vertex] (F10) at ( 0.1948,  0.1948) {};
      \node[vertex] (F11) at ( 0.1948,  0.9744) {};
      \node[vertex] (F12) at ( 0.1948,  0.5847) {};
      \node[vertex] (F13) at ( 0.1948, -0.1948) {};
      \node[vertex] (F14) at (-0.1948, -0.9744) {};
      \node[vertex] (F15) at (-0.1948, -0.1948) {};
      \node[vertex] (F16) at (-0.1948,  0.1948) {};
      \node[vertex] (F17) at (-0.1948,  0.5847) {};
      \node[vertex] (F18) at (-0.1948,  0.9744) {};
      \node[vertex] (F19) at (-0.1948, -0.5847) {};
      \begin{pgfonlayer}{background}
      \draw[thick] (F0)--(F1) (F0)--(F2) (F0)--(F3);
      \draw[thick] (F1)--(F17) (F1)--(F18);
      \draw[thick] (F2)--(F14) (F2)--(F19);
      \draw[thick] (F3)--(F15) (F3)--(F16);
      \draw[thick] (F4)--(F7) (F4)--(F8) (F4)--(F9);
      \draw[thick] (F5)--(F7) (F5)--(F10) (F5)--(F13);
      \draw[thick] (F6)--(F7) (F6)--(F11) (F6)--(F12);
      \draw[thick] (F8)--(F14) (F8)--(F15);
      \draw[thick] (F9)--(F16) (F9)--(F17);
      \draw[thick] (F10)--(F18) (F10)--(F19);
      \draw[thick] (F11)--(F16) (F11)--(F18);
      \draw[thick] (F12)--(F15) (F12)--(F19);
      \draw[thick] (F13)--(F14) (F13)--(F17);
      \end{pgfonlayer}
    \end{tikzpicture}
    \subcaption{\textsc{HoG 34214} (both)}
  \end{subfigure}
\end{wrapfigure}

The dual to saturation is criticality, which is concerned with finding the sparsest graph that maintains a lower bound on some global property. The best example of this is $k$-criticality, which is a property where a graph $G$ has chromatic number $k$, and any proper subgraph of $G$ has chromatic number strictly less than $k$ --- every edge is essential to maintain chromatic number $k$. Similarly, minimal crossing number graphs are the smallest graphs attaining their crossing number. In general, the idea of criticality is that every piece of the graph is load-bearing, and asymmetry implies there are redundancies in some local region.

Lower and upper conformal rigidity can be viewed as spectral versions of saturation and criticality. The problem of maximizing $\lambda_2(w)$ subject to $\sum_{uv \in E} w_{uv} = \abs{E}$ is equivalent to the relaxed version where we require $\sum_{uv \in E} w_{uv} \leq \abs{E}$. This echoes the above description of saturation: maximize connectivity subject to an upper bound on the total weight allowed. This leads to the most uniform weight distribution factoring in the structure of $G$ as hinted at in Example~\ref{ex:barbell}. In the case where $G$ is lower conformally rigid, $w = \one$ is a maximizer, so the most uniform weight distribution is already reflected by the edge structure. Analogously, upper conformal rigidity can be seen as a criticality problem of minimizing $\lambda_n(w)$ subject to $\sum_{uv \in E} w_{uv} \geq \abs{E}$. We remark that the original inspiration for the definition of conformal rigidity came from the criticality problem of graph sparsification~\cite{babecki}. In this setting, the hypercube graph cannot be further sparsified.

\setcounter{secnumdepth}{2}
\section{Subdifferentials and Conformal Rigidity}\label{sec:subdifferential}

\subsection{Convex Optimization Preliminaries}
We can characterize conformal rigidity of a graph $G$ through subdifferential analysis, which is a standard technique in convex optimization. This will give us a geometric understanding of conformal rigidity as well as the tools to certify conformal rigidity of a graph computationally. First, we review basic results from convex optimization, referring to~\cite{hiriartCAMA,hiriart}.

Consider a convex function  $f: \R^n \to \R$. Any point $x \in \R^n$ has a \textit{subgradient} $g \in \R^n$ which satisfies the inequality $f(y) \geq f(x) + \langle g , y -x \rangle$ for all $y \in \R^n$. We can interpret a subgradient geometrically as a vector $g$ such that $(g, -1) \in \R^{n+1}$ is a normal vector to the epigraph of $f$ at $(x,f(x))$. The set of all subgradients of $f$ at $x$ is denoted by $\partial f(x)$ and called the \textit{subdifferential} at $x$.
The minimality of a point $x^*$ for a convex function $f$ can be certified by its subdifferential: consider the constrained optimization problem
\begin{align*}
    \min_{x \geq 0} \ &f(x)\\
    \text{s.t. } &Ax = b,
\end{align*}
where $f: \R^n \to \R$ is convex, $A \in \R^{m \times n}$, and $b \in \R^m$. Assuming there exists a strictly feasible point (i.e. $x \in \R^n$ such that $Ax = b$ and $x > 0$), Slater's condition holds and thus, strong duality holds. This means the Karush-Kuhn-Tucker (KKT) conditions are a necessary and sufficient check for optimality~\cite[Ch. VII, Theorem 2.2.5]{hiriartCAMA}. In our context, $x^*$ is optimal if and only if there exist multipliers $y \in \R^m$, $\lambda \in \R^n$ with $\lambda \geq 0$ such that
\begin{equation}\label{eq:kkt}
    0 \in \partial f(x^*) + A^Ty - \lambda 
\end{equation} 
and $\lambda_i x^*_i = 0$ for all $1 \leq i \leq n$.
We are primarily interested in functions $f: \R^n \to \R$ of the form $f(x) = \sup_{k \in K} f_k(x)$ where $f_k: \R^n \to \R$ is convex for each $k$ in some index set $K$. Any function $f$ of this form is convex, and under certain assumptions of our index set $K$, we can apply Danskin's theorem to obtain a nice description of $\partial f(x)$ in terms of the subdifferentials $\partial f_k (x)$. For a fixed $x \in \R^n$, we denote the \textit{active index set} at $x$ to be $K(x) = \set{k \in K : f_k(x) = f(x)}$. This is the set of indices $k$ where $f_k$ attains the supremum at $x$. 
\begin{theorem}[Danskin]\cite[Theorem D.4.4.2]{hiriart}\label{thm:danskin}
    With the notation above, assume $K$ is a compact metric space, and $k \mapsto f_k(x)$ is continuous for each $x \in \R^n$. Then the active index set $K(x)$ is nonempty, and 
    \[\partial f(x) = \Conv \paren{\bigcup_{k \in K(x)} \partial f_k(x)}. \]
    In other words, the subdifferential $\partial f(x)$ is the convex hull of the subdifferentials $\partial f_k(x)$ for all active indices $k$.
\end{theorem}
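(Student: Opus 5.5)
The statement is Danskin's theorem for $f=\sup_{k\in K} f_k$ with $K$ compact and $k\mapsto f_k(x)$ continuous. I will prove the two inclusions separately, writing $C(x)=\Conv\bigl(\bigcup_{k\in K(x)}\partial f_k(x)\bigr)$.

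\textbf{Step 1: nonemptiness of $K(x)$.} Since $k\mapsto f_k(x)$ is continuous on the compact space $K$, the supremum is attained. Hence $K(x)\neq\emptyset$, and $f$ is finite everywhere.

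\textbf{Step 2: the inclusion $C(x)\subseteq\partial f(x)$.} Take $k\in K(x)$ and $g\in\partial f_k(x)$. For every $y$,
\[
f(y)\ge f_k(y)\ge f_k(x)+\langle g,y-x\rangle = f(x)+\langle g,y-x\rangle,
\]
so $g\in\partial f(x)$. Since $\partial f(x)$ is convex, the whole hull $C(x)$ lies in it.

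\textbf{Step 3: $C(x)$ is closed.} Rather than attack $\partial f(x)\subseteq C(x)$ directly, I would compare support functions, and for that I need $C(x)$ closed. First, $K(x)$ is a closed subset of the compact $K$, hence compact. Next, I claim the union $U=\bigcup_{k\in K(x)}\partial f_k(x)$ is compact.
\begin{itemize}
\item \emph{Boundedness.} Convex functions are locally Lipschitz. For equicontinuity, use that each $f_k\le f$ with $f$ convex and finite, and that $f_k(x)=f(x)$ for active $k$. Then for $g\in\partial f_k(x)$ and $\|u\|=1$,
\[
\langle g,u\rangle\le f_k(x+u)-f_k(x)\le f(x+u)-f(x),
\]
which is bounded uniformly over $u$ in the unit sphere.
\item \emph{Closedness.} Take $g_j\in\partial f_{k_j}(x)$ with $g_j\to g$ and (after passing to a subsequence) $k_j\to k$. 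I need $f_{k_j}(y)\to f_k(y)$ for each $y$, which is exactly the continuity hypothesis. Passing to the limit in $f_{k_j}(y)\ge f(x)+\langle g_j,y-x\rangle$ gives $g\in\partial f_k(x)$, and $k\in K(x)$ because $K(x)$ is closed.
\end{itemize}
In $\R^n$, the convex hull of a compact set is compact, so $C(x)$ is compact.

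\textbf{Step 4: the inclusion $\partial f(x)\subseteq C(x)$.} Both sets are compact and convex, so it suffices to show $f'(x;d)\le \max_{k\in K(x)} f_k'(x;d)$ for every direction $d$. Here $f'(x;d)=\sigma_{\partial f(x)}(d)$ and $\max_{k\in K(x)} f_k'(x;d)=\sigma_{C(x)}(d)$; the reverse inequality already follows from Step 2.

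To prove it, fix $d$ and $t_j\downarrow0$, and choose $k_j\in K(x+t_jd)$. By compactness, pass to $k_j\to k$. Then $k\in K(x)$, using continuity of $f$ (a finite convex function) and continuity in $k$. By convexity of $f_{k_j}$,
\[
\frac{f(x+t_jd)-f(x)}{t_j}\le\frac{f_{k_j}(x+t_jd)-f_{k_j}(x)}{t_j}\le f_{k_j}'(x+t_jd;d).
\]
It remains to bound $\limsup_j f_{k_j}'(x+t_jd;d)$ by $f_k'(x;d)$. This is the main obstacle: it needs upper semicontinuity of $(k,z)\mapsto f_k'(z;d)$.

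For this I would use the bound
\[
f_{k_j}'(x+t_jd;d)\le \frac{f_{k_j}(x+(t_j+s)d)-f_{k_j}(x+t_jd)}{s}
\]
for fixed $s>0$. Letting $j\to\infty$ should give $\bigl(f_k(x+sd)-f_k(x)\bigr)/s$, and then letting $s\downarrow0$ gives $f_k'(x;d)$. The step $j\to\infty$ requires joint continuity of $(k,z)\mapsto f_k(z)$. I obtain this from pointwise continuity in $k$ together with local equi-Lipschitzness of the $f_k$. The latter holds because the family is uniformly bounded above by $f$ and uniformly bounded below near $x$, the lower bound coming from continuity in $k$ on the compact $K$ at finitely many simplex vertices around $x$.
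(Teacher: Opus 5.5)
Your proof is correct. The paper gives no argument of its own here; it only cites Hiriart-Urruty--Lemar\'echal, so the natural comparison is with the standard textbook proof.

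\textbf{Where you agree with the standard proof.} Steps 1--3 are essentially that proof:
\begin{itemize}
\item $K(x)$ is nonempty;
\item the easy inclusion holds;
\item $\bigcup_{k\in K(x)}\partial f_k(x)$ is compact, with boundedness coming from $\partial f_k(x)\subseteq\partial f(x)$ for active $k$ and closedness from semicontinuity in $k$.
\end{itemize}
The compactness in the last point is exactly what justifies writing $\Conv$ rather than a closed convex hull.

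\textbf{Where you diverge.} In Step 4 you bound the difference quotient by the right derivative $f_{k_j}'(x+t_jd;d)$ at the \emph{moving} point $x+t_jd$. This forces you to prove joint continuity of $(k,z)\mapsto f_k(z)$ via local equi-Lipschitzness. That is valid under the paper's hypothesis of genuine continuity in $k$, which supplies the uniform local lower bound. One point already suffices for that bound: convexity gives $f_k(y)\ge 2f_k(z)-f(2z-y)$ for $y$ near any $z$, so you get the bound near $x+sd$ as well as near $x$; no simplex vertices are needed. Note also that your claim $k\in K(x)$ already relies on this joint continuity before you establish it.

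\textbf{How the standard argument avoids this.} It takes chords from the fixed base point. For each $k$, the map $t\mapsto\bigl(f_k(x+td)-f(x)\bigr)/t$ is nondecreasing, being a convex slope plus a term with nonpositive numerator. So for fixed $t>0$ and all $j$ with $t_j\le t$,
\[
\frac{f_{k_j}(x+td)-f(x)}{t}\;\ge\;\frac{f(x+t_jd)-f(x)}{t_j}\;\ge\; f'(x;d).
\]
Letting $j\to\infty$ uses only upper semicontinuity of $k\mapsto f_k(x+td)$ at a fixed point. Then letting $t\downarrow 0$ for the single convex function $f_k$ gives both $k\in K(x)$ and $f_k'(x;d)\ge f'(x;d)$. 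The reference packages this as a nested family of nonempty compact superlevel sets in $K$.

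\textbf{What each route buys.} Your route is self-contained and correct for the theorem exactly as the paper states it. The chord argument is shorter and proves the stronger version in which $k\mapsto f_k(y)$ is merely upper semicontinuous. The ``main obstacle'' you flagged is an artifact of differentiating at $x+t_jd$ instead of comparing chords from $x$.
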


\subsection{A Subdifferential Characterization of Conformal Rigidity}\label{subsec:equiv-cr}
We now return our attention to the functions $\lambda_2(w)$ and $\lambda_n(w)$ on a graph $G = (V,E)$ with nonnegative edge weights $w: E \to \R$ normalized to $\sum_{ij \in E} w_{ij} = \abs{E}$. Recall that we can characterize $\lambda_n$ variationally by
\[\lambda_n(w) = \max_{\|\varphi\|=1} \langle \varphi, L(w) \varphi \rangle = \max_{\|\varphi\|=1} \sum_{uv \in E} w_{uv}(\varphi(u) - \varphi(v))^2 = \max_{\|\varphi\|=1} \langle w, \ell(\varphi) \rangle\]
where $\ell(\varphi) = \paren{(\varphi(u) - \varphi(v))^2}_{uv \in E}$ is the edge-energy map given in Definition~\ref{def:ell}.
For a fixed $\varphi$ such that $\| \varphi \| = 1$, the map $w \mapsto \langle w, \ell(\varphi) \rangle$ is linear in $w$ and is thus convex. This means that $\lambda_n(w)$ is the pointwise maximum of convex functions in $w$ and is thus convex. Similarly, 
\[\lambda_2(w) = \min_{\substack{\|\varphi\|=1, \\ \langle \one, \varphi \rangle = 0}} \sum_{uv \in E} w_{uv}(\varphi(u) - \varphi(v))^2 = \min_{\substack{\|\varphi\|=1, \\ \langle \one, \varphi \rangle = 0}} \langle w, \ell(\varphi) \rangle.\]
is a pointwise minimum of linear functions and is thus concave. Equivalently, $-\lambda_2(w)$ is convex.
\begin{lemma}
    A graph $G$ is lower conformally rigid if and only if there exists some $c \in \R$ such that $c \one \in \partial (-\lambda_2)(\one)$. Analogously, $G$ is upper conformally rigid if and only if $c \one \in \partial \lambda_n(\one)$ for some $c \in \R$.
\end{lemma}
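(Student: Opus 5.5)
We will apply the KKT characterization \eqref{eq:kkt} to the convex programs
\[
\min_{w \geq 0} \ -\lambda_2(w) \quad \text{and} \quad \min_{w\ge 0} \ \lambda_n(w), \qquad \text{s.t. } \one^T w = \abs{E}.
\]
Here $A = \one^T \in \R^{1 \times \abs{E}}$ and $b = \abs{E}$. Lower (upper) conformal rigidity says exactly that $w^* = \one$ is optimal for the first (second) program. Slater's condition holds because $w = \one$ is itself strictly feasible. Both objectives are finite convex functions on all of $\R^{\abs{E}}$, since they are a pointwise max or min of linear functions over a compact set. So the cited KKT theorem applies, and $\one$ is optimal if and only if there exist $y \in \R$ and $\mu \in \R^{\abs{E}}$ with $\mu \geq 0$ such that
\[
0 \in \partial f(\one) + y\one - \mu, \qquad \mu_{uv} \cdot 1 = 0 \ \text{ for all } uv\in E.
\]

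The key step is to use complementary slackness. Since every coordinate of $w^* = \one$ is strictly positive, the condition $\mu_{uv} w^*_{uv} = 0$ forces $\mu = 0$. The KKT condition therefore reduces to $-y\one \in \partial f(\one)$. Setting $c = -y$ gives the forward direction: optimality of $\one$ produces some $c \in \R$ with $c\one \in \partial f(\one)$.

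For the converse, suppose $c \one \in \partial f(\one)$. Take $y = -c$ and $\mu = 0$. These multipliers satisfy the KKT system, including complementary slackness, so $\one$ is optimal. Alternatively, one can argue directly from the subgradient inequality: for any feasible $w$,
\[
f(w) \geq f(\one) + c\langle \one, w - \one\rangle = f(\one),
\]
because $\one^T w = \abs{E} = \one^T\one$. This direct argument avoids needing sufficiency of KKT at all.

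Applying this with $f = -\lambda_2$ gives the lower statement, and applying it with $f = \lambda_n$ gives the upper statement.

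The only point requiring care is checking the hypotheses of the KKT theorem, namely convexity and finiteness of $f$ on all of $\R^{\abs{E}}$ and a strictly feasible point. The paper has already established convexity just before the lemma. Beyond that, the proof is routine.
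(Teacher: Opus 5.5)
Your proposal is correct and follows the paper's proof: you apply the KKT conditions~\eqref{eq:kkt}, justified by Slater's condition at the strictly feasible point $\one$, and read off $c\one \in \partial f(\one)$ for $f = -\lambda_2$ or $f = \lambda_n$. You also make explicit the complementary-slackness step (the multiplier for $w \geq 0$ must vanish because $\one > 0$), which the paper leaves implicit, and your direct subgradient-inequality argument is a clean self-contained proof of the sufficiency direction.
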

\begin{proof}    
    In the context of upper conformal rigidity, we are interested in the linearly-constrained, convex optimization problem
    \begin{equation}\label{eq:lambda_n}
    \begin{split}
        \min_{w\geq 0} \ &\lambda_n(w)\\
        \text{s.t.} \ &\one^T w = \abs{E}.
    \end{split}
    \end{equation}
    Determining whether a graph is upper conformally rigid or not is equivalent to determining whether $w = \one$ is an optimal solution to this problem. Since $\one$ is a strictly feasible solution of the feasible set $\{w \geq 0, \one^\top w = \abs{E}\}$, the KKT conditions~\eqref{eq:kkt} tell us that $\one$ is optimal if and only if there exists some $c \in \R$ such that $0 \in \partial \lambda_n(\one) + c \one$. Equivalently, $\one$ is optimal if and only if there exists some $c \in \R$ such that $c\one \in \partial \lambda_n(\one)$.  For lower conformal rigidity, we can apply the same analysis to the optimization problem
    \begin{equation}\label{eq:lambda_2}
    \begin{aligned}
        \max_{w\geq 0} \ &\lambda_2(w)\\
        \text{s.t.} \ &\one^T w = \abs{E}
    \end{aligned}
    \qquad \text{or, equivalently,} \qquad
    \begin{aligned}
        \min_{w\geq 0} \ &{-\lambda_2(w)}\\
        \text{s.t.} \ &\one^T w = \abs{E}. \qedhere
    \end{aligned}
    \end{equation}
\end{proof}

\begin{lemma}
    The subdifferentials of $-\lambda_2$ and $\lambda_n$ for edge weights $w$ on a graph $G$ are given by 
    \begin{equation}\label{eq:subdifferential}
    \begin{aligned}
    \partial (-\lambda_2)(w) &= \Conv \set{-\ell(\varphi): \varphi \in
        \E_{\lambda_2(w)}, \| \varphi \| = 1}, \\[4pt]
    \partial \lambda_n(w) &= \Conv \set{\ell(\varphi) : \varphi \in
        \E_{\lambda_n(w)}, \| \varphi \| = 1}.
    \end{aligned}
    \end{equation}                                                             
                  
\end{lemma}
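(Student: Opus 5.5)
We will apply Danskin's theorem (Theorem~\ref{thm:danskin}) directly to the variational formulas written just above. For $\lambda_n$, the index set is the unit sphere $K = \set{\varphi \in \R^n : \|\varphi\| = 1}$, which is a compact metric space. For each $\varphi \in K$ we set $f_\varphi(w) = \langle w, \ell(\varphi)\rangle$, so that $\lambda_n(w) = \max_{\varphi \in K} f_\varphi(w)$. The map $\varphi \mapsto f_\varphi(w)$ is a polynomial in the entries of $\varphi$, hence continuous, so the hypotheses of Danskin's theorem hold. Each $f_\varphi$ is linear in $w$ (the formula extends to all $w \in \R^{E}$, which is needed since Danskin is stated for functions on all of Euclidean space), so $\partial f_\varphi(w) = \set{\ell(\varphi)}$.

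Next we identify the active set $K(w)$. By Rayleigh--Ritz, $\langle \varphi, L(w)\varphi\rangle = \lambda_n(w)$ with $\|\varphi\|=1$ holds exactly when $\varphi$ is a unit eigenvector of $L(w)$ for $\lambda_n(w)$. To see this, expand $\varphi$ in an orthonormal eigenbasis: $\sum_k \lambda_k c_k^2 = \lambda_n \sum_k c_k^2$ forces $c_k = 0$ whenever $\lambda_k < \lambda_n$. So $K(w) = \set{\varphi \in \E_{\lambda_n(w)} : \|\varphi\|=1}$, and Danskin gives the second formula in~\eqref{eq:subdifferential}.

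For $-\lambda_2$ we write $-\lambda_2(w) = \max_{\varphi \in K'} \langle w, -\ell(\varphi)\rangle$, where $K' = \set{\|\varphi\|=1, \langle \one,\varphi\rangle = 0}$ is again compact. Each $\partial f_\varphi(w) = \set{-\ell(\varphi)}$. The active set consists of the unit vectors orthogonal to $\one$ that achieve the minimum Rayleigh quotient on $\one^\perp$. Restricting $L(w)$ to the invariant subspace $\one^\perp$ and repeating the eigenbasis argument shows these are exactly the unit vectors of $\E_{\lambda_2(w)} \cap \one^\perp$. This gives the first formula in~\eqref{eq:subdifferential}.

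The main subtle step is this last identification. When the weighted graph is disconnected, $\lambda_2(w) = 0$, and then $\E_{\lambda_2(w)}$ contains $\one$, so the active set is $\E_{\lambda_2(w)} \cap \one^\perp$ rather than the full eigenspace. The same thing happens in the degenerate case $w$ with negative entries, if one insists on all of $\R^E$. We will either interpret $\E_{\lambda_2(w)}$ in the statement as the eigenspace within $\one^\perp$, or restrict attention to $w$ with $\lambda_2(w) > 0$, which covers $w = \one$ for connected $G$ and is the only case needed for the preceding lemma. For $w$ with negative entries, $L(w)$ is still symmetric with $L(w)\one = 0$, so the restriction argument still goes through verbatim.
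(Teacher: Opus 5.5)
Your proposal is correct and is essentially the paper's proof. Both apply Danskin's theorem over the unit sphere (resp.\ the unit sphere in $\one^\perp$), use linear $f_\varphi$ with gradient $\ell(\varphi)$ (resp.\ $-\ell(\varphi)$), and identify the active set as the unit eigenvectors for $\lambda_n(w)$ (resp.\ $\lambda_2(w)$). Your caveat is a genuine refinement that the paper glosses over. When $\lambda_2(w)=0$ the active set is the unit sphere of $\E_{\lambda_2(w)}\cap\one^\perp$; otherwise vectors such as $\one/\sqrt{n}$, which have $\ell=0$, would spuriously enter the hull. This does not affect the lemma's only use, at $w=\one$ for connected $G$.
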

\begin{proof}
    Recall that $\lambda_n(w) = \max_{\varphi \in K} \langle w, \ell(\varphi) \rangle$ where $K = \set{\varphi : \| \varphi \| = 1}$ is compact and $\varphi \mapsto \langle w, \ell (\varphi) \rangle$ is continuous for each $w \in W$. Moreover, for each $\varphi$, the map $w \mapsto \langle w, \ell(\varphi) \rangle$ is affine, so its subdifferential just contains its gradient $\ell(\varphi)$. Finally, we observe that for edge weights $w$, the active index set $K(w)$ is precisely the set of unit eigenvectors in $\E_{\lambda_n(w)}$. Indeed,
    \[K(w) = \set{\varphi \in K : \langle w, \ell(\varphi) \rangle = \langle \varphi, L(w) \varphi \rangle = \lambda_n(w)} = \set{\varphi \in \E_{\lambda_n(w)}: \| \varphi \| = 1},\]
    so we can apply Theorem~\ref{thm:danskin} to get
    \[ \partial \lambda_n(w) = \Conv \set{\ell(\varphi) : \varphi \in \E_{\lambda_n(w)}, \| \varphi \| = 1}.\]
    Applying an identical analysis to $-\lambda_2$ and $K = \set{\varphi : \| \varphi \| = 1, \langle \one, \varphi \rangle = 0}$ gives us
    \[\partial (-\lambda_2)(w) = \Conv \set{-\ell(\varphi) : \varphi \in \E_{\lambda_2(w)}, \| \varphi \| = 1}. \qedhere\]
\end{proof}

We now give a characterization of conformal rigidity.

\begin{theorem}\label{thm:cr-subgradient}
    A graph $G$ is lower (upper) conformally rigid if and only if there exist unit-length eigenvectors $\varphi_1, \ldots, \varphi_r \in \E_{\lambda_2}$ ($\E_{\lambda_n}$), a constant $c > 0$  and coefficients $a_i \geq 0, \sum_{i=1}^r a_i = 1$ such that 
    \begin{equation}
      c \one = \sum_{i=1}^r a_i \ell(\varphi_i) = \sum_{i=1}^r a_i \paren{{(\varphi_i(u) - \varphi_i(v))}^2}_{uv \in E}.
      \label{eq:cr-subgradient}  
    \end{equation}
\end{theorem}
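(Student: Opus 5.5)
The plan is to combine the two preceding lemmas and then settle the sign of $c$. By the first lemma, $G$ is upper conformally rigid if and only if $c\one \in \partial \lambda_n(\one)$ for some $c \in \R$. By the second lemma, evaluated at $w = \one$ (where $\E_{\lambda_n(\one)} = \E_{\lambda_n}$), this subdifferential equals $\Conv\set{\ell(\varphi) : \varphi \in \E_{\lambda_n}, \|\varphi\|=1}$. Membership of $c\one$ in a convex hull means, by definition (Carathéodory lets us take finitely many points, at most $\abs{E}+1$), that there are unit eigenvectors $\varphi_1,\ldots,\varphi_r \in \E_{\lambda_n}$ and convex coefficients $a_i \ge 0$ with $\sum a_i = 1$ satisfying $c\one = \sum_i a_i \ell(\varphi_i)$. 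This gives both directions of the equivalence, except that the statement demands $c>0$ rather than $c \in \R$.

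For the lower case we argue identically: $c\one \in \partial(-\lambda_2)(\one) = \Conv\set{-\ell(\varphi) : \varphi\in\E_{\lambda_2}, \|\varphi\|=1}$. Replacing $c$ by $-c$, this is equivalent to $c\one \in \Conv\set{\ell(\varphi)}$ over unit vectors in $\E_{\lambda_2}$, which is \eqref{eq:cr-subgradient}.

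The only remaining issue, and the one step needing an actual argument, is upgrading $c\in\R$ to $c>0$. Since each $\ell(\varphi_i)$ has nonnegative entries, $c \ge 0$ automatically. To exclude $c=0$, we take the inner product of \eqref{eq:cr-subgradient} with $\one$:
\[
c\abs{E} = \sum_i a_i \langle \one, \ell(\varphi_i)\rangle = \sum_i a_i \langle \varphi_i, L\varphi_i\rangle = \lambda \sum_i a_i = \lambda,
\]
where $\lambda \in \set{\lambda_2, \lambda_n}$. Hence $c = \lambda/\abs{E}$, which is positive because $G$ is connected, so that $\lambda_n \ge \lambda_2 > 0$. This identity pins down $c$ exactly and completes the proof. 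The reverse direction requires nothing further: an expression \eqref{eq:cr-subgradient} with $c>0$ exhibits $c\one$ in the subdifferential, and the first lemma then gives rigidity.
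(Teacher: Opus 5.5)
Your proof is correct and follows the paper's route: you combine the KKT optimality lemma with the Danskin formula for $\partial\lambda_n(\one)$ and $\partial(-\lambda_2)(\one)$, then upgrade $c\in\R$ to $c>0$. The only minor difference is in that last step. The paper gets $c>0$ because each $\ell(\varphi_i)$ is nonzero and entrywise nonnegative, whereas you pair with $\one$ to compute $c=\lambda/\abs{E}$ exactly, which is equally valid and slightly more informative.
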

\begin{proof}
    We know $G$ is upper conformally rigid if and only if there exists some $c \in \R$ such that $c\one \in \partial \lambda_n(\one)$. Since 
 $\partial \lambda_n(\one) = \Conv \set{ \ell(\varphi)  : \varphi \in \E_{\lambda_n}, \| \varphi \| = 1}$ this means that $G$ is upper conformally rigid if and only if we can find a convex combination 
    $\sum_{i=1}^r a_i \ell(\varphi_i) = c\one$
    for some collection of unit-length eigenvectors $\varphi_1, \ldots \varphi_r \in \E_{\lambda_n}$ and some $c \in \R$. Each $\ell(\varphi_i)$ is nonzero and nonnegative in every coordinate, so we have $c > 0$. We achieve the analogous result for lower conformal rigidity by applying our analysis to the convex function $-\lambda_2(w)$.
\end{proof}

We remark that the statement of this theorem differs slightly from the statement in \S~\ref{sec:main-results}: there the $\varphi_i$ did not have to be unit-length and the coefficients $a_i$ did not need to sum to $1$. We resolve this discrepancy shortly in Lemma~\ref{lemma:cr-subgradient}.

\subsection{Edge-Isometric Spectral Embeddings} The subdifferential characterization of conformal rigidity reflects a special geometric property of a conformally rigid graph. In particular, conformal rigidity corresponds to the existence of an edge-isometric spectral embedding (Definition~\ref{def:edge-iso}) of $G$ on the eigenspace $\E_{\lambda_2}$ or $\E_{\lambda_n}$. For a more detailed account on the connection between spectral embeddings and conformal rigidity, we refer to~\cite[\S~2]{gouveia} and~\cite[\S~4]{steinerberger}. In the latter paper, the authors prove the following result.

\begin{theorem}\cite[Proposition 4.3]{steinerberger}\label{thm:edge-iso}
    A graph $G$ is lower (upper) conformally rigid if and only if $G$ has an edge-isometric embedding $\mathcal{P}$ on $\E_{\lambda_2}$ ($\E_{\lambda_n}$)
\end{theorem}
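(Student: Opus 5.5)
The plan is to deduce Theorem~\ref{thm:edge-iso} directly from Theorem~\ref{thm:cr-subgradient} by translating between convex combinations of edge-energy vectors and the rows of a matrix of rescaled eigenvectors. The key identity is that for $P = \begin{pmatrix} \psi_1 & \cdots & \psi_r \end{pmatrix}$ with rows $p_v^T$,
\[\|p_u - p_v\|^2 = \sum_{i=1}^r (\psi_i(u) - \psi_i(v))^2 = \sum_{i=1}^r \ell(\psi_i)_{uv},\]
and that $\ell$ is homogeneous of degree two: $\ell(t\varphi) = t^2 \ell(\varphi)$. The nonnegative weights $a_i$ are thus absorbed into the eigenvectors by setting $\psi_i = \sqrt{a_i}\,\varphi_i$. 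I will argue for $\lambda_2$; the $\lambda_n$ case is word-for-word the same with $\E_{\lambda_n}$ in place of $\E_{\lambda_2}$.

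($\Rightarrow$) Suppose $G$ is lower conformally rigid. Theorem~\ref{thm:cr-subgradient} gives unit eigenvectors $\varphi_1,\dots,\varphi_r \in \E_{\lambda_2}$, weights $a_i \ge 0$ summing to $1$, and $c>0$ with $\sum_i a_i \ell(\varphi_i) = c\one$. Discard the indices with $a_i = 0$; this is possible since $\sum a_i = 1$ guarantees at least one remains. Define $\psi_i = \sqrt{a_i}\varphi_i$, which are nonzero vectors in $\E_{\lambda_2}$ because eigenspaces are closed under scaling. By the identity above, the resulting spectral embedding satisfies $\|p_u - p_v\|^2 = c$ for every $uv\in E$. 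Hence it is edge-isometric with constant $\sqrt{c}>0$.

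($\Leftarrow$) Suppose $\mathcal{P}$ is an edge-isometric embedding on $\E_{\lambda_2}$, built from $\varphi_1,\dots,\varphi_r$, with $\|p_u-p_v\| = c > 0$. Then $\sum_i \ell(\varphi_i) = c^2 \one$. Drop any zero columns $\varphi_i$, since they contribute nothing. Setting $\varphi_i = t_i \hat\varphi_i$ with $t_i = \|\varphi_i\| > 0$ and $\hat\varphi_i$ a unit vector gives $\sum_i t_i^2 \ell(\hat\varphi_i) = c^2\one$. Put $T = \sum_i t_i^2 > 0$ and divide through by $T$. This yields a convex combination $\sum_i (t_i^2/T)\,\ell(\hat\varphi_i) = (c^2/T)\one$ with $c^2/T > 0$. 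Theorem~\ref{thm:cr-subgradient} then gives lower conformal rigidity.

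The only real obstacle is bookkeeping: we must make sure that after discarding zero weights or zero columns the collection is still nonempty and the constant is still strictly positive. In the forward direction this follows from $\sum a_i = 1$. In the reverse direction it follows from $c>0$, because at least one edge has positive length and so some $\varphi_i$ is nonzero. This same rescaling argument also reconciles the two formulations of Theorem~\ref{thm:cr-subgradient}, one with nonzero eigenvectors and arbitrary $a_i\ge0$, the other with unit eigenvectors and convex weights.
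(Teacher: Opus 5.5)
Your proof is correct and follows essentially the same route as the paper. The forward direction takes the columns $\sqrt{a_i}\,\varphi_i$ from Theorem~\ref{thm:cr-subgradient}, and the reverse direction rescales to a convex combination of unit eigenvectors, which the paper does by citing Lemma~\ref{lemma:cr-subgradient} and you do by writing out that lemma's normalization inline; your bookkeeping ($\|p_u-p_v\| = c$ giving $\sum_i \ell(\varphi_i) = c^2\one$) is also slightly cleaner than the paper's, which uses $c$ for both the edge length and its square.
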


We now show that the condition given by~\eqref{eq:cr-subgradient} in Theorem~\ref{thm:cr-subgradient} is directly equivalent to the existence of an edge-isometric spectral embedding on the relevant eigenspace. This gives us an alternate proof of the above proposition. First we show that we can somewhat relax the conditions assumed in Theorem~\ref{thm:cr-subgradient}.

\begin{lemma}\label{lemma:cr-subgradient}
    A graph $G$ is lower (upper) conformally rigid if and only if there exist nonzero eigenvectors $\varphi_1, \ldots, \varphi_r \in \E_{\lambda_2}$ ($\E_{\lambda_n}$) such that 
    \begin{equation}
        \one = \sum_{i=1}^r \ell(\varphi_i) = \sum_{i=1}^r \paren{{(\varphi_i(u) - \varphi_i(v))}^2}_{uv \in E}.
        \label{eq:cr-edge-iso}
    \end{equation}
\end{lemma}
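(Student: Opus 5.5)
The plan is to show that condition~\eqref{eq:cr-edge-iso} is equivalent to condition~\eqref{eq:cr-subgradient} of Theorem~\ref{thm:cr-subgradient}, and then invoke that theorem. The only tool needed is the homogeneity of the edge-energy map: for any scalar $t \in \R$ and any $\varphi: V \to \R$,
\[
\ell(t\varphi) = t^2 \ell(\varphi),
\]
since each coordinate $(t\varphi(u) - t\varphi(v))^2$ scales by $t^2$. Eigenspaces are closed under scaling, so rescaled eigenvectors stay in $\E_{\lambda_2}$ (respectively $\E_{\lambda_n}$).

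For the forward direction, assume $G$ is lower (upper) conformally rigid. Theorem~\ref{thm:cr-subgradient} then supplies unit eigenvectors $\varphi_1, \ldots, \varphi_r$, weights $a_i \geq 0$ summing to $1$, and $c > 0$ with $c\one = \sum_i a_i \ell(\varphi_i)$.

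\begin{itemize}
\item Discard the indices with $a_i = 0$; this leaves a nonempty list, since the $a_i$ sum to $1$.
\item Set $\psi_i = \sqrt{a_i/c}\,\varphi_i$. Each $\psi_i$ is a nonzero eigenvector in the same eigenspace.
\item By homogeneity, $\sum_i \ell(\psi_i) = \frac{1}{c}\sum_i a_i \ell(\varphi_i) = \one$, which is~\eqref{eq:cr-edge-iso}.
\end{itemize}

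For the converse, suppose nonzero eigenvectors $\varphi_1, \ldots, \varphi_r$ satisfy $\sum_i \ell(\varphi_i) = \one$.

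\begin{itemize}
\item Normalize: put $\hat\varphi_i = \varphi_i/\|\varphi_i\|$ and $b_i = \|\varphi_i\|^2 > 0$. Homogeneity gives $\one = \sum_i b_i \ell(\hat\varphi_i)$.
\item Let $B = \sum_i b_i > 0$, and set $a_i = b_i/B$ and $c = 1/B > 0$. Then $c\one = \sum_i a_i \ell(\hat\varphi_i)$, which is a convex combination of edge-energy vectors of unit eigenvectors.
\item Theorem~\ref{thm:cr-subgradient} now gives lower (upper) conformal rigidity.
\end{itemize}

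No step here is a genuine obstacle; the only point needing care is the bookkeeping of positivity. We need $c > 0$ so that $\sqrt{a_i/c}$ is defined and the scaled vectors stay real, and we need $B > 0$ so that the normalization is legitimate; the latter holds because the $\varphi_i$ are nonzero. The same argument applies verbatim to $\E_{\lambda_2}$ and to $\E_{\lambda_n}$.
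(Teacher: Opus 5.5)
Your proposal is correct and follows the paper's proof essentially step for step: it uses the homogeneity $\ell(t\varphi) = t^2\ell(\varphi)$, rescales by $\sqrt{a_i/c}$ in the forward direction, and in the converse normalizes by $\sum_i \|\varphi_i\|^2$ to recover a convex combination. Your explicit removal of the indices with $a_i = 0$ is a small improvement. The paper asserts that all the $\sqrt{a_i/c}\,\varphi_i$ are nonzero without noting that this requires $a_i > 0$.
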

\begin{proof}
    Observe that the edge-energy vector corresponding to any $\varphi$ is always nonnegative and is a homogeneous quadratic polynomial in $\varphi$. This means for any $c \in \R, \ \ell(c \varphi) = c^2 \ell(\varphi)$. Assume we have unit-length eigenvectors $\varphi_1,\ldots, \varphi_r$ satisfying \eqref{eq:cr-subgradient}. Then there exists $c > 0$ such that 
    \[\one = \frac{1}{c}\sum_{i=1}^r a_i \ell(\varphi_i) = \sum_{i=1}^r \ell\paren{\sqrt{\frac{a_i}{c}} \cdot \varphi_i}\]
    where $\sqrt{a_1/c} \cdot \varphi_1, \ldots, \sqrt{a_r/c} \cdot \varphi_r$ are nonzero eigenvectors. Now assume we have nonzero eigenvectors $\varphi_1, \ldots, \varphi_r$ that are of arbitrary length satisfying \eqref{eq:cr-edge-iso}. If we let $M = \sum_{i=1}^r \|\varphi_i \|^2 > 0$, we have 
    \[\frac{1}{M} \one = \frac{1}{M} \sum_{i=1}^r \ell(\varphi_i) = \frac{1}{M} \sum_{i=1}^r \|\varphi_i\|^2 \ell\left(\frac{\varphi_i}{\|\varphi_i\|}\right)\]
    where each $\varphi_i/\|\varphi_i\|$ is of unit-length and 
    \[\sum_{i=1}^r \frac{\|\varphi_i \|^2}{M} = \frac{1}{M} \sum_{i=1}^r \|\varphi_i\|^2 = 1.\]
    Thus, we can obtain a convex combination of unit vectors that is constant.
\end{proof}

We can now give an alternate proof that conformal rigidity is equivalent to the existence of edge-isometric spectral embeddings on $\E_{\lambda_2}$ and $\E_{\lambda_n}$.

\begin{proof}[Proof of Theorem~\ref{thm:edge-iso}]
    Let $\mathcal{P}$ be an edge-isometric spectral embedding of $G$ on $\E_{\lambda_2}$ corresponding to the matrix $\begin{pmatrix}
        \varphi_1 & \cdots & \varphi_r
    \end{pmatrix}$
    where $\varphi_1, \ldots, \varphi_r \in \E_{\lambda_2}$. We may assume without loss of generality that each $\varphi_i$ is nonzero. This means there exists a constant $c > 0$ such that 
    \[c = \|p_u - p_v \|^2 = \sum_{i=1}^r {(\varphi_i(u) - \varphi_i(v))}^2 = \sum_{i=1}^r \ell(\varphi_i)_{uv}\]
    for all $uv \in E$. Thus, we have the convex combination
    \[\sum_{i=1}^r \frac{1}{\abs{E}} \ell(\varphi_i) = \frac{c}{\abs{E}} \one,\]
    so by Lemma~\ref{lemma:cr-subgradient}, $G$ is lower conformally rigid.
    Conversely, suppose $G$ is lower conformally rigid. Then by Theorem~\ref{thm:cr-subgradient} there exists unit eigenvectors $\varphi_1, \ldots, \varphi_r$ and $a_1, \ldots, a_r \geq 0$ with $\sum_{i=1}^r a_i = 1$ such that $\sum_{i=1}^r a_i \ell(\varphi_i) = c \one$ for some constant $c > 0$. Now let $\mathcal{P}$ be the spectral embedding corresponding to the matrix
    $\begin{pmatrix}
        \sqrt{a_1} \varphi_1 & \cdots & \sqrt{a_r} \varphi_r
    \end{pmatrix}$.
    Then
    \[\|p_u - p_v\|^2 = \sum_{i=1}^r a_i (\varphi_i(u) - \varphi_i(v))^2 = \sum_{i=1}^r a_i {\ell(\varphi_i)}_{uv} = c > 0\]
    for all $uv \in E$. Hence $\mathcal{P}$ is an edge-isometric spectral embedding of $G$ on $\E_{\lambda_2}$. The same proof works for $\E_{\lambda_n}$ and upper conformal rigidity.
\end{proof}

\begin{remark}
    This proof explains our choice for the notation $\ell(\varphi)$. If $\varphi \in \E_{\lambda}$ is a column of $P$ corresponding to a spectral embedding $\mathcal{P}$, then $\varphi$ contributes $\ell(\varphi)_{uv}$ to the squared length of the edge $uv$ in $\mathcal{P}$.    
\end{remark}

Taking convex combinations in~\eqref{eq:cr-subgradient} of edge-energy vectors $\ell(\varphi)$ is necessary. This is equivalent to saying that certain graphs need more than one eigenvector to certify their conformal rigidity. In the language of spectral embeddings, this corresponds to the minimum number of vectors needed for an edge-isometric embedding. 

\begin{lemma}\label{lemma:bipartite}
    Suppose that $G$ has a non-constant 1-dimensional edge-isometric (not necessarily spectral) embedding $\varphi: V \to \R$. Then $G$ is bipartite.
\end{lemma}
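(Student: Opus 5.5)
The plan is to read off a $2$-coloring directly from the embedding. By Definition~\ref{def:edge-iso}, a $1$-dimensional edge-isometric embedding $\varphi: V \to \R$ means there is $c>0$ with $\abs{\varphi(u)-\varphi(v)} = c$ for every $uv \in E$. Note that ``non-constant'' is automatic once $E \neq \emptyset$ and $c>0$. The case $c=0$ is exactly what the non-constant hypothesis rules out, since $G$ is connected. So every edge changes the value of $\varphi$ by exactly $\pm c$.

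First fix a base vertex $v_0$. Then define the coloring
\[
\chi(v) \equiv \frac{\varphi(v)-\varphi(v_0)}{c} \pmod 2 .
\]
For this to make sense, I check that $(\varphi(v)-\varphi(v_0))/c$ is an integer. Take any path $v_0=u_0,u_1,\ldots,u_k=v$ (which exists by connectivity). Each step changes $\varphi$ by $\pm c$, so $\varphi(v)-\varphi(v_0) = c\sum_{j=1}^k \epsilon_j$ with $\epsilon_j \in \set{\pm1}$. Hence the quotient is an integer, and it has the same parity as $k$.

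Next, take any edge $uv$. Then $(\varphi(u)-\varphi(v))/c = \pm 1$ is odd, so $\chi(u)\neq\chi(v)$. Thus $\chi$ is a proper $2$-coloring, and $G$ is bipartite with parts $\chi^{-1}(0)$ and $\chi^{-1}(1)$.

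An equivalent way to phrase the argument uses cycles. Along any closed walk $u_0,\ldots,u_k=u_0$, the sum $\sum_j \epsilon_j c$ telescopes to $0$. A sum of $k$ terms $\pm1$ can vanish only if $k$ is even, so every cycle is even, which again gives bipartiteness.

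There is no real obstacle here. The only point needing care is using the non-constant hypothesis together with connectivity to guarantee $c>0$. Without that, the parity argument breaks down: $c=0$ makes the quotient undefined.
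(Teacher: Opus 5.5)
Your proof is correct, and the closed-walk phrasing at the end is exactly the paper's argument: the signed increments $\pm c$ around a cycle telescope to $0$, which forces every cycle to have even length. Your primary route through the coloring $\chi(v) \equiv (\varphi(v)-\varphi(v_0))/c \pmod 2$ uses the same parity idea but writes down the bipartition directly instead of appealing to the ``no odd cycles implies bipartite'' characterization; it relies on connectivity to reach every vertex from $v_0$, which the paper assumes throughout.
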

\begin{proof}
    Since $\varphi$ is not constant, we know there exists $c >0$ such that $\abs{\varphi(u)- \varphi(v)} = c$ for all $uv \in E$. Given a cycle $C$ in $G$ and any orientation of edges, we know 
    \[0 = \sum_{uv \in C} \varphi(u) - \varphi(v) = \sum_{\set{i,j} \in C} \pm c.\]
    Since $c > 0$, the number of edges in $C$ must be even. This means $G$ contains no odd cycles and is thus bipartite.
\end{proof}

\begin{corollary}\label{cor:bipartite}
    If $G$ is not bipartite, then no 1-dimensional edge-isometric embedding exists.
\end{corollary}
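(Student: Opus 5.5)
This is the contrapositive of Lemma~\ref{lemma:bipartite}, so the plan is simply to invoke that lemma. Suppose, for contradiction, that $G$ is not bipartite but admits a 1-dimensional edge-isometric embedding $\mathcal{P}=\set{p_v}_{v\in V}\subseteq\R$. Write $\varphi(v)=p_v$, so $\varphi\colon V\to\R$.

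By Definition~\ref{def:edge-iso}, there is a constant $c>0$ with $\abs{\varphi(u)-\varphi(v)}=c$ for every $uv\in E$. The only point to check before applying Lemma~\ref{lemma:bipartite} is that $\varphi$ is non-constant. This follows because $G$ is connected and so has at least one edge $uv$, and on that edge $\abs{\varphi(u)-\varphi(v)}=c>0$.

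Lemma~\ref{lemma:bipartite} then says $G$ is bipartite, which contradicts the hypothesis. Hence no 1-dimensional edge-isometric embedding exists.

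The only subtlety is the non-constancy check, and positivity of $c$ in the definition makes it immediate. I expect no real obstacle. It is worth noting, as a consequence, that for non-bipartite conformally rigid graphs any certificate in Theorem~\ref{thm:cr-subgradient} needs $r\ge 2$ eigenvectors.
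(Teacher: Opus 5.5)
Your proposal is correct and matches the paper, which treats the corollary as the immediate contrapositive of Lemma~\ref{lemma:bipartite}; non-constancy comes from $c>0$ on any edge. One small fix: deduce that an edge exists from non-bipartiteness (an odd cycle exists) rather than from connectedness, since a single vertex is connected but has no edges, although that graph is bipartite anyway.
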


For example, cycles $C_n$ of odd length $n$ are conformally rigid (because they are edge-transitive~\cite{steinerberger}) and not bipartite. Thus, any edge-isometric spectral embedding of $C_n$ must be of dimension at least 2. We also saw that Example~\ref{ex:running} requires two eigenvectors for both of its edge-isometric spectral embeddings. We close this section with a partial converse of Lemma~\ref{lemma:bipartite}.

\begin{proposition}\label{prop:regular-bipartite}
    If $G$ is regular and bipartite, it is upper conformally rigid.
\end{proposition}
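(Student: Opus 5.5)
Let $G$ be $k$-regular and bipartite with bipartition $V = A \sqcup B$. The plan is to exhibit a single eigenvector of $\lambda_n$ that is itself a $1$-dimensional edge-isometric spectral embedding, and then invoke Theorem~\ref{thm:edge-iso} (equivalently Lemma~\ref{lemma:cr-subgradient} with $r=1$). This is the natural converse direction to Lemma~\ref{lemma:bipartite}: bipartiteness is exactly what allows a non-constant function with $\abs{\varphi(u)-\varphi(v)}$ constant on edges.

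The candidate is the signed indicator $\varphi = \one_A - \one_B$, i.e.\ $\varphi(v) = 1$ for $v \in A$ and $\varphi(v) = -1$ for $v \in B$. First I check that $\varphi$ is an eigenvector of $L = kI - A_G$. For $v \in A$, every neighbor lies in $B$, so
\[(L\varphi)(v) = k\varphi(v) - \sum_{u \sim v} \varphi(u) = k + k = 2k = 2k\,\varphi(v),\]
and symmetrically $(L\varphi)(v) = -2k = 2k\,\varphi(v)$ for $v \in B$. Thus $L\varphi = 2k\varphi$.

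Second, I check that $2k = \lambda_n$. For any unit $\psi$, the bound $(\psi(u)-\psi(v))^2 \le 2\psi(u)^2 + 2\psi(v)^2$ gives
\[\langle \psi, L\psi\rangle \le \sum_{uv\in E} \bigl(2\psi(u)^2 + 2\psi(v)^2\bigr) = 2k\|\psi\|^2,\]
where regularity is used to count each vertex exactly $k$ times. Hence $\lambda_n \le 2k$, and since $2k$ is an eigenvalue, $\lambda_n = 2k$ and $\varphi \in \E_{\lambda_n}$.

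Finally, for every edge $uv \in E$ the endpoints lie on opposite sides, so $\ell(\varphi)_{uv} = (\pm 2)^2 = 4$. Hence $\ell(\varphi) = 4\one$, and the $1$-dimensional spectral embedding $P = (\varphi)$ is edge-isometric with $c = 2$. Alternatively, $\ell(\tfrac12 \varphi) = \one$ satisfies \eqref{eq:cr-edge-iso}. By Theorem~\ref{thm:edge-iso}, $G$ is upper conformally rigid. There is no real obstacle; the only step needing care is showing that $2k$ is the \emph{top} eigenvalue, which is where regularity enters, through the Rayleigh quotient bound above (or equivalently Gershgorin).
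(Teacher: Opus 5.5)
Your proof is correct and follows the paper's argument: the signed indicator of the bipartition is an eigenvector with eigenvalue $2k$, this equals $\lambda_n$, and every edge gets length $2$, so the $1$-dimensional embedding is edge-isometric. The only difference is that you prove $\lambda_n \le 2k$ directly from $(\psi(u)-\psi(v))^2 \le 2\psi(u)^2 + 2\psi(v)^2$, while the paper cites Gershgorin's theorem; both give the same bound.
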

\begin{proof}
    Since $G$ is bipartite, we can assign a vector $\varphi$ that is $1$ on one partition and $-1$ on the other partition. By regularity, this must be an eigenvector of $L$ whose eigenvalue is twice the max degree of $G$. Since $\lambda_n$ is bounded above by twice the max degree of $G$ (by Gershgorin's Theorem~\cite[Theorem 6.1.1]{horn}), $\varphi \in \E_{\lambda_n}$. This embedding is edge-isometric as $\abs{\varphi(u) - \varphi(v)} = \abs{\pm 1 - \mp 1} = 2$ for all $uv \in E$.
\end{proof}

\subsection{SDP Formulation}

Now that we know we can certify conformal rigidity with an edge-isometric spectral embedding on a suitable eigenspace, we want to verify computationally whether such an edge-isometric spectral embedding exists for a specific graph $G$. We can do so through \textit{semidefinite programming}~\cite{bental, bvconvex}, which falls under the umbrella of convex optimization and generalizes linear programming to matrix-valued variables. We will use $\s^n$ to denote the set of $n \times n$ real symmetric matrices and $\s^n_+$ to denote the cone of real symmetric positive semidefinite (psd) matrices. We also write $Z \succeq 0$ when $Z$ is psd.

To phrase the problem of finding an edge-isometric embedding as a semidefinite program (SDP), we need to associate a psd matrix to any spectral embedding.  For an edge $uv \in E$ denote the matrix $L^{uv}$ to be the Laplacian of the subgraph induced by the edge $uv$, so that for $\varphi: V \to \R$, we have $\langle \varphi, L^{uv} \varphi \rangle = \ell(\varphi)_{uv}$. Recall that given two matrices $A, B \in \R^{n \times n}$, their \textit{Frobenius inner product} is given by 
\begin{equation}\label{eq:frobenius-ip}
    \langle A, B \rangle_F := \Tr(A^TB) = \sum_{i,j=1}^n A_{ij} B_{ij}.
\end{equation}

\begin{lemma}\label{lemma:embed-to-psd}
    Let $\varphi_1, \ldots, \varphi_r \in \R^V$ be vectors and $\Phi = \sum_{i=1}^r \varphi_i \varphi_i^T$. Then $\Phi \succeq 0$, and $\langle L^{uv}, \Phi \rangle_F = \sum_{i=1}^r \ell(\varphi_i)_{uv}$ for all $uv \in E$.
\end{lemma}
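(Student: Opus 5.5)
The plan is to verify the two claims separately, each by a direct computation using only the definitions of $\Phi$, $L^{uv}$, and the Frobenius inner product~\eqref{eq:frobenius-ip}.

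For positive semidefiniteness, I will fix an arbitrary $x \in \R^V$ and expand the quadratic form:
\[
\langle x, \Phi x\rangle = \sum_{i=1}^r \langle x, \varphi_i\rangle^2 \geq 0 .
\]
Since $\Phi$ is a sum of symmetric matrices $\varphi_i\varphi_i^T$, it is symmetric. Together these give $\Phi \succeq 0$.

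For the inner product identity, the key observation is that for any symmetric $A$ and any vector $\varphi$, cyclicity of the trace gives
\[
\langle A, \varphi\varphi^T\rangle_F = \Tr(A\varphi\varphi^T) = \varphi^T A \varphi .
\]
I will apply this with $A = L^{uv}$, which is symmetric. By linearity of $\langle \cdot,\cdot\rangle_F$ in its second argument,
\[
\langle L^{uv},\Phi\rangle_F = \sum_{i=1}^r \langle \varphi_i, L^{uv}\varphi_i\rangle .
\]
Each term equals $\ell(\varphi_i)_{uv}$ by the defining property of $L^{uv}$. If one wants to be explicit, $L^{uv} = (e_u - e_v)(e_u-e_v)^T$, so $\langle \varphi, L^{uv}\varphi\rangle = (\varphi(u)-\varphi(v))^2$.

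There is no real obstacle here; the only point needing care is the trace-cyclicity step. Note that transposing $L^{uv}$ in the Frobenius product is harmless because it is symmetric.
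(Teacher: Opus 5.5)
Your proposal is correct and follows essentially the same route as the paper: linearity of the Frobenius product plus trace cyclicity gives $\langle L^{uv},\Phi\rangle_F=\sum_{i=1}^r \langle \varphi_i, L^{uv}\varphi_i\rangle=\sum_{i=1}^r \ell(\varphi_i)_{uv}$. Positive semidefiniteness comes from $\Phi$ being a sum of rank-one psd matrices, which your expansion $\langle x,\Phi x\rangle=\sum_{i=1}^r\langle x,\varphi_i\rangle^2\geq 0$ simply makes explicit.
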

\begin{proof}
    Given $\varphi_1, \ldots, \varphi_r \in \R^V$ and $uv \in E$, we have
    \begin{align*}
      \langle L^{uv}, \Phi \rangle_F &= \sum_{i=1}^r \langle L^{uv}, \varphi_i \varphi_i^T \rangle_F = \sum_{i=1}^r  \Tr(L^{uv} \varphi_i \varphi_i^T) = \sum_{i=1}^r  \Tr(\varphi_i^T L^{uv} \varphi_i) \\
      &= \sum_{i=1}^r \langle \varphi_i, L^{uv} \varphi_i \rangle = \sum_{i=1}^r \ell(\varphi_i)_{uv}.
    \end{align*}
    Since $\Phi$ is a sum of rank-1 psd matrices $\varphi_i \varphi_i^T$, it is also psd.
\end{proof}

\begin{proposition}
    Let $\lambda > 0$ be an eigenvalue of $G$. Then $G$ has an edge-isometric embedding on $\E_{\lambda}$ if and only if we can solve the SDP feasibility problem:
    \begin{equation}\label{eq:cr-sdp}
    \begin{split}
        \text{find} \ &\Phi \succeq 0,\\
        \text{s.t.} \ &L\Phi = \lambda \Phi, \\
        & \langle L^{uv}, \Phi \rangle_F = 1 \text{ for all } uv \in E.
    \end{split}
\end{equation}
\end{proposition}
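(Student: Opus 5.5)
We prove the two directions separately. Lemma~\ref{lemma:embed-to-psd} passes from a spectral embedding to a psd matrix, and a spectral decomposition of $\Phi$ passes back.

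\emph{Embedding $\Rightarrow$ SDP.} Suppose $\mathcal{P}$ is an edge-isometric embedding on $\E_\lambda$, given by $P = \begin{pmatrix} \varphi_1 & \cdots & \varphi_r\end{pmatrix}$ with $\varphi_i \in \E_\lambda$, so that $\|p_u - p_v\|^2 = c > 0$ for every $uv \in E$. Rescale each column by $1/\sqrt{c}$; this keeps the columns in $\E_\lambda$ and makes every squared edge length equal to $1$. Set $\Phi = \sum_i \varphi_i\varphi_i^T$ using the rescaled vectors. Lemma~\ref{lemma:embed-to-psd} gives $\Phi \succeq 0$ and
\[
\langle L^{uv},\Phi\rangle_F = \sum_i \ell(\varphi_i)_{uv} = \|p_u-p_v\|^2 = 1 .
\]
Since $L\varphi_i = \lambda\varphi_i$ for each $i$, we get $L\Phi = \sum_i (L\varphi_i)\varphi_i^T = \lambda\Phi$. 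So $\Phi$ is feasible for~\eqref{eq:cr-sdp}.

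\emph{SDP $\Rightarrow$ embedding.} Let $\Phi$ be feasible. Because $\Phi$ is symmetric psd, it has a decomposition $\Phi = \sum_{i=1}^r \varphi_i\varphi_i^T$ with $\varphi_i = \sqrt{\mu_i}\,x_i$, where the $x_i$ are orthonormal eigenvectors of $\Phi$ for its positive eigenvalues $\mu_i$. The main point is that each $\varphi_i$ lies in $\E_\lambda$. Since $L\Phi = \lambda\Phi$, every column of $\Phi$ lies in $\E_\lambda$, so the column space of $\Phi$ is contained in $\E_\lambda$. That column space equals $\operatorname{span}\{x_i\}$, so each $\varphi_i \in \E_\lambda$. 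Equivalently, $L\varphi_i = \mu_i^{-1/2} L\Phi x_i = \lambda\varphi_i$.

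Now take $P = \begin{pmatrix}\varphi_1 & \cdots & \varphi_r\end{pmatrix}$, which is a spectral embedding on $\E_\lambda$. By Lemma~\ref{lemma:embed-to-psd}, for every $uv \in E$,
\[
\|p_u-p_v\|^2 = \sum_i \ell(\varphi_i)_{uv} = \langle L^{uv},\Phi\rangle_F = 1 .
\]
So the embedding is edge-isometric with $c = 1$. Note that $r \ge 1$ automatically: the edge constraints force $\Phi \ne 0$, assuming $E \neq \emptyset$.

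The only step requiring care is showing that the factors of $\Phi$ lie in $\E_\lambda$. It is handled by the column-space argument above, which uses that $\Phi$ is symmetric. Everything else is direct bookkeeping with Lemma~\ref{lemma:embed-to-psd}.
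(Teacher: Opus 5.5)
Your proof is correct and follows the paper's argument. The forward direction builds $\Phi=\sum_i\varphi_i\varphi_i^T$ and applies Lemma~\ref{lemma:embed-to-psd}, and the converse factors $\Phi$ spectrally and uses $x_i=\mu_i^{-1}\Phi x_i$ to place each factor in $\E_\lambda$. Your direct rescaling by $1/\sqrt{c}$ is slightly cleaner than the paper's appeal to Lemma~\ref{lemma:cr-subgradient}, which is stated only for $\lambda_2$ and $\lambda_n$ rather than for an arbitrary eigenvalue $\lambda$.
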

\begin{proof}
By Lemma~\ref{lemma:cr-subgradient}, if $G$ has an edge-isometric embedding on $\E_\lambda$, we can find eigenvectors $\varphi_1, \ldots, \varphi_r$ such that~\eqref{eq:cr-edge-iso} holds. By Lemma~\ref{lemma:embed-to-psd}, the matrix $\Phi = \sum_{i=1}^r \varphi_i \varphi_i^T \succeq 0$ and satisfies $\langle L^{uv}, \Phi \rangle_F = \sum_{i=1}^r \ell(\varphi_i)_{uv} = 1$ for all $uv \in E$, and
    \[L\Phi = \sum_{i=1}^r L\varphi_i \varphi_i^T = \sum_{i=1}^r \lambda \varphi_i \varphi_i^T = \lambda \Phi.\]

    Conversely, suppose we have $\Phi$ satisfying~\eqref{eq:cr-sdp}. By the spectral theorem for symmetric matrices, since $\Phi \succeq 0$, we can write $\Phi = \sum_{i=1}^r \mu_i \psi_i \psi_i^T$ where $\psi_i$ is an eigenvector of $\Phi$ with eigenvalue $\mu_i \geq 0$. We can assume $r$ is the rank of $\Phi$, so that all the $\mu_i > 0$. Since $L\Phi = \lambda \Phi$, the columns of $\Phi$ are all in $\E_\lambda$, and thus, $\psi_i = \frac{1}{\mu_i} \Phi \psi_i \in \E_\lambda$. Taking $\varphi_i = \sqrt{\mu_i} \psi_i \in \E_\lambda$, we get $\Phi = \sum_{i=1}^r \varphi_i \varphi_i^T$, and the corresponding spectral embedding $\begin{pmatrix}
        \varphi_1 & \ldots & \varphi_r
    \end{pmatrix}$ on $\E_\lambda$ makes each edge of $G$ length one.
\end{proof}

The above SDP feasibility problem is equivalent to finding an edge-isometric embedding on $\E_\lambda$ where the rank of $\Phi$ corresponds to the dimension of the spectral embedding. This is equivalent to the complementary slackness condition in Proposition 3.3 of~\cite{steinerberger} derived from their SDP formulations of conformal rigidity. In practice, we can parametrize~\eqref{eq:cr-sdp} using an orthonormal basis $B := \begin{pmatrix}
    \varphi_1 & \ldots & \varphi_d
\end{pmatrix}$ of $\E_\lambda$~\cite[Corollary 6.2]{steinerberger}. 

\begin{corollary}
    Let $\lambda > 0$ be a Laplacian eigenvalue of $G$, and $B := \begin{pmatrix} \varphi_1 & \ldots & \varphi_d \end{pmatrix}$ an orthonormal basis of $\E_\lambda$. If we denote $\tilde{L}^{uv} = B^TL^{uv}B$, then~\eqref{eq:cr-sdp} reduces to 
    \begin{equation}\label{eq:cr-sdp-param}
        \begin{split}
            \text{find} \ &Z \succeq 0,\\
            \text{s.t.} \ & \langle \tilde{L}^{uv}, Z \rangle_F = 1 \text{ for all } uv \in E.
        \end{split}
    \end{equation}
\end{corollary}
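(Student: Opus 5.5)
The plan is to show that the change of variables $\Phi = BZB^T$ is a bijection between feasible points of~\eqref{eq:cr-sdp} and feasible points of~\eqref{eq:cr-sdp-param}. It preserves positive semidefiniteness and transforms each edge constraint correctly. Combined with the preceding Proposition, this shows that either SDP certifies an edge-isometric embedding on $\E_\lambda$.

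\textbf{From~\eqref{eq:cr-sdp-param} to~\eqref{eq:cr-sdp}.} Given $Z \in \s^d_+$, set $\Phi = BZB^T$.
\begin{itemize}
\item Positive semidefiniteness: for any $x$, we have $x^T\Phi x = (B^Tx)^T Z (B^Tx) \geq 0$, so $\Phi \succeq 0$.
\item Eigenvector condition: since each column of $B$ lies in $\E_\lambda$, we have $LB = \lambda B$, hence $L\Phi = LBZB^T = \lambda BZB^T = \lambda\Phi$.
\item Edge constraints: by cyclicity of the trace, $\langle L^{uv}, BZB^T\rangle_F = \Tr(L^{uv}BZB^T) = \Tr(B^TL^{uv}BZ) = \langle \tilde L^{uv}, Z\rangle_F$. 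Here we use that $L^{uv}$ and $Z$ are symmetric, so the transposes in the Frobenius product are harmless.
\end{itemize}
So $\Phi$ is feasible for~\eqref{eq:cr-sdp} exactly when $Z$ is feasible for~\eqref{eq:cr-sdp-param}.

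\textbf{From~\eqref{eq:cr-sdp} to~\eqref{eq:cr-sdp-param}.} Given $\Phi$ feasible for~\eqref{eq:cr-sdp}, set $Z = B^T\Phi B$, which is psd by the same argument as above.
\begin{itemize}
\item The condition $L\Phi = \lambda\Phi$ forces every column of $\Phi$ into $\E_\lambda$.
\item Since $\Phi$ is symmetric, every row of $\Phi$ also lies in $\E_\lambda$.
\item Let $\Pi = BB^T$ be the orthogonal projector onto $\E_\lambda$ (this uses the orthonormality $B^TB = I_d$). Then $\Pi\Phi = \Phi = \Phi\Pi$, so $BZB^T = \Pi\Phi\Pi = \Phi$.
\end{itemize}
The edge-constraint computation from the first direction, applied to this $Z$, now gives $\langle \tilde L^{uv}, Z\rangle_F = \langle L^{uv}, \Phi\rangle_F = 1$.

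The only step needing care is the identity $BB^T\Phi BB^T = \Phi$. It requires both the range of $\Phi$ lying in $\E_\lambda$, which is where $L\Phi = \lambda\Phi$ is used, and the orthonormality of $B$. Everything else is the trace computation. I would also note that the constraint $L\Phi = \lambda\Phi$ disappears in~\eqref{eq:cr-sdp-param} because the parametrization builds it in automatically.
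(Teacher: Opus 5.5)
Your proposal is correct and takes the same route as the paper: the change of variables $\Phi = BZB^T$. The paper only remarks that solutions of the two programs correspond under this substitution. You supply the details it leaves implicit, namely the trace identity $\langle L^{uv}, BZB^T\rangle_F = \langle \tilde{L}^{uv}, Z\rangle_F$ and, for the converse, the projector identity $BB^T\Phi BB^T = \Phi$, which uses both $L\Phi = \lambda\Phi$ and $B^TB = I_d$.
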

Under this parametrization, any solution $Z$ for~\eqref{eq:cr-sdp-param} corresponds to the solution $\Phi = BZB^T$ for~\eqref{eq:cr-sdp}. The advantage of this parametrization is that we reduce the matrix size from $n \times n$ to $d \times d$, and we do not have to check the constraint $L\Phi = \lambda\Phi$. This gives us an SDP problem of matrix size $d = \dim \E_\lambda$ with $m = \abs{E}$ constraints. Modern SDP solvers can solve these problems on the order of $O(m^3 + m^2d^2 + md^3)$~\cite[Table 1.1]{kathuria}. Since $m$ grows quadratically with the number of vertices $n = \abs{V}$, this approach quickly becomes expensive for larger graphs. 

Even when an SDP can be successfully solved, the solution returned by a solver is numerical and does not give a formal proof of conformal rigidity. The edge constraints in~\eqref{eq:cr-sdp} and~\eqref{eq:cr-sdp-param} are typically only satisfied up to some tolerance, and the equality $L \Phi = \lambda \Phi$ depends on the approximate orthonormal basis $B$.

\begin{rexample2}[continued]
    We continue the example of $G = \Cay(\Z_{12}, \set{2,3})$. This graph is lower conformally rigid and has $\lambda_2=3$ with multiplicity 6. Solving~\eqref{eq:cr-sdp-param} for $G$ using the SDP solver MOSEK~\cite{mosek}, we obtain 
    \begin{equation}\label{eq:sdp-certificate}
        Z \approx \begin{pmatrix}
            1.3499 & 0.1321 & -0.0216 & -0.0523 & -0.1784 & 0.0078 \\
            0.1321 & 1.3066 & 0.1622 & 0.0558 & 0.0763 & -0.0813 \\
            -0.0216 & 0.1622 & 1.2313 & -0.0257 & 0.1239 & 0.1392 \\
            -0.0523 & 0.0558 & -0.0257 & 1.4805 & -0.0519 & 0.0122 \\
            -0.1784 & 0.0763 & 0.1239 & -0.0519 & 1.2038 & -0.0684 \\
            0.0078 & -0.0813 & 0.1392 & 0.0122 & -0.0684 & 1.4278
        \end{pmatrix}.
    \end{equation}
    Each edge constraint is satisfied within $3 \times 10^{-13}$, so this computation shows $G$ is very likely lower conformally rigid, but does not rigorously prove so. We can decrease the solver tolerance to improve the residuals to be within $2 \times 10^{-15}$, but even if we were able to get 0 residuals, $Z$ still depends on the numeric parametrization of eigenvectors in $B$. For implementation details, see our accompanying code~\cite{me}.
\end{rexample2}

If $G$ is not conformally rigid, there are no known bounds on how close some $Z \succeq 0$ can be to satisfying the edge-isometry constraints. In the needle in a haystack analogy, our SDP solution cannot rigorously distinguish between a needle and an arbitrarily needle-like piece of hay. Although no such false positive has been observed in practice, there is no theoretical obstruction to a non-conformally rigid graph fooling the SDP solver. In~\cite[\S 6]{steinerberger}, the authors present several heuristic techniques to upgrade an approximate certificate of conformal rigidity to an exact certificate. These approaches, however, are somewhat ad-hoc and typically require detailed manual inspection for each graph. In  \S~\ref{sec:sym}, we address how to speed up the runtime of the SDP~\eqref{eq:cr-sdp-param} in the presence of symmetry and how to obtain exact certificates algebraically.
\section{A Perturbation Theory Perspective}\label{sec:perturbation-theory}
The goal of this section is to give a new proof of Theorem~\ref{thm:cr-subgradient} from the viewpoint of perturbation theory. The subdifferential language in the previous section implicitly casts conformal rigidity as a first-order optimality condition on $\lambda_2(w)$ and $\lambda_n(w)$; we wish to make this perspective explicit through eigenvalue perturbation theory. This generalizes the hyperplane discussion given in~\cite[\S 5.3]{gouveia} from Cayley graphs to all graphs. Throughout this section, we work with $\lambda_2$ and lower conformal rigidity, but the same results hold for $\lambda_n$ and upper conformal rigidity. First we show why equation~\eqref{eq:cr-subgradient} implies lower conformal rigidity. 

\begin{definition}
    Let $w= \one  + y$ where $\sum_{uv \in E} y_{uv} = 0$ be a perturbation from uniform edge-weights. For a vector $\varphi : V \to \R$, we denote its change in energy by
    \[\Delta L^y (\varphi) = \langle \varphi, (L(w) - L) \varphi \rangle = \langle y, \ell(\varphi) \rangle.\]
\end{definition}

The above equation holds because the weighted Dirichlet energy $\langle \varphi, L(w) \varphi \rangle$ of an eigenvector $\varphi$ is linear with respect to $w$, which we showed in \S~\ref{subsec:equiv-cr}. We can interpret $\Delta L^y(\varphi)$ as the directional derivative at $w=\one$ in the direction $y$ for the Dirichlet energy of $\varphi$ with respect to $L(w)$.

\begin{lemma}\label{lemma:energy-change}
    Suppose there exist unit eigenvectors $\varphi_1, \ldots, \varphi_r \in \E_{\lambda_2}$ of $G$ and  $a_i \geq 0, \sum_{i=1}^r a_i = 1$ such that for any perturbation $w = \one + y$ where $\sum_{uv} y_{uv} = 0$,
    \begin{equation}\label{eq:equilibrium}
        \sum_{i=1}^r a_i \Delta L^y(\varphi_i) = 0.
    \end{equation}
    Then $G$ is lower conformally rigid.
\end{lemma}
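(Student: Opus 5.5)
The plan is a direct minimax argument. I will use the Rayleigh quotient characterization of $\lambda_2(w)$ and avoid the subdifferential machinery entirely. Fix an arbitrary normalized nonnegative weight vector $w$ and write $y = w - \one$, so that $\sum_{uv} y_{uv} = 0$.

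Each $\varphi_i$ is a unit eigenvector in $\E_{\lambda_2}$. Since $\lambda_2 > 0$ for connected $G$, each $\varphi_i$ is orthogonal to $\one$, so each is admissible in the variational formula
\[\lambda_2(w) = \min_{\|\varphi\|=1,\ \langle \one,\varphi\rangle = 0} \langle \varphi, L(w)\varphi\rangle.\]
Hence for every $i$,
\[\lambda_2(w) \le \langle \varphi_i, L(w)\varphi_i\rangle = \langle \varphi_i, L\varphi_i\rangle + \Delta L^y(\varphi_i) = \lambda_2 + \Delta L^y(\varphi_i),\]
using that the Dirichlet energy is linear in the weights.

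Next I take the convex combination of these $r$ inequalities with the coefficients $a_i$. Because $\sum_i a_i = 1$ and $a_i \ge 0$, this gives
\[\lambda_2(w) \le \lambda_2 + \sum_{i=1}^r a_i \Delta L^y(\varphi_i) = \lambda_2,\]
where the last equality is the hypothesis \eqref{eq:equilibrium}. Equivalently, at least one index $i$ has $\Delta L^y(\varphi_i) \le 0$, which matches the intuition from \S~\ref{sec:main-results}. Since $w$ was arbitrary, $G$ is lower conformally rigid.

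No step here is a genuine obstacle. The only points needing care are two admissibility checks. First, the eigenvectors must be orthogonal to $\one$; this holds because $\lambda_2 \neq 0$ and $L$ is symmetric. Second, the perturbation $y$ must really range over all zero-sum vectors arising from feasible $w$; this holds by the normalization $\sum w_{uv} = \abs{E}$.

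For the upper case, the same argument runs with the max characterization of $\lambda_n(w)$, and the inequalities reverse.
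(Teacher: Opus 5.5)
Your proposal is correct and follows the paper's proof: bound $\lambda_2(w)$ by the Rayleigh quotient of each $\varphi_i$ under $L(w)$, use linearity in the weights to write this as $\lambda_2 + \Delta L^y(\varphi_i)$, and average with the $a_i$ so that the hypothesis kills the correction term. Your explicit check that each $\varphi_i \perp \one$, because $\lambda_2 > 0$ on a connected graph, is a detail the paper leaves implicit.
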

\begin{proof}
    For any $w = \one + y$, 
    \begin{align*}
        \lambda_2(w) &\leq \sum_{i=1}^r a_i \langle \varphi_i, L(w) \varphi_i \rangle = \sum_{i=1}^r a_i \paren{\langle \varphi_i, L \varphi_i \rangle + \Delta L^y(\varphi_i)} \\
        &= \sum_{i=1}^r a_i\paren{\lambda_2 + \Delta L^y(\varphi_i) } = \lambda_2 + 0 = \lambda_2. \qedhere
    \end{align*} 
\end{proof}
The equation~\eqref{eq:equilibrium} is one of equilibrium --- there is no perturbation of weights $w = \one + y$ that simultaneously increases the Dirichlet energy of \textit{every} vector in $\E_{\lambda_2}$. Since $\lambda_2(w)$ is the minimum Dirichlet energy over unit vectors orthogonal to $\one$, any perturbation that decreases (or maintains) the Dirichlet energy of even a single eigenvector in $\E_{\lambda_2}$ forces $\lambda_2(w) \leq \lambda_2$; the equilibrium condition ensures every perturbation does exactly this to at least one eigenvector. We now show that the subdifferential condition~\eqref{eq:cr-subgradient} implies~\eqref{eq:equilibrium}.
\begin{proof}[Alternate proof of ($\impliedby$) for Theorem~\ref{thm:cr-subgradient}]
    Suppose we have eigenvectors $\varphi_1, \ldots, \varphi_r$, convex coefficients $a_i$ and a constant $c > 0$ satisfying~\eqref{eq:cr-subgradient}. Then for any perturbation $w = \one + y$ where $\sum_{uv \in E} y_{uv} = \langle y, \one \rangle =0$, we have
    \begin{equation*}
    \sum_{i=1}^r a_i \Delta L^y(\varphi_i) = \sum_{i=1}^r a_i \langle y, \ell(\varphi_i) \rangle = \left\langle y, \sum_{i=1}^r a_i\ell(\varphi_i) \right\rangle = \langle y, c \one \rangle = 0.
    \end{equation*}
    Thus, by Lemma~\ref{lemma:energy-change}, $G$ is lower conformally rigid.
\end{proof}

This proof is essentially showing that $G$ has no feasible direction of 
subgradient descent for $-\lambda_2$ at $w = \one$: by~\eqref{eq:subdifferential}, 
the subgradients of $-\lambda_2$ at $\one$ are $\{-\ell(\varphi) : \varphi \in 
\E_{\lambda_2},\, \|\varphi\| = 1\}$, and the condition $\sum_{i=1}^r a_i 
\ell(\varphi_i) = c\one$ is precisely the Lagrange multiplier condition for 
the constraint $\sum_{uv \in E} w_{uv} = |E|$. When $\lambda_2$ 
is simple, this is especially transparent: $\lambda_2(w)$ is differentiable 
at $w = \one$ with gradient $\nabla \lambda_2(\one) = \ell(\varphi)$, so the 
condition reduces to $\ell(\varphi) = c\one$, which is the classical 
Lagrange multiplier condition. The multiple eigenvalue case is the 
nonsmooth generalization, where the unique gradient is replaced by a convex 
combination of subgradients. 

We now show that when this condition fails, one can explicitly construct a perturbation that increases $\lambda_2(w)$, proving that $G$ is not lower conformally rigid. It suffices to show that if~\eqref{eq:cr-subgradient} does not hold, then we can find nonconstant weights $w$ where $\langle \varphi, L(w) \varphi \rangle > \lambda_2$ for all $\|\varphi \| = 1$ such that $\langle \one, \varphi \rangle = 0$. We first show that we can do this for all unit-length $\varphi \in \E_{\lambda_2}$.

\begin{proposition}\label{prop:hyperplane}
    Suppose that~\eqref{eq:cr-subgradient} does not hold for a graph $G$, that is, the line $c\one$ does not intersect $\partial(-\lambda_2)(\one)$. Then there exists a perturbation $y$ with $\sum_{uv \in E} y_{uv} = 0$ such that $\langle y, \ell(\varphi) \rangle > 0$ for all unit-length $\varphi \in \E_{\lambda_2}$. 
\end{proposition}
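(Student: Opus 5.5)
The plan is a separating hyperplane argument applied to the compact convex set
\[C := \Conv\set{\ell(\varphi) : \varphi \in \E_{\lambda_2}, \|\varphi\| = 1} \subseteq \R^{E},\]
which is $-\partial(-\lambda_2)(\one)$ by \eqref{eq:subdifferential}. The first step is to record that $C$ is compact: the unit sphere in $\E_{\lambda_2}$ is compact, $\ell$ is continuous, and the convex hull of a compact set in finite dimensions is compact (Carath\'eodory). The hypothesis that \eqref{eq:cr-subgradient} fails says exactly that $C$ is disjoint from the ray $R := \set{c\one : c > 0}$. Since every element of $C$ is nonzero and nonnegative, $0 \notin C$. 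Hence $C$ is also disjoint from the full line $D := \set{c\one : c \in \R}$: for $c \le 0$, a point $c\one \in C$ would force $c\one = 0$ or a negative coordinate.

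Next, I would apply strict separation of a compact convex set from a closed convex set (here the subspace $D$). This gives a vector $z \in \R^E$ and $\alpha \in \R$ with
\[\langle z, x\rangle > \alpha > \langle z, c\one\rangle \quad \text{for all } x \in C,\ c \in \R.\]
Because $c \mapsto c\langle z, \one\rangle$ is bounded above on all of $\R$, we must have $\langle z,\one\rangle = 0$. Then the right inequality reads $\alpha > 0$, so $\langle z, x\rangle > 0$ on $C$. Setting $y := z$ gives $\sum_{uv} y_{uv} = 0$ and $\langle y, \ell(\varphi)\rangle > 0$ for every unit $\varphi \in \E_{\lambda_2}$, since each such $\ell(\varphi)$ lies in $C$. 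If one prefers, $y$ can be rescaled by a positive constant without affecting either property.

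The only delicate point is why the separation must be against the whole line rather than just the ray. Separating only from $R$ could produce a $z$ with $\langle z, \one\rangle < 0$, and such a $z$ is not a valid perturbation. The cleanest way around this is the nonnegativity observation above, which upgrades disjointness from $R$ to disjointness from the subspace $D$ and thereby forces $\langle z,\one\rangle = 0$.

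A concrete alternative is to let $x^*$ be the point of $C$ nearest to $D$ and take $y = x^* - \frac{\langle x^*, \one\rangle}{|E|}\one$, the component of $x^*$ orthogonal to $\one$. This $y$ is nonzero because $x^* \notin D$, and the standard projection inequality then gives $\langle y, x\rangle \ge \|y\|^2 > 0$ for all $x \in C$. In either version the main obstacle is only this orthogonality bookkeeping; compactness and convexity are routine.
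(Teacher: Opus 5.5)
Your proposal is correct and takes the same route as the paper: strictly separate the compact convex set $\Conv\set{\ell(\varphi) : \varphi \in \E_{\lambda_2}, \|\varphi\|=1}$ from the line $\set{c\one : c \in \R}$, and use that a linear functional bounded above on a line must vanish on it to get $\langle y, \one\rangle = 0$. Your explicit step upgrading disjointness from the ray $c>0$ to the whole line, via nonnegativity and nonvanishing of $\ell(\varphi)$ (which also yields $\alpha > 0$), fills in a point the paper leaves implicit: its proof writes the separation only for $c > 0$, yet concludes that the hyperplane is parallel to $\one$.
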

\begin{proof}
    Suppose that $\partial(-\lambda_2)(\one)$ does not intersect the subspace $c \one$. Since $\partial(-\lambda_2)(\one)$ is a compact, convex set and $c \one$ is closed and convex, we can find a separating hyperplane $y$ such that $\langle y, \ell(\varphi) \rangle > \langle y, c \one \rangle = c \langle y, \one \rangle$ for all unit-length $\varphi \in \E_{\lambda_2}$ and $c > 0$. This means the separating hyperplane must be parallel to the subspace $c \one$ i.e. $\sum_{uv \in E} y_{uv} = \langle y , \one \rangle = 0$. In other words, $w = \one + y$ is a valid perturbation such that the energy change $\Delta L^y (\varphi)$ is strictly positive for all $\varphi \in \E_{\lambda_2}$ with $\| \varphi \| = 1$. 
\end{proof}

We have shown that when~\eqref{eq:cr-subgradient} cannot be satisfied for $G$, we can simultaneously increase the energy for all eigenvectors in $\E_{\lambda_2}$. However, other vectors not in $\E_{\lambda_2}$ might have their energy decrease, so we have to verify that we can choose weights where such that $\langle \varphi, L(w) \varphi \rangle > \lambda_2$ still holds for $\varphi \notin \E_{\lambda_2}$. Informally, we will argue that any unit vector $\varphi$ ``mostly'' in $\E_{\lambda_2}$ will have its energy increase under a perturbation given in Proposition~\ref{prop:hyperplane}. For any unit-length $\varphi$ with ``large'' component outside of $\E_{\lambda_2}$, we know its initial Dirichlet energy $\langle \varphi, L \varphi \rangle$ is ``a lot larger'' than $\lambda_2$. Thus, we can choose $y$ small enough to make sure the energy of $\varphi$ stays above $\lambda_2$.

Denote $S = \set{\varphi : \| \varphi \| = 1, \ \langle \one, \varphi \rangle = 0}$. For any eigenvalue $\lambda > 0$, let $S_\lambda = \set{\varphi \in \E_\lambda : \|\varphi \| = 1}$ and $S_\lambda^\bot = \set{\varphi \in \E_\lambda^\bot \cap S}$. Then any $\varphi \in S$ can be written uniquely as the sum $\varphi = a \psi + b \eta$ with $\psi \in S_{\lambda_2}, \eta \in S_{\lambda_2}^\bot$ and $a^2 + b^2 = 1$.
\begin{lemma}\label{lemma:nbd}
    Suppose there exists some $y \in \R^\abs{E}$ and $M > 0$ such that $\Delta L^y (\varphi) = \langle y, \ell(\varphi) \rangle \geq M$ for all $\varphi \in S_{\lambda_2}$. Then there exists some $\varepsilon > 0$ such that for all $\varphi = a \psi + b \eta \in S$ with $b^2 \leq \varepsilon$,  $\Delta L^y (\varphi) = \langle y, \ell(\varphi) \rangle \geq \frac{M}{2}$.
\end{lemma}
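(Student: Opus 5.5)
The natural approach is to exploit that $\Delta L^y(\varphi) = \langle \varphi, L^y \varphi\rangle$ is a quadratic form, where $L^y := L(\one+y) - L$ is the weighted Laplacian with (possibly signed) weights $y$. We expand $\varphi = a\psi + b\eta$ bilinearly and bound the cross and tail terms uniformly. Concretely, we write
\[
\Delta L^y(\varphi) = a^2 \langle \psi, L^y\psi\rangle + 2ab\,\langle \psi, L^y \eta\rangle + b^2 \langle \eta, L^y \eta\rangle .
\]
The first term is $a^2 \Delta L^y(\psi) \geq a^2 M = (1-b^2)M$ by hypothesis, since $\psi \in S_{\lambda_2}$.

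For the remaining terms we use only the operator norm $\|L^y\|$, a finite constant depending on $y$ (for instance bounded by $2\max_v \sum_{u\sim v}|y_{uv}|$ via Gershgorin, though any bound suffices). Because $\psi,\eta$ are unit vectors and $|a|\leq 1$, Cauchy--Schwarz gives $|2ab\langle\psi,L^y\eta\rangle| \leq 2|b|\,\|L^y\|$ and $|b^2\langle \eta,L^y\eta\rangle|\leq b^2\|L^y\|$. Hence
\[
\Delta L^y(\varphi) \geq M - b^2 M - 2|b|\,\|L^y\| - b^2\|L^y\| ,
\]
and the right-hand side is a continuous function of $|b|$ equal to $M$ at $b=0$.

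We then choose $\varepsilon>0$ small enough that $\varepsilon M + 2\sqrt{\varepsilon}\,\|L^y\| + \varepsilon\|L^y\| \leq M/2$. For example, take $\sqrt{\varepsilon} \leq \min\{1/2,\ M/(8\|L^y\|+8M)\}$, or simply argue by continuity at $0$. With $b^2\leq\varepsilon$ this yields $\Delta L^y(\varphi)\geq M/2$, which is the claim.

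The only point requiring care is that the cross term is linear rather than quadratic in $b$. This is why the threshold is on $\sqrt{\varepsilon}$ and why we cannot simply drop the cross term. It is also why $\eta$ need not be an eigenvector: the bound uses only that $\eta$ is a unit vector. The decomposition $\varphi = a\psi+b\eta$ with $a^2+b^2=1$ is available from the setup preceding the lemma. In the degenerate case $a=0$ or $b=0$ the corresponding unit vector can be chosen arbitrarily without affecting the estimate. Since the bounds are uniform in $\psi$ and $\eta$, $\varepsilon$ depends only on $M$ and $y$, as required.
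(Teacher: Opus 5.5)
Your proof is correct, but it takes a different route from the paper.

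The paper's argument is purely topological. It uses only that $\varphi \mapsto \langle y, \ell(\varphi)\rangle$ is continuous on $S$, so the compact image of $S_{\lambda_2}$ lies in $[M,\infty)$. The preimage of an open interval around that image which stays inside $[M/2,\infty)$ is an open neighborhood of $S_{\lambda_2}$ in $S$. The paper then asserts that any such neighborhood contains a slab $\{a\psi + b\eta : b^2 \le \varepsilon\}$.

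You instead use the quadratic structure $\Delta L^y(\varphi) = \langle \varphi, L^y \varphi\rangle$ with $L^y = L(\one+y) - L$. You expand bilinearly and bound the cross and tail terms by $\|L^y\|$, which gives an explicit threshold. Your choice $\sqrt{\varepsilon} \le \min\{1/2,\ M/(8\|L^y\|+8M)\}$ works: with $t=\sqrt{\varepsilon}$ the error is at most $t\,(M/2 + 5\|L^y\|/2) \le 5M/16$.

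What each approach buys:
\begin{itemize}
\item The paper's proof is shorter and would work for any continuous functional, not just a quadratic one.
\item Its final step, that an open neighborhood of $S_{\lambda_2}$ contains a uniform slab $\{b^2 \le \varepsilon\}$, is stated without proof. It silently needs a compactness argument, for instance $\|\varphi - \psi\|^2 = (1-a)^2 + b^2 \le 2b^2$ (after choosing the sign of $\psi$ so that $a \ge 0$), combined with the positive distance from the compact set $S_{\lambda_2}$ to the closed complement of the neighborhood.
\item Your argument avoids that step entirely and is fully elementary.
\item It also makes $\varepsilon$ quantitative in $M$ and $y$. That could in turn make the step size $\delta$ explicit in the subsequent alternate proof of the forward direction of Theorem~\ref{thm:cr-subgradient}.
\end{itemize}
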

\begin{proof}
    The map $\varphi \mapsto \langle y, \ell(\varphi) \rangle$ on $S$ is continuous, so the image of the compact set $S_{\lambda_2}$ is a compact set contained in the interval $[M, \infty)$. Thus there exists an open interval $U' \subseteq \R$ containing $\langle y, \ell(S_{\lambda_2}) \rangle$ where $x \geq \frac{M}{2} $ for all $ x \in U'$. Then the preimage $U$ of $U'$ gives us an open neighborhood of $S_{\lambda_2}$ in $S$ where $\langle \varphi, L \varphi \rangle \geq \frac{M}{2}$ for all $\varphi \in U$. Finally, any open neighborhood of $S_{\lambda_2}$ in $S$ contains another neighborhood of $S_{\lambda_2}$ of the form $\set{\varphi = a \psi + b \eta : b^2 \leq \varepsilon}$ for some $ \varepsilon > 0$.
\end{proof}
The above lemma gives us a precise form of the statement that if a perturbation $y$ increases the energy on $\E_{\lambda_2}$, it also increases the energy near $\E_{\lambda_2}$. Now we formalize the second part of our argument.

\begin{lemma}\label{lemma:spectral-gap}
    Let $\lambda' > \lambda_2$ be the next largest distinct Laplacian eigenvalue of $G$. For $\varphi = a \psi + b \eta \in S$ with $\psi \in S_{\lambda_2}$ and $\eta \in S_{\lambda_2}^\bot$, we have $\langle \varphi, L \varphi \rangle - \lambda_2 \geq b^2(\lambda' - \lambda_2)$.
\end{lemma}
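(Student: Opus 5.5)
We plan to expand the quadratic form along the orthogonal decomposition $\varphi = a\psi + b\eta$ and use the spectral theorem for $L$ on $\one^\perp$.

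First we kill the cross terms. Since $\psi \in \E_{\lambda_2}$, we have $L\psi = \lambda_2\psi$. Because $L$ is symmetric and $\eta \perp \E_{\lambda_2}$,
\[\langle \eta, L\psi \rangle = \lambda_2 \langle \eta, \psi \rangle = 0.\]
Hence
\[\langle \varphi, L\varphi\rangle = a^2\lambda_2 + b^2\langle \eta, L\eta\rangle.\]

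Next we bound $\langle \eta, L\eta\rangle$ from below. The vector $\eta$ is a unit vector orthogonal to both $\one$ (which spans $\E_0$, since $G$ is connected) and $\E_{\lambda_2}$. Because $L$ preserves this orthogonal complement, we can expand $\eta$ in an orthonormal eigenbasis of $L$ restricted to it. Every eigenvalue appearing there is at least $\lambda'$, the next distinct eigenvalue after $\lambda_2$. By Rayleigh--Ritz (equivalently, expanding in the eigenbasis) this gives $\langle \eta, L\eta\rangle \ge \lambda'$. If no such $\lambda'$ exists, i.e.\ $\lambda_2 = \lambda_n$, then $S_{\lambda_2}^\perp$ is empty, so $b = 0$ and the claim is trivial.

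Finally we combine. Using $a^2 + b^2 = 1$,
\[\langle \varphi, L\varphi\rangle - \lambda_2 \ge a^2\lambda_2 + b^2\lambda' - (a^2+b^2)\lambda_2 = b^2(\lambda' - \lambda_2).\]

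There is no real obstacle here. The only points needing care are the vanishing of the cross term, which uses the symmetry of $L$, and the justification that $L$ restricted to $(\E_0 \oplus \E_{\lambda_2})^\perp$ has all eigenvalues at least $\lambda'$; both are standard.
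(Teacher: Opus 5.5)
Your proof is correct and follows the paper's route: expand along $\varphi = a\psi + b\eta$, drop the cross terms, bound $\langle \eta, L\eta\rangle \geq \lambda'$, and combine using $a^2+b^2=1$. Your justification is slightly more careful than the paper's, which calls $\eta$ an eigenvector; you rightly use only that $\psi$ is an eigenvector (with symmetry of $L$) for the cross terms, and that $\eta$ lies in the span of eigenspaces with eigenvalues at least $\lambda'$ for the lower bound.
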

\begin{proof}
    Since $\psi$ and $\eta$ are eigenvectors of $L$ with $\langle \psi, \eta \rangle = 0$, we get that 
    \begin{align*}
        &\langle \varphi, L \varphi \rangle = a^2 \langle \psi, L \psi \rangle + b^2 \langle \eta, L \eta \rangle \geq a^2 \lambda_2 + b^2 \lambda' \\[4pt]
        \implies &\langle \varphi, L \varphi \rangle - \lambda_2 \geq a^2 \lambda_2 + b^2 \lambda' - \lambda_2 = b^2(\lambda' - \lambda_2). \qedhere
    \end{align*}
\end{proof}

We can now prove the forward direction of Theorem~\ref{thm:cr-subgradient}.

\begin{proof}[Alternate proof of ($\implies$) for Theorem~\ref{thm:cr-subgradient}]
    Let $M$ be as in Lemma~\ref{lemma:nbd} and choose $\varepsilon > 0$ so that $\langle y, \ell(\varphi) \rangle \geq \frac{M}{2}$ for all $\varphi = a \psi + b \eta$ where $b^2 \leq \varepsilon$. We know that $\langle w, \ell(\varphi) \rangle$ is continuous as a function of $w$ and $\varphi$ on the compact domain $\set{w \geq 0 : \langle w , \one \rangle = |E|} \times S$. Thus, $\langle w, \ell(\varphi) \rangle$ is uniformly continuous, so we can choose some $\delta > 0$ such that for all $\varphi \in S$, 
    \begin{equation}
        \delta \abs{\Delta L^y(\varphi)} = \delta \abs{ \langle y, \ell(\varphi) \rangle} = \abs{\langle \varphi, L \varphi \rangle - \langle \varphi, L(\one + \delta y) \varphi \rangle} < \varepsilon(\lambda' - \lambda_2).
        \label{eq:bdd-energy}
    \end{equation}
    Now we will show that $\lambda_2(w) > \lambda_2$ for $w = \one + \delta y$. If $b^2 \leq \varepsilon$, then by Lemma~\ref{lemma:nbd},
    \[ \langle \varphi, L(\one + \delta y) \varphi \rangle = \langle \varphi, L \varphi \rangle + \delta \Delta L^y(\varphi) \geq \lambda_2 + \delta \cdot \frac{M}{2} > \lambda_2.\]
    If $b^2 > \varepsilon$ and $\lambda' > \lambda_2$ is the next largest Laplacian eigenvalue, then 
    \begin{align*}
        \langle \varphi, L(\one + \delta y)\varphi \rangle - \lambda_2 \geq (\langle \varphi, L \varphi \rangle - \lambda_2) - \delta \abs{\Delta L^y(\varphi)} > (b^2 - \varepsilon)(\lambda' - \lambda_2) > 0.
    \end{align*}
    where the second-to-last inequality holds from Lemma~\ref{lemma:spectral-gap} and our choice of $\delta$ in \eqref{eq:bdd-energy}. We have shown $\langle \varphi, L(w) \varphi \rangle > \lambda_2$ for all $\varphi \in S$, so $\lambda_2(w) > \lambda_2$. Hence, $G$ is not lower conformally rigid.
\end{proof}
\section{Symmetry Reduction}\label{sec:sym}
\subsection{Symmetry Reduction}
If $G$ has a large group of automorphisms, we can drastically reduce the number of constraints to check whether $G$ is conformally rigid or not. In this section, we let $\Psi \leq \Aut(G)$ be a subgroup of automorphisms of $G$. By definition of a graph automorphism, every element $\sigma \in \Psi$ acts on $V$ and $E$ by permutation. These actions lift to $\R^{V}$ and $\R^{E}$ by permuting coordinates. For $\varphi: V \to \R$ and $\sigma \in \Psi$, we have $(\sigma \cdot \varphi)(u) = \varphi(\sigma^{-1}(u))$ and similarly, for $y: E \to \R$, we have 
$(\sigma \cdot y)_{uv} = y_{\sigma^{-1}(u)\sigma^{-1}(v)}$.
We observe that the map $\ell: \R^V \to \R^E$ given in \eqref{eq:ell} is $\Psi$-equivariant i.e. for all $\varphi: V \to \R$ and $\sigma \in \Psi$,
\begin{equation}
    \ell( \sigma \cdot \varphi) = \sigma \cdot \ell(\varphi)
    \label{eq:equivariance}
\end{equation}
Indeed, we can check that for any $\sigma \in \Psi$ and $uv \in E$,
\begin{align*}
    \ell(\sigma \cdot \varphi)_{uv} &= \paren{\sigma \cdot \varphi(u)- \sigma \cdot \varphi(v)}^2 = \paren{\varphi( \sigma^{-1} (u)) - \varphi(\sigma^{-1}(v))}^2 \\
    &= \ell(\varphi)_{\sigma^{-1}(u)\sigma^{-1}(v)} = \paren{\sigma \cdot \ell(\varphi)}_{uv}.
\end{align*}
We also observe that $\Psi$ acts on $\R^{E}$ by permutation matrices. Since permutation matrices are orthogonal, we know that for any $y, y': E \to \R$ and $\sigma \in \Psi$,
\begin{equation}
    \langle \sigma \cdot y, y' \rangle = \langle y, \sigma^{-1} \cdot y' \rangle
    \label{eq:ortho-action}
\end{equation}
With these observations, we can symmetry reduce our conformal rigidity check.
\begin{definition}
    We call edge weights $w: E \to \R, \ w \geq 0$, \textit{symmetric} (with respect to $\Psi$) if $\sigma \cdot w = w$ for all $\sigma \in \Psi$.
\end{definition}
Given any normalized edge weights $w \geq 0$ with $\sum_{uv \in E} w_{uv} = \abs{E}$, we can symmetrize $w$ to get a symmetric weighting $w'$ given by 
\[w' = \frac{1}{\abs{\Psi}} \sum_{\tau \in \Psi} \tau \cdot w.\]
Clearly $w' \geq 0$ and 
\[ \sum_{uv \in E} w'_{uv} = \frac{1}{\abs{\Psi}} \sum_{\tau \in \Psi} \sum_{uv \in E}(\tau \cdot w)_{uv} = \frac{1}{\abs{\Psi}}\sum_{\tau \in \Psi} \abs{E} = \abs{E}.\]
Moreover, for any $\sigma \in \Psi$,
\[ \sigma \cdot w' = \frac{1}{\abs{\Psi}} \sum_{\tau \in \Psi} (\sigma \circ \tau) \cdot w = \frac{1}{\abs{\Psi}} \sum_{\tau \in \Psi} \tau \cdot w = w'.\]
We call $w'$ the \textit{symmetrization} of $w$.
\begin{proposition}
    For any edge weights $w: E \to \R$, its symmetrization $w'$ satisfies $\lambda_2(w') \geq \lambda_2(w)$ and $\lambda_n(w') \leq \lambda_n(w)$.
\end{proposition}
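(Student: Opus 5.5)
The plan is to use the concavity of $\lambda_2$ (and convexity of $\lambda_n$) in $w$, established in \S\ref{subsec:equiv-cr}, together with the invariance of the spectrum under graph automorphisms. The symmetrization $w' = \frac{1}{\abs{\Psi}}\sum_{\tau\in\Psi}\tau\cdot w$ is a convex combination of the permuted weightings $\tau\cdot w$. Concavity of $\lambda_2$ and convexity of $\lambda_n$ (Jensen's inequality for finite averages) then give
\[
\lambda_2(w') \geq \frac{1}{\abs{\Psi}}\sum_{\tau\in\Psi}\lambda_2(\tau\cdot w), \qquad \lambda_n(w') \leq \frac{1}{\abs{\Psi}}\sum_{\tau\in\Psi}\lambda_n(\tau\cdot w).
\]
So the whole proof reduces to showing $\lambda_k(\tau\cdot w)=\lambda_k(w)$ for every $\tau\in\Psi$ and every $k$.

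For this invariance, I would let $P_\tau$ denote the permutation matrix of $\tau$ acting on $\R^V$, so that $(\tau\cdot\varphi) = P_\tau\varphi$. The key identity is
\[
L(\tau\cdot w) = P_\tau L(w) P_\tau^T.
\]
The cleanest way to see this is through quadratic forms, using the equivariance \eqref{eq:equivariance} and the orthogonality relation \eqref{eq:ortho-action}:
\[
\langle \varphi, L(\tau\cdot w)\varphi\rangle = \langle \tau\cdot w, \ell(\varphi)\rangle = \langle w, \tau^{-1}\cdot\ell(\varphi)\rangle = \langle w, \ell(\tau^{-1}\cdot\varphi)\rangle = \langle \tau^{-1}\cdot \varphi, L(w)\,\tau^{-1}\cdot\varphi\rangle.
\]
Two symmetric matrices with equal quadratic forms are equal, which gives $L(\tau\cdot w)=P_{\tau}L(w)P_{\tau}^{-1}$. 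Since $P_\tau$ is orthogonal, $L(\tau\cdot w)$ is similar to $L(w)$ and has the same spectrum. In particular $\lambda_2(\tau\cdot w)=\lambda_2(w)$ and $\lambda_n(\tau\cdot w)=\lambda_n(w)$.

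Combining the two steps, each average on the right-hand side equals $\lambda_2(w)$ (respectively $\lambda_n(w)$), which gives the claim. There are two points to note about the domain:

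\begin{itemize}
\item Concavity of $\lambda_2$ as a minimum of linear functions holds on all of $\R^E$, so it applies for any $w$. The Rayleigh quotient characterization is valid for any symmetric $L(w)$.
\item $\tau\cdot w\ge 0$ whenever $w\ge 0$, so nonnegativity is preserved if one wishes to stay in the feasible set.
\end{itemize}

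The only mildly delicate step is getting the direction of the action right in the conjugation identity, that is, whether the conjugating permutation is $\tau$ or $\tau^{-1}$. This does not affect the conclusion, since either way the result is an orthogonal similarity. Everything else is a direct application of Jensen's inequality.
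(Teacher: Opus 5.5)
Your argument is correct and is essentially the paper's proof, repackaged: the paper fixes a unit $\varphi\perp\one$ and averages the inequalities $\langle \sigma\cdot\varphi, L(w)\,(\sigma\cdot\varphi)\rangle \ge \lambda_2(w)$ over $\sigma\in\Psi$, which is exactly your Jensen step combined with $\lambda_2(\tau\cdot w)=\lambda_2(w)$, carried out one test vector at a time. One correction to your domain remark: the minimum of the Rayleigh quotient over $\one^\perp$ equals the second smallest eigenvalue of $L(w)$ only when $L(w)\succeq 0$ (if $L(w)$ has a negative eigenvalue on $\one^\perp$, that minimum is $\lambda_1(w)$, not $\lambda_2(w)$), so your proof, like the paper's, should be stated for weights $w\ge 0$, which is the paper's standing convention for edge weights anyway.
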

\begin{proof}
    Let $w' = \frac{1}{\abs{\Psi}} \sum_{\tau \in \Psi} \tau \cdot w$ and $\varphi$ with $\|\varphi \| = 1$ be orthogonal to the constant vector $\one$. By the variational characterization of $\lambda_2(w)$, we have 
    \[ \langle w, \ell(\varphi) \rangle = \langle \varphi, L(w) \varphi \rangle \geq \lambda_2(w).\]
    Then for any $\sigma \in \Psi$, we know $\sigma \cdot \varphi$ is still unit-length and orthogonal to $\one$, so 
    \[ \langle w, \sigma \cdot \ell (\varphi) \rangle = \langle w, \ell(\sigma \cdot \varphi) \rangle = \langle \sigma \cdot \varphi, L(w) (\sigma \cdot \varphi) \rangle \geq \lambda_2(w),\]
    where the first equality holds from the equivariance of $\ell$ in \eqref{eq:equivariance}. Thus,
    \begin{align*}
        \lambda_2(w) &\leq \frac{1}{\abs{\Psi}} \sum_{\sigma \in \Psi} \langle w, \sigma \cdot \ell(\varphi) \rangle = \frac{1}{\abs{\Psi}} \sum_{\sigma \in \Psi} \langle \sigma^{-1} \cdot w, \ell(\varphi) \rangle \\
        &= \left\langle \left(\frac{1}{\abs{\Psi}} \sum_{\sigma \in \Psi} \sigma^{-1} \cdot w \right), \ell(\varphi) \right\rangle = \langle w', \ell(\varphi) \rangle = \langle \varphi, L(w') \varphi \rangle,
    \end{align*}
    where the first equality holds from \eqref{eq:ortho-action}. Since $\varphi$ was arbitrary, we know $\lambda_2(w') \geq \lambda_2(w).$
    The same argument with inequalities flipped shows $\lambda_n(w') \leq \lambda_n(w)$.
\end{proof}

This proof is similar in spirit to the classical result in mathematical physics that symmetrizing a domain can only decrease its principal Dirichlet eigenvalue~\cite[\S 5.3]{baernstein}. In our case, symmetrizing the edge weights $w$ can only increase $\lambda_2(w)$ and decrease $\lambda_n(w)$. A similar argument was used to prove Theorem 2.3 in~\cite{steinerberger}. With this proposition, we can reduce the search space of the problems in \eqref{eq:lambda_n} and \eqref{eq:lambda_2}
from all normalized edge weights $w: E \to \R$ to only those that are symmetric. Let $\Om_1, \ldots, \Om_s$ be the edge orbits of $G$ under $\Psi$, and let $L^i$ be the graph Laplacian of the subgraph induced by $\Om_i$. Explicitly, for $\varphi : V \to \R$,
\begin{equation}\label{eq:Li}
    \langle \varphi,  L^i \varphi \rangle = \sum_{uv \in \Om_i} \ell(\varphi)_{uv}
\end{equation}
measures the total squared edge length of $\varphi$ on the orbit $\Om_i$. This yields the orbit-energy vector $\ell_\Psi(\varphi) = \paren{\langle \varphi, L^i \varphi \rangle}_{i=1}^s = \paren{\sum_{uv \in \Om_i} (\varphi(u) - \varphi(v))^2}_{i=1}^s$ given in Definition~\ref{def:orbit-energy}. Since symmetric edge weights are constant on edge orbits, we denote $w \in \R^s$ when $w$ is symmetric. Then for any $w \in \R^s$ and any $\varphi$, we have 
\begin{align*}
    \langle \varphi, L(w) \varphi \rangle = \sum_{i=1}^s w_i \sum_{uv \in \Om_i} \ell(\varphi)_{uv} = \sum_{i=1}^s w_i \ell_\Psi(\varphi)_i = \langle w, \ell_\Psi (\varphi) \rangle.
\end{align*}
We use the notation
\[\lambda_2^\Psi(w) = \min_{\|\varphi\| = 1} \langle w, \ell_\Psi(\varphi) \rangle, \text{ and } \lambda_n^\Psi(w) = \max_{\|\varphi\| = 1} \langle w, \ell_\Psi(\varphi) \rangle\]
to indicate that we are considering Laplacian eigenvalues of symmetric edge weights $w$. As in the unsymmetrized case, $\lambda_2^\Psi(w)$ and $\lambda_n^\Psi(w)$ are the pointwise optimum of linear functions and are thus concave/convex.
We care about optimizing these functions over all normalized symmetric weights $w$. For a symmetric weighting $w \geq 0$ to be normalized, we need $\sum_{i=1}^s w_i \abs{\Om_i} = \abs{E}$ since $w$ assigns weight $w_i$ to $\abs{\Om_i}$ edges. Equivalently, if  $\Om := (\abs{\Om_1}, \ldots, \abs{\Om_s})$, we have the problems
\begin{equation} \label{eq:sym-eigenvalues}
    \max_w \lambda_2^\Psi(w), \ \min_w \lambda_n^\Psi(w)
\end{equation}
where $w \geq 0$ is symmetric with $ \langle w, \Om \rangle = \abs{E}$. For conformal rigidity, it now suffices to check that uniform edge weights optimize the problems in~\eqref{eq:sym-eigenvalues}, which is strictly easier than checking if they optimize~\eqref{eq:lambda_n} or~\eqref{eq:lambda_2}. The uniform weights $\one \in \R^\abs{E}$ are symmetric and correspond to $w = \Om$ as symmetrized weights in $\R^s$. Thus $G$ is lower (upper) conformally rigid if and only if $\lambda_2^\Psi(\Om)$ ($\lambda_n^\Psi(\Om)$) are maximal (minimal) amongst all symmetric weights. Again, by the KKT conditions, $\lambda_2^\Psi(\Om)$ is maximal if and only if there exists some $c \in \R$ such that $c \Om \in \partial(-\lambda_2^\Psi)(\Om)$. Likewise, $\lambda_n(\Om)$ is minimal if and only if there exists some $c \in \R$ such that $c \Om \in \partial\lambda_n^\Psi(\Om)$. Applying Theorem~\ref{thm:danskin} to the symmetry reduced problems in~\eqref{eq:sym-eigenvalues}, we know 
\begin{align*}
    \partial(-\lambda_2^\Psi)(w) &= \Conv\set{- \ell_\Psi(\varphi) : \varphi \in \E_{\lambda_2(w)}, \| \varphi \| = 1}, \text{ and} \\
    \partial \lambda_n^\Psi(w) &= \Conv\set{\ell_\Psi(\varphi) : \varphi \in \E_{\lambda_n(w)}, \| \varphi \| = 1}.
\end{align*}
Adapting the proof from Theorem~\ref{thm:cr-subgradient}, we get the following result.
\begin{theorem}\label{thm:sym-subgradient}
    A graph $G$ with a group of automorphisms $\Psi \leq \Aut(G)$ is lower (upper) conformally rigid if and only if there exist unit-length eigenvectors $\varphi_1, \ldots, \varphi_r \in \E_{\lambda_2}$ ($\E_{\lambda_n}$) and coefficients $a_i \geq 0, \sum_{i=1}^r a_i = 1$ such that 
    \begin{equation}
      c \Om = \sum_{i=1}^r a_i \ell_\Psi(\varphi_i)
    \end{equation}
    for some constant $c > 0$.
\end{theorem}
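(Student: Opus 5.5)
The plan mirrors the proof of Theorem~\ref{thm:cr-subgradient}, but is run on the symmetry-reduced problems~\eqref{eq:sym-eigenvalues}. By the symmetrization proposition, any normalized weight $w$ has a symmetric normalized counterpart $w'$ with $\lambda_2(w') \geq \lambda_2(w)$ and $\lambda_n(w') \leq \lambda_n(w)$. Since $\one$ is itself symmetric, $G$ is lower (upper) conformally rigid if and only if $w=\Om$ maximizes $\lambda_2^\Psi$ (minimizes $\lambda_n^\Psi$) over $\{w \in \R^s : w \geq 0,\ \langle w, \Om\rangle = \abs{E}\}$.

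Note that $\lambda_2^\Psi$ must be read with the constraint $\langle \one, \varphi\rangle = 0$, as in the unsymmetrized case. Without that constraint the minimum would be $0$, so I will state this explicitly. This is a linearly constrained convex problem, and $w = \Om$ is strictly feasible because every $\abs{\Om_i} > 0$. Slater's condition therefore holds, and the KKT conditions~\eqref{eq:kkt} apply. Since $\Om > 0$, complementary slackness forces the multipliers for $w \geq 0$ to vanish. Optimality is then equivalent to $c\Om \in \partial(-\lambda_2^\Psi)(\Om)$, respectively $c\Om \in \partial \lambda_n^\Psi(\Om)$, for some $c \in \R$.

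Next I will compute the subdifferential with Danskin's Theorem~\ref{thm:danskin}. The index set is the compact unit sphere (intersected with $\one^\perp$ for $\lambda_2$), and $w \mapsto \langle w, \ell_\Psi(\varphi)\rangle$ is linear with gradient $\ell_\Psi(\varphi)$. The key check is the identification of the active set at $w = \Om$. For a symmetric $w \in \R^s$ we have $\langle w, \ell_\Psi(\varphi)\rangle = \langle \varphi, L(w)\varphi\rangle$ with $L(w) = \sum_i w_i L^i$, and at $w=\Om$ this corresponds to uniform weights $\one \in \R^{\abs{E}}$, so $L(w) = L$. Hence the active set consists exactly of the unit vectors in $\E_{\lambda_2}$ (respectively $\E_{\lambda_n}$) of the ordinary Laplacian, by Rayleigh--Ritz. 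This gives $\partial \lambda_n^\Psi(\Om) = \Conv\{\ell_\Psi(\varphi) : \varphi \in \E_{\lambda_n},\ \|\varphi\|=1\}$, and analogously for $-\lambda_2^\Psi$ with a minus sign.

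Finally, $c\Om$ lies in this convex hull exactly when there are unit eigenvectors $\varphi_i$ and convex weights $a_i$ with $\sum_i a_i \ell_\Psi(\varphi_i) = c\Om$, which is the statement of the theorem. The sign condition $c > 0$ follows as in Theorem~\ref{thm:cr-subgradient}. Each $\ell_\Psi(\varphi)$ is entrywise nonnegative, and it is nonzero for nonconstant $\varphi$ since $\sum_i \ell_\Psi(\varphi)_i = \langle \varphi, L\varphi\rangle = \lambda > 0$. So the combination is nonzero and nonnegative, and since $\Om > 0$ this forces $c > 0$. For $\lambda_2$ the combination equals $-c\Om$ inside $\partial(-\lambda_2^\Psi)(\Om)$, and I will just flip the sign.

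I expect the only delicate point to be the reduction step: checking that restricting to symmetric weights loses nothing in either direction of the ``if and only if''. One direction is the symmetrization proposition. The other is trivial, because the symmetric weights are a subset of all weights and contain $\one$.
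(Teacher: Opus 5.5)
Your route matches the paper's: reduce to symmetric weights via the symmetrization proposition, apply KKT to the reduced problems~\eqref{eq:sym-eigenvalues}, compute the subdifferential with Danskin's theorem, and get $c>0$ as in Theorem~\ref{thm:cr-subgradient}. You are also right that $\lambda_2^\Psi$ needs the constraint $\langle \one, \varphi\rangle = 0$, which the paper's displayed definition omits.

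However, the step you singled out as the key check is wrong as written. In your own parametrization ($w_i$ is the weight on each edge of $\Om_i$, $L(w)=\sum_i w_i L^i$, normalization $\langle w,\Om\rangle=\abs{E}$), the uniform weighting is $w=\one\in\R^s$, not $w=\Om$. The point $\Om$ is not even feasible once some orbit has two or more edges, since $\langle \Om,\Om\rangle=\sum_i\abs{\Om_i}^2>\sum_i\abs{\Om_i}=\abs{E}$. Moreover, $L(\Om)=\sum_i\abs{\Om_i}L^i$ is a multiple of $L$ only when all orbits have the same size, which fails for Crossing Number 6B with orbit sizes $12,12,6$. So your assertion $L(\Om)=L$ is false. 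Danskin applied at $\Om$ returns unit eigenvectors for $\lambda_2(L(\Om))$ (resp.\ $\lambda_n(L(\Om))$), and these eigenspaces need not coincide with $\E_{\lambda_2}$ or $\E_{\lambda_n}$. The paper's surrounding text makes the same identification when it writes $\partial(-\lambda_2^\Psi)(\Om)$, so you inherited this slip rather than introduced it.

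The repair is to evaluate at $w=\one\in\R^s$ instead; everything else stays intact.
This point is strictly feasible, since $\one>0$ and $\langle\one,\Om\rangle=\sum_i\abs{\Om_i}=\abs{E}$, so complementary slackness kills the multipliers for $w\ge 0$.
It satisfies $L(\one)=\sum_i L^i=L$, so the active set is exactly the unit sphere of $\E_{\lambda_2}$ (resp.\ $\E_{\lambda_n}$).
The vector $\Om$ enters only as the gradient of the normalization constraint. That is why the optimality condition reads $c\Om\in\partial(-\lambda_2^\Psi)(\one)$ (resp.\ $c\Om\in\partial\lambda_n^\Psi(\one)$) rather than $c\one\in\cdots$.
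With this change, your sign argument for $c>0$ and your two-directional reduction to symmetric weights go through unchanged.
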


\subsection{Orbit-Isometric Embeddings} We would now like to describe how symmetry reduction works in terms of spectral embeddings. First we give the symmetry reduced analogues to the results in \S~\ref{sec:subdifferential}. Recall that an orbit-isometric embedding on $\E_{\lambda}$ (Definition~\ref{def:orbit-iso}) is a spectral embedding such that $\frac{1}{\abs{\Om_i}}\sum_{uv \in \Om_i} \|p_u - p_v \|^2$ is independent of $1 \leq i \leq s$. The following two results correspond to Lemma~\ref{lemma:cr-subgradient} and Theorem~\ref{thm:edge-iso} respectively.

\begin{corollary}\label{cor:orbit-iso}
    A graph $G$ with a group of automorphisms $\Psi \leq \Aut(G)$ is lower (upper) conformally rigid if and only if there exist nonzero eigenvectors $\varphi_1, \ldots, \varphi_r \in \E_{\lambda_2}$ ($\E_{\lambda_n}$) such that 
    \begin{equation}\label{eq:sum-ell-psi}
        \Om = \sum_{i=1}^r \ell_\Psi(\varphi_i).
    \end{equation}
\end{corollary}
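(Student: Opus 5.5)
This follows the proof of Lemma~\ref{lemma:cr-subgradient} almost verbatim, with $\ell$ replaced by $\ell_\Psi$, $\one$ by $\Om$, and Theorem~\ref{thm:cr-subgradient} by Theorem~\ref{thm:sym-subgradient}. The only structural fact needed is that $\ell_\Psi$ is a nonnegative homogeneous quadratic map. Each coordinate $\ell_\Psi(\varphi)_i = \langle \varphi, L^i \varphi\rangle$ is a quadratic form, so $\ell_\Psi(t\varphi) = t^2 \ell_\Psi(\varphi)$ for all $t \in \R$, and $\ell_\Psi(\varphi) \geq 0$ coordinatewise.

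For the forward direction, assume $G$ is lower (upper) conformally rigid. Theorem~\ref{thm:sym-subgradient} then gives unit eigenvectors $\varphi_1,\ldots,\varphi_r$ in the relevant eigenspace, convex coefficients $a_i$, and a constant $c>0$ with $c\Om = \sum_i a_i \ell_\Psi(\varphi_i)$. I discard every index with $a_i = 0$ and set $\tilde\varphi_i = \sqrt{a_i/c}\,\varphi_i$. These are nonzero eigenvectors, and homogeneity gives $\sum_i \ell_\Psi(\tilde\varphi_i) = \Om$.

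For the converse, assume nonzero eigenvectors satisfy \eqref{eq:sum-ell-psi}. I set $M = \sum_i \|\varphi_i\|^2 > 0$ and use the unit vectors $\varphi_i/\|\varphi_i\|$ with weights $a_i = \|\varphi_i\|^2/M$. These weights are nonnegative and sum to $1$, and homogeneity gives $\sum_i a_i \ell_\Psi(\varphi_i/\|\varphi_i\|) = \frac{1}{M}\Om$. Since $c = 1/M > 0$, Theorem~\ref{thm:sym-subgradient} yields conformal rigidity.

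There is no real obstacle here. The only points to check are that discarding zero coefficients leaves a nonempty family, which holds because the $a_i$ sum to $1$, and that the resulting constant $c$ is strictly positive. Positivity is automatic in both directions, since $\Om$ has positive entries and the $\ell_\Psi$ values are nonnegative.
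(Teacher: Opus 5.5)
Your proof is correct and is the argument the paper intends: the corollary is stated as the orbit analogue of Lemma~\ref{lemma:cr-subgradient}, obtained from Theorem~\ref{thm:sym-subgradient} by rescaling the convex combination using the homogeneity of $\ell_\Psi$. Your step of discarding indices with $a_i = 0$ also closes a small gap in the paper's version of that lemma, which calls every $\sqrt{a_i/c}\,\varphi_i$ nonzero without first excluding $a_i = 0$.
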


\begin{theorem}\label{thm:orbit-iso}
    A graph $G$ with a group of automorphisms $\Psi \leq \Aut(G)$ is lower (upper) conformally rigid if and only if there exists an orbit-isometric embedding $\mathcal{P}$ on $\E_{\lambda_2}$ ($\E_{\lambda_n}$).
\end{theorem}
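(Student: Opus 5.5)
The plan is to mirror the proof of Theorem~\ref{thm:edge-iso}, with Theorem~\ref{thm:sym-subgradient} and Corollary~\ref{cor:orbit-iso} replacing Theorem~\ref{thm:cr-subgradient} and Lemma~\ref{lemma:cr-subgradient}. The key identity links a spectral embedding to orbit energies. If $\mathcal{P}$ is the spectral embedding given by $P = \begin{pmatrix}\varphi_1 & \cdots & \varphi_r\end{pmatrix}$ with $\varphi_j \in \E_\lambda$, then for each orbit $\Om_i$
\[
\sum_{uv \in \Om_i} \|p_u - p_v\|^2 = \sum_{uv \in \Om_i}\sum_{j=1}^r (\varphi_j(u)-\varphi_j(v))^2 = \sum_{j=1}^r \ell_\Psi(\varphi_j)_i ,
\]
which is just swapping finite sums. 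Consequently, $\mathcal{P}$ is orbit-isometric with constant $c$ if and only if $\sum_j \ell_\Psi(\varphi_j) = c\,\Om$.

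For the direction that an orbit-isometric embedding implies rigidity, take such an embedding on $\E_{\lambda_2}$ (the case $\E_{\lambda_n}$ is identical). First discard any zero columns $\varphi_j$; this changes no edge lengths. Since $c>0$, at least one column survives. The identity above gives $\sum_j \ell_\Psi(\varphi_j) = c\,\Om$. Rescaling each $\varphi_j$ by $1/\sqrt{c}$ and using $\ell_\Psi(t\varphi) = t^2\ell_\Psi(\varphi)$ then yields $\Om = \sum_j \ell_\Psi(\varphi_j/\sqrt c)$, and Corollary~\ref{cor:orbit-iso} gives lower conformal rigidity. Alternatively, normalizing to unit vectors with weights $\|\varphi_j\|^2/\sum_k\|\varphi_k\|^2$ produces a convex combination equal to a positive multiple of $\Om$, which can be fed directly into Theorem~\ref{thm:sym-subgradient}.

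For the converse, suppose $G$ is lower conformally rigid. Theorem~\ref{thm:sym-subgradient} then supplies unit eigenvectors $\varphi_1,\dots,\varphi_r\in\E_{\lambda_2}$, convex weights $a_j$, and $c>0$ with $\sum_j a_j\ell_\Psi(\varphi_j) = c\,\Om$. Let $\mathcal{P}$ be the embedding with columns $\sqrt{a_j}\,\varphi_j$; these still lie in $\E_{\lambda_2}$. The identity gives $\frac{1}{|\Om_i|}\sum_{uv\in\Om_i}\|p_u-p_v\|^2 = c$ for every $i$, so $\mathcal{P}$ is orbit-isometric.

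The only point requiring care is Corollary~\ref{cor:orbit-iso} itself, if one wants to rely on it rather than on Theorem~\ref{thm:sym-subgradient} alone. Its proof is the same rescaling argument as in Lemma~\ref{lemma:cr-subgradient}, applied coordinatewise to $\ell_\Psi$, and uses the fact that $\ell_\Psi(\varphi)$ is nonnegative and homogeneous of degree two. I also expect to check that $c>0$ is automatic. This holds because some $\ell_\Psi(\varphi_j)$ is nonzero: a nonzero $\varphi \perp \one$ cannot be constant on a connected graph, so some edge, and hence some orbit, carries positive energy.
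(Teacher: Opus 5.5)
Your proof is correct, but it takes a different route from the paper's.

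\emph{Your route.} You treat the theorem as a formal rewriting of Theorem~\ref{thm:sym-subgradient}. The identity $\sum_{uv\in\Om_i}\|p_u-p_v\|^2=\sum_j\ell_\Psi(\varphi_j)_i$ identifies orbit-isometric embeddings on $\E_\lambda$ with conical combinations of orbit-energy vectors lying on the ray through $\Om$. This mirrors how Theorem~\ref{thm:edge-iso} was obtained from Theorem~\ref{thm:cr-subgradient}. All the analytic content is thereby delegated to Theorem~\ref{thm:sym-subgradient}, i.e.\ to averaging the \emph{weights} over $\Psi$ and then applying KKT and Danskin to the reduced problem.

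\emph{The paper's route.} The paper instead averages the \emph{embedding}. Necessity is immediate, since the edge-isometric embedding from Theorem~\ref{thm:edge-iso} is in particular orbit-isometric. For sufficiency, the paper replaces the columns $\varphi_j$ by all translates $\sigma\cdot\varphi_j$, $\sigma\in\Psi$; these stay in $\E_\lambda$ because the eigenspace is $\Psi$-invariant. Equivariance of $\ell$ then makes the squared edge lengths constant on each edge orbit. The orbit-isometry condition forces all these constants to equal $|\Psi|\,c$, so the symmetrized embedding is edge-isometric and Theorem~\ref{thm:edge-iso} finishes.

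\emph{What each buys.} The paper's argument never uses Theorem~\ref{thm:sym-subgradient}, so combined with your translation it gives an independent proof of that theorem. It is also constructive: it turns any orbit-isometric certificate into an explicit edge-isometric one with $r|\Psi|$ columns, which is what Figure~\ref{fig:2010} displays. Your argument is shorter and makes transparent that orbit-isometry is exactly the cone condition $\Om\in\Cone\,\ell_\Psi(\E_\lambda)$ exploited in \S~\ref{sec:rep-theory}. The trade-off is that its justification rests entirely on Theorem~\ref{thm:sym-subgradient}, whose reduced KKT argument the paper gives only in outline.
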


Any edge-isometric spectral embedding $\mathcal{P}$ is orbit-isometric, so it is clear that orbit-isometry is a necessary condition for conformal rigidity. To see why this condition is sufficient, we show that we can symmetrize an orbit-isometric embedding to get an edge-isometric embedding. First, we observe the following.

\begin{proposition}\label{prop:Li-invariance}
    For each orbit Laplacian $L^1, \ldots, L^s$, we have $\langle \sigma \cdot \varphi, L^i (\sigma \cdot \varphi)\rangle = \langle \varphi, L^i \varphi \rangle$ for all $\sigma \in \Psi$ and $\varphi: V \to \R$. Consequently, 
    \[\langle \sigma \cdot \varphi, L(\sigma \cdot \varphi) \rangle = \sum_{i=1}^s\langle \sigma \cdot \varphi,  L^i(\sigma \cdot \varphi) \rangle = \sum_{i=1}^s \langle \varphi, L^i \varphi \rangle = \langle \varphi, L \varphi \rangle.\]
    for all $\sigma \in \Psi$ and $\varphi: V \to \R$.
\end{proposition}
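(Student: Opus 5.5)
The first claim follows directly from equivariance of $\ell$ in \eqref{eq:equivariance} together with the fact that each edge orbit $\Om_i$ is stable under $\Psi$. The second claim is then just a sum over the orbits.

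Fix $\sigma \in \Psi$, $\varphi : V \to \R$, and an orbit index $i$. By \eqref{eq:Li} and \eqref{eq:equivariance},
\[
\langle \sigma\cdot\varphi, L^i(\sigma\cdot\varphi)\rangle = \sum_{uv\in\Om_i} \ell(\sigma\cdot\varphi)_{uv} = \sum_{uv \in \Om_i} \ell(\varphi)_{\sigma^{-1}(u)\sigma^{-1}(v)}.
\]
Since $\Om_i$ is a $\Psi$-orbit, the map $uv \mapsto \sigma^{-1}(u)\sigma^{-1}(v)$ is a bijection of $\Om_i$ onto itself. It is injective because $\sigma^{-1}$ is a permutation of $V$, it maps $\Om_i$ into $\Om_i$ by the definition of an orbit, and $\Om_i$ is finite. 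Reindexing the sum therefore gives $\sum_{uv\in\Om_i}\ell(\varphi)_{uv} = \langle \varphi, L^i\varphi\rangle$.

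An equivalent matrix-level route is to write $\sigma$ as a permutation matrix $P_\sigma$ and check that $P_\sigma^T L^i P_\sigma = L^i$ because $\sigma$ preserves $\Om_i$. The reindexing argument above avoids this bookkeeping.

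For the consequence, the orbits $\Om_1,\ldots,\Om_s$ partition $E$. Hence $L = \sum_{i=1}^s L^i$, since $\langle \varphi, L\varphi\rangle = \sum_{uv\in E}\ell(\varphi)_{uv}$ splits over the partition and a quadratic form determines its symmetric matrix. Summing the orbitwise invariance over $i$ then gives the displayed chain of equalities.

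None of these steps is a real obstacle. The one point that needs care is the direction of the action, $\sigma^{-1}$ versus $\sigma$, in \eqref{eq:equivariance}. This is harmless because $\Psi$ is a group, so $\sigma^{-1}\in\Psi$ also preserves each orbit.
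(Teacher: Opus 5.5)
Your proposal is correct and follows essentially the same route as the paper: expand the orbit energy via \eqref{eq:Li}, apply the equivariance \eqref{eq:equivariance}, and reindex the sum using that $\sigma^{-1}$ permutes $\Om_i$. You also spell out two steps the paper leaves implicit, namely that this reindexing is a bijection of $\Om_i$ and that $L=\sum_{i=1}^s L^i$.
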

\begin{proof}
    Applying~\eqref{eq:Li} to $\sigma \cdot \varphi$, we get $\langle \sigma \cdot \varphi,  L^i (\sigma \cdot \varphi) \rangle = \sum_{uv \in \Om_i} \ell(\sigma \cdot \varphi)_{uv}$. By equivariance in~\eqref{eq:equivariance}, this is equal to 
    \[\sum_{uv \in \Om_i} \paren{\sigma \cdot \ell(\varphi)}_{uv} = \sum_{uv \in \Om_i} \ell(\varphi)_{\sigma^{-1}(u)\sigma^{-1}(v)}= \sum_{uv \in \Om_i} \ell(\varphi)_{uv} = \langle \varphi, L^i \varphi \rangle. \qedhere\]
\end{proof}

The above proposition tells us that acting on $\varphi$ by any $\sigma \in \Psi$ does not change its Dirichlet energy. In particular, each eigenspace of $L$ is invariant under the action of $\Psi$, so we can define the notion of a \textit{symmetrized embedding}.

\begin{definition}\label{def:symmetrized-embedding}
    Let $\mathcal{P}$ be a spectral embedding on $\E_\lambda$ corresponding to the matrix $P = \begin{pmatrix}
        \varphi_1 & \ldots & \varphi_r
    \end{pmatrix}$ where each $\varphi_i \in \E_\lambda$. Then for any $\sigma \in \Psi$, we have $\sigma \cdot \varphi_i \in \E_\lambda$, so we can define the $\Psi$-\textit{symmetrized embedding} of $\mathcal{P}$ to be the spectral embedding $\mathcal{P}_\Psi$ on $\E_\lambda$ corresponding to the matrix $P_\Psi$ whose columns are given by 
    \[\set{\sigma \cdot \varphi_i : \sigma \in \Psi, 1 \leq i \leq r}.\]
\end{definition}

\begin{proposition}
    Let $\mathcal{P}$ be an orbit-isometric embedding on $\E_\lambda$. Then the symmetrized embedding $\mathcal{P}_\Psi$ is edge-isometric.
\end{proposition}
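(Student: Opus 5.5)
The plan is to compute the squared edge lengths of $\mathcal{P}_\Psi$ directly and show they depend only on the orbit, and hence are constant. Write $P = \begin{pmatrix} \varphi_1 & \cdots & \varphi_r \end{pmatrix}$ with all $\varphi_i \in \E_\lambda$, and let $c > 0$ satisfy $\sum_{uv \in \Om_i} \|p_u - p_v\|^2 = c\abs{\Om_i}$ for every orbit. In $\mathcal{P}_\Psi$ the squared length of an edge $uv$ is
\[
\sum_{\sigma \in \Psi} \sum_{j=1}^r \ell(\sigma \cdot \varphi_j)_{uv} = \sum_{\sigma \in \Psi} \sum_{j=1}^r \ell(\varphi_j)_{\sigma^{-1}(u)\sigma^{-1}(v)},
\]
by the equivariance \eqref{eq:equivariance}. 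Equivalently, it is $\sum_{\sigma\in\Psi} \|p_{\sigma^{-1}(u)} - p_{\sigma^{-1}(v)}\|^2$.

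The key step is an orbit-counting argument. Fix an edge $uv$ lying in the orbit $\Om_i$. As $\sigma$ ranges over $\Psi$, the edge $\sigma^{-1}(u)\sigma^{-1}(v)$ ranges over $\Om_i$. By the orbit–stabilizer theorem, each edge of $\Om_i$ is hit exactly $\abs{\Psi}/\abs{\Om_i}$ times, namely the size of the setwise stabilizer of the edge, which is a coset count. Hence the squared length of $uv$ in $\mathcal{P}_\Psi$ equals
\[
\frac{\abs{\Psi}}{\abs{\Om_i}} \sum_{u'v' \in \Om_i} \|p_{u'} - p_{v'}\|^2 = \frac{\abs{\Psi}}{\abs{\Om_i}} \cdot c\abs{\Om_i} = c\abs{\Psi}.
\]
This quantity is independent of $i$ and of the particular edge, and it is positive. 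One point needs care: an automorphism may swap the endpoints of an edge. This is harmless because $\ell(\varphi)_{uv}$ is symmetric in $u$ and $v$, so we treat edges as unordered pairs and the count goes through unchanged.

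It remains to check that $\mathcal{P}_\Psi$ is a genuine spectral embedding on $\E_\lambda$. Proposition~\ref{prop:Li-invariance} shows each $\sigma\cdot\varphi_j$ lies in $\E_\lambda$, which is exactly why Definition~\ref{def:symmetrized-embedding} makes sense. If some columns $\sigma \cdot \varphi_j$ vanish, we can drop them without changing any lengths. Edge-isometry, with constant $\sqrt{c\abs{\Psi}}$, then follows.

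The only real obstacle I expect is the orbit–stabilizer bookkeeping just described, in particular the possible reversal of edge orientation under $\Psi$. The rest is direct substitution.
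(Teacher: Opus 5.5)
Your proposal is correct and follows essentially the same argument as the paper. You use equivariance to write each symmetrized squared edge length as $\sum_{\sigma\in\Psi}$ of original squared lengths, then double count over the orbit $\Om_i$ to obtain the constant $c\abs{\Psi}$. The only difference is bookkeeping: you count the fibers of $\sigma\mapsto\sigma^{-1}(uv)$ by orbit--stabilizer for a single edge, whereas the paper first notes that lengths are constant on each orbit and then averages over $\Om_i$, using that each $\sigma$ permutes $\Om_i$.
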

\begin{proof}
    First we show that the squared length of each edge in $\mathcal{P}_\Psi = \set{p_1, \ldots, p_n}$ is constant within an edge orbit. Indeed, for any $uv \in E$
    \begin{equation}\label{eq:sym-embedding}
            \|p_u - p_v\|^2 = \sum_{\sigma \in \Psi} \sum_{i=1}^r \ell(\sigma \cdot \varphi_i)_{uv} = \sum_{\sigma \in \Psi} \sum_{i=1}^r \paren{\sigma \cdot \ell(\varphi_i)}_{uv} 
    \end{equation}
    which depends only on the edge orbit of $uv$. Hence for each $\Om_i$, there exists some $c_i \geq 0$ such that $\| p_u - p_v\|^2 = c_i$ for all $uv \in \Om_i$. 

    It remains to show that each $c_i$ is the same for all $1 \leq i \leq s$. Denote $\mathcal{P} = \set{q_1, \ldots, q_n}$. Since $\mathcal{P}$ is orbit-isometric, there exists $c >0$ such that~\eqref{eq:orbit-isometry} holds. By~\eqref{eq:sym-embedding}, we know 
    \[\|p_u-p_v\|^2 = \sum_{\sigma \in \Psi} \sum_{i=1}^r \ell(\varphi_i)_{\sigma^{-1}(u)\sigma^{-1}(v)} = \sum_{\sigma \in \Psi} \|q_{\sigma(u)} - q_{\sigma(v)}\|^2.\]
    Thus, for any edge orbit $\Om_i$
    \begin{align*}
        c_i = \frac{1}{\abs{\Om_i}} \sum_{uv \in \Om_i} \|p_u - p_v\|^2 &= \frac{1}{\abs{\Om_i}} \sum_{uv \in \Om_i} \sum_{\sigma \in \Psi} \|q_{\sigma(u)} - q_{\sigma(v)}\|^2 \\
        &= \frac{\abs{\Psi}}{\abs{\Om_i}} \sum_{uv \in \Om_i} \|q_{u} - q_{v}\|^2 = \abs{\Psi} \cdot c.
    \end{align*}
    Hence, $\mathcal{P}_\Psi$ is edge-isometric as desired.
\end{proof}

This proof gives a physical intuition of the symmetry reduction in terms of embeddings. Since we can always symmetrize an embedding to have constant squared edge length within each edge orbit of $\Psi$, $G$ is conformally rigid if and only if we can balance the average squared edge length across different orbits. This recovers the known result that edge-transitive graphs are both lower and upper conformally rigid since every edge is in the same orbit.
\begin{theorem} \cite[Proposition 2.1]{steinerberger}
    Every edge-transitive graph is both lower and upper conformally rigid.
\end{theorem}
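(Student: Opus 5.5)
Take $\Psi = \Aut(G)$. Edge-transitivity means $\Aut(G)$ acts transitively on $E$, so there is exactly one edge orbit, $s=1$, with $\Om_1 = E$. The plan is to apply the symmetry-reduced criterion, Theorem~\ref{thm:orbit-iso} (equivalently Theorem~\ref{thm:sym-subgradient} or Corollary~\ref{cor:orbit-iso}), with this choice of $\Psi$. This will give lower and upper conformal rigidity simultaneously.

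With a single orbit, the orbit-isometry condition~\eqref{eq:orbit-isometry} asks only that
\[
\frac{1}{\abs{E}}\sum_{uv\in E}\|p_u-p_v\|^2 = c
\]
for one value $c>0$. This holds automatically for any spectral embedding with positive total edge energy. Concretely, pick any nonzero eigenvector $\varphi\in\E_{\lambda_2}$ and consider the one-dimensional embedding $P=(\varphi)$. Its total squared edge length is $\langle \varphi, L\varphi\rangle = \lambda_2\|\varphi\|^2>0$, where positivity uses connectivity of $G$. The average over the unique orbit is therefore the positive constant $c=\lambda_2\|\varphi\|^2/\abs{E}$, so $(\varphi)$ is orbit-isometric.

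Theorem~\ref{thm:orbit-iso} then gives lower conformal rigidity. Equivalently, in the form of Corollary~\ref{cor:orbit-iso}, the rescaling $\sqrt{\abs{E}/(\lambda_2\|\varphi\|^2)}\,\varphi$ satisfies~\eqref{eq:sum-ell-psi}, since $\Om=(\abs{E})$.

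The identical argument with a nonzero $\varphi\in\E_{\lambda_n}$, which has $\lambda_n>0$, gives upper conformal rigidity.

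There is essentially no obstacle here. The only point worth stating explicitly is that $c>0$, which follows because the eigenvalues $\lambda_2$ and $\lambda_n$ are positive for a connected graph. For intuition, one can also note that the symmetrized embedding $\mathcal{P}_\Psi$ from the preceding proposition turns this orbit-isometric embedding into an explicit edge-isometric one, which matches Theorem~\ref{thm:edge-iso}.
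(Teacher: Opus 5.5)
Your proof is correct and takes the paper's approach: with $\Psi = \Aut(G)$ there is a single edge orbit, so any nonzero eigenvector in $\E_{\lambda_2}$ or $\E_{\lambda_n}$ gives an orbit-isometric embedding (whose symmetrization is edge-isometric), and Theorem~\ref{thm:orbit-iso} finishes the argument. Your explicit check that $c = \lambda\|\varphi\|^2/\abs{E} > 0$ is the one detail the paper leaves implicit.
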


Orbit-isometric embeddings also recover Theorem 4.8 in~\cite{gouveia}, which proves a version of Theorem~\ref{thm:sym-subgradient} when $G$ is vertex-transitive with respect to $\Psi$. The upshot of our perspective is that we do not assume vertex-transitivity; this allows us to address the sporadic examples brought up in~\cite[\S 1.5]{gouveia}. The authors list five graphs that have multiple vertex and edge orbits under their respective automorphism groups. We can now certify these without using the full unsymmetrized SDP. Moreover, we can somewhat explain why these graphs and other non-edge-transitive, conformally rigid graphs are conformally rigid: they contain few edge orbits that can be spectrally balanced. However, we still do not know any structural properties (other than edge-transitivity) that say when the orbits can be balanced. Below is an example of a certificate of one of the five previously unexplained graphs.

\begin{figure}[h]
    \begin{subfigure}{0.45 \textwidth}
        \centering
        \begin{tikzpicture}[
            scale=1,
            vertex/.style={circle, draw, fill=black, inner sep=1.2pt},]

            \node[vertex, label=above: $a$] (A) at (3.873, 4.000) {};
            \node[vertex, label=above: $b$] (B) at (2.620, 3.311) {};
            \node[vertex, label=right: $c$] (C) at (4.000, 2.749) {};
            \node[vertex, label=above: $d$] (D) at (2.944, 3.818) {};
            \node[vertex, label=below: $e$] (E) at (1.468, 0.696) {};
            \node[vertex, label=left: $f$] (F) at (0.222, 1.469) {};
            \node[vertex, label=below: $g$] (G) at (1.152, 0.000) {};
            \node[vertex, label=left: $h$] (H) at (0.193, 0.154) {};
            \node[vertex, label=above right: $i$] (I) at (1.263, 2.506) {};
            \node[vertex, label=below: $j$] (J) at (3.280, 0.473) {};
            \node[vertex, label=below: $k$] (K) at (2.625, 0.336) {};
            \node[vertex, label=above: $l$] (L) at (2.135, 1.105) {};
            \node[vertex, label=left: $m$] (M) at (1.074, 1.749) {};
            \node[vertex, label=above: $n$] (N) at (0.000, 3.194) {};
            \node[vertex, label=above: $o$] (O) at (2.325, 2.556) {};
            \node[vertex, label=above: $p$] (P) at (1.241, 3.999) {};
            \node[vertex, label=right: $q$] (Q) at (3.911, 1.204) {};
            \node[vertex, label=right: $r$] (R) at (3.452, 1.814) {};
            \node[vertex, label=right: $s$] (S) at (2.800, 1.621) {};
            \node[vertex, label=above: $t$] (T) at (0.990, 3.239) {};

            \draw[blue, thick] (R) -- (K);
            \draw[blue, thick] (R) -- (D);
            \draw[blue, thick] (L) -- (F);
            \draw[blue, thick] (M) -- (O);
            \draw[blue, thick] (N) -- (P);
            \draw[blue, thick] (Q) -- (L);
            \draw[blue, thick] (D) -- (P);
            \draw[blue, thick] (Q) -- (C);
            \draw[blue, thick] (K) -- (G);
            \draw[blue, thick] (C) -- (O);
            \draw[blue, thick] (M) -- (G);
            \draw[blue, thick] (F) -- (N);
            \draw[orange, thick] (Q) -- (J);
            \draw[orange, thick] (I) -- (O);
            \draw[orange, thick] (A) -- (D);
            \draw[orange, thick] (S) -- (L);
            \draw[orange, thick] (G) -- (H);
            \draw[orange, thick] (A) -- (C);
            \draw[orange, thick] (T) -- (M);
            \draw[orange, thick] (T) -- (N);
            \draw[orange, thick] (J) -- (R);
            \draw[orange, thick] (I) -- (P);
            \draw[orange, thick] (S) -- (K);
            \draw[orange, thick] (F) -- (H);
            \draw[violet, thick] (A) -- (B);
            \draw[violet, thick] (B) -- (S);
            \draw[violet, thick] (B) -- (T);
            \draw[violet, thick] (J) -- (E);
            \draw[violet, thick] (I) -- (E);
            \draw[violet, thick] (E) -- (H);

        \end{tikzpicture}
        \subcaption{Crossing Number 6B}\label{fig:cn6b}
    \end{subfigure}
    \begin{subfigure}{0.45 \textwidth}
        \centering
        \begin{tikzpicture}[
            scale=1.00000000000000,
            vertex/.style={circle, draw, fill=black, inner sep=1.2pt},]

            \node[vertex, label=above: $a$] (A) at (2.293, 4.000) {};
            \node[vertex, label=above: $b$] (B) at (4.000, 3.688) {};
            \node[vertex, label=above: {$c,d$}] (C) at (1.293, 3.156) {};
            \node[vertex] (D) at (1.293, 3.156) {};
            \node[vertex, label=below: $e$] (E) at (0.000, 0.312) {};
            \node[vertex, label=right: {$f,g$}] (F) at (2.707, 0.844) {};
            \node[vertex] (G) at (2.707, 0.844) {};
            \node[vertex, label=below: $h$] (H) at (1.707, 0.000) {};
            \node[vertex, label=left: $i$] (I) at (0.000, 1.451) {};
            \node[vertex, label=right: $j$] (J) at (0.293, 1.173) {};
            \node[vertex, label={[yshift=-2pt]right: {$k,l$}}] (K) at (3.000, 1.705) {};
            \node[vertex] (L) at (3.000, 1.705) {};
            \node[vertex, label={[xshift=-3pt] above: {$m,n$}}] (M) at (2.707, 1.983) {};
            \node[vertex] (N) at (2.707, 1.983) {};
            \node[vertex, label=above left: {$o,p$}] (O) at (1.000, 2.295) {};
            \node[vertex] (P) at (1.000, 2.295) {};
            \node[vertex, label={[xshift=2pt]below: {$q,r$}}] (Q) at (1.293, 2.017) {};
            \node[vertex] (R) at (1.293, 2.017) {};
            \node[vertex, label=right: $s$] (S) at (4.000, 2.549) {};
            \node[vertex, label=above left: $t$] (T) at (3.707, 2.827) {};

            \draw[blue, thick] (R) -- (K);
            \draw[blue, thick] (R) -- (D);
            \draw[blue, thick] (L) -- (F);
            \draw[blue, thick] (M) -- (O);
            \draw[blue, thick] (N) -- (P);
            \draw[blue, thick] (Q) -- (L);
            \draw[blue, thick] (D) -- (P);
            \draw[blue, thick] (Q) -- (C);
            \draw[blue, thick] (K) -- (G);
            \draw[blue, thick] (C) -- (O);
            \draw[blue, thick] (M) -- (G);
            \draw[blue, thick] (F) -- (N);
            \draw[orange, thick] (Q) -- (J);
            \draw[orange, thick] (I) -- (O);
            \draw[orange, thick] (A) -- (D);
            \draw[orange, thick] (S) -- (L);
            \draw[orange, thick] (G) -- (H);
            \draw[orange, thick] (A) -- (C);
            \draw[orange, thick] (T) -- (M);
            \draw[orange, thick] (T) -- (N);
            \draw[orange, thick] (J) -- (R);
            \draw[orange, thick] (I) -- (P);
            \draw[orange, thick] (S) -- (K);
            \draw[orange, thick] (F) -- (H);
            \draw[violet, thick] (A) -- (B);
            \draw[violet, thick] (B) -- (S);
            \draw[violet, thick] (B) -- (T);
            \draw[violet, thick] (J) -- (E);
            \draw[violet, thick] (I) -- (E);
            \draw[violet, thick] (E) -- (H);

        \end{tikzpicture}
        \subcaption{Orbit-isometric embedding on $\E_{\lambda_2}$}\label{fig:orb-iso}
    \end{subfigure}
    \caption{}
\end{figure}
\begin{example}\label{ex:cn6b}
    Let $G$ be the Crossing Number 6B graph (HoG ID 1004) on 20 vertices with 30 edges. $G$ has $\lambda_2 = 1$ with multiplicity 3. One can check that the following is a basis for $\E_{\lambda_2}$ where the ordering of vertices is given by Figure~\ref{fig:cn6b}.
    \begin{align*}
        \varphi_1^T &= (1, 0, 1, 1, 0, -1, -1, -1, 0, 1, 0, 0, -1, -1, 0, 0, 1, 1, 0, -1)\\
        \varphi_2^T &= (0, 1, -\frac{1}{2}, -\frac{1}{2}, -1, \frac{1}{2}, \frac{1}{2}, 0, 0, -2, -\frac{1}{2}, -\frac{1}{2}, \frac{3}{2}, \frac{3}{2}, \frac{1}{2}, \frac{1}{2}, -\frac{3}{2}, -\frac{3}{2}, 0, 2)\\
        \varphi_3^T &= (0, 0, 0, 0, 0, 0, 0, 0, 1, -1, -1, -1, 1, 1, 1, 1, -1, -1, -1, 1)
    \end{align*}
    Under the full automorphism group $\Psi = \Aut(G)$, there are three edge orbits, colored in Figure~\ref{fig:cn6b}, and three vertex orbits. Now instead of trying to satisfy 30 edge constraints, we only have to satisfy three edge orbit constraints. One can check that the $1$-dimensional spectral embedding given by 
    \[\varphi = \paren{-\frac{\sqrt{2}}{2} + 1} \cdot \varphi_1 + 2 \cdot \varphi_2 - 2 \cdot \varphi_3\]
    is orbit-isometric. Figure~\ref{fig:orb-iso} shows a $2$-dimensional orbit-isometric embedding on $\E_{\lambda_2}$ where the $x$-coordinates are given by $\varphi$, and the $y$-coordinates are given by another 1-dimensional orbit-isometric embedding, which we specify in Example~\ref{ex:cn6b2}. We remark that this orbit-isometric embedding is not injective. $G$ is also upper conformally rigid as it is regular and bipartite, so the alternating sign vector gives a $1$-dimensional edge-isometric embedding on $\E_{\lambda_n}$.
\end{example}

\subsection{Symmetry Reduced SDP}\label{subsec:sym-sdp} We now give the symmetry reduced SDPs corresponding to~\eqref{eq:cr-sdp} and~\eqref{eq:cr-sdp-param}. We can apply the exact same logic to verify an orbit-isometric embedding on $\E_\lambda$:
\begin{equation}\label{eq:sym-sdp}
    \begin{split}
        \text{find} \ &\Phi \succeq 0,\\
        \text{s.t.} \ &L\Phi = \lambda \Phi, \\
        & \langle L^i, \Phi \rangle_F = \abs{\Om_i} \text{ for all } 1 \leq i \leq s.
    \end{split}
\end{equation}
We can parametrize~\eqref{eq:sym-sdp} with an orthonormal basis $B$ of $\E_\lambda$ to get
\begin{equation}\label{eq:sym-sdp-param}
    \begin{split}
        \text{find} \ &Z \succeq 0,\\
        \text{s.t.} \ & \langle \tilde{L}^i, Z \rangle_F = \abs{\Om_i}  \text{ for all } 1 \leq i \leq s.
    \end{split}
\end{equation}
where $\tilde{L}^i$ is the compressed operator $B^TL^iB$. The same correspondence of solutions $\Phi = BZB^T$ for~\eqref{eq:cr-sdp} holds as before. Now, to check if a graph $G$ with some group of automorphisms $\Psi$ is conformally rigid, we only have to check the consistency of a system of $s$ equations instead of $\abs{E}$ equations. In the case of highly symmetric graphs, $s$ can be a lot less than $\abs{E}$. Since the runtime to solve an SDP depends cubically on the number of constraints, the computational speedup can be massive. Before we get to a large-scale example, let us revisit our running example.

\begin{rexample2}[continued]
    Recall the graph $G = \Cay(\Z_{12}, \set{2,3})$ has two edge orbits $\Om_2, \Om_3$ under the group of automorphisms induced by $\Z_{12}$. This reduces the number of SDP constraints from $m=24$ to $s=2$. Interestingly, solving this SDP with MOSEK returns a solution nearly identical to the solution~\eqref{eq:sdp-certificate} in the unsymmetrized SDP.    
\end{rexample2}

All the graphs that we tested from the House of Graphs database had at most $n=250$ vertices, and their unsymmetrized SDPs could be solved quickly by MOSEK. To see the full power of symmetry reduction, we need a much bigger graph.

\begin{example}\label{ex:8000}
    Let $G_1$ be the Crossing Number 6B graph, $G_2$ be the (10,3)-Incidence Graph (HoG ID 33538), and $G_3$ be the Desargues Graph (HoG ID 1036). All three graphs have $\lambda_2=1$ with multiplicities $3,3,$ and $4$ respectively. The first two come from the sporadic examples in~\cite[\S 1.5]{gouveia}, and the last one is edge-transitive, so all are lower conformally rigid. By Lemma 2.6 in~\cite{gouveia}, the Cartesian product $G = G_1 \square G_2 \square G_3$ must also be lower conformally rigid. $G$ has $n=8000$ vertices, $m=36000$ edges, and has $\lambda_2=1$ with multiplicity 10. To test the efficacy of our symmetry reduction, we certify the lower conformal rigidity of $G$ without using any knowledge of the structure of $G$ as a Cartesian product, using MOSEK. 

    As a baseline, we compute the unsymmetrized SDP with matrices of size $\dim \E_{\lambda_2} = 10$ and $36000$ edge constraints. The solver took 70 seconds to solve the SDP and returned the certificate $\Phi$ shown in Figure~\ref{fig:8000certificate}. Because we constructed $G$ as a Cartesian product, we can see the $400 \times 400$ and $20 \times 20$ subblocks given by the product structure, but the solver had no knowledge of this structure.

    \begin{figure}[h]
        \centering
        \includegraphics[width=\textwidth]{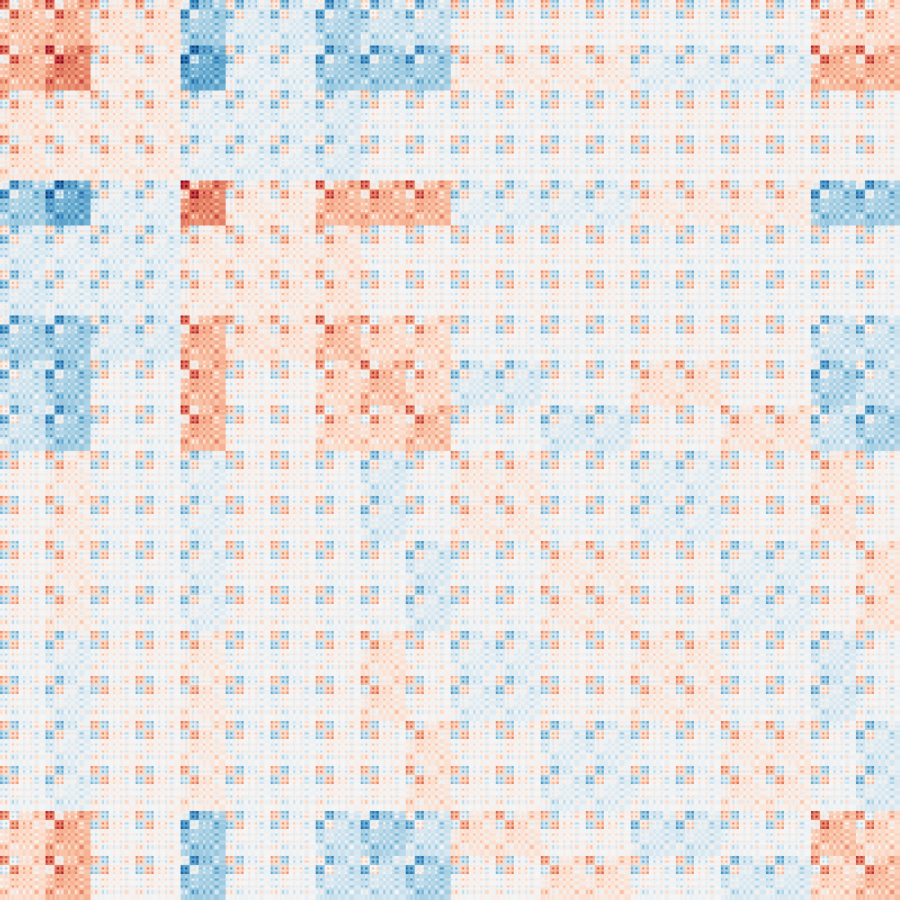}
        \caption{Lower conformal rigidity certificate to~\eqref{eq:cr-sdp} for the graph on 8000 vertices.}
        \label{fig:8000certificate}
    \end{figure}

    We then solved the symmetry reduced SDP with the full automorphism group $\Psi = \Aut(G)$, which has order $46080$, and breaks up $G$ into $51$ edge orbits. The SDP ran in under one-tenth of a second. The main bottleneck was computing the automorphism group using \textsc{SageMath}'s~\cite{sage} \verb|automorphism_group()| method, which took 32 seconds. In practice, even using a handful of automorphisms dramatically reduces the SDP computation, so rather than taking $\Psi$ to be the full automorphism group, one can use a smaller subgroup of automorphisms. \textsc{SageMath}'s built-in method computes automorphisms through partition refinement, so we can find all graph automorphisms that fix a subset of vertices. Doing this with 1 and 2 fixed vertices, we achieved the runtimes shown in Table~\ref{tab:8000runtimes}. We see that even working with just $\abs{\Psi} = 4$ reduces the time to solve the SDP to just 15 seconds while only taking 2 seconds to compute the automorphisms. The sweet spot seemed to be the (relatively) small automorphism group with $\abs{\Psi} = 96$.

\begin{table}[h]
    \centering
    \begin{tabular}{rrrrrr}
        \toprule
        $|\Psi|$ & \makecell{Number of \\edge orbits} & \makecell{Time to \\ compute $\Psi$} & \makecell{Time to find \\ edge orbits} & \makecell{SDP \\ solve time} & \makecell{Total \\ time} \\
        \midrule
        46080 & 51    & 31.94s & 0.10s & 0.09s  & 32.13s \\
        96    & 2528  & 7.52s  & 0.07s & 1.23s  & 8.82s  \\
        4     & 14900 & 2.43s  & 0.20s & 15.11s & 17.74s \\
        1     & 36000 & 0s     & 0s    & 70.40s & 70.40s \\
        \bottomrule
    \end{tabular}
    \caption{Runtime comparison for symmetry reduction on the 8000-vertex graph.}
    \label{tab:8000runtimes}
\end{table}

\end{example}

\subsection{Exact Certificates}

We have seen how symmetry reduction can drastically reduce the SDP computation to certify conformal rigidity. However, this still does not address the issue that the certificates returned by SDPs are only approximate. We present a method for exact certification assuming the existence of rank-1 solutions for~\eqref{eq:sym-sdp} and~\eqref{eq:sym-sdp-param}.

Observe that in Example~\ref{ex:cn6b}, we could certify conformal rigidity with just one eigenvector. This corresponds to a 1-dimensional orbit-isometric embedding or a rank-1 SDP certificate. A natural question to ask is: does a conformally rigid graph always have a rank-1 SDP certificate? This has been asked before for vertex-transitive and Cayley graphs in~\cite[Question 1]{gouveia}. In the absence of any symmetry reduction, Corollary~\ref{cor:bipartite} tells us that no 1-dimensional edge-isometric embedding exists unless $G$ is bipartite. We will show in \S~\ref{sec:rep-theory}, however, that it is sufficient to check for 1-dimensional orbit-isometric embeddings for a large class of graphs. 

For now, let us assume we are working with candidate graphs $G$ that are conformally rigid if and only if they have a 1-dimensional orbit-isometric embedding. Under this assumption,~\eqref{eq:sym-sdp} becomes a quadratic system of equations on $\E_{\lambda}$:
\begin{equation}\label{eq:grobner}
    \begin{split}
        \text{find} \ &\varphi \in \E_{\lambda}\\
        \text{s.t.} \ &L\varphi = \lambda \varphi, \\
        & \langle \varphi, L^i \varphi \rangle = \abs{\Om_i} \text{ for all } 1 \leq i \leq s.
    \end{split}
\end{equation}
Since $\lambda$ is a real algebraic number, this can be parametrized by a basis $B$ (not necessarily orthonormal) of $\E_{\lambda}$ with real algebraic coefficients, yielding the smaller quadratic system
\begin{equation}\label{eq:grobner-param}
    \begin{split}
        \text{find} \ &x \in \R^d\\
        \text{s.t.} \ &\langle Bx, L^i Bx \rangle = \abs{\Om_i} \text{ for all } 1 \leq i \leq s
    \end{split}
\end{equation}
in $d$ variables and $s$ constraints. This defines a variety over $\bar{\Q}$, and by the Tarski-Seidenberg principle, the existence of a real solution implies a real algebraic solution~\cite[Corollary 1.4.7]{bochnak}. Hence, we can solve for an exact solution using standard tools from computer algebra, such as Gr\"{o}bner bases~\cite[\S 2]{cox} or cylindrical algebraic decomposition (CAD)~\cite{collins}. Both of these methods involve at least exponential complexity: the worst-case space complexity for Gr\"{o}bner bases and worst-case time complexity for CAD is doubly exponential in the number of variables~\cite{collins, mayr}. Because of the prohibitively expensive runtime, this approach is only possibly feasible after symmetry reduction and parametrization. For example, we could not solve~\eqref{eq:grobner} for Crossing Number 6B using \textsc{SageMath}, but~\eqref{eq:grobner-param} is simple enough to solve by hand.

\begin{example}\label{ex:cn6b2}
    Recall that $G$ from Example~\ref{ex:cn6b} had 3 edge orbits of order 12, 12, and 6. Using the basis $B = \begin{pmatrix}
        \varphi_1 & \varphi_2 & \varphi_3
    \end{pmatrix}$ as our parametrization, we get the system~\eqref{eq:grobner-param} of 3 quadratics in 3 variables given by
    \begin{align*}
        12 &= 8x_0^2 - 16x_0 x_1 + 12x_1^2 - 8x_0 x_2 + 16x_1 x_2 + 8x_2^2 , \\
        12 &= 3x_1^2, \\
        6 &= 4x_0^2 - 8x_0 x_1 + 6x_1^2 - 4x_0 x_2 + 8x_1 x_2 + 4x_2^2.
    \end{align*}
    Computing a Gr\"{o}bner basis for the ideal generated by these polynomials yields 
    \begin{equation*}
        \left(x_0^2 - 2x_0 x_1 - x_0 x_2 + 2x_1 x_2 + x_2^2 
        + \frac{9}{2}, \ x_1^2 - 4\right).
    \end{equation*}
    This variety decomposes into two irreducible components corresponding to $x_1 = 2$ and $x_1 = -2$, each of which is an ellipse in the $(x_0, x_2)$-plane. The two components arise from the fact that $-\varphi$ is a solution whenever $\varphi$ is. Each point in this ellipse gives an exact 1-dimensional orbit-isometric embedding for $\E_{\lambda_2}$. 

    In contrast, the projection method used to certify the same graph in~\cite[\S 6.2]{steinerberger} manually inspects a numerical certificate to~\eqref{eq:cr-sdp} returned by an SDP solver and (successfully) attempts to round to a nearby solution with rational or algebraic coefficients of low degree. This approach can be finicky and likely has varying success depending on the solver and the chosen basis of $\E_\lambda$. Our version computes \textit{all} 1-dimensional orbit-isometric embeddings exactly. This is how we generated Figure~\ref{fig:orb-iso}: the $x$-values are specified by the eigenvector corresponding to $x_1 = 2, x_2=-2$, and $y$-values corresponding to $x_1=2, x_2=-0.65$ as shown in Figure~\ref{fig:ellipses}.
\end{example}
\begin{figure}[h]
    \centering
    \begin{tikzpicture}[scale=0.8]
        \draw[->] (-3,0) -- (3,0) node[right] {$x_0$};
        \draw[->] (0,-3) -- (0,3) node[above] {$x_2$};
        \foreach \x in {-2,-1,1,2} {
            \draw (\x,2pt) -- (\x,-2pt) node[below, font=\tiny] {$\x$};
            \draw (2pt,\x) -- (-2pt,\x) node[left,  font=\tiny] {$\x$};
        }

        \draw[blue, thick, rotate around={45:(1.333,-1.333)}]
            (1.333,-1.333) ellipse (1.291 and 0.745);
        \node[blue, font=\small] at (2.6,-1.8) {$x_1=2$};

        \draw[orange, thick, rotate around={45:(-1.333,1.333)}]
            (-1.333,1.333) ellipse (1.291 and 0.745);
        \node[orange, font=\small] at (-2.6,2.1) {$x_1=-2$};

        \filldraw[black] (2.370,-0.65) circle (2pt);
        \filldraw[black] (0.293,-2) circle (2pt);
    \end{tikzpicture}
    \caption{Valid $(x_0, x_2)$ values for orbit-isometric embeddings of $G$ on $\E_{\lambda_2}$. Points indicate the embeddings used in Figure~\ref{fig:orb-iso}.}
    \label{fig:ellipses}
\end{figure}

We remark that our method still has some major drawbacks. In the previous example, the dimension of the ideal corresponding to valid orbit-isometric embeddings was one, so it was easy to find a real solution. Apart from the exponential runtime to compute Gr\"{o}bner bases, the ideal of orbit-isometric embeddings for some graphs can have dimension two or larger. This is somewhat expected because conformally rigid graphs have a lot of symmetry, so these systems of equations are typically degenerate. In these cases, it can be hard to read off a real algebraic solution by naively computing Gr\"{o}bner bases without additional structure. We will address this issue in \S~\ref{sec:polyhedral-cone}.
\section{Representation Theory}\label{sec:rep-theory}

In the previous section, we showed how we could leverage symmetry reduction to speed up the numerical certification of conformal rigidity. We also showed that in the case where a 1-dimensional orbit-isometric embedding exists for a conformally rigid graph $G$, we can get an exact certificate using Gr\"{o}bner bases. In this section, we will prove that for a wide class of graphs, including all vertex-transitive graphs, the image $\ell_\Psi(\E_\lambda)$ is convex in Theorems~\ref{thm:convex-image} and~\ref{thm:vertex-transitive}. Here is how this helps us: by Corollary~\ref{cor:orbit-iso}, $G$ is conformally rigid if and only if there exist nonzero eigenvectors satisfying~\eqref{eq:sum-ell-psi}, which is a conical combination of eigenvector orbit-energies. Thus, another way of phrasing Corollary~\ref{cor:orbit-iso} is that $G$ is conformally rigid if and only if $\Om \in \Cone \set{\ell_\Psi(\E_\lambda)}$. If we can show that the image $\ell_\Psi(\E_\lambda)$ is convex, then this would be equivalent to finding
\[\Om \in \Cone \set{\ell_\Psi(\E_\lambda)} = \Conv \set{\ell_\Psi(\E_\lambda)} = \ell_\Psi(\E_\lambda),\]
where the first equality holds because $\ell_\Psi(\E_\lambda)$ is already a cone containing the origin. In this case, $G$ is conformally rigid if and only if there exists $\varphi \in \E_{\lambda}$ such that $\ell_\Psi(\varphi) = \Om$ i.e., there exists a 1-dimensional orbit-isometric embedding. The proof that this holds for vertex-transitive graphs requires representation theory and finding a \textit{symmetry adapted} basis of $\E_\lambda$ with respect to $\Psi$. First, we establish notation and rephrase our symmetry reduction in the language of representation theory. For representation theory basics, we refer to~\cite{fulton} or~\cite{serre}.

Recall that a \textit{(real) linear representation} of a finite group $\Gamma$ is a real vector space $X$ together with a group homomorphism $\rho: \Gamma \to \GL(X)$. The \textit{dimension} of the representation is given by $n = \dim X$, which we assume is finite. We follow standard notation and denote $\Hom_\Gamma(X, X')$ as the set of $\Gamma$-linear maps between two $\Gamma$ representations $(X, \rho)$ and $(X', \rho')$. Explicitly, for any $T \in \Hom_\Gamma(X,X')$ and $g \in \Gamma$, we have $\rho'(g)T= T\rho(g)$. In the case that $X=X'$, we denote the algebra of $\Gamma$-endomorphisms by $\End_\Gamma(X) = \Hom_\Gamma(X,X)$.

We have seen that the action of a group of automorphisms $\Psi \leq \Aut(G)$ on the vertices $V$ induces an action by permutation matrices on $\R^V$. This can be interpreted as a \textit{permutation representation} $\rho: \Psi \to \GL(\R^V)$, which is a representation where all matrices $\rho(\sigma)$ are permutation matrices. In particular, the representation is \textit{unitary} with respect to the standard dot product, which means $\rho(\sigma)^{-1} = \rho(\sigma)^T$ for all $\sigma \in \Psi$.  Rewriting Proposition~\ref{prop:Li-invariance} with this notation, we get that 
\begin{equation}\label{eq:Li-invariant2}
    \langle \varphi, (\rho(\sigma)^{-1} L^i \rho(\sigma)) \varphi \rangle = \langle \varphi, (\rho(\sigma)^T L^i \rho(\sigma)) \varphi \rangle = \langle \varphi, L^i \varphi \rangle
\end{equation}
for all $\varphi \in \R^V$ and $1 \leq i \leq s$. Hence, $\rho(\sigma)L^i = L^i \rho(\sigma)$, so $L^i \in \End_\Psi(\R^V)$, and consequently $L \in \End_\Psi(\R^V)$. We have seen that this means every eigenspace $\E_\lambda$ of $L$ is $\rho(\sigma)$-invariant for all $\sigma \in \Psi$, which means $\E_\lambda$ is a \textit{subrepresentation} of $\R^V$. 

To determine whether there is an orbit-isometric embedding on $\E_\lambda$ for an eigenvalue $\lambda > 0$, we need to understand how the quadratic map $\ell_\Psi(\varphi) = \paren{\langle \varphi, L^i \varphi \rangle}_{i=1}^s$ behaves when restricted to $\E_\lambda$. Determining the convexity of a complex image of a tuple of quadratic forms --- called the joint numerical range --- is a rich and classical area of research~\cite{muller}. Our setting differs from standard joint numerical range problems as we are only considering tuples of real symmetric matrices applied to real vectors. We are also considering matrices $L^i$ that all respect the same algebraic structure: equation~\eqref{eq:Li-invariant2} tells us that each $\langle \varphi, L^i \varphi \rangle$ defines a \textit{$\Psi$-invariant quadratic form }on $\E_\lambda$. In the next section, we will show that $\Psi$-invariant quadratic forms can be described nicely in terms of the representation theory of $\Psi$. Under certain assumptions on the subrepresentation $\E_\lambda$, we can conclude that $\ell_\Psi(\E_\lambda)$ is convex.

\subsection{Orbit-Energy Structure Theorem}

In this section let $\Gamma$ denote a finite group, and let $(X, \rho)$ be a real $n$-dimensional $\Gamma$-representation. Given a quadratic map $Q: X \to \R^s$ where $Q = (Q_i)_{i=1}^s$ and each $Q_i: X \to \R$ is a $\Gamma$-invariant quadratic form, we would like to determine when $Q(X)$ is convex. In this section, we give a necessary condition for when $Q(X)$ is convex that depends only on the representation $X$ and not the map $Q$. 

Given a quadratic form $Q$, there is a corresponding symmetric bilinear form $B: X \times X \to \R$ such that $Q(x) = B(x,x)$. If we fix an inner product on $X$, any such bilinear form has a corresponding \textit{self-adjoint} endomorphism $T: X \to X$ satisfying $B(x,y) = \langle x, Ty \rangle_X$.
If the inner product is taken to be $\Gamma$-invariant (whose existence is guaranteed by Maschke's theorem~\cite[Theorem 1]{serre}), that is,
\[ \langle \rho(g) x, \rho(g) y\rangle_X = \langle x, y \rangle_X,\]
then $Q$ is $\Gamma$-invariant if and only if $T \in \End_\Gamma(X)$:

\begin{proposition}
    Fix a $\Gamma$-invariant inner product $\langle \cdot, \cdot \rangle_X$ on $X$. If $Q$ is a quadratic form given by $Q(x) = \langle x, Tx \rangle_X$ for some self-adjoint $T: X \to X$, then $Q$ is $\Gamma$-invariant if and only if $T \in \End_\Gamma(X)$.
\end{proposition}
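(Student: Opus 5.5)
The plan is to reduce both directions to the identity $\rho(g)^{-1} = \rho(g)^*$ and to polarization. The adjoint $\rho(g)^*$ is taken with respect to $\langle \cdot, \cdot \rangle_X$. First I record that $\Gamma$-invariance of the inner product means each $\rho(g)$ is an isometry. Hence $\rho(g)^* \rho(g) = \Id$, that is, $\rho(g)^* = \rho(g)^{-1} = \rho(g^{-1})$. Then for any $x \in X$ and $g \in \Gamma$ I compute
\[
Q(\rho(g)x) = \langle \rho(g)x, T\rho(g)x\rangle_X = \langle x, \rho(g)^{-1} T \rho(g) x\rangle_X .
\]
So $Q$ is $\Gamma$-invariant exactly when the quadratic forms of $T$ and of $T_g := \rho(g)^{-1}T\rho(g)$ agree for every $g$.

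For the direction ($\Leftarrow$), suppose $T \in \End_\Gamma(X)$. Then $T\rho(g) = \rho(g)T$, so $T_g = T$, and the display gives $Q(\rho(g)x) = Q(x)$ immediately.

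For the direction ($\Rightarrow$), suppose $Q$ is invariant. I first check that $T_g$ is self-adjoint: its adjoint is $\rho(g)^* T^* (\rho(g)^{-1})^* = \rho(g)^{-1} T \rho(g) = T_g$, using $\rho(g)^* = \rho(g)^{-1}$ and $T^* = T$. The difference $D = T_g - T$ is therefore self-adjoint and satisfies $\langle x, Dx\rangle_X = 0$ for all $x$. By polarization,
\[
\langle x, Dy\rangle_X = \tfrac{1}{2}\bigl(\langle x+y, D(x+y)\rangle_X - \langle x, Dx\rangle_X - \langle y, Dy\rangle_X\bigr),
\]
which holds because $D$ is self-adjoint, so $\langle x, Dy\rangle_X = \langle y, Dx\rangle_X$. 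This shows $\langle x, Dy\rangle_X = 0$ for all $x, y$, hence $D = 0$. Thus $\rho(g)^{-1}T\rho(g) = T$ for every $g$, which is the statement $T \in \End_\Gamma(X)$.

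The only delicate point is where self-adjointness is needed. Over $\R$, a quadratic form determines only the symmetric part of an operator. The polarization step therefore requires both $T$ and its conjugate $T_g$ to be self-adjoint, and the conjugate is self-adjoint precisely because the inner product is $\Gamma$-invariant. Everything else is a one-line computation.
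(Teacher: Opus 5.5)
Your proof is correct and follows the paper's argument: both directions come from the identity $Q(\rho(g)x) = \langle x, \rho(g)^{-1}T\rho(g)x\rangle_X$. Your check that $\rho(g)^{-1}T\rho(g)$ is self-adjoint, together with the polarization step, makes explicit what the paper leaves implicit when it passes from equality of the quadratic forms to $T = \rho(g)^{-1}T\rho(g)$.
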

\begin{proof}
    If $T \in \End_\Gamma(X)$, then for all $g \in \Gamma$ and $x \in X$, we have 
    \[Q(\rho(g)x) = \langle \rho(g) x, T \rho(g) x \rangle_X = \langle \rho(g) x, \rho(g) Tx \rangle_X = \langle x, T x \rangle_X = Q(x).\]
    On the other hand, if $Q(x)$ is $\Gamma$-invariant, we know 
    \[\langle x, Tx \rangle_X = Q(x) = Q(\rho(g)x) = \langle \rho(g) x, T \rho(g) x \rangle_X = \langle x, \rho(g)^{-1} T \rho(g) x \rangle_X\]
    for all $x \in X$. Thus, $T = \rho(g)^{-1}T \rho(g)$ for all $g \in \Gamma$, so $T \in \End_\Gamma(X)$.
\end{proof}

From now on, we fix a $\Gamma$-invariant inner product $\langle \cdot, \cdot \rangle_X$ on $X$, so we can identify any $\Gamma$-invariant quadratic form $Q$ with a self-adjoint endomorphism $T \in \End_\Gamma(X)$. This identification with $\End_\Gamma(X)$ allows us to apply \textit{Schur's lemma} to describe the structure of $Q$. Schur's lemma describes $\Gamma$-linear maps between \textit{irreducible representations} of $\Gamma$, that is, representations of $\Gamma$ with no nontrivial subrepresentations.

\begin{lemma}[Schur]~\cite[\S 13.2]{serre}\label{lemma:schur}
    Let $U$ and $U'$ be real irreducible representations of $\Gamma$. If
    \begin{itemize}
        \item $U \not \cong U'$, then $\Hom_\Gamma(U, U') = \set{0}$, so there are no nonzero $\Gamma$-invariant maps between $U$ and $U'$;
        \item $U = U'$, then $\End_\Gamma(U) \cong D$ where $D$ is a finite dimensional division algebra over $\R$. Thus, $D = \R, \C$, or the quaternions $\HH$.
    \end{itemize}
\end{lemma}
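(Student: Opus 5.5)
The plan is to prove the two parts of Schur's lemma separately, starting from the observation that kernels and images of $\Gamma$-linear maps are subrepresentations. Let $T \in \Hom_\Gamma(U,U')$, so $\rho'(g)T = T\rho(g)$ for all $g \in \Gamma$. If $x \in \ker T$, then $T\rho(g)x = \rho'(g)Tx = 0$, so $\rho(g)x \in \ker T$. Similarly, $\rho'(g)Tx = T\rho(g)x$ lies in the image of $T$. Hence $\ker T \subseteq U$ and $\operatorname{im} T \subseteq U'$ are subrepresentations. Irreducibility of $U$ forces $\ker T \in \set{0, U}$, and irreducibility of $U'$ forces $\operatorname{im} T \in \set{0, U'}$. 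Therefore every nonzero $T$ is injective and surjective, hence a $\Gamma$-isomorphism. Taking the contrapositive gives the first bullet: if $U \not\cong U'$, then the only $\Gamma$-linear map is $0$.

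For the second bullet, I would take $U = U'$ and conclude from the same argument that every nonzero element of $D := \End_\Gamma(U)$ is invertible. Its inverse is again $\Gamma$-linear, since applying $T^{-1}$ on both sides of $\rho(g)T = T\rho(g)$ gives $T^{-1}\rho(g) = \rho(g)T^{-1}$. Clearly $D$ is a subalgebra of $\End_\R(U)$: it contains $\Id$ and is closed under sums, products, and real scalar multiples. So $D$ is an associative division algebra over $\R$. It is finite dimensional because $\dim_\R D \leq (\dim U)^2$.

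The final step is to identify $D$ as $\R$, $\C$, or $\HH$. For this I would invoke Frobenius' theorem classifying finite-dimensional associative real division algebras. If a self-contained argument is wanted, I would sketch it as follows.

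\begin{enumerate}
\item Every $x \in D \setminus \R$ generates a commutative subfield $\R[x]$, which is a finite field extension of $\R$ and hence isomorphic to $\C$. So $x$ satisfies a real quadratic, and after translating by a real scalar we may assume $x^2 = -r^2$.
\item The set $D' = \set{x : x^2 \in \R_{\leq 0}}$ is a real subspace, and $D = \R \oplus D'$.
\item The form $-x^2$ restricted to $D'$ is positive definite. Building an orthonormal basis $i, j, k = ij, \ldots$ then shows $\dim D' \in \set{0,1,3}$, which gives $D \cong \R$, $\C$, or $\HH$.
\end{enumerate}

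The main obstacle is this classification step. Everything before it is the routine kernel and image argument, but showing that $D'$ is closed under addition and that no division algebra of dimension beyond $4$ exists takes real work. I would cite Frobenius' theorem rather than reprove it, consistent with how the paper cites Serre.
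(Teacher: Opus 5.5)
Your proposal is correct. The kernel/image stability argument gives both bullets: every nonzero $\Gamma$-map between irreducibles is an isomorphism, so $\End_\Gamma(U)$ is a finite-dimensional associative real division algebra, and Frobenius' theorem then forces it to be $\R$, $\C$, or $\HH$. The paper gives no proof of its own and only cites Serre, whose treatment follows this same route, so your argument is essentially the one behind the citation.
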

We call $U$ of \textit{real, complex,} or \textit{quaternionic type} when $\End_\Gamma(U) \cong \R, \C,$ or $\HH$ respectively. For simplicity, we assume for the rest of this section that $U$ is of real type --- sometimes referred to as an \textit{absolutely irreducible} representation. The generalized cases of the following results where $U$ is of complex or quaternionic type follow essentially the same proof but involve some additional technicalities. We prove the generalized case in \S~\ref{subsec:complex-quaternionic}. When $U$ is of real type, $\End_\Gamma(U) \cong \R$ consists of scalar multiples of the identity: $\End_\Gamma(U) = \set{a \cdot \Id_U : a \in \R}$. By Maschke's theorem~\cite[\S 1.3]{serre}, $X$ is completely reducible and has a canonical decomposition 
\begin{equation}\label{eq:isotypics}
    X = X_1 \oplus X_2 \oplus \ldots \oplus X_h.
\end{equation}
Here each $X_i$ is called an \textit{isotypic} component and is isomorphic to the representation $(U_i \otimes \R^{m_i}, \rho_i \otimes 1)$ where $(U_i, \rho_i)$ is a real irreducible representation of $\Gamma$ of dimension $d_i$ and $(\R^{m_i}, 1)$ is the trivial representation. Each $U_i$ is pairwise nonisomorphic, and $m_i$ denotes the multiplicity of $U_i$ in the representation $X$. Thus, we can think of $X_i$ as $m_i$ copies of the irreducible representation $U_i$. Moreover, with respect to any $\Gamma$-invariant inner product, the isotypic components are pairwise orthogonal: $X_i \bot X_j$ for $i \neq j$. 

Schur's lemma tells us that $\Gamma$-invariant maps between absolutely irreducible representations can only be scalar multiples of the identity; the isotypic decomposition tells us that we can decompose $\Gamma$-invariant maps of arbitrary representations in terms of many smaller maps between irreducible representations. For any $T \in \End_\Gamma(X)$ and $1 \leq i \leq h$, we have $T(X_i) \subseteq X_i$.
This means if we choose an orthonormal basis for each $X_i$, we can block diagonalize $T$. Equivalently, 
\[\End_\Gamma(X) \cong \End_\Gamma(X_1) \oplus \ldots \oplus \End_\Gamma(X_h).\]
If $T$ corresponds to a quadratic form $Q$, this corresponds to a block-diagonalization of $Q$. If we let $x = \sum_{i=1}^h x_i \in X$ where $x_i \in X_i$, we have
\begin{align}
    Q(x) = \langle x, Tx \rangle_X &= \sum_{i,j=1}^h \langle x_i, T x_j \rangle_X = \sum_{i=1}^h \langle x_i, T x_i \rangle_X = \sum_{i=1}^h Q(x_i), \text{ so} \nonumber\\
    Q(X) &= \sum_{i=1}^h Q(X_i). \label{eq:diagonal-quadratic}
\end{align}
In other words, the image $Q(X)$ is the Minkowski sum of the images $Q(X_i)$. Thus, to describe $Q(X)$, it suffices to describe each $\Gamma$-invariant quadratic form $Q\vert_{X_i}$. With this reduction, we can assume $X \cong U \otimes \R^m$ is an isotypic representation given by $m$ copies of an irreducible representation $(U, \rho)$ of $\Gamma$. We also fix $\Gamma$-invariant inner products $\langle \cdot, \cdot \rangle_X$ and $\langle \cdot, \cdot \rangle_U$. First, we show that we can choose an isomorphism $\Phi: U \otimes \R^m \to X$ that is compatible with the inner products.

\begin{proposition}~\label{prop:iso-inner-prod}
    Assume $U$ is absolutely irreducible, and $X \cong U \otimes \R^m$ as representations. Fix $\Gamma$-invariant products $\langle \cdot, \cdot \rangle_U$ and $\langle \cdot, \cdot \rangle_X$. There exists a $\Gamma$-isomorphism $\Phi: U \otimes \R^m \to X$ such that the induced inner product acts on simple tensors by $\langle \Phi(u \otimes z), \Phi(u' \otimes z')\rangle_X = \langle u, u' \rangle_U \cdot \langle z, z' \rangle_{\R^m}$.
\end{proposition}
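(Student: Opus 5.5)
We start from an arbitrary $\Gamma$-isomorphism $\Phi_0: U \otimes \R^m \to X$ and correct it. Pull the inner product on $X$ back along $\Phi_0$ to obtain a $\Gamma$-invariant inner product $\beta(a,b) = \langle \Phi_0 a, \Phi_0 b\rangle_X$ on $U \otimes \R^m$. Compare it with the product inner product $\gamma$, which is defined on simple tensors by $\gamma(u\otimes z, u'\otimes z') = \langle u,u'\rangle_U \langle z,z'\rangle_{\R^m}$ and is $\Gamma$-invariant because $\Gamma$ acts only on the $U$ factor. Both are positive definite and $\Gamma$-invariant, so there is a unique $\gamma$-self-adjoint, positive definite operator $S$ with $\beta(a,b) = \gamma(a, Sb)$. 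Invariance of $\beta$ and $\gamma$ forces $S \in \End_\Gamma(U\otimes \R^m)$, by the same computation as in the proposition identifying invariant quadratic forms with $\End_\Gamma(X)$.

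Next we identify $\End_\Gamma(U \otimes \R^m)$. For basis vectors $e_j, e_k$ of $\R^m$, consider the map $U \to U$ given by $u \mapsto$ (the $e_k$-component of $S(u \otimes e_j)$). This map is $\Gamma$-linear, so Schur's lemma (Lemma~\ref{lemma:schur}) with $U$ of real type makes it a scalar $c_{kj}\Id_U$. Hence $S = \Id_U \otimes C$ for a real $m\times m$ matrix $C$. Since $S$ is $\gamma$-self-adjoint and positive definite, $C$ is symmetric positive definite: test $\gamma(u\otimes z, S(u\otimes z)) = \|u\|_U^2\, z^TCz > 0$ with $u\ne0$, and symmetry follows similarly.

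Now set $\Phi = \Phi_0 \circ (\Id_U \otimes C^{-1/2})$. This is a composition of $\Gamma$-isomorphisms, because $\Id_U\otimes C^{-1/2}$ commutes with $\rho\otimes 1$, so $\Phi$ is a $\Gamma$-isomorphism. Then
\[\langle \Phi(u\otimes z), \Phi(u'\otimes z')\rangle_X = \gamma\big(u\otimes C^{-1/2}z,\ u'\otimes C C^{-1/2} z'\big) = \langle u,u'\rangle_U \langle z, z'\rangle_{\R^m},\]
which is exactly the required identity.

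The main obstacle is the Schur step, namely showing that every $\Gamma$-endomorphism of $U\otimes\R^m$ has the form $\Id_U\otimes C$. This requires writing the block decomposition carefully, using the inclusions $U \to U\otimes e_j$ and the $\gamma$-orthogonal projections onto $U\otimes e_k$, and checking that each block is $\Gamma$-equivariant. Absolute irreducibility is used only here. The remaining steps are routine linear algebra.
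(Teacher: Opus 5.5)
Your proof is correct, but it takes a different route from the paper's. The paper builds $\Phi$ directly. It chooses $\Gamma$-equivariant inclusions $\iota_1,\dots,\iota_m\colon U\to X$ with mutually orthogonal images. It then applies Schur's lemma to one copy at a time to get $\langle \iota_i(\cdot),\iota_i(\cdot)\rangle_X = a_i\langle\cdot,\cdot\rangle_U$ with $a_i>0$, rescales $\iota_i$ by $\sqrt{a_i}$, and sets $\Phi(u\otimes e_i)=\iota_i(u)$.

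You instead start from an arbitrary isomorphism $\Phi_0$. You encode the mismatch between the pulled-back and product inner products as a positive self-adjoint $\Gamma$-endomorphism $S=\Id_U\otimes C$, and remove it with $\Id_U\otimes C^{-1/2}$. This is essentially a polar decomposition, or symmetric Gram--Schmidt, in the multiplicity space.

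Your route gets orthogonality of the copies for free. The paper only asserts that the inclusions can be chosen with $\iota_i(U)\perp\iota_j(U)$. Justifying that takes an extra argument: take orthogonal complements of invariant subspaces inductively, and note that every irreducible subrepresentation of an isotypic component is isomorphic to $U$. In your argument the off-diagonal Gram data is absorbed into $C$ and eliminated in one step. You also establish along the way the identification $\End_\Gamma(U\otimes\R^m)=\Id_U\otimes\R^{m\times m}$, which the paper invokes immediately after this proposition.

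The paper's construction is more hands-on and only needs Schur's lemma on a single copy of $U$ at a time, at the cost of that unproved splitting step. Your route needs the block-wise Schur argument and a matrix square root. You supply both correctly: each block $\pi_k S\iota_j$ is a $\Gamma$-map $U\to U$, and self-adjointness and positivity of $S$ pass to $C$ by testing on $u\otimes z$ with $u\neq 0$.
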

\begin{proof}
    Since $X$ is isomorphic to $m$ copies of $U$, we have inclusions $\iota_i: U \to X$ that are $\Gamma$-equivariant. Moreover, we can choose these inclusions so that $\iota_i(U) \bot \iota_j(U)$ are orthogonal subspaces with respect to the $\Gamma$-invariant inner product on $X$. Each $\iota_i$ induces a $\Gamma$-invariant inner product on $U$ given by $\langle u, u' \rangle_i = \langle \iota_i(u), \iota_i(u')\rangle_X$. Since $\langle \cdot, \cdot \rangle_i$ is a symmetric $\Gamma$-invariant bilinear form, there exists a self-adjoint $T_i \in \End_\Gamma(U)$ such that 
    $\langle u, u' \rangle_i = \langle u, T_i(u') \rangle_U$.
    But $U$ is absolutely irreducible, so $T_i = a_i \cdot \Id_U$ for some $a_i \in \R$. Thus, $\langle u, u' \rangle_i = a_i \langle u, u' \rangle_U$.
    Moreover, since $T_i$ corresponds to a positive-definite bilinear form, $a_i > 0$, so we may replace $\iota_i$ with $\sqrt{a_i} \cdot \iota_i$. With this substitution, the induced bilinear forms $\iota_i$ will be equal to $\langle \cdot, \cdot \rangle_U$. 
    
    Now define the linear map $\Phi: U \otimes \R^m \to X$ by $\Phi(u \otimes e_i) = \iota_i(u)$, and extend linearly. We see for simple tensors $u \otimes e_i$ and $u' \otimes e_j$, we have
    \[ \langle \Phi(u \otimes e_i), \Phi(u' \otimes e_j) \rangle_X  = \langle u, u' \rangle_U \cdot \langle e_i, e_j \rangle_{\R^m}.\]
    Extending bilinearly gives us our result.
\end{proof}
From now on we identify $X = U \otimes \R^m$ with such an isomorphism so we can write any $x \in X$ uniquely as $x = \sum_{i=1}^m u_i \otimes e_i, \ u_i \in U$. We are working with finite-dimensional representations of a finite group $\Gamma$, so we have a natural isomorphism 
\[\End_\Gamma(X) = \End_\Gamma(U \otimes \R^m) \cong \End_\Gamma(U) \otimes \End_\Gamma(\R^m).\]
Since $\R^m$ is the trivial representation of $\Gamma$, we identify $\End_\Gamma(\R^m)$ with $\R^{m \times m}$ via the standard basis $e_1, \ldots, e_m$ to get
\begin{equation}\label{eq:end-u-otimes-rm}
    \End_\Gamma(X) \cong \End_\Gamma(U) \otimes \R^{m \times m}.
\end{equation}

With the identification in~\eqref{eq:end-u-otimes-rm}, any $T \in \End_\Gamma(X)$ is of the form $T = (a \cdot \Id_U) \otimes M$ where $M \in \R^{m \times m}$, and by bilinearity, we can rescale $M$ to get the simpler form 
\begin{equation}\label{eq:tensor-linear-map}
    T = \Id_U \otimes M.
\end{equation}
We can describe $\Gamma$-invariant quadratic forms on $X$ using the Frobenius inner product on matrices given in~\eqref{eq:frobenius-ip}. Given $x = \sum_{i=1}^m u_i \otimes e_i \in X$, we can assign a symmetric matrix $G(x) \in \s^m_+$ given by the Gram matrix of the vectors $u_1, \ldots, u_m$.
\begin{definition}
    Let $x = \sum_{i=1}^m u_i \otimes e_i$. The \textit{Gram matrix} of $x$ is the symmetric psd matrix $G(x) \in \s^m_+$ given by 
    \[G(x)_{ij} = \langle u_i, u_j \rangle_U.\]
\end{definition}
\begin{proposition}\label{prop:gram-matrix}
    Assume $U$ is absolutely irreducible, and let $Q$ be a $\Gamma$-invariant quadratic form on $X$. Letting $T$ be the associated $\Gamma$-linear map as in~\eqref{eq:tensor-linear-map}, we have
    \begin{equation}
        Q(x) = \langle x, Tx \rangle_{X} = \langle G(x), M \rangle_F
    \end{equation}
    for any $x \in X$, where $\langle \cdot, \cdot \rangle_F$ is the Frobenius inner product on $\s^m$.
\end{proposition}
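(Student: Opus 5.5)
The identity is a direct computation: expand $x$ in simple tensors and use that the identification $X = U \otimes \R^m$ from Proposition~\ref{prop:iso-inner-prod} turns $\langle\cdot,\cdot\rangle_X$ into the product form on simple tensors. First, by~\eqref{eq:tensor-linear-map} we write $T = \Id_U \otimes M$. Since $T$ is self-adjoint with respect to $\langle \cdot,\cdot\rangle_X$, we may take $M$ symmetric; if not, we replace $M$ by $(M+M^T)/2$, which changes neither $T$'s quadratic form nor the Frobenius pairing with the symmetric matrix $G(x)$. Writing $x = \sum_{j=1}^m u_j \otimes e_j$, we get
\[
Tx = \sum_{j} u_j \otimes Me_j = \sum_{j}\sum_{i} M_{ij}\, u_j \otimes e_i .
\]

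Next we pair $x$ with $Tx$, using bilinearity and the product formula $\langle u\otimes z, u'\otimes z'\rangle_X = \langle u,u'\rangle_U \langle z,z'\rangle_{\R^m}$ from Proposition~\ref{prop:iso-inner-prod}:
\[
\langle x, Tx\rangle_X = \sum_{k,i,j} M_{ij}\,\langle u_k,u_j\rangle_U\,\langle e_k,e_i\rangle = \sum_{i,j} M_{ij}\,\langle u_i,u_j\rangle_U = \sum_{i,j} M_{ij}\,G(x)_{ij}.
\]
This last sum is exactly $\langle G(x), M\rangle_F$ by~\eqref{eq:frobenius-ip}, which proves the proposition.

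The computation itself is routine. The only point needing care is the bookkeeping that justifies $T = \Id_U\otimes M$. The scalar from $\End_\Gamma(U)\cong\R$ is absorbed into $M$, and the identification $\End_\Gamma(\R^m)\cong \R^{m\times m}$ uses the standard basis $e_i$. This is the same basis used both to define $G(x)$ and in Proposition~\ref{prop:iso-inner-prod}, so the indices of $M$ and $G(x)$ match. That consistency is what I expect to be the main (minor) obstacle.
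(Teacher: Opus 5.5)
Your proposal is correct and is essentially the paper's proof: both expand $x=\sum_i u_i\otimes e_i$, apply the product formula for $\langle\cdot,\cdot\rangle_X$ from Proposition~\ref{prop:iso-inner-prod} to get $\sum_{i,j}\langle u_i,u_j\rangle_U\, M_{ij}$, and identify this sum as $\langle G(x),M\rangle_F$. Your symmetrization of $M$ is harmless but unnecessary, since the computation yields the identity for any $M$, and self-adjointness of $T$ already forces $M$ to be symmetric.
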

\begin{proof}
    Letting $x = \sum_{i=1}^m u_i \otimes e_i$, we have 
    \begin{align*}
        Q(x) &= \langle x, Tx \rangle_{X} = \sum_{i,j=1}^m \langle u_i \otimes e_i, T(u_j \otimes e_j) \rangle_{X} \\
        &= \sum_{i,j=1}^m \langle u_i, u_j \rangle_U \cdot \langle e_i, Me_j \rangle_{\R^m} = \sum_{i,j=1}^m G(x)_{ij} \cdot M_{ij} \\
        &= \langle G(x), M \rangle_F. \qedhere
    \end{align*}
\end{proof}

Since $G(x)$ is positive semidefinite, this proposition tells us that the map $Q(x)$ can be factored through the space of $m \times m$ psd matrices: 
\[X \xrightarrow{G}  \s_+^m \xrightarrow{\langle \cdot, M \rangle_F}  \R.\]
Crucially, we know $\s_+^m$ is convex, and that the map $G: X \to \s_+^m$ is surjective if and only if $m \leq \dim U$. In this case, $Q$ is a linear functional applied to $\s_+^m$. 

We now give our main statement where $U$ can be complex or quaternionic type, but we only prove the case where $U$ is of real type. We will prove the result in full generality in \S~\ref{subsec:complex-quaternionic}. Since $D \cong \End_\Gamma(U)$ is a division ring, we can view $U$ as a $D$-vector space when $D= \R,\C$ or as a free left $D$-module when $D = \HH$. This allows us to talk about the dimension of $U$ as a $D$-vector space or module
\[ \dim_D U = \dim_\R U / \dim_\R D,\]
where $\dim_\R \C = 2$, and $\dim_\R \HH = 4$.

\begin{proposition}\label{prop:convex-image}
    Assume $U$ is an irreducible representation of $\Gamma$ with $\End_\Gamma(U) \cong D$, and suppose we have $\Gamma$-invariant quadratic forms $Q_1, \ldots, Q_s: X \to \R$. This defines the quadratic map $Q: X \to \R^s$ where
    \[Q(x) = (Q_1(x), \ldots, Q_s(x)).\]
    If the multiplicity $m$ of $U$ in $X$ satisfies $m \leq \dim_D U$, then $Q(X)$ is convex.
\end{proposition}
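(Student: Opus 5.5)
We treat the real-type case $D = \R$ here and leave complex and quaternionic types to \S~\ref{subsec:complex-quaternionic}. The idea is to factor $Q$ through the psd cone using the Gram matrix construction, and then read off convexity from the convexity of $\s^m_+$.

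First, by Proposition~\ref{prop:iso-inner-prod}, identify $X = U \otimes \R^m$ through an isomorphism compatible with the fixed $\Gamma$-invariant inner products. Each $Q_k$ is $\Gamma$-invariant, so its self-adjoint operator $T_k$ lies in $\End_\Gamma(X) \cong \End_\Gamma(U) \otimes \R^{m \times m}$. Since $U$ is absolutely irreducible, \eqref{eq:tensor-linear-map} lets us write $T_k = \Id_U \otimes M_k$. We may take $M_k$ symmetric, since only its symmetric part pairs nontrivially with the symmetric matrix $G(x)$. Proposition~\ref{prop:gram-matrix} then gives
\[ Q(x) = \bigl(\langle G(x), M_1 \rangle_F, \ldots, \langle G(x), M_s \rangle_F\bigr) = \mathcal{A}(G(x)),\]
where $\mathcal{A}: \s^m \to \R^s$ is linear. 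Consequently $Q(X) = \mathcal{A}(G(X))$.

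The key step is to show $G(X) = \s^m_+$ whenever $m \leq \dim_\R U$. The inclusion $G(X) \subseteq \s^m_+$ holds because Gram matrices are psd. For the reverse inclusion, take any $Z \in \s^m_+$ and factor it as $Z = R^T R$ with $R \in \R^{m \times m}$, for instance via the spectral theorem. Let $r_1, \ldots, r_m \in \R^m$ be the columns of $R$, so that $Z_{ij} = \langle r_i, r_j \rangle$. Because $\dim U \geq m$, we can choose an orthonormal family $f_1, \ldots, f_m$ in $U$ with respect to $\langle \cdot, \cdot \rangle_U$; this gives an isometric linear embedding $J: \R^m \to U$ with $J(e_j) = f_j$. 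Setting $u_i = J(r_i)$ and $x = \sum_i u_i \otimes e_i$, we get $G(x)_{ij} = \langle J r_i, J r_j \rangle_U = Z_{ij}$.

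It follows that $Q(X) = \mathcal{A}(\s^m_+)$. This is the linear image of a convex cone, hence convex. We never need $X$ itself to be isotypic: for a general $X$ with all multiplicities bounded as in the hypothesis, \eqref{eq:diagonal-quadratic} expresses $Q(X)$ as a Minkowski sum of the convex images $Q(X_i)$, which is again convex.

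The only delicate point is surjectivity of $G$, and the embedding argument above reduces it to $\dim U \geq m$. For complex or quaternionic type, the same scheme should work with Hermitian Gram matrices over $D$, using $\End_\Gamma(U) \cong D$ and $\dim_D U \geq m$ to embed $D^m$ isometrically into $U$. There the main obstacle is checking that $T_k$ takes the form $\Id \otimes M_k$ with $M_k$ Hermitian over $D$, and that $\langle G(x), M_k \rangle$ equals the real part of the corresponding trace pairing.
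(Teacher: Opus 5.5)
Your proposal is correct and matches the paper's proof: identify $X = U\otimes\R^m$, write each $T_k = \Id_U\otimes M_k$, express $Q$ as a linear map applied to the Gram matrix $G(x)$, and conclude from surjectivity of $G$ onto the convex cone $\s^m_+$ when $m \le \dim U$. The complex and quaternionic cases are deferred to Hermitian Gram matrices and the pairing $\re\Tr(G_D(x)^*M)$ in both your sketch and the paper's \S~\ref{subsec:complex-quaternionic}; your explicit surjectivity argument via $Z = R^TR$ and an orthonormal family in $U$ supplies a step the paper only asserts in the real case, and the paper proves the quaternionic analogue the same way, using Gram--Schmidt and Cholesky.
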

\begin{proof}[Proof (real case)]
    Each $Q_j$ has an associated $\Gamma$-linear map $T_j = \Id_U \otimes M_j$ where $M_j \in \R^{m \times m}$. Then by Proposition~\ref{prop:gram-matrix}, $Q_j(x) = \langle G(x), M_j \rangle_F$, and
    \begin{equation*}
     Q(x) = \paren{\langle G(x), M_1 \rangle_F, \ldots, \langle G(x), M_s \rangle_F }.
    \end{equation*}
    The map $Z \mapsto (\langle Z, M_j\rangle_F)_{j=1}^s$ is linear from $\s^m$ to $\R^s$, so $Q(x)$ is the linear image of $G(x)$ under this map. But since $m \leq \dim_\R U$, $G: X \to \s_+^m$ is surjective, so $G(X)$ is convex. Hence, $Q(X)$ is the linear image of a convex set and is itself convex.
\end{proof}

We now have a necessary condition for when $\ell_\Psi(\E_\lambda)$ is convex, assuming the general statement of Proposition~\ref{prop:convex-image}.

\begin{theorem}\label{thm:convex-image}
    Let $G$ be a graph with a subgroup of automorphisms $\Psi$, and $\lambda > 0$ be a Laplacian eigenvalue. Let
    $\mathcal{E}_\lambda = X_1 \oplus \cdots \oplus X_h$ be the isotypic
    decomposition of $\mathcal{E}_\lambda$ as a $\Psi$-representation, where
    $X_i \cong \R^{m_i} \otimes U_i$ for distinct irreducibles $U_i$
    with $D_i \cong \End_\Psi(U_i)$. If $m_i \leq \dim_{D_i} U_i$ for all $1 \leq i \leq h$, then 
    \begin{enumerate}
        \item $\ell_\Psi(\mathcal{E}_\lambda)$ is a convex cone, and
        \item $G$ has an orbit-isometric embedding on $\E_\lambda$ if and only if there exists a single $\varphi \in \mathcal{E}_\lambda$ with $\ell_\Psi(\varphi) = \Om$.
    \end{enumerate}
\end{theorem}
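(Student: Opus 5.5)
We apply Proposition~\ref{prop:convex-image} to each isotypic component and then combine the pieces using the block-diagonalization~\eqref{eq:diagonal-quadratic}.

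\textbf{Step 1: the orbit-energy map as $\Psi$-invariant quadratic forms.} By~\eqref{eq:Li-invariant2}, each $L^i$ lies in $\End_\Psi(\R^V)$. Hence each coordinate $\varphi \mapsto \langle \varphi, L^i \varphi\rangle$ of $\ell_\Psi$ restricts to a $\Psi$-invariant quadratic form $Q_i$ on the subrepresentation $\E_\lambda$. We take the standard dot product restricted to $\E_\lambda$ as our $\Psi$-invariant inner product; it is invariant because $\rho$ acts by permutation matrices. The corresponding self-adjoint operators $T_i$ are the compressions of $L^i$ to $\E_\lambda$, and these lie in $\End_\Psi(\E_\lambda)$.

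\textbf{Step 2: reduce to isotypic components and apply Proposition~\ref{prop:convex-image}.} Since $\Psi$-equivariant endomorphisms preserve isotypic components, and these components are mutually orthogonal, the cross terms vanish exactly as in~\eqref{eq:diagonal-quadratic}. This gives
\[\ell_\Psi(\E_\lambda) = \ell_\Psi(X_1) + \cdots + \ell_\Psi(X_h),\]
a Minkowski sum. For each component $X_i$, the hypothesis $m_i \le \dim_{D_i} U_i$ lets us invoke Proposition~\ref{prop:convex-image}, so $\ell_\Psi(X_i)$ is convex. A Minkowski sum of convex sets is convex.

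\textbf{Step 3: the image is a cone.} For any $t \ge 0$, homogeneity gives $\ell_\Psi(t\varphi) = t^2 \ell_\Psi(\varphi)$, and $t^2$ ranges over all of $[0,\infty)$. So $\ell_\Psi(\E_\lambda)$ is closed under nonnegative scaling and contains $0$. Together with Step 2, this proves part (1).

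\textbf{Step 4: part (2).} By Corollary~\ref{cor:orbit-iso}, or directly from the definition of an orbit-isometric embedding after rescaling the columns, an orbit-isometric embedding on $\E_\lambda$ exists if and only if $\Om = \sum_j \ell_\Psi(\varphi_j)$ for some eigenvectors $\varphi_j \in \E_\lambda$. Such a sum lies in $\Cone(\ell_\Psi(\E_\lambda))$.

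A convex cone is closed under addition. Indeed, for $a,b$ in the cone, $a+b = 2\cdot\tfrac12(a+b)$ is a positive multiple of a convex combination of cone elements. So $\Cone(\ell_\Psi(\E_\lambda)) = \ell_\Psi(\E_\lambda)$, and the sum equals $\ell_\Psi(\varphi)$ for a single $\varphi \in \E_\lambda$. That $\varphi$ is nonzero because $\Om \neq 0$, and the one-column matrix $(\varphi)$ is itself an orbit-isometric embedding with $c=1$. The converse direction is immediate.

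\textbf{Main obstacle.} The substantive input is Proposition~\ref{prop:convex-image} in the complex and quaternionic cases, where surjectivity of the Gram map onto the appropriate psd cone over $D$ must be checked. The statement allows us to assume this proposition in full generality. What remains is to verify that the coordinate forms $Q_i$ are genuinely $\Psi$-invariant on $\E_\lambda$, which is exactly what Step 1 establishes.
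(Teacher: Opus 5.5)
Your proposal is correct and follows the paper's route. The paper also applies Proposition~\ref{prop:convex-image} to each isotypic component $X_i$ and concludes convexity from the Minkowski-sum decomposition~\eqref{eq:diagonal-quadratic}; part (2) comes from the identity $\Cone\set{\ell_\Psi(\E_\lambda)} = \Conv\set{\ell_\Psi(\E_\lambda)} = \ell_\Psi(\E_\lambda)$ stated at the opening of \S~\ref{sec:rep-theory}. Your Steps 1, 3 and 4 spell out what the paper leaves implicit (invariance of the restricted forms, the cone property by homogeneity, closure under addition). Your direct rescaling argument for part (2) is the appropriate one for general $\lambda>0$, since Corollary~\ref{cor:orbit-iso} is stated only for $\lambda_2$ and $\lambda_n$.
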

\begin{proof}
    Since $\ell_\Psi$ is $\Psi$-invariant and $m_i \leq \dim_{D_i} U_i$, we can apply Proposition~\ref{prop:convex-image} to conclude that each $\ell_\Psi(X_i)$ is convex. By~\eqref{eq:diagonal-quadratic}, we know
    \[\ell_\Psi(\E_\lambda) = \sum_{i=1}^h \ell_\Psi(X_i),\]
    which is the Minkowski sum of $h$ convex sets, and is thus convex.
\end{proof}

\subsection{Vertex-Transitive Graphs}\label{subsec:vertex-transitive}

Now we show that if $G$ is vertex-transitive with respect to $\Psi$, then any Laplacian eigenspace $\E_\lambda$ satisfies the assumptions of Theorem~\ref{thm:convex-image}. We first observe by Schur's lemma that if $m$ is the multiplicity of an irreducible representation $U$ in $\E_\lambda$ with $\End_\Psi(U) \cong D$, then $\Hom_\Psi(\E_\lambda, U) \cong \End_\Psi(U)^m \cong D^m$. This implies $\dim_\R(\Hom_\Psi(\E_\lambda, U)) = \dim_\R(D^m) = m \cdot \dim_\R(D)$, which gives us an upper bound on $m$:
\begin{equation}\label{eq:mult-bound}
    m = \dim_\R(\Hom_\Psi(\E_\lambda, U)) / \dim_\R D \leq \dim_\R(\Hom_\Psi(\R^V, U)) / \dim_\R D.
\end{equation}
We can bound the right-hand side using \textit{Frobenius reciprocity}. Here we use $\res_H^\Psi U$ to denote the \textit{restriction} of the $\Psi$-representation $U$ to the subgroup $H \leq \Psi$. We use $\ind_H^\Psi 1_H$ to denote the \textit{induced representation} of the trivial representation $1_H$ of $H$. For a standard treatment of these definitions over $\C$, we refer to~\cite[\S 7]{serre} or~\cite[\S 3.3]{fulton}. We cite~\cite[Chapter 3 \S 6]{brocker}, which gives the same definitions and results for arbitrary fields, including $\R$.

\begin{lemma}[Frobenius reciprocity]\label{lemma:frobenius}
    Fix $v \in V$, and let $H \leq \Psi$ be the stabilizer of $v$. Then $\Hom_\Psi(\R^V, U) \cong \Hom_H(1_H, \res_H^\Psi U)$.
\end{lemma}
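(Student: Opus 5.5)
The plan is to identify $\R^V$, as a $\Psi$-representation, with the permutation representation on the orbit of $v$. Then the statement follows from the standard adjunction between induction and restriction. Concretely, I would first check that the action of $\Psi$ on $V$ makes $\R^V$ a permutation representation. I would then note that each $\Psi$-orbit $\Psi\cdot u\subseteq V$ spans a subrepresentation $\R^{\Psi u}\cong \ind_{H_u}^\Psi 1_{H_u}$, where $H_u$ is the stabilizer of $u$. In the intended application $G$ is vertex-transitive with respect to $\Psi$, so $V=\Psi\cdot v$ and $\R^V\cong\ind_H^\Psi 1_H$ with $H=\mathrm{Stab}(v)$. I would make this hypothesis explicit, since as literally stated for a non-transitive action one only gets $\Hom_\Psi(\R^V,U)\cong\bigoplus_{\text{orbits}}\Hom_{H_u}(1_{H_u},\res U)$.

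The key step is the isomorphism $\R^V\cong \ind_H^\Psi 1_H$, which I would write down directly rather than cite. Realize $\ind_H^\Psi 1_H$ as the space of functions $f:\Psi\to\R$ with $f(gh)=f(g)$ for $h\in H$, or equivalently as $\R[\Psi]\otimes_{\R[H]}\R$. Then send $\delta_{g v}\mapsto g\otimes 1$. This map is well defined because $gv=g'v$ if and only if $g^{-1}g'\in H$. It is bijective because $g\mapsto gv$ induces a bijection $\Psi/H\to V$ under transitivity. It is $\Psi$-equivariant since $\sigma\cdot\delta_{gv}=\delta_{\sigma g v}$.

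With this identification I would construct the reciprocity map explicitly instead of appealing to \cite{brocker}. Given $T\in\Hom_\Psi(\R^V,U)$, set $\Theta(T)=T(\delta_v)\in U$. Since $h\delta_v=\delta_v$ for $h\in H$, we get $hT(\delta_v)=T(\delta_v)$, so $T(\delta_v)\in U^H\cong\Hom_H(1_H,\res_H^\Psi U)$. Conversely, given $u\in U^H$, define $T_u(\delta_{gv})=g u$. This is well defined because $u$ is $H$-fixed, and it is $\Psi$-linear by construction. The maps $T\mapsto T(\delta_v)$ and $u\mapsto T_u$ are mutually inverse, since $T$ is determined by its value on the cyclic generator $\delta_v$. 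Both maps are $\R$-linear, and they are also $D$-linear for the natural right $D=\End_\Psi(U)$-actions, which matters for the dimension count in~\eqref{eq:mult-bound}.

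The only real obstacle is the hidden transitivity hypothesis and working over $\R$ rather than $\C$. The explicit construction above is field-independent, so nothing specific to $\C$ is needed. I would therefore state the lemma under the standing assumption that $\Psi$ acts transitively on $V$, and remark on the orbit-wise direct sum in the general case. The intended consequence then follows: $m\le \dim_\R U^H/\dim_\R D\le \dim_\R U/\dim_\R D=\dim_D U$, which is exactly the hypothesis of Theorem~\ref{thm:convex-image}.
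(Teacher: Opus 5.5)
Your proof is correct and follows the paper's route. Like the paper, you identify $\R^V \cong \ind_H^\Psi 1_H$ via the orbit--stabilizer bijection $\Psi/H \to V$ and then apply Frobenius reciprocity; the only difference is that you write the reciprocity isomorphism $T \mapsto T(\delta_v) \in U^H$ out explicitly instead of citing~\cite{brocker}. You are also right that the lemma tacitly needs $\Psi$ to act transitively on $V$: the paper's orbit--stabilizer step uses exactly this and the lemma is only applied in the vertex-transitive setting, and without it the statement already fails for trivial $\Psi$ and $|V| \geq 2$.
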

\begin{proof}
    The induced representation $\ind_H^\Psi 1_H$ is $\Psi$-isomorphic to the permutation representation of the action of $\Psi$ on the cosets $\Psi/H$~\cite[Example 6.3]{brocker}. By the orbit-stabilizer theorem, $\Psi /H$ is isomorphic as a $\Psi$-set to $V$, so $\ind_H^\Psi 1_H \cong \R^V$ as $\Psi$-representations. Applying Frobenius reciprocity~\cite[Theorem 6.2]{brocker}, we get 
    \[ \Hom_\Psi(\R^V, U) \cong \Hom_\Psi(\ind_H^\Psi 1_H, U) \cong \Hom_H(1_H, \res_H^\Psi U). \qedhere\]
\end{proof}

\begin{theorem}\label{thm:vertex-transitive}
    If $G$ is vertex-transitive with respect to $\Psi \leq \Aut(G)$
    and $\lambda > 0$, then $\ell_\Psi(\mathcal{E}_\lambda)$ is a convex cone.
    In particular, $G$ is conformally rigid if and only if there exists a single
    $\varphi \in \mathcal{E}_\lambda$ with $\ell_\Psi(\varphi) = \Om$.
\end{theorem}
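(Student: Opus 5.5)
The plan is to verify the multiplicity hypothesis of Theorem~\ref{thm:convex-image} for every isotypic component of $\E_\lambda$, and then read off both conclusions from that theorem together with Corollary~\ref{cor:orbit-iso}. Fix a vertex $v$ and let $H \leq \Psi$ be its stabilizer. Vertex-transitivity means the action of $\Psi$ on $V$ is a single orbit, so $V \cong \Psi/H$ as $\Psi$-sets. This is exactly the setting of Lemma~\ref{lemma:frobenius}.

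Let $U$ be any irreducible constituent of $\E_\lambda$, with $D = \End_\Psi(U)$ and multiplicity $m$. The first step is the bound~\eqref{eq:mult-bound}:
\[m \leq \dim_\R \Hom_\Psi(\R^V, U)/\dim_\R D.\]
By Lemma~\ref{lemma:frobenius}, $\Hom_\Psi(\R^V, U) \cong \Hom_H(1_H, \res^\Psi_H U) = U^H$, the subspace of $H$-fixed vectors. So $m \le \dim_\R U^H/\dim_\R D$.

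The key point is that $U^H$ is a $D$-subspace of $U$. If $T \in D$ commutes with all of $\Psi$, then in particular it commutes with $H$, so $T$ maps $H$-fixed vectors to $H$-fixed vectors. Hence $\dim_\R U^H = \dim_D U^H \cdot \dim_\R D$, which gives $m \le \dim_D U^H \le \dim_D U$. For the quaternionic case I would note that $U^H$ is a sub-$\HH$-module of the free module $U$, and its $D$-dimension is still bounded by $\dim_D U$ via real dimension counting. I expect this compatibility of the $D$-structure with taking fixed points, and checking it in the $\C$ and $\HH$ cases, to be the only slightly delicate step; the rest is bookkeeping.

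With $m_i \le \dim_{D_i} U_i$ established for every isotypic component, Theorem~\ref{thm:convex-image}(1) gives that $\ell_\Psi(\E_\lambda)$ is a convex set. It is also closed under nonnegative scaling, since $\ell_\Psi(t\varphi) = t^2 \ell_\Psi(\varphi)$, so it is a convex cone. For the final statement, Corollary~\ref{cor:orbit-iso} says $G$ is lower (upper) conformally rigid if and only if $\Om \in \Cone \ell_\Psi(\E_\lambda)$ for $\lambda = \lambda_2$ ($\lambda_n$). Since the image is already a convex cone, this cone equals $\ell_\Psi(\E_\lambda)$ itself. Therefore rigidity holds if and only if $\Om = \ell_\Psi(\varphi)$ for a single $\varphi \in \E_\lambda$.
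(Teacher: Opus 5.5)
Your proposal is correct and follows the paper's proof: you bound the multiplicity via~\eqref{eq:mult-bound}, apply Frobenius reciprocity (Lemma~\ref{lemma:frobenius}), invoke Theorem~\ref{thm:convex-image}, and then read off the final equivalence from Corollary~\ref{cor:orbit-iso}, exactly as the paper does at the start of \S\ref{sec:rep-theory}. The step you flag as delicate (that $U^H$ is a $D$-subspace) is true but unnecessary, since the paper simply uses $\dim_\R U^H \le \dim_\R U$, which already gives $m \le \dim_\R U/\dim_\R D = \dim_D U$ for $D = \R$, $\C$, or $\HH$.
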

\begin{proof}
    By Theorem~\ref{thm:convex-image}, it suffices to show that any irreducible representation $U$ with endomorphism ring isomorphic to $D = \R, \C,$ or $\HH$ satisfies $m \leq \dim_D U$ where $m$ is the multiplicity of $U$ in $\E_\lambda$. Combining~\eqref{eq:mult-bound} and Lemma~\ref{lemma:frobenius}, we have 
    \begin{align*}
        m &\leq \dim_\R(\Hom_\Psi(\R^V, U)) / \dim_\R D = \dim_\R(\Hom_H(1_H, \res_H^\Psi U)) / \dim_\R D \\
        &\leq \dim_\R(U) / \dim_\R D = \dim_D U. \qedhere
    \end{align*}
\end{proof}

Theorem~\ref{thm:vertex-transitive} gives an affirmative answer to Question~1 in~\cite{gouveia}, providing a full converse to Corollary~4.5 there and to Theorem~2.3 in~\cite{steinerberger} in the special case where $G$ is a Cayley graph.  In practice, most non-vertex-transitive graphs with a large automorphism group still satisfy Theorem~\ref{thm:convex-image}. For example, the five non-vertex-transitive sporadic graphs from~\cite{gouveia}, including Crossing Number 6B, all satisfy this multiplicity bound with their full automorphism groups, and they can be certified by a single $\varphi$ using Theorem~\ref{thm:polyhedral-cone}.

 A natural question to ask is whether every conformally rigid graph $G$ has an automorphism group under which Theorem~\ref{thm:convex-image} holds. The answer to this question is no. This shows that even though Theorem~\ref{thm:convex-image} seems to apply broadly to many graphs, it is not a necessary condition for conformal rigidity.

\begin{example}
    Let $G$ (\textsc{HoG 56682}), pictured in Figure~\ref{fig:P25}, be the $(25,12,5,6)$-strongly regular graph P25.01 (see Brouwer's tables~\cite{brouwer}), which has $\lambda_2 = 10$ and $\lambda_n=15$ both with multiplicity 12. Strongly regular graphs are 1-walk regular and thus lower and upper conformally rigid~\cite[Theorem 3.2]{gouveia}, but $G$ has a trivial automorphism group. This means $G$ does not satisfy Theorem~\ref{thm:convex-image}. Indeed, $G$ is not bipartite, so Corollary~\ref{cor:bipartite} confirms that we need more than one eigenvector to certify the lower and upper conformal rigidity of $G$.
    \begin{figure}[h]
        \centering
        \begin{tikzpicture}[
                scale=1,
                vertex/.style={circle, draw, fill=blue!75, inner sep=2pt, line width=0.8pt},
            ]
            \coordinate (V1) at (1.2491, 3.6238);
            \coordinate (V2) at (2.4729, 4.2160);
            \coordinate (V3) at (3.2909, 3.8463);
            \coordinate (V4) at (3.6684, 2.3241);
            \coordinate (V5) at (0.0000, 2.6767);
            \coordinate (V6) at (0.4308, 1.7946);
            \coordinate (V7) at (3.9739, 4.4483);
            \coordinate (V8) at (1.9032, 2.2992);
            \coordinate (V9) at (0.4316, 0.4334);
            \coordinate (V10) at (0.3886, 3.8651);
            \coordinate (V11) at (1.1464, 1.2599);
            \coordinate (V12) at (2.0047, 1.4769);
            \coordinate (V13) at (2.9349, 3.0441);
            \coordinate (V14) at (3.5011, 1.3271);
            \coordinate (V15) at (2.4662, 0.0000);
            \coordinate (V16) at (4.7985, 2.2860);
            \coordinate (V17) at (1.4801, 0.2716);
            \coordinate (V18) at (1.0924, 2.6175);
            \coordinate (V19) at (1.6041, 4.5585);
            \coordinate (V20) at (2.7648, 2.0437);
            \coordinate (V21) at (4.0723, 3.1604);
            \coordinate (V22) at (4.4580, 1.1447);
            \coordinate (V23) at (2.5911, 0.9417);
            \coordinate (V24) at (2.0270, 3.2372);
            \coordinate (V25) at (3.6011, 0.2986);
            \draw[thick] (V1) -- (V2);
            \draw[thick] (V1) -- (V3);
            \draw[thick] (V1) -- (V4);
            \draw[thick] (V1) -- (V5);
            \draw[thick] (V1) -- (V6);
            \draw[thick] (V1) -- (V7);
            \draw[thick] (V1) -- (V8);
            \draw[thick] (V1) -- (V9);
            \draw[thick] (V1) -- (V10);
            \draw[thick] (V1) -- (V11);
            \draw[thick] (V1) -- (V12);
            \draw[thick] (V1) -- (V13);
            \draw[thick] (V2) -- (V3);
            \draw[thick] (V2) -- (V4);
            \draw[thick] (V2) -- (V5);
            \draw[thick] (V2) -- (V6);
            \draw[thick] (V2) -- (V7);
            \draw[thick] (V2) -- (V14);
            \draw[thick] (V2) -- (V15);
            \draw[thick] (V2) -- (V16);
            \draw[thick] (V2) -- (V17);
            \draw[thick] (V2) -- (V18);
            \draw[thick] (V2) -- (V19);
            \draw[thick] (V3) -- (V4);
            \draw[thick] (V3) -- (V5);
            \draw[thick] (V3) -- (V6);
            \draw[thick] (V3) -- (V7);
            \draw[thick] (V3) -- (V20);
            \draw[thick] (V3) -- (V21);
            \draw[thick] (V3) -- (V22);
            \draw[thick] (V3) -- (V23);
            \draw[thick] (V3) -- (V24);
            \draw[thick] (V3) -- (V25);
            \draw[thick] (V4) -- (V8);
            \draw[thick] (V4) -- (V9);
            \draw[thick] (V4) -- (V10);
            \draw[thick] (V4) -- (V14);
            \draw[thick] (V4) -- (V15);
            \draw[thick] (V4) -- (V16);
            \draw[thick] (V4) -- (V20);
            \draw[thick] (V4) -- (V21);
            \draw[thick] (V4) -- (V22);
            \draw[thick] (V5) -- (V8);
            \draw[thick] (V5) -- (V11);
            \draw[thick] (V5) -- (V12);
            \draw[thick] (V5) -- (V14);
            \draw[thick] (V5) -- (V17);
            \draw[thick] (V5) -- (V18);
            \draw[thick] (V5) -- (V20);
            \draw[thick] (V5) -- (V23);
            \draw[thick] (V5) -- (V24);
            \draw[thick] (V6) -- (V9);
            \draw[thick] (V6) -- (V11);
            \draw[thick] (V6) -- (V13);
            \draw[thick] (V6) -- (V15);
            \draw[thick] (V6) -- (V17);
            \draw[thick] (V6) -- (V19);
            \draw[thick] (V6) -- (V21);
            \draw[thick] (V6) -- (V23);
            \draw[thick] (V6) -- (V25);
            \draw[thick] (V7) -- (V10);
            \draw[thick] (V7) -- (V12);
            \draw[thick] (V7) -- (V13);
            \draw[thick] (V7) -- (V16);
            \draw[thick] (V7) -- (V18);
            \draw[thick] (V7) -- (V19);
            \draw[thick] (V7) -- (V22);
            \draw[thick] (V7) -- (V24);
            \draw[thick] (V7) -- (V25);
            \draw[thick] (V8) -- (V9);
            \draw[thick] (V8) -- (V10);
            \draw[thick] (V8) -- (V11);
            \draw[thick] (V8) -- (V14);
            \draw[thick] (V8) -- (V16);
            \draw[thick] (V8) -- (V19);
            \draw[thick] (V8) -- (V23);
            \draw[thick] (V8) -- (V24);
            \draw[thick] (V8) -- (V25);
            \draw[thick] (V9) -- (V10);
            \draw[thick] (V9) -- (V12);
            \draw[thick] (V9) -- (V15);
            \draw[thick] (V9) -- (V17);
            \draw[thick] (V9) -- (V18);
            \draw[thick] (V9) -- (V22);
            \draw[thick] (V9) -- (V23);
            \draw[thick] (V9) -- (V25);
            \draw[thick] (V10) -- (V13);
            \draw[thick] (V10) -- (V17);
            \draw[thick] (V10) -- (V18);
            \draw[thick] (V10) -- (V19);
            \draw[thick] (V10) -- (V20);
            \draw[thick] (V10) -- (V21);
            \draw[thick] (V10) -- (V24);
            \draw[thick] (V11) -- (V12);
            \draw[thick] (V11) -- (V13);
            \draw[thick] (V11) -- (V14);
            \draw[thick] (V11) -- (V15);
            \draw[thick] (V11) -- (V19);
            \draw[thick] (V11) -- (V21);
            \draw[thick] (V11) -- (V22);
            \draw[thick] (V11) -- (V24);
            \draw[thick] (V12) -- (V13);
            \draw[thick] (V12) -- (V15);
            \draw[thick] (V12) -- (V16);
            \draw[thick] (V12) -- (V18);
            \draw[thick] (V12) -- (V20);
            \draw[thick] (V12) -- (V22);
            \draw[thick] (V12) -- (V23);
            \draw[thick] (V13) -- (V14);
            \draw[thick] (V13) -- (V16);
            \draw[thick] (V13) -- (V17);
            \draw[thick] (V13) -- (V20);
            \draw[thick] (V13) -- (V21);
            \draw[thick] (V13) -- (V25);
            \draw[thick] (V14) -- (V16);
            \draw[thick] (V14) -- (V17);
            \draw[thick] (V14) -- (V18);
            \draw[thick] (V14) -- (V21);
            \draw[thick] (V14) -- (V22);
            \draw[thick] (V14) -- (V25);
            \draw[thick] (V15) -- (V16);
            \draw[thick] (V15) -- (V17);
            \draw[thick] (V15) -- (V19);
            \draw[thick] (V15) -- (V20);
            \draw[thick] (V15) -- (V22);
            \draw[thick] (V15) -- (V24);
            \draw[thick] (V16) -- (V19);
            \draw[thick] (V16) -- (V20);
            \draw[thick] (V16) -- (V23);
            \draw[thick] (V16) -- (V25);
            \draw[thick] (V17) -- (V18);
            \draw[thick] (V17) -- (V20);
            \draw[thick] (V17) -- (V24);
            \draw[thick] (V17) -- (V25);
            \draw[thick] (V18) -- (V19);
            \draw[thick] (V18) -- (V21);
            \draw[thick] (V18) -- (V22);
            \draw[thick] (V18) -- (V23);
            \draw[thick] (V19) -- (V21);
            \draw[thick] (V19) -- (V23);
            \draw[thick] (V19) -- (V24);
            \draw[thick] (V20) -- (V21);
            \draw[thick] (V20) -- (V23);
            \draw[thick] (V20) -- (V24);
            \draw[thick] (V21) -- (V22);
            \draw[thick] (V21) -- (V23);
            \draw[thick] (V22) -- (V24);
            \draw[thick] (V22) -- (V25);
            \draw[thick] (V23) -- (V25);
            \draw[thick] (V24) -- (V25);
            \foreach \i in {1,...,25} {
                \node[vertex] at (V\i) {};
            }
        \end{tikzpicture}
        \caption{The graph P25.01 (\textsc{HoG 56682}).}\label{fig:P25}
    \end{figure}
\end{example}
The previous example illustrates that some conformally rigid graphs are degenerate in the sense that their relevant eigenspace $\E_\lambda$ is so large that they are still conformally rigid even in the absence of a large automorphism group. Intuitively, conformal rigidity is certified by finding a weighted collection of eigenvectors in $\E_\lambda$ whose total edge-energy is constant; a larger
eigenspace provides more candidate eigenvectors, making this easier to satisfy independently of any automorphism structure. Conversely, we do not know of a conformally rigid graph that admits a 1-dimensional orbit-isometric embedding but fails to satisfy Theorem~\ref{thm:convex-image} for any subgroup $\Psi$, though we see no theoretical obstruction to such an example existing.

Additionally, there were nine graphs in \textsc{HoG} that were numerically certified to be conformally rigid, but had at least one isotypic where $m > \dim_D U$. The nine graphs have \textsc{HoG} IDs \textsc{730, 1290, 21234, 21609, 50485, 51392, 51393, 51453,} and \textsc{51455}; each has a nontrivial automorphism group but still fails the multiplicity bound of Theorem~\ref{thm:convex-image}. These graphs together with the previous example demonstrate that our framework of symmetry reduction with respect to the automorphism group of a graph $G$ does not explain all conformally rigid graphs. While the previous example is explained by existing theory (1-walk regularity), we were unable to find an exact certificate for the nine \textsc{HoG} graphs. We wonder if there is some underlying combinatorial property that can unify our symmetry reduction techniques with conformally rigid graphs that have little-to-no automorphisms.

\subsection{Proposition~\ref{prop:convex-image}: Complex and Quaternionic Cases}\label{subsec:complex-quaternionic}

Now we prove the general case of Proposition~\ref{prop:convex-image}. We are back in the setting where we fix a real irreducible representation $U$ of $\Gamma$ and $X = U \otimes \R^m$ is an isotypic representation. Now $U$ is of complex or quaternionic type where $\End_\Gamma(U) \cong \C$ or $\HH$ respectively. In the case where $U$ is complex, there exists a map $J \in \End_\Gamma(U)$ such that $J^2 = -\Id_U$.
This turns $U$ into a $\C$-vector space where $i \cdot u = Ju$ for all $u \in U$. Similarly, if $U$ is quaternionic, there exists $J_1, J_2, J_3 \in \End_\Gamma(U)$ satisfying $J_i^2 = -\Id_U$ for each $i=1,2,3$. This allows us to view  $U$ as a left $\HH$-module, satisfying
\[ i \cdot u = J_1u, \ j \cdot u = J_2 u, \ k \cdot u = J_3 u\]
where $i,j,k$ are the standard imaginary basis elements of $\HH$. Notationally, it is unfortunate that these basis elements overlap with the standard indexing variables in linear algebra. For the remainder of this section, we overload these variables, but hopefully the context on whether the variable is being used as an index or unit quaternion should prevent confusion.

We can introduce \textit{complex (Hermitian)} and \textit{quaternionic inner products} on $U$ and similarly construct Gram matrices with entries in $\C$ and $\HH$. Just as we can talk about positive semidefiniteness for real symmetric matrices, we can talk about \textit{psd Hermitian matrices} for $\C$ and $\HH$. Just as in the real case, we will see that the set of all such matrices is convex. Moreover, under the appropriate dimension assumptions, any such psd matrix arises as a Gram matrix with the corresponding complex or quaternionic inner product.

First, we describe what maps $T$ in $\End_\Gamma(X) \cong \End_\Gamma(U) \otimes \R^{m \times m}$ look like. In the complex case, $\Id_U$ and $J$ form a real basis of $\End_\Gamma(U)$, so any $T \in \End_\Gamma(X)$ is of the form 
\begin{equation}\label{eq:tensor-complex}
    T = \Id_U \otimes M^{(0)} + J \otimes M^{(1)}    
\end{equation}
where $M^{(i)} \in \R^{m \times m}$. Similarly for the quaternionic case, $\Id_U, J_1, J_2,$ and $J_3$ form a basis for $\End_\Gamma(U)$, so any $T\in \End_\Gamma(X)$ is of the form 
\begin{equation}\label{eq:tensor-quaternionic}
    T = \Id_U \otimes M^{(0)} + J_1 \otimes M^{(1)} + J_2 \otimes M^{(2)} + J_3 \otimes M^{(3)}
\end{equation}
with each $M^{(i)} \in \R^{m \times m}$.

Now that we have a nice description of maps $T \in \End_\Gamma(X)$, we want to prove the rest of Proposition~\ref{prop:convex-image}. To do this, we need to define appropriate complex and quaternionic inner products on $U$. The discussion of complex inner products, Hermitian matrices, psd matrices, and Gram matrices are all standard and can be read about in~\cite{horn}. The quaternionic versions of these objects require a bit more care due to the noncommutativity of $\HH$, but all the relevant results still hold. For quaternionic linear algebra, we refer to~\cite{farenick, rodman}. We define the following scalar products on $U$ when it is of complex or quaternionic type.

\begin{definition}\label{def:ips}
    Let $\langle \cdot, \cdot \rangle_U$ be a $\Gamma$-invariant real inner product on $U$. If $U$ is of complex type, we define the \textit{complex inner product} on $U$ given by
    \begin{equation}\label{eq:complex-ip}
      \langle u, u' \rangle_\C = \langle u , u' \rangle_U + i \langle u, J u' \rangle_U.  
    \end{equation} 
    If $U$ is of quaternionic type, we define the \textit{quaternionic inner product} on $U$ to be 
    \begin{equation}\label{eq:quaternionic-ip}
        \langle u, u' \rangle_\HH = \langle u , u' \rangle_U + i \langle u, J_1 u' \rangle_U +j \langle u, J_2 u' \rangle_U+k \langle u, J_3 u' \rangle_U.
    \end{equation}
\end{definition}

We want to verify that~\eqref{eq:complex-ip} and~\eqref{eq:quaternionic-ip} are bona fide complex and quaternionic inner products. The definition of a complex inner product is well-known: we need to check that~\eqref{eq:complex-ip} as defined above is conjugate symmetric, $\C$-linear in the first argument, and positive definite. The definition for a quaternionic inner product is not too different; for~\eqref{eq:quaternionic-ip} to be a quaternionic inner product, we also need conjugate symmetry, \textit{left} $\HH$-linearity in the first argument, and positive definiteness. The main difference from complex inner products is the left-linearity of the first argument. Since $\C$ is commutative, conjugate symmetry and $\C$-linearity in the first argument imply conjugate linearity in the second argument:
\[\langle u,z \cdot u' \rangle_\C = \overline{z \cdot \langle u', u\rangle}_\C = \bar{z} \cdot \overline{\langle u', u \rangle}_\C = \bar{z} \cdot \langle u, u' \rangle_\C\]
for any $z \in \C$. In $\HH$, we need to be careful about our order of multiplication. Conjugate symmetry and left $\HH$-linearity in the first argument gives us the following:
\begin{equation}
    \langle u, q \cdot u' \rangle_\HH = \overline{q \cdot \langle u', u \rangle}_\HH = \overline{\langle u', u\rangle}_\HH \cdot \bar{q} = \langle u, u' \rangle_\HH \cdot \bar{q}
\end{equation}
for all $q \in \HH$. So a quaternionic inner product is \textit{right} conjugate linear in the second argument, which comes from the noncommutativity of $\HH$.

\begin{remark}
    Our definition of the quaternionic inner product varies slightly from the definitions in~\cite{farenick,rodman}. In these references, they work with right $\HH$-modules, and their inner products satisfy right $\HH$-linearity in the second argument. We could have just as easily defined $U$ to be a right $\HH$-module under the conjugate action 
    \[ u \cdot i = - J_1u, \ u \cdot j = - J_2 u, \ u \cdot k = - J_3 u,\]
    but since our $\Gamma$-endomorphisms act on the left, we chose to turn $U$ into a left $\HH$-module. Our quaternionic structure and inner products only differ by conjugation, so all the necessary results on convexity and positive semidefiniteness still follow.
\end{remark}

With these definitions in hand, we can now prove~\eqref{eq:complex-ip} and~\eqref{eq:quaternionic-ip} give the appropriate inner products. First we prove a useful lemma.

\begin{lemma}\label{lemma:skew-adjoint}
    Suppose $T \in \End_\Gamma(U)$ where $T^2 = -\Id_U$. Then $T$ is an isometry and skew-adjoint with respect to any $\Gamma$-invariant, real inner product $\langle \cdot, \cdot \rangle_U$.
\end{lemma}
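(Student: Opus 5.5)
The plan is to first reduce to the case of an orthogonal $T$. Then a single algebraic identity gives the isometry claim, and skew-adjointness follows from it.

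The key step is to show that $T$ is orthogonal with respect to $\langle \cdot, \cdot \rangle_U$, meaning $T^\ast T = \Id_U$, where $T^\ast$ is the adjoint for this inner product. Since the inner product is $\Gamma$-invariant, each $\rho(g)$ is orthogonal, so $\rho(g)^\ast = \rho(g)^{-1}$. Taking adjoints of $\rho(g)T = T\rho(g)$ then shows $T^\ast \in \End_\Gamma(U)$. Consequently $S := T^\ast T$ is a self-adjoint, positive definite element of $\End_\Gamma(U)$. It is positive definite because $T$ is invertible, with $T^{-1} = -T$.

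Now I use the structure of $\End_\Gamma(U) \cong D$ from Lemma~\ref{lemma:schur}. Here $D$ is a division algebra and is finite dimensional over $\R$. The element $S$ commutes with $T$: indeed $T^\ast$ commutes with $T$ because $T^\ast = (T^{-1})^{\ast\,-1}$, and more directly $T^\ast T$ and $T T^\ast$ are related through $T^2=-\Id_U$. So I intend to argue via the subalgebra generated by $S$ instead. Because $S$ is self-adjoint, it is diagonalizable over $\R$. Its eigenspaces are $\Gamma$-invariant, since $S$ commutes with $\Gamma$. By irreducibility of $U$ there is exactly one eigenspace, so $S = a\,\Id_U$ with $a>0$.

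The claim that $S = a \Id_U$ is the main point. It avoids needing any knowledge of which of $\R$, $\C$, $\HH$ the ring $D$ is: a self-adjoint $\Gamma$-endomorphism of an irreducible representation is always a real scalar.

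To fix the scalar, I use the identity $T^\ast T = a\Id_U$. It says that $T^\ast = aT^{-1} = -aT$. Applying adjoints twice gives
\[T = (T^\ast)^\ast = -a T^\ast = a^2 T,\]
so $a^2 = 1$, and since $a>0$ we get $a=1$. Therefore $T^\ast T=\Id_U$, which means $\langle Tu, Tu'\rangle_U = \langle u,u'\rangle_U$ for all $u, u'$, so $T$ is an isometry. Substituting $a=1$ back into $T^\ast = -aT$ gives $T^\ast = -T$, which is skew-adjointness.
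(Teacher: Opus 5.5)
Your proof is correct and follows essentially the same route as the paper: the paper's self-adjoint $S\in\End_\Gamma(U)$ defined by $\langle Tu,Tu'\rangle_U=\langle u,Su'\rangle_U$ is exactly your $T^\ast T$, and it is reduced to a positive scalar by the same irreducibility argument. The only difference is in fixing the scalar to $1$ and getting skew-adjointness: the paper computes $\langle T^2u,T^2u'\rangle_U$ and $\langle Tu,T^2u'\rangle_U$ directly, whereas you take adjoints of $T^\ast=-aT$. (Your aside that $T^\ast$ commutes with $T$ ``because $T^\ast=(T^{-1})^{\ast\,-1}$'' is vacuous, but you never use it, so it can simply be deleted.)
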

\begin{proof}
    We know $T$ is invertible, so the map $(u,u') \mapsto \langle Tu, Tu' \rangle_U$ is a positive definite, symmetric bilinear form. It is also $\Gamma$-invariant as both $T$ and the inner product are $\Gamma$-invariant. Thus, there exists $S \in \End_\Gamma(U)$ that is self-adjoint satisfying 
    \[\langle Tu, Tu' \rangle_U = \langle u, Su' \rangle_U \]
    for all $u,u' \in U$. By the spectral theorem and positive-definiteness, $S$ has a real eigenvalue $c > 0$. This means $S - c \cdot \Id_U \in \End_\Gamma(U)$ has nontrivial kernel. As kernels of $\Gamma$-endomorphisms are $\Gamma$-stable and $U$ is irreducible, $S = c \cdot \Id_U$. Hence,
    \[\langle Tu , Tu' \rangle_U = c \cdot \langle u, u' \rangle_U.\]
    Combining this with the fact that $T^2 = -\Id_U$, we get
    \[c^2 \cdot \langle u, u' \rangle_U = \langle T^2u , T^2u' \rangle_U = \langle -u, -u' \rangle_U = \langle u, u' \rangle_U,\]
    whence $c=1$. Thus, $T$ is an isometry. Moreover for any $u,u' \in U$,
    \[ \langle u, Tu' \rangle_U = \langle Tu, T^2 u' \rangle_U = - \langle Tu, u' \rangle_U,\]
    so $T$ is also skew-adjoint.
\end{proof}

\begin{proposition}
    The constructions in Definition~\ref{def:ips} give $\Gamma$-invariant complex and quaternionic inner products on $U$.
\end{proposition}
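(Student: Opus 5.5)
We need to check three things for each of the forms \eqref{eq:complex-ip} and \eqref{eq:quaternionic-ip}: conjugate symmetry, ($\C$-linearity or left $\HH$-linearity) in the first argument, and positive definiteness. After that, $\Gamma$-invariance remains. The tool throughout is Lemma~\ref{lemma:skew-adjoint}, which applies to $J$ in the complex case and to each of $J_1, J_2, J_3$ in the quaternionic case. It tells us these maps are orthogonal and skew-adjoint for the real $\Gamma$-invariant inner product $\langle \cdot,\cdot\rangle_U$.

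\emph{Positive definiteness.} Skew-adjointness gives $\langle u, Ju\rangle_U = -\langle Ju, u\rangle_U = -\langle u, Ju\rangle_U$, so $\langle u, Ju\rangle_U = 0$. The same holds for each $J_\alpha$. Hence $\langle u,u\rangle_\C = \langle u,u\rangle_\HH = \langle u,u\rangle_U > 0$ for $u \neq 0$.

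\emph{Conjugate symmetry.} The real part is symmetric. For each imaginary coefficient, skew-adjointness gives $\langle u', J_\alpha u\rangle_U = -\langle u, J_\alpha u'\rangle_U$. So swapping the arguments negates every imaginary component, which is exactly conjugation in $\C$ or $\HH$.

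\emph{Linearity in the first argument.} Real-bilinearity is clear, so it suffices to check multiplication by the generators.

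In the complex case we need $\langle Ju, u'\rangle_\C = i\langle u,u'\rangle_\C$. The left side expands to $\langle Ju,u'\rangle_U + i\langle Ju, Ju'\rangle_U$. Skew-adjointness gives $\langle Ju,u'\rangle_U = -\langle u, Ju'\rangle_U$, and the isometry property gives $\langle Ju,Ju'\rangle_U = \langle u,u'\rangle_U$. Together these give $-\langle u,Ju'\rangle_U + i\langle u,u'\rangle_U = i(\langle u,u'\rangle_U + i\langle u,Ju'\rangle_U)$, as required.

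In the quaternionic case we must verify $\langle J_1 u, u'\rangle_\HH = i\langle u,u'\rangle_\HH$, and similarly for $j$ and $k$. This requires fixing the relations among the $J_\alpha$. Since $q \mapsto$ (left action) is an algebra isomorphism $\HH \cong \End_\Gamma(U)$, we may choose the $J_\alpha$ so that they satisfy the quaternion relations $J_1J_2 = J_3$, $J_2J_3 = J_1$, $J_3J_1 = J_2$, and $J_\alpha J_\beta = -J_\beta J_\alpha$ for $\alpha \neq \beta$.

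We then expand $\langle J_1u,u'\rangle_U + i\langle J_1u,J_1u'\rangle_U + j\langle J_1u,J_2u'\rangle_U + k\langle J_1u,J_3u'\rangle_U$. Moving $J_1$ across by skew-adjointness and using $-J_1J_2 = -J_3$ and $-J_1J_3 = J_2$, this becomes
\[
-\langle u,J_1u'\rangle_U + i\langle u,u'\rangle_U - j\langle u,J_3u'\rangle_U + k\langle u,J_2u'\rangle_U .
\]
Next we multiply out $i(\langle u,u'\rangle_U + i a_1 + j a_2 + k a_3)$, where $a_\alpha = \langle u, J_\alpha u'\rangle_U$, using $ij = k$ and $ik = -j$. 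This yields $-a_1 + i\langle u,u'\rangle_U - j a_3 + k a_2$, which matches. The $j$ and $k$ cases are identical up to cyclic permutation.

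This bookkeeping of signs and of left versus right multiplication is the main obstacle. It is the place where the remark's convention (a left module, with the inner product linear on the left) matters, and any mismatch would be absorbed by replacing a $J_\alpha$ with its negative.

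\emph{$\Gamma$-invariance.} Each $J_\alpha$ commutes with $\rho(g)$, and $\langle\cdot,\cdot\rangle_U$ is $\Gamma$-invariant. Therefore $\langle \rho(g)u, J_\alpha\rho(g)u'\rangle_U = \langle \rho(g)u, \rho(g)J_\alpha u'\rangle_U = \langle u, J_\alpha u'\rangle_U$, and every component of the form is preserved.
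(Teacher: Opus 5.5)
Your proof is correct and follows essentially the same route as the paper. Both arguments verify conjugate symmetry, linearity in the first argument, and positive definiteness directly from Lemma~\ref{lemma:skew-adjoint}, and both use the same sign computation for $\langle J_1u,u'\rangle_\HH$ via $J_1J_2=J_3$ and $J_1J_3=-J_2$. The only difference is that you spell out the quaternion relations among the $J_\alpha$ and check $\Gamma$-invariance explicitly, whereas the paper uses the former implicitly and calls the latter clear.
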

\begin{proof}
    It is clear the inner products are $\Gamma$-invariant, so we just have to show they are complex and quaternionic inner products. We start with the familiar complex inner product. Applying Lemma~\ref{lemma:skew-adjoint} several times, we can check
    \begin{itemize}
        \item \textit{Conjugate-symmetry:} For all $u, u' \in U$, we have \begin{align*}
            \langle u, u' \rangle_\C = \langle u, u' \rangle_U + i \langle u, Ju' \rangle_U = \langle u', u \rangle_U - i \langle u', Ju \rangle_U = \overline{\langle u', u \rangle}_\C.
        \end{align*}
        \item \textit{Complex-linearity in the first argument:} It is clear that $\langle \cdot, u' \rangle_\C$ is $\R$-linear in the first argument so we just need to show $\langle iu, u' \rangle_\C = i \langle u, u' \rangle_\C$ for all $u, u' \in U$:
        \begin{align*}
            \langle iu, u' \rangle_\C &= \langle Ju, u' \rangle_U + i \langle Ju, Ju' \rangle_U = - \langle u, J u' \rangle_U + i \langle u, u' \rangle_U \\
            &= i \cdot (\langle u, u' \rangle_U + i \langle u, Ju' \rangle_U) = i \langle u, u' \rangle_\C.
        \end{align*}
        \item \textit{Positive-definiteness:} Given $u \in U$, we have 
        \[\langle u, u \rangle_\C = \langle u, u \rangle_U + i \langle u, Ju \rangle_U = \langle u, u \rangle_U \geq 0,\]
        with equality if and only if $u=0$.
    \end{itemize}
    If $U$ is of quaternionic type, conjugate-symmetry and positive-definiteness follow identically by applying skew-symmetry of $J_i$ where $i = 1,2,3$. Left $\HH$-linearity in the first argument takes some additional steps. We see 
    \begin{align*}
      \langle iu, u' \rangle_\HH &= \langle J_1 u, u' \rangle_U + i\langle J_1 u, J_1 u' \rangle_U + j \langle J_1 u, J_2 u' \rangle_U + k \langle J_1 u, J_3 u' \rangle_U \\
      &= -\langle  u, J_1 u' \rangle_U + i\langle u, u' \rangle_U - j \langle u, J_1J_2 u' \rangle_U - k \langle u, J_1J_3 u' \rangle_U\\
      &= -\langle  u, J_1 u' \rangle_U + i\langle u, u' \rangle_U - j \langle u, J_3 u' \rangle_U + k \langle u, J_2 u' \rangle_U \\ 
      &= i (\langle u, u'\rangle_U + i\langle  u, J_1 u' \rangle_U + j \langle u, J_2 u' \rangle_U + k \langle u, J_3 u' \rangle_U) \\
      &= i \langle u, u' \rangle_\HH.
    \end{align*}
    The proof for $j$ and $k$ follow similarly, so $\langle \cdot, \cdot \rangle_\HH$ is left $\HH$-linear in the first argument.
\end{proof}

Now we define Hermitian and psd matrices for $\C$ and $\HH$. Recall the adjoint of a complex matrix $A \in \C^{m \times m}$ is given by the conjugate transpose $A^* = \bar{A}^T.$
We can similarly define the adjoint $A^*$ of a quaternionic matrix $A \in \HH^{m \times m}$ to be the conjugate transpose.
\begin{definition}
    A matrix $A \in D^{m \times m}$ where $D = \C, \HH$ is \textit{Hermitian} if it is equal to its conjugate transpose i.e. $A = A^* = \bar{A}^T$. A Hermitian matrix $A$ satisfies $\xi^* A \xi \in \R$ for all $\xi \in D^m$, and is \textit{positive semidefinite} if $\xi^* A \xi \geq 0$ for all $\xi \in D^m$.
\end{definition}

We denote the space of all $m \times m$ positive semidefinite matrices over $D = \C, \HH$ as $\s_+^m(D)$. As in the real case, $\s_+^m(D)$ is convex: given $A,B \in \s_+^m(D), \ \xi \in D$, and $t \in [0,1]$, we have
\[\xi^*(t A + (1-t)B)\xi = t \xi^* A \xi + (1-t) \xi^* B \xi \geq 0.\]
Gram matrices given by inner products over $D$ always produce a psd matrix.

\begin{definition}
    Let $x = \sum_{i=1}^m u_i \otimes e_i \in X$. Then the Gram matrix $G_D(x) \in D^{m \times m}$ for $D = \C, \HH$, is defined by $G_D(x)_{ij} = \langle u_i, u_j \rangle_D$.
\end{definition}
It is well-known in the case $D = \C$ that $G_\C(x) \in \s_+^m(\C)$ and the map $x \mapsto G_\C(x)$ is surjective onto $\s_+^m(\C)$ if $m \leq \dim_\C U$. This is also true when $D = \HH$, but we will need to prove this.

\begin{lemma}
    Suppose $U$ is of quaternionic type. Then $G_\HH(x) \in \s^m_+(\HH)$ for any $x \in U$. Moreover, if $m \leq \dim_\HH U$, the map $G_\HH: X \to \s^m_+(\HH)$ is surjective.
\end{lemma}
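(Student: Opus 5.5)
The plan has two parts: first show that $G_\HH(x)$ is Hermitian psd, then show surjectivity by building $x$ from a factorization $A = C C^*$ together with an orthonormal $\HH$-basis of $U$.

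\textbf{Psd.} Write $x=\sum_{i=1}^m u_i\otimes e_i$. Conjugate symmetry of $\langle\cdot,\cdot\rangle_\HH$ (from the preceding proposition) gives $G_\HH(x)_{ji}=\overline{G_\HH(x)_{ij}}$, so $G_\HH(x)$ is Hermitian. For $\xi=(\xi_1,\ldots,\xi_m)^T\in\HH^m$, I will identify $\xi^*G_\HH(x)\xi=\sum_{i,j}\bar\xi_i\langle u_i,u_j\rangle_\HH\,\xi_j$ with $\langle w,w\rangle_\HH$ for a suitable $w$. Left linearity in the first argument and right conjugate linearity in the second give $\langle \sum_i q_i u_i,\sum_j p_j u_j\rangle_\HH=\sum_{i,j} q_i\langle u_i,u_j\rangle_\HH\,\bar p_j$. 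Taking $w=\sum_i\bar\xi_i u_i$ then yields exactly $\xi^*G_\HH(x)\xi$. By positive definiteness, $\langle w,w\rangle_\HH=\langle w,w\rangle_U\ge 0$. The only thing to watch is the order of noncommuting factors, which is why the left-module convention from the Remark matters; I will check the index placement once carefully.

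\textbf{Surjectivity.} I need two standard facts of quaternionic linear algebra, taken from \cite{farenick,rodman} and adjusted to left modules by conjugation as in the Remark.
\begin{enumerate}
\item Every $A\in\s^m_+(\HH)$ factors as $A=CC^*$ with $C\in\HH^{m\times m}$. This follows from the quaternionic spectral theorem $A=W\Lambda W^*$ with $W$ unitary and $\Lambda\ge0$ real diagonal, taking $C=W\Lambda^{1/2}$.
\item $U$, as a left $\HH$-module of rank $d=\dim_\HH U$, has an orthonormal basis $f_1,\ldots,f_d$ with respect to $\langle\cdot,\cdot\rangle_\HH$. This comes from quaternionic Gram--Schmidt: if $\langle f,f\rangle_\HH=1$, then $u-\langle u,f\rangle_\HH f$ is orthogonal to $f$ by left linearity, and normalization is by the real positive scalar $\langle u,u\rangle_U^{-1/2}$.
\end{enumerate}

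Since $m\le d$, I define $u_i=\sum_{k=1}^m c_{ik}f_k$, where $c_{ik}$ are the entries of $C$ and the coefficients multiply on the left. Then $\langle u_i,u_j\rangle_\HH=\sum_{k,l}c_{ik}\langle f_k,f_l\rangle_\HH\,\bar c_{jl}=\sum_k c_{ik}\bar c_{jk}=(CC^*)_{ij}=A_{ij}$. So $x=\sum_i u_i\otimes e_i$ satisfies $G_\HH(x)=A$.

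The main obstacle is making sure the factorization convention matches. The expansion produces $CC^*$ with $C$ acting through left coefficients, so the spectral decomposition must be stated in the corresponding left or conjugated form. If the cited theorem is for right modules, I will apply it to $\bar A^T$, or conjugate entrywise, to obtain the needed factorization, using that $A\mapsto \bar A$ preserves psd-ness up to the module-side swap. Everything else is bookkeeping that parallels the complex case.
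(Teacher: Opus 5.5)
Your proof is correct and essentially the paper's. The psd step uses the same identity $\xi^*G_\HH(x)\xi=\langle \sum_i\bar\xi_iu_i,\sum_j\bar\xi_ju_j\rangle_\HH$, and surjectivity uses the same Gram--Schmidt orthonormal family, with your factorization $A=CC^*$ being the paper's Cholesky factorization $A=B^*B$ under $C=B^*$ (you obtain it from the spectral theorem instead).

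The convention worry in your last paragraph is unnecessary, because $A=CC^*$ is a plain matrix identity with no module side attached. You should also drop the proposed fallback: entrywise conjugation does not preserve positive semidefiniteness over $\HH$. For $v=(1,i,j)^T$, $vv^*$ is psd, but its entrywise conjugate $M$ gives $v^*Mv=-3$.
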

\begin{proof}
    Given $x = \sum_{i=1}^m u_i \otimes e_i \in X$ and $\xi \in \HH^m$, we have
    \begin{align*}
        \xi^* G_\HH(x) \xi = \sum_{i, j=1}^m \bar{\xi}_i \cdot \langle u_i, u_j \rangle_\HH \cdot \xi_j = \left \langle \sum_{i=1}^m \bar{\xi}_i u_i, \sum_{j=1}^m \bar{\xi}_j u_j \right \rangle_\HH \geq 0
    \end{align*}
    since $\langle \cdot, \cdot \rangle_\HH$ is positive definite. Thus $G_\HH(x) \in \s_+^m(\HH)$.

    Now suppose that $m \leq \dim_\HH U$. This means that there exists an orthonormal set $v_1, \ldots v_m \in U$, that is, $\langle v_i, v_j \rangle_\HH = 1 \text{ if } i = j$ and $0$ otherwise. This follows from Gram-Schmidt applied to quaternionic inner products~\cite[Theorem 4.3]{farenick}. Given any $A \in \s_+^m(\HH)$, we can write $A = B^*B$ for some $B \in \HH^{m \times m}$ using Cholesky factorization~\cite[Proposition 3.2.7]{rodman}. Let $x = \sum_{i=1}^m u_i \otimes e_i$ where
    \[u_i = \sum_{k=1}^m (B^*)_{ik} v_k = \sum_{k=1}^m \overline{B}_{ki} v_k.\]
    Now we check
    \begin{align*}
        G_\HH(x)_{ij} &= \langle u_i, u_j \rangle_\HH = \left \langle \sum_{k=1}^m \overline{B}_{ki} v_k, \sum_{l=1}^m \overline{B}_{lj} v_l \right \rangle_\HH \\
        &= \sum_{k, l=1}^m \overline{B}_{ki} \cdot \langle v_k, v_l \rangle_\HH \cdot B_{lj} = \sum_{k=1}^m \overline{B}_{ki} \cdot B_{kj} \\
        &= \sum_{k=1}^m (B^*)_{ik} \cdot B_{kj} = A_{ij}.
    \end{align*}
    Hence $G_\HH(x) = A$, so $G_\HH$ is surjective.
\end{proof}

\begin{proposition}\label{prop:gram-matrix-hermitian}
    Let $U$ be a real irreducible representation of complex or quaternionic type, and let $Q$ be a $\Gamma$-invariant quadratic form on $X$. Letting $T$ be the associated $\Gamma$-linear map, there is some $M \in D^{m \times m}$ such that for any $x \in X$
    \[Q(x) = \langle x, T x \rangle_X = \re \Tr(G_D(x)^*M)\]
    where $\langle \cdot, \cdot \rangle_X$ is the $\Gamma$-invariant product in Proposition~\ref{prop:iso-inner-prod}. In particular, $Q(x)$ is the composition of $G_D(x)$ with a real linear functional.
\end{proposition}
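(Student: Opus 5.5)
The plan is to mirror the proof of Proposition~\ref{prop:gram-matrix}, replacing the real tensor description of $T$ by~\eqref{eq:tensor-complex} or~\eqref{eq:tensor-quaternionic}. Write $x = \sum_{i=1}^m u_i \otimes e_i$ and expand bilinearly, using the compatible inner product of Proposition~\ref{prop:iso-inner-prod}. In the quaternionic case, with $J_0 := \Id_U$, this gives
\[
Q(x) = \langle x, Tx\rangle_X = \sum_{t=0}^{3} \sum_{i,j=1}^m M^{(t)}_{ij}\, \langle u_i, J_t u_j\rangle_U .
\]
The complex case is the same with only $t=0,1$. Proposition~\ref{prop:iso-inner-prod} was stated for absolutely irreducible $U$. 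I will first note that its proof only needs the induced forms $\langle \iota_i(\cdot), \iota_i(\cdot)\rangle_X$ to be multiples of $\langle\cdot,\cdot\rangle_U$. This still holds here by the argument of Lemma~\ref{lemma:skew-adjoint}: a self-adjoint element of $\End_\Gamma(U)$ has a real eigenvalue, so it is a scalar.

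Next I will express the real numbers $\langle u_i, J_t u_j\rangle_U$ through the $D$-valued Gram matrix. By Definition~\ref{def:ips}, $\langle u_i, u_j\rangle_U = \re G_D(x)_{ij}$, and $\langle u_i, J_t u_j\rangle_U$ is the $i$, $j$, or $k$ coefficient of $G_D(x)_{ij}$. For a quaternion $q$, that coefficient equals $\re(q\bar{\epsilon}_t)$, where $\epsilon_0=1$, $\epsilon_1=i$, $\epsilon_2=j$, $\epsilon_3=k$. Note that $\re(q\bar\epsilon)=\re(\bar\epsilon q)$ even for quaternions. Then I set
\[
M := \sum_t \epsilon_t M^{(t)} \in D^{m\times m}.
\]
This gives
\[
Q(x) = \sum_{i,j}\re\bigl(G_D(x)_{ij}\,\overline{M_{ij}}\bigr) = \re \sum_{i,j} \overline{G_D(x)_{ij}}\, M_{ij} = \re\Tr(G_D(x)^* M),
\]
where the middle step uses $\re(\bar a) = \re(a)$ and $\overline{ab}=\bar b\bar a$.

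The last claim is then immediate. The map $A \mapsto \re\Tr(A^*M)$ is real linear on $D^{m\times m}$, so $Q$ factors as $G_D$ followed by a real linear functional. I will also remark that one may replace $M$ by its Hermitian part, since $G_D(x)$ is Hermitian, though the statement does not require this.

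I expect the main obstacle to be bookkeeping with signs and ordering in the quaternionic case. This includes checking that the coefficient extraction $\re(q\bar\epsilon_t)$ matches the convention in~\eqref{eq:quaternionic-ip}, and that $\re$ of the trace of a product is insensitive to quaternion noncommutativity. I will handle this by verifying the identity $\re(\bar a b) = \re(b\bar a) = \sum_t a_t b_t$ (the Euclidean inner product on $\HH\cong\R^4$) once, and reducing everything to it.
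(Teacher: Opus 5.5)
Your proposal is correct and follows the paper's proof essentially line by line. Both expand $\langle x, Tx\rangle_X$ using the tensor form of $T$, read off $\langle u_i, J_t u_j\rangle_U$ as the $\epsilon_t$-coefficient of $G_D(x)_{ij}$, set $M=\sum_t \epsilon_t M^{(t)}$, and conclude via $\re(\bar a b)=\sum_t a_t b_t$. Your side remark that Proposition~\ref{prop:iso-inner-prod} extends to complex and quaternionic type, because self-adjoint elements of $\End_\Gamma(U)$ are scalars, makes explicit a step the paper uses without comment.
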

\begin{proof}
    Let $x = \sum_{i=1}^m u_i \otimes e_i$. First assume $D= \C$. Writing $T$ as in~\eqref{eq:tensor-complex}, let 
    $M = M^{(0)} + i M^{(1)} \in \C^{m \times m}$. Then we have
    \begin{align*}
        Q(x) &= \langle x, Tx \rangle_X = \sum_{i,j=1}^m \langle u_i \otimes e_i, T(u_j \otimes e_j) \rangle_X \\
        &= \sum_{i,j=1}^m \langle u_i, u_j \rangle_U \cdot M^{(0)}_{ij} + \langle u_i, J u_j \rangle_U \cdot M^{(1)}_{ij} \\
        &= \sum_{i,j=1}^m \re \paren{\overline{\langle u_i, u_j \rangle}_\C \cdot M_{ij}} = \re \paren{\sum_{i,j=1}^m (G_\C(x)^*)_{ji} \cdot M_{ij}} \\
        &= \re \Tr(G_\C(x)^* M).
    \end{align*}
    Now suppose $D = \HH$, and write $T$ as in~\eqref{eq:tensor-quaternionic}. If we let
    \[M = M^{(0)} + i M^{(1)} + jM^{(2)} + kM^{(3)} \in \HH^{m \times m},\]
    we get that $Q(x)$ is equal to 
    \begin{align*}
        &\sum_{i,j=1}^m \langle u_i, u_j \rangle_U \cdot M^{(0)}_{ij} + \langle u_i, J_1 u_j \rangle_U \cdot M^{(1)}_{ij} + \langle u_i, J_2 u_j \rangle_U \cdot M^{(2)}_{ij} + \langle u_i, J_3 u_j \rangle_U \cdot M^{(3)}_{ij} \\
        = & \sum_{i,j=1}^m \re \paren{\overline{\langle u_i, u_j \rangle}_\HH \cdot M_{ij}} = \re \paren{\sum_{i,j=1}^m (G_\HH(x)^*)_{ji} \cdot M_{ij}} = \re \Tr(G_\HH(x)^* M). \qedhere
    \end{align*}
\end{proof}

After all this technical setup, we can now prove the complex and quaternionic versions of Proposition~\ref{prop:convex-image}.

\begin{proof}[Proof of Proposition~\ref{prop:convex-image} (complex and quaternionic cases)] 
    Each $Q_k$ has an associated $\Gamma$-linear map $T_k$ of the form~\eqref{eq:tensor-complex} or~\eqref{eq:tensor-quaternionic} when $D= \C$ or $\HH$ respectively. By Proposition~\ref{prop:gram-matrix-hermitian} each $T_k$ has a corresponding matrix $M_k \in D^{m \times m}$ such that 
    \begin{align*}
        Q_k(x) &= \re \Tr (G_D(x)^*M_k), \text{ so}\\
        Q(x) &= \re (\Tr (G_D(x)^*M_1), \ldots, \Tr (G_D(x)^*M_s)).
    \end{align*}
    As in the real case, this is a real linear map applied after $G_D(x)$. But since $m \leq \dim_D U$, the map $G_D: X \to \s_+^m(D)$ is surjective. Thus $Q(X)$ is the linear image of the convex set $\s_+^m(D)$ and is itself convex.
\end{proof}

\section{Symmetry Adapted Bases and Block-Diagonalization}\label{sec:polyhedral-cone}
The computational speedup given in \S~\ref{sec:sym} is only half the picture of symmetry reduction. We were able to reduce the number of constraints from $m =\abs{E}$ to the number of edge orbits $s$, but we did not leverage any symmetry on the search space $\E_\lambda$, which has the structure of a $\Psi$-representation. We have seen how we can block-diagonalize an invariant quadratic form, such as $\ell_\Psi$, using a basis for the isotypics in~\eqref{eq:diagonal-quadratic}. Furthermore, if we choose each isotypic basis to arise from the identification in Proposition~\ref{prop:iso-inner-prod}, we get a \textit{symmetry adapted basis}. This allows us to simplify the search space for our certificates. 

For this section, we let $\E_{\lambda} = X_1 \oplus \ldots \oplus X_h$ be the isotypic decomposition, where $X_j$ corresponds to the $\Psi$-irreducible representation $U_j$ with dimension $d_j$ and multiplicity $m_j$. Thus, $X_j$ is of dimension $d_j \cdot m_j$ and $\sum_{j=1}^h d_j \cdot m_j = d = \dim \E_\lambda$.

\subsection{Block-Diagonal SDP}\label{subsec:block-diagonal}
If we choose a symmetry adapted basis to get the parametrization $B$ in~\eqref{eq:sym-sdp-param}, the orbit Laplacians become block diagonal of the form
\begin{equation}\label{eq:Li-block}
    \tilde{L}^i = \begin{pmatrix}
        \tilde{L}^i \vert_{X_1} & 0 & \cdots & 0 \\
        0 & \tilde{L}^i \vert_{X_2} & \cdots & 0 \\
        \vdots & \vdots & \ddots & \vdots \\
        0 & 0 & \cdots & \tilde{L}^i \vert_{X_h}
    \end{pmatrix}
\end{equation}
since each $\tilde{L}^i \in \End_\Psi(\E_\lambda)$.

\begin{proposition}
    Using a symmetry adapted basis, the SDP~\eqref{eq:sym-sdp-param} reduces to finding $Z \in \s_+^d$ of the form
    \[Z = \begin{pmatrix}
        Z_1 & 0 & \cdots & 0 \\
        0 & Z_2 & \cdots & 0 \\
        \vdots & \vdots & \ddots & \vdots \\
        0 & 0 & \cdots & Z_h
    \end{pmatrix}\]
    with each $Z_j \in \s_+^{d_jm_j}$, such that for all $1 \leq i \leq s$ 
    \[\langle \tilde{L}^i, Z \rangle_F = \sum_{j=1}^h \langle \tilde{L}^i \vert_{X_j}, Z_j \rangle_F = \abs{\Om_i}\]
    where the $\tilde{L}^i$ are block-diagonal as in~\eqref{eq:Li-block}.
\end{proposition}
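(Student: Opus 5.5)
The plan is to show two things: the parametrized SDP~\eqref{eq:sym-sdp-param} is feasible if and only if it has a feasible block-diagonal solution, and for block-diagonal $Z$ the constraint $\langle \tilde{L}^i, Z\rangle_F = \abs{\Om_i}$ splits as the stated sum. Fix a symmetry adapted basis $B$ of $\E_\lambda$, orthonormal with respect to the standard inner product on $\R^V$, which is $\Psi$-invariant because $\Psi$ acts by permutation matrices. Order $B$ so that the first $d_1m_1$ vectors span $X_1$, the next $d_2m_2$ span $X_2$, and so on. Since each $L^i \in \End_\Psi(\R^V)$ and $\E_\lambda$ is $\Psi$-stable, the compression $\tilde{L}^i = B^T L^i B$ represents a self-adjoint element of $\End_\Psi(\E_\lambda)$. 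It therefore preserves each isotypic $X_j$. Because the isotypics are mutually orthogonal, the off-diagonal blocks $B_{X_j}^T L^i B_{X_k}$ vanish for $j\neq k$, which gives~\eqref{eq:Li-block}.

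Next I reduce to block-diagonal $Z$. Let $Z \succeq 0$ be any feasible point of~\eqref{eq:sym-sdp-param}, written in blocks $Z_{jk}$. Set $Z' = \diag(Z_{11}, \ldots, Z_{hh})$. The diagonal blocks of a psd matrix are psd, since they are principal submatrices, so $Z' \succeq 0$. Because $\tilde{L}^i$ is block diagonal, the Frobenius inner product only pairs diagonal blocks:
\[\langle \tilde{L}^i, Z\rangle_F = \sum_{j,k} \langle (\tilde{L}^i)_{jk}, Z_{jk}\rangle_F = \sum_{j=1}^h \langle \tilde{L}^i\vert_{X_j}, Z_{jj}\rangle_F = \langle \tilde{L}^i, Z'\rangle_F.\]
Hence $Z'$ satisfies the same $s$ constraints. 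Conversely, any block-diagonal $Z$ with psd blocks is psd and feasible whenever the displayed sum equals $\abs{\Om_i}$. So the two feasibility problems are equivalent, and $\Phi = BZB^T$ still gives the correspondence with~\eqref{eq:sym-sdp}.

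The only point needing care is the block-diagonality of $\tilde{L}^i$. This relies on choosing the basis orthonormal for the same invariant inner product under which the $X_j$ are orthogonal and $L^i$ is self-adjoint. Taking the standard inner product restricted to $\E_\lambda$ handles this, so I expect no real obstacle. Further reduction within each $X_j$ via Proposition~\ref{prop:iso-inner-prod}, where $\tilde{L}^i\vert_{X_j} = \Id_{U_j}\otimes M$, is not needed for this statement.
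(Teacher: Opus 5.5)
Your proposal is correct and follows the paper's reasoning: the paper gets \eqref{eq:Li-block} from $\tilde{L}^i \in \End_\Psi(\E_\lambda)$ preserving the mutually orthogonal isotypics, and leaves the passage to block-diagonal $Z$ implicit. You supply that step by noting that keeping only the diagonal blocks of a feasible $Z \succeq 0$ preserves positive semidefiniteness and leaves every $\langle \tilde{L}^i, Z\rangle_F$ unchanged; using an arbitrary orthonormal basis ordered by isotypics, rather than the specific basis from Proposition~\ref{prop:iso-inner-prod}, is harmless since only the isotypic block structure is needed.
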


Depending on the type of the irreducible representation $U_j$, one can block-diagonalize the $\tilde{L}^i \vert_{X_j}$ further into $d_j$ equal subblocks of size $m_j$. For a detailed overview of symmetry reduction in SDPs, we refer to~\cite{deKlerk, gatermann, vallentin}. In practice, this block-diagonalization can greatly speed up the runtime of the SDP, but in our cases, the dimension $d$ of $\E_\lambda$ is already small. Even in Example~\ref{ex:8000}, $d=10$, which is already trivial for an SDP solver. The real advantage of this block-diagonalization in our setting comes when the multiplicity $m_j = 1$ for each irreducible representation.

\subsection{Polyhedral cone and linear feasibility} In this section, we assume that $m_j=1$ for all $1 \leq j \leq h$. Recall that if $\varphi = \sum_{j=1}^h \varphi_j$ with $\varphi_j \in X_j$, we have $\ell_\Psi(\varphi) = \sum_{j=1}^h \ell_\Psi(\varphi_j)$ by our block-diagonalization. We now show when $m_j=1$, $\ell_\Psi \vert_{X_j}$ is unique up to scaling.

\begin{proposition}
    Let $X$ be an isotypic component of $\E_\lambda$ with multiplicity 1. Then for all $\varphi \in X$, $\ell_\Psi(\varphi)$ only depends on the norm of $\varphi$.
\end{proposition}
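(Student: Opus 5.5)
The plan is to apply the structure theory of \S\ref{sec:rep-theory} to each coordinate of $\ell_\Psi$ restricted to $X$. Each coordinate $\varphi \mapsto \langle \varphi, L^i \varphi\rangle$ is a $\Psi$-invariant quadratic form on $\E_\lambda$, because $L^i \in \End_\Psi(\R^V)$. Its restriction to the invariant subspace $X$ is therefore a $\Psi$-invariant quadratic form on $X$. We take the $\Psi$-invariant inner product on $X$ to be the standard inner product on $\R^V$, restricted to $X$; this is $\Psi$-invariant because $\rho$ acts by permutation matrices. The self-adjoint operator $T_i$ representing $\langle \cdot, L^i \cdot\rangle$ on $X$ is then the compression of $L^i$ to $X$, which lies in $\End_\Psi(X)$.

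Since $m=1$, we have $X \cong U$ irreducible, so $\End_\Psi(X) \cong D \in \{\R,\C,\HH\}$. In the real case Schur's lemma gives $T_i = c_i \Id_X$ immediately. In the complex and quaternionic cases, write $T_i = a_0\Id + \sum_k a_k J_k$ as in \eqref{eq:tensor-complex} and \eqref{eq:tensor-quaternionic} with $m=1$. By Lemma~\ref{lemma:skew-adjoint} each $J_k$ is skew-adjoint with respect to the invariant inner product, so $\langle \varphi, J_k \varphi\rangle = 0$. Hence $\langle \varphi, T_i\varphi\rangle = a_0\|\varphi\|^2$. The same conclusion follows from Proposition~\ref{prop:gram-matrix-hermitian} with $m=1$: $G_D(\varphi) = \langle \varphi,\varphi\rangle_D = \|\varphi\|^2$ is real, so $\re(\|\varphi\|^2 M) = \|\varphi\|^2 \re M$.

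Consequently $\ell_\Psi(\varphi)_i = c_i \|\varphi\|^2$ for every $i$. This gives $\ell_\Psi(\varphi) = \|\varphi\|^2 \ell_\Psi(\psi)$ for any fixed unit vector $\psi \in X$, which proves the claim. As a byproduct, $\ell_\Psi(X)$ is a ray.

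The only delicate point is making sure the inner product used in Schur's lemma is the one defining the norm in the statement. This is settled by the observation above that the standard inner product is already $\Psi$-invariant. The remaining work is the skew-adjointness bookkeeping in the complex and quaternionic cases, which Lemma~\ref{lemma:skew-adjoint} supplies.
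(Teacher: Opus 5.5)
Your proof is correct and matches the paper's argument: Schur's lemma forces the compression of each $L^i$ to $X$ to be $a_i\Id$, plus skew-adjoint $J$-terms in the complex and quaternionic cases that vanish on the diagonal, which gives $\ell_\Psi(\varphi)=\|\varphi\|^2(a_1,\ldots,a_s)$. The only cosmetic difference is that the paper works in coordinates through the orthonormal symmetry-adapted basis $B$ (with $\tilde L^i = B^T L^i B$), whereas you work intrinsically with the standard inner product restricted to $X$.
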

\begin{proof}
    Let $\tilde{X} \leq \R^d$ be the isotypic corresponding to $X$ under the parametrization $B$. Then we have $\varphi = Bx$ for some $x \in \tilde{X}$, so $\ell_\Psi(\varphi) = (\langle x, \tilde{L}^i \vert_{X} x \rangle)_{i=1}^s$. By Schur's lemma, we know that $\tilde{L}^i \vert_{X}$ is of the form $a_i \cdot I_{\dim X}$ with $a_i \in \R$ if $X$ is of real type. Then $\langle x, \tilde{L}^i \vert_X x \rangle = a_i \|x\|^2 = a_i \|\varphi\|^2$, so
    \[\ell_\Psi(\varphi) = \paren{a_i\| \varphi\|^2}_{i=1}^s = \|\varphi\|^2 \cdot (a_1, \ldots, a_s)^T.\]
    If $X$ is of complex type then $\tilde{L}^i \vert_X = a_i I_{\dim X} + b_i J$ where $J$ is the skew-symmetric element satisfying $J^2 = -I_{\dim X}$. Since $\langle x, J x \rangle = 0$, we have $\langle x, \tilde{L}^i \vert_X x \rangle = a_i \|x\|^2 = a_i \|\varphi\|^2$ as before. The same is true if $X$ is of quaternionic type as the three generators $J_1, J_2, J_3$ are all skew-symmetric.
\end{proof}

\begin{theorem}\label{thm:polyhedral-cone}
    Suppose each isotypic $X_j$ of $\E_\lambda$ has multiplicity 1, and fix a nonzero $\varphi_j$ in each $X_j$. Then $\ell_\Psi(\E_\lambda)$ is a polyhedral cone given by
    \begin{equation}\label{eq:polyhedral-cone}
        \ell_\Psi(\E_\lambda) = \Cone\set{\ell_\Psi(\varphi_j) : 1 \leq j \leq h}.
    \end{equation}
\end{theorem}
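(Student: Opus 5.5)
The plan is to combine the block-diagonalization identity~\eqref{eq:diagonal-quadratic} with the preceding proposition, which says that on a multiplicity-one isotypic component the orbit-energy depends only on the norm. Concretely, every $\varphi \in \E_\lambda$ decomposes uniquely as $\varphi = \sum_{j=1}^h \psi_j$ with $\psi_j \in X_j$. By~\eqref{eq:diagonal-quadratic}, applied to each coordinate quadratic form $\langle \varphi, L^i \varphi\rangle$ (which is $\Psi$-invariant by Proposition~\ref{prop:Li-invariance}), we get $\ell_\Psi(\varphi) = \sum_{j=1}^h \ell_\Psi(\psi_j)$. The cross terms vanish because the isotypic components are orthogonal and invariant under each $L^i$ restricted to $\E_\lambda$.

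Next, fix the nonzero reference vectors $\varphi_j \in X_j$. By the preceding proposition, $\ell_\Psi(\psi) = \|\psi\|^2 \cdot (a^{(j)}_1,\ldots,a^{(j)}_s)^T$ for every $\psi \in X_j$, with the vector $a^{(j)}$ independent of $\psi$. Hence
\[
\ell_\Psi(\psi_j) = \frac{\|\psi_j\|^2}{\|\varphi_j\|^2}\,\ell_\Psi(\varphi_j).
\]
Substituting gives $\ell_\Psi(\varphi) = \sum_j t_j \ell_\Psi(\varphi_j)$ with $t_j = \|\psi_j\|^2/\|\varphi_j\|^2 \geq 0$, which proves the inclusion $\ell_\Psi(\E_\lambda) \subseteq \Cone\set{\ell_\Psi(\varphi_j)}$.

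For the reverse inclusion, take any $t_1,\ldots,t_h \geq 0$ and set $\varphi = \sum_j \sqrt{t_j}\,\varphi_j \in \E_\lambda$. The same two computations give $\ell_\Psi(\varphi) = \sum_j t_j \ell_\Psi(\varphi_j)$. So every conical combination of the $\ell_\Psi(\varphi_j)$ is attained, and the image equals this finitely generated cone, which is therefore polyhedral by the Minkowski--Weyl theorem.

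The only delicate point is the first step: verifying that~\eqref{eq:diagonal-quadratic} applies with the inner product actually used. The orbit-energies are defined using the standard dot product on $\R^V$, and this product is $\Psi$-invariant because $\Psi$ acts by permutation matrices. The isotypic components of $\E_\lambda$ are therefore mutually orthogonal for this inner product, and each $L^i$ preserves them by Schur's lemma, so the cross terms $\langle \psi_j, L^i \psi_k\rangle$ vanish for $j \neq k$. The norm appearing in the preceding proposition is also the standard norm, via an orthonormal parametrization $B$, so the scaling $\|\psi_j\|^2$ is consistent across the two steps.
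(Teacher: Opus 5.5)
Your proof is correct. The inclusion $\ell_\Psi(\E_\lambda)\subseteq\Cone\set{\ell_\Psi(\varphi_j)}$ is exactly the paper's argument: block-diagonalize $\ell_\Psi$, then use that $\ell_\Psi$ on a multiplicity-one $X_j$ depends only on the norm.

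\textbf{Reverse inclusion.} Here your route differs from the paper's.
\begin{itemize}
\item The paper does not construct a preimage. It invokes Theorem~\ref{thm:convex-image} (multiplicity $1\le\dim_{D}U_j$) to know that $\ell_\Psi(\E_\lambda)$ is already a convex cone, so it contains every conical combination of the $\ell_\Psi(\varphi_j)$.
\item You exhibit the explicit witness $\sum_j\sqrt{t_j}\,\varphi_j$. This needs only block-diagonalization and degree-$2$ homogeneity.
\end{itemize}
What each route buys:
\begin{itemize}
\item Yours is more elementary and self-contained. It bypasses the Gram-matrix surjectivity onto the psd cone, including its complex and quaternionic versions, and in effect reproves convexity of the image in the multiplicity-one case.
\item Yours also produces the certificate vector directly. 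This is how the paper later writes down $\varphi_1^1+\tfrac{1}{\sqrt3}\varphi_1^4$ for $\Cay(\Z_{12},\set{2,3})$.
\item The paper's route is shorter given the machinery already in place, and it shows the result is a special case of the convexity theorem.
\item You also make explicit the Minkowski--Weyl step (a finitely generated cone is polyhedral), which the paper leaves implicit.
\end{itemize}

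\textbf{One justification to tighten.} You say each $L^i$ preserves the isotypic components of $\E_\lambda$. That is not right as stated. $L^i$ commutes with $\Psi$ but in general not with $L$, so it need not map $\E_\lambda$, let alone each $X_j$, into itself. The cross terms still vanish, for either of these reasons:
\begin{itemize}
\item $L^i$ maps $X_k$ into the $U_k$-isotypic component of $\R^V$. That component is orthogonal to the $U_j$-isotypic component of $\R^V$, which contains $X_j$, whenever $j\neq k$.
\item Equivalently, work with the compression $\tilde L^i=B^TL^iB\in\End_\Psi(\E_\lambda)$, as the paper does. This is also the operator implicitly used in the norm-only proposition you cite.
\end{itemize}
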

\begin{proof}
    We know $\E_\lambda$ satisfies Theorem~\ref{thm:convex-image}, so 
    \[\Cone\set{\ell_\Psi(\varphi_j) : 1 \leq j \leq h} \subseteq \Cone\set{\ell_\Psi(\varphi) : \varphi \in \E_\lambda} = \ell_\Psi(\E_\lambda).\]
    To show the other inclusion, we observe that any $\varphi \in \E_\lambda$ can be written as $\varphi = \sum_{j=1}^h \psi_j$ with $\psi_j \in X_j$. Since $\ell_\Psi$ is unique up to scaling on each $X_j$, we know there exists $a_j \geq 0$ such that $\ell_\Psi(\psi_j) = a_j \ell_\Psi(\varphi_j)$ for all $1 \leq j \leq h$. Thus, by the block-diagonalization of $\ell_\Psi$, we know \[\ell_\Psi(\varphi) = \sum_{j=1}^h \ell_\Psi(\psi_j) = \sum_{j=1}^h a_j \ell_\Psi(\varphi_j) \in \Cone\set{\ell_\Psi(\varphi_j) : 1 \leq j \leq h}. \qedhere\]
\end{proof}

One class of graphs where this theorem always applies is abelian Cayley graphs. The following corollary gives the real eigenvector analogue of Theorem 5.4 in~\cite{gouveia}.

\begin{theorem}\label{thm:abelian-cayley}
    Let $G = \Cay(\Gamma, S)$ be a Cayley graph where $\Gamma$ is abelian. With respect to the group of automorphisms induced by $\Gamma$, the eigenspace $\E_\lambda$ decomposes into isotypics of multiplicity 1. Thus, $\ell_\Gamma(\E_\lambda)$ is a polyhedral cone for any $\lambda > 0$.
\end{theorem}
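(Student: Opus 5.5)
The plan is to prove that every isotypic component of $\R^\Gamma$, and hence of $\E_\lambda$, has multiplicity $1$ under the regular action of the abelian group $\Gamma$. Then Theorem~\ref{thm:polyhedral-cone} gives the polyhedral cone directly. The group of automorphisms is the translation action $g \cdot h = gh$ (or $hg^{-1}$; the group is abelian, so left and right translations agree and both preserve edges $(h, hs)$). This action is simply transitive on vertices, so $\R^V \cong \R[\Gamma]$ is the real regular representation.

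First I would decompose $\R[\Gamma]$ over $\R$. Every complex irreducible of $\Gamma$ is a character $\chi: \Gamma \to \C^\times$. Real characters ($\chi = \bar\chi$) give one-dimensional real irreducibles of real type. Non-real characters pair up as $\{\chi,\bar\chi\}$, and each pair gives a two-dimensional real irreducible $U_\chi$, spanned by $\re\chi$ and $\operatorname{Im}\chi$ as functions on $\Gamma$, with $\End_\Gamma(U_\chi) \cong \C$. Each complex character occurs exactly once in $\C[\Gamma]$. Since $U_\chi \otimes \C \cong \chi \oplus \bar\chi$, the multiplicity of $U_\chi$ in $\R[\Gamma]$ equals the multiplicity of $\chi$ in $\C[\Gamma]$, which is $1$. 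Likewise each real character occurs once.

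Alternatively, I could use the argument already in the paper. By \eqref{eq:mult-bound} and Lemma~\ref{lemma:frobenius}, with trivial stabilizer $H = \{e\}$, the multiplicity of $U$ in $\R^V$ is at most $\dim_\R U / \dim_\R D = \dim_D U$. For real-type $U$ we have $\dim U = 1$, because commuting operators of an abelian group have common complex eigenvectors, which forces real-type irreducibles to be characters. For complex type, $\dim_\C U = 1$. The paper's general bound only gives $m \le \dim_D U$, so I need this abelian-specific fact to get $\dim_D U = 1$. That is the main point to check carefully, including ruling out quaternionic type: an abelian group has $\End_\Gamma(U) \supseteq \rho(\Gamma)$ commutative, and irreducibility forces $U$ to be one- or two-dimensional.

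Next, since $\E_\lambda \le \R^V$ is a subrepresentation, its multiplicities are bounded by those in $\R^V$. Each isotypic component of $\E_\lambda$ therefore has multiplicity exactly $1$ (or is absent). The hypothesis of Theorem~\ref{thm:polyhedral-cone} holds, so $\ell_\Gamma(\E_\lambda)$ is the cone generated by the finitely many vectors $\ell_\Gamma(\varphi_j)$, which makes it polyhedral. The expected obstacle is only the careful real-versus-complex bookkeeping in the first step; the rest is a direct citation.
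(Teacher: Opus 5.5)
Your proposal is correct, and your main line is the paper's own argument with the real-versus-complex bookkeeping written out. $\R^V$ is the regular representation of $\Gamma$, each complex character occurs once, conjugate pairs $\{\chi,\bar\chi\}$ give two-dimensional irreducibles of complex type and real characters give one-dimensional irreducibles of real type, all of multiplicity one in $\R^V$ and hence in $\E_\lambda$, and Theorem~\ref{thm:polyhedral-cone} finishes. Your alternative route through \eqref{eq:mult-bound} and Lemma~\ref{lemma:frobenius} with trivial stabilizer also works once the key step is made precise: $\rho(\Gamma)$ lies in $\End_\Gamma(U)$ and commutes with it, hence lies in its center, which is $\R$ for $D=\R,\HH$ (forcing $\dim_\R U=1$, impossible for quaternionic type) and $\C$ for $D=\C$ (forcing $\dim_\C U=1$).
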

\begin{proof}
    The action of $\Gamma$ on $\R^\Gamma$ is the regular representation. Thus $\R^\Gamma$ decomposes into the $\abs{\Gamma}$ complex characters. Thus every real irreducible representation of $\R^\Gamma$ (and hence $\E_\lambda$) is distinct and equal to a 1-dimensional real irreducible of real type or a 2-dimensional irreducible of complex type.
\end{proof}

This polyhedral cone structure of $\ell_\Psi(\E_\lambda)$ reduces the conformal rigidity check to a linear feasibility check in $h$ variables and $s$ constraints.

\begin{corollary}
    If we pick eigenvectors $\varphi_1, \ldots, \varphi_h \in \E_\lambda$ satisfying~\eqref{eq:polyhedral-cone}, then $G$ is conformally rigid if and only if there exists $a_1, \ldots, a_h \geq 0$ with $\sum_{j=1}^h a_j \ell_\Psi(\varphi_j) = \Om$.
\end{corollary}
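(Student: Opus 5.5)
The plan is to combine Corollary~\ref{cor:orbit-iso} with Theorem~\ref{thm:polyhedral-cone}; the only additional work is bookkeeping between the conical description of $\ell_\Psi(\E_\lambda)$ and the sum-of-eigenvectors form of the certificate. Throughout, $\lambda$ is $\lambda_2$ (lower) or $\lambda_n$ (upper). We also assume, as in the hypothesis of Theorem~\ref{thm:polyhedral-cone}, that each isotypic component has multiplicity $1$ and that the chosen $\varphi_j$ are nonzero elements of the isotypics $X_j$.

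For the forward direction, suppose $G$ is conformally rigid. By Corollary~\ref{cor:orbit-iso} there are nonzero $\psi_1,\dots,\psi_r\in\E_\lambda$ with $\sum_i \ell_\Psi(\psi_i)=\Om$. Each $\ell_\Psi(\psi_i)$ lies in $\ell_\Psi(\E_\lambda)$, which by \eqref{eq:polyhedral-cone} equals $\Cone\set{\ell_\Psi(\varphi_j)}$. We therefore write $\ell_\Psi(\psi_i)=\sum_j a_{ij}\ell_\Psi(\varphi_j)$ with $a_{ij}\ge 0$ and sum over $i$. Setting $a_j=\sum_i a_{ij}\ge 0$ gives $\sum_j a_j\ell_\Psi(\varphi_j)=\Om$. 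Alternatively, convexity of the cone (Theorem~\ref{thm:convex-image}) shows directly that $\Om\in\ell_\Psi(\E_\lambda)$, and we decompose that single point.

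For the converse, suppose $a_j\ge0$ satisfy $\sum_j a_j\ell_\Psi(\varphi_j)=\Om$. We set $\varphi=\sum_j\sqrt{a_j}\,\varphi_j\in\E_\lambda$. Since the $\varphi_j$ lie in distinct isotypics, the block-diagonalization \eqref{eq:diagonal-quadratic} kills the cross terms, giving
\[
\ell_\Psi(\varphi)=\sum_j \ell_\Psi(\sqrt{a_j}\varphi_j)=\sum_j a_j\ell_\Psi(\varphi_j)=\Om,
\]
where the middle equality uses homogeneity $\ell_\Psi(c\varphi)=c^2\ell_\Psi(\varphi)$. The vector $\varphi$ is nonzero, because $\Om\neq 0$ forces some $a_j>0$. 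Corollary~\ref{cor:orbit-iso} with $r=1$ then yields conformal rigidity. Alternatively, we could apply the corollary to the family $\set{\sqrt{a_j}\varphi_j : a_j>0}$ and bypass cross terms entirely; this version does not even need the block-diagonal structure.

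There is no real obstacle here. The only points to check are that the $\varphi_j$ are nonzero and taken one per isotypic, which is exactly the setting of Theorem~\ref{thm:polyhedral-cone}, and that orthogonality of distinct isotypics under a $\Psi$-invariant inner product justifies dropping cross terms. Using the $\Psi$-invariant standard inner product on $\R^V$, where the $L^i$ commute with $\Psi$, makes the latter immediate.
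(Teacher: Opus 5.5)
Your proof is correct and follows the argument the paper leaves implicit: Corollary~\ref{cor:orbit-iso} says $G$ is conformally rigid if and only if $\Om$ lies in the cone generated by $\ell_\Psi(\E_\lambda)$, and \eqref{eq:polyhedral-cone} identifies that cone with $\Cone\set{\ell_\Psi(\varphi_j)}$. Of your two converse arguments, the second (applying Corollary~\ref{cor:orbit-iso} to $\set{\sqrt{a_j}\varphi_j : a_j>0}$) is the better fit for the statement as written, because it uses only \eqref{eq:polyhedral-cone} and not the extra assumption that the $\varphi_j$ lie one per multiplicity-one isotypic.
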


For our numerical check, we now only have to check feasibility of a linear program instead of an SDP. \textsc{SageMath} allows for polyhedron construction over the algebraic reals and can check if it is empty or not --- if nonempty, \textsc{SageMath} can return an algebraic point inside the polyhedron.

\begin{rexample}[continued]
    For $G = \Cay(\Z_{21}, \set{1,6})$, both $\E_{\lambda_2}$ and $\E_{\lambda_n}$ are comprised of one irreducible representation with multiplicity $1$, so $\ell_{\Z_{21}}$ is constant (up to scaling) on $\E_{\lambda_2}$ and on $\E_{\lambda_n}$. Thus, any eigenvector in $\E_{\lambda}$ for $\lambda = \lambda_2, \lambda_n$  gives a 1-dimensional orbit-isometric embedding.
\end{rexample}

    \begin{figure}[h]
        \centering
        \begin{tikzpicture}[scale=0.3]
            \fill[blue!10] (0,0) -- (22,0) -- (22,15.5) -- (7.75,15.5) -- cycle;

            \draw[->] (0,0) -- (22,0) node[below, midway, yshift=-10pt] {$\Om_2$ energy};
            \draw[->] (0,0) -- (0,15.5) node[rotate=90, midway, yshift=18pt] {$\Om_3$ energy};

            \foreach \x in {6,12,18} {
                \draw (\x,0.15) -- (\x,-0.15) node[below] {\small $\x$};
            }
            \foreach \y in {6,12} {
                \draw (0.15,\y) -- (-0.15,\y) node[left] {\small $\y$};
            }

            \draw[dashed, red, thick] (0,0) -- (15.5,15.5);

            \draw[->, blue, very thick] (0,0) -- (6,12);
            \draw[->, blue, very thick] (0,0) -- (6,0);

            \node[blue, left] at (5.8,12.5) {\small $\ell_{\Z_{12}}(\varphi_1^1)$};

            \node[blue, below] at (6,2) {\small $\tfrac{1}{3}\ell_{\Z_{12}}(\varphi_1^4)$};

            \draw[blue, dashed, thick] (6,12) -- (12,12);
            \draw[blue, dashed, thick] (7,2) -- (12,12);

            \filldraw[red] (12,12) circle (4.5pt);
            \node[red, below right] at (12,12) {\small $\ell_{\Z_{12}}(\varphi)$};
        \end{tikzpicture}
        \caption{$\ell_{\Z_{12}}(\mathcal{E}_{\lambda_2})$ for $G = \Cay(\Z_{12}, \set{2,3})$, with solution eigenvector $\varphi = \varphi^1_1 + \frac{1}{\sqrt{3}}\varphi^4_1$. The line $c \cdot (12,12), \ c > 0$ is in red.}\label{fig:cone}
    \end{figure}

\begin{rexample2}[continued]
We can now fully explain our exact certificate of lower conformal rigidity of $G = \Cay(\Z_{12}, \set{2,3})$. The group $\Z_{12}$ is abelian, so Theorem~\ref{thm:abelian-cayley} applies, and $\ell_\Psi(\E_{\lambda_2})$ 
is a polyhedral cone. The eigenspace $\E_{\lambda_2} = X_1 \oplus X_4 \oplus X_5$ breaks into three 2-dimensional real irreducible representations of complex type corresponding to the complex characters of $\Z_{12}$,
\[
    \chi_k(j) = e^{2\pi i jk/12}, \qquad k = 1, 4, 5.
\]
Taking real and imaginary parts gives the real basis $\varphi_1^k(j) = \cos(\pi 
jk/6)$ and $\varphi_2^k(j) = \sin(\pi jk/6)$ for each $X_k$. This means the orbit-energy vectors of the eigenvectors $\varphi_1^1, \varphi_1^4, \varphi_1^5$ give the extreme rays of the polyhedral cone in~\eqref{eq:polyhedral-cone}:
\[
    \ell_\Psi(\varphi_1^1) = (6, 12), \qquad \ell_\Psi(\varphi_1^4) = (18, 0), 
    \qquad \ell_\Psi(\varphi_1^5) = (6, 12).
\]
The rays for $k = 1$ and $k = 5$ coincide, so conformal rigidity reduces to the linear feasibility problem
\[
    a_1 (6, 12) + a_2 (18, 0) = (12, 12),
\]
such that $a_1, a_2 \geq 0$, which is solved by $a_1 = 1$, $a_2 = 1/3$ pictured in Figure~\ref{fig:cone}. The corresponding certificate $\varphi = \varphi_1^1 + (1/\sqrt{3}) \cdot \varphi_1^4$ gives an exact 1-dimensional orbit-isometric embedding on $\E_{\lambda_2}$.
\end{rexample2}

Experimentally, it seems that for most conformally rigid graphs $G$, the eigenspace $\E_\lambda$ decomposes into distinct irreducibles with respect to the full automorphism group. There are 191 graphs in \textsc{HoG} that we found to be numerically conformally rigid that cannot be explained by a structural result (edge-transitive, 1-walk regular, or regular and bipartite). Nine of these do not satisfy Theorem~\ref{thm:convex-image}, which we discussed in \S~\ref{subsec:vertex-transitive}, so these clearly do not satisfy Theorem~\ref{thm:polyhedral-cone} either. Out of the remaining 182 graphs that satisfy Theorem~\ref{thm:convex-image} with respect to their full automorphism groups, 176 have their relevant eigenspaces decompose into distinct irreducibles to satisfy Theorem~\ref{thm:polyhedral-cone}. We were able to rigorously prove the conformal rigidity of all 176 of these graphs by finding an algebraic certificate inside the corresponding polyhedral cone. Moreover, our certification --- for all but a handful of graphs --- did not require manual inspection of the graph. Most of the graphs we had to inspect had large automorphism groups ($> 15000$), so some work was involved to find a suitable small subgroup of automorphisms. We refer to our \textsc{GitHub} repository~\cite{me} for implementation details of our full audit of \textsc{HoG}. We were also able to find an exact certificate for a graph whose conformal rigidity was previously only numerically certified. 

\begin{figure}[htbp]
  \centering
  \begin{subfigure}[b]{0.30\textwidth}
    \centering
    \begin{tikzpicture}[
        vertex/.style={circle, draw, fill=black, inner sep=1.2pt},
        scale=0.7
    ]
    \coordinate (v0) at (-0.760, 2.500);
    \coordinate (v1) at (1.077, 1.352);
    \coordinate (v2) at (0.010, -0.245);
    \coordinate (v3) at (-0.190, 0.678);
    \coordinate (v4) at (0.148, 2.475);
    \coordinate (v5) at (-1.871, 0.814);
    \coordinate (v6) at (0.367, -2.500);
    \coordinate (v7) at (0.911, -1.438);
    \coordinate (v8) at (0.910, 0.226);
    \coordinate (v9) at (-0.803, -0.404);
    \coordinate (v10) at (-1.664, -2.092);
    \coordinate (v11) at (-0.775, 1.702);
    \coordinate (v12) at (1.597, 1.883);
    \coordinate (v13) at (-2.345, -1.077);
    \coordinate (v14) at (0.164, -1.439);
    \coordinate (v15) at (-1.896, -0.097);
    \coordinate (v16) at (-1.976, 1.540);
    \coordinate (v17) at (2.257, -0.191);
    \coordinate (v18) at (2.345, 0.929);
    \coordinate (v19) at (1.992, -1.276);
    \draw[blue, thick] (v7)--(v17);
    \draw[blue, thick] (v7)--(v15);
    \draw[blue, thick] (v8)--(v18);
    \draw[blue, thick] (v4)--(v5);
    \draw[blue, thick] (v8)--(v16);
    \draw[blue, thick] (v3)--(v5);
    \draw[blue, thick] (v6)--(v8);
    \draw[blue, thick] (v2)--(v19);
    \draw[blue, thick] (v6)--(v7);
    \draw[blue, thick] (v0)--(v2);
    \draw[blue, thick] (v6)--(v10);
    \draw[blue, thick] (v1)--(v19);
    \draw[blue, thick] (v9)--(v12);
    \draw[blue, thick] (v5)--(v10);
    \draw[blue, thick] (v0)--(v1);
    \draw[blue, thick] (v1)--(v12);
    \draw[blue, thick] (v6)--(v9);
    \draw[blue, thick] (v10)--(v14);
    \draw[blue, thick] (v9)--(v11);
    \draw[blue, thick] (v13)--(v16);
    \draw[blue, thick] (v5)--(v9);
    \draw[blue, thick] (v2)--(v14);
    \draw[blue, thick] (v1)--(v11);
    \draw[blue, thick] (v11)--(v16);
    \draw[blue, thick] (v10)--(v13);
    \draw[blue, thick] (v14)--(v18);
    \draw[blue, thick] (v13)--(v15);
    \draw[blue, thick] (v2)--(v13);
    \draw[blue, thick] (v12)--(v18);
    \draw[blue, thick] (v11)--(v15);
    \draw[blue, thick] (v14)--(v17);
    \draw[blue, thick] (v3)--(v17);
    \draw[blue, thick] (v12)--(v17);
    \draw[blue, thick] (v3)--(v15);
    \draw[blue, thick] (v4)--(v18);
    \draw[blue, thick] (v0)--(v4);
    \draw[blue, thick] (v4)--(v16);
    \draw[blue, thick] (v0)--(v3);
    \draw[blue, thick] (v8)--(v19);
    \draw[blue, thick] (v7)--(v19);
    \draw[orange, thick] (v3)--(v7);
    \draw[orange, thick] (v4)--(v8);
    \draw[orange, thick] (v0)--(v5);
    \draw[orange, thick] (v6)--(v19);
    \draw[orange, thick] (v1)--(v2);
    \draw[orange, thick] (v9)--(v10);
    \draw[orange, thick] (v11)--(v12);
    \draw[orange, thick] (v13)--(v14);
    \draw[orange, thick] (v15)--(v16);
    \draw[orange, thick] (v17)--(v18);
    \node[vertex] at (-0.760, 2.500) {};
    \node[vertex] at (1.077, 1.352) {};
    \node[vertex] at (0.010, -0.245) {};
    \node[vertex] at (-0.190, 0.678) {};
    \node[vertex] at (0.148, 2.475) {};
    \node[vertex] at (-1.871, 0.814) {};
    \node[vertex] at (0.367, -2.500) {};
    \node[vertex] at (0.911, -1.438) {};
    \node[vertex] at (0.910, 0.226) {};
    \node[vertex] at (-0.803, -0.404) {};
    \node[vertex] at (-1.664, -2.092) {};
    \node[vertex] at (-0.775, 1.702) {};
    \node[vertex] at (1.597, 1.883) {};
    \node[vertex] at (-2.345, -1.077) {};
    \node[vertex] at (0.164, -1.439) {};
    \node[vertex] at (-1.896, -0.097) {};
    \node[vertex] at (-1.976, 1.540) {};
    \node[vertex] at (2.257, -0.191) {};
    \node[vertex] at (2.345, 0.929) {};
    \node[vertex] at (1.992, -1.276) {};
    \end{tikzpicture}
    \subcaption{Spring layout}
  \end{subfigure}%
  \hfill
  \begin{subfigure}[b]{0.30\textwidth}
    \centering
    \begin{tikzpicture}[
        vertex/.style={circle, draw, fill=black, inner sep=1.2pt},
        scale=0.7
    ]
    \coordinate (v0) at (0.000, 0.000);
    \coordinate (v1) at (-1.780, 1.069);
    \coordinate (v2) at (-1.732, -1.248);
    \coordinate (v3) at (0.078, 0.288);
    \coordinate (v4) at (1.264, -0.220);
    \coordinate (v5) at (2.170, 0.110);
    \coordinate (v6) at (0.000, 0.000);
    \coordinate (v7) at (-1.264, 0.220);
    \coordinate (v8) at (-0.078, -0.288);
    \coordinate (v9) at (1.732, 1.248);
    \coordinate (v10) at (1.780, -1.069);
    \coordinate (v11) at (-0.035, 2.500);
    \coordinate (v12) at (-0.042, 1.249);
    \coordinate (v13) at (0.042, -1.249);
    \coordinate (v14) at (0.035, -2.500);
    \coordinate (v15) at (-0.361, 1.169);
    \coordinate (v16) at (0.372, 0.855);
    \coordinate (v17) at (-0.372, -0.855);
    \coordinate (v18) at (0.361, -1.169);
    \coordinate (v19) at (-2.170, -0.110);
    \draw[blue, thick] (v7)--(v17);
    \draw[blue, thick] (v7)--(v15);
    \draw[blue, thick] (v8)--(v18);
    \draw[blue, thick] (v4)--(v5);
    \draw[blue, thick] (v8)--(v16);
    \draw[blue, thick] (v3)--(v5);
    \draw[blue, thick] (v6)--(v8);
    \draw[blue, thick] (v2)--(v19);
    \draw[blue, thick] (v6)--(v7);
    \draw[blue, thick] (v0)--(v2);
    \draw[blue, thick] (v6)--(v10);
    \draw[blue, thick] (v1)--(v19);
    \draw[blue, thick] (v9)--(v12);
    \draw[blue, thick] (v5)--(v10);
    \draw[blue, thick] (v0)--(v1);
    \draw[blue, thick] (v1)--(v12);
    \draw[blue, thick] (v6)--(v9);
    \draw[blue, thick] (v10)--(v14);
    \draw[blue, thick] (v9)--(v11);
    \draw[blue, thick] (v13)--(v16);
    \draw[blue, thick] (v5)--(v9);
    \draw[blue, thick] (v2)--(v14);
    \draw[blue, thick] (v1)--(v11);
    \draw[blue, thick] (v11)--(v16);
    \draw[blue, thick] (v10)--(v13);
    \draw[blue, thick] (v14)--(v18);
    \draw[blue, thick] (v13)--(v15);
    \draw[blue, thick] (v2)--(v13);
    \draw[blue, thick] (v12)--(v18);
    \draw[blue, thick] (v11)--(v15);
    \draw[blue, thick] (v14)--(v17);
    \draw[blue, thick] (v3)--(v17);
    \draw[blue, thick] (v12)--(v17);
    \draw[blue, thick] (v3)--(v15);
    \draw[blue, thick] (v4)--(v18);
    \draw[blue, thick] (v0)--(v4);
    \draw[blue, thick] (v4)--(v16);
    \draw[blue, thick] (v0)--(v3);
    \draw[blue, thick] (v8)--(v19);
    \draw[blue, thick] (v7)--(v19);
    \draw[orange, thick] (v3)--(v7);
    \draw[orange, thick] (v4)--(v8);
    \draw[orange, thick] (v0)--(v5);
    \draw[orange, thick] (v6)--(v19);
    \draw[orange, thick] (v1)--(v2);
    \draw[orange, thick] (v9)--(v10);
    \draw[orange, thick] (v11)--(v12);
    \draw[orange, thick] (v13)--(v14);
    \draw[orange, thick] (v15)--(v16);
    \draw[orange, thick] (v17)--(v18);
    \node[vertex] at (0.000, 0.000) {};
    \node[vertex] at (-1.780, 1.069) {};
    \node[vertex] at (-1.732, -1.248) {};
    \node[vertex] at (0.078, 0.288) {};
    \node[vertex] at (1.264, -0.220) {};
    \node[vertex] at (2.170, 0.110) {};
    \node[vertex] at (0.000, 0.000) {};
    \node[vertex] at (-1.264, 0.220) {};
    \node[vertex] at (-0.078, -0.288) {};
    \node[vertex] at (1.732, 1.248) {};
    \node[vertex] at (1.780, -1.069) {};
    \node[vertex] at (-0.035, 2.500) {};
    \node[vertex] at (-0.042, 1.249) {};
    \node[vertex] at (0.042, -1.249) {};
    \node[vertex] at (0.035, -2.500) {};
    \node[vertex] at (-0.361, 1.169) {};
    \node[vertex] at (0.372, 0.855) {};
    \node[vertex] at (-0.372, -0.855) {};
    \node[vertex] at (0.361, -1.169) {};
    \node[vertex] at (-2.170, -0.110) {};
    \end{tikzpicture}
    \subcaption{$\E_{\lambda_2}$}
  \end{subfigure}
  \hfill
  \begin{subfigure}[b]{0.30\textwidth}
    \centering
    \begin{tikzpicture}[
        vertex/.style={circle, draw, fill=black, inner sep=1.2pt},
        scale=0.7
    ]
    \coordinate (v0) at (0.000, 0.000);
    \coordinate (v1) at (-0.791, 2.500);
    \coordinate (v2) at (-0.691, -1.699);
    \coordinate (v3) at (1.553, -1.813);
    \coordinate (v4) at (2.329, -0.283);
    \coordinate (v5) at (-2.399, 1.296);
    \coordinate (v6) at (0.000, 0.000);
    \coordinate (v7) at (-2.329, 0.283);
    \coordinate (v8) at (-1.553, 1.813);
    \coordinate (v9) at (0.691, 1.699);
    \coordinate (v10) at (0.791, -2.500);
    \coordinate (v11) at (1.837, -0.112);
    \coordinate (v12) at (-1.775, -2.483);
    \coordinate (v13) at (1.775, 2.483);
    \coordinate (v14) at (-1.837, 0.112);
    \coordinate (v15) at (-0.489, 0.505);
    \coordinate (v16) at (-1.743, -1.970);
    \coordinate (v17) at (1.743, 1.970);
    \coordinate (v18) at (0.489, -0.505);
    \coordinate (v19) at (2.399, -1.296);
    \draw[blue, thick] (v7)--(v17);
    \draw[blue, thick] (v7)--(v15);
    \draw[blue, thick] (v8)--(v18);
    \draw[blue, thick] (v4)--(v5);
    \draw[blue, thick] (v8)--(v16);
    \draw[blue, thick] (v3)--(v5);
    \draw[blue, thick] (v6)--(v8);
    \draw[blue, thick] (v2)--(v19);
    \draw[blue, thick] (v6)--(v7);
    \draw[blue, thick] (v0)--(v2);
    \draw[blue, thick] (v6)--(v10);
    \draw[blue, thick] (v1)--(v19);
    \draw[blue, thick] (v9)--(v12);
    \draw[blue, thick] (v5)--(v10);
    \draw[blue, thick] (v0)--(v1);
    \draw[blue, thick] (v1)--(v12);
    \draw[blue, thick] (v6)--(v9);
    \draw[blue, thick] (v10)--(v14);
    \draw[blue, thick] (v9)--(v11);
    \draw[blue, thick] (v13)--(v16);
    \draw[blue, thick] (v5)--(v9);
    \draw[blue, thick] (v2)--(v14);
    \draw[blue, thick] (v1)--(v11);
    \draw[blue, thick] (v11)--(v16);
    \draw[blue, thick] (v10)--(v13);
    \draw[blue, thick] (v14)--(v18);
    \draw[blue, thick] (v13)--(v15);
    \draw[blue, thick] (v2)--(v13);
    \draw[blue, thick] (v12)--(v18);
    \draw[blue, thick] (v11)--(v15);
    \draw[blue, thick] (v14)--(v17);
    \draw[blue, thick] (v3)--(v17);
    \draw[blue, thick] (v12)--(v17);
    \draw[blue, thick] (v3)--(v15);
    \draw[blue, thick] (v4)--(v18);
    \draw[blue, thick] (v0)--(v4);
    \draw[blue, thick] (v4)--(v16);
    \draw[blue, thick] (v0)--(v3);
    \draw[blue, thick] (v8)--(v19);
    \draw[blue, thick] (v7)--(v19);
    \draw[orange, thick] (v3)--(v7);
    \draw[orange, thick] (v4)--(v8);
    \draw[orange, thick] (v0)--(v5);
    \draw[orange, thick] (v6)--(v19);
    \draw[orange, thick] (v1)--(v2);
    \draw[orange, thick] (v9)--(v10);
    \draw[orange, thick] (v11)--(v12);
    \draw[orange, thick] (v13)--(v14);
    \draw[orange, thick] (v15)--(v16);
    \draw[orange, thick] (v17)--(v18);
    \node[vertex] at (0.000, 0.000) {};
    \node[vertex] at (-0.791, 2.500) {};
    \node[vertex] at (-0.691, -1.699) {};
    \node[vertex] at (1.553, -1.813) {};
    \node[vertex] at (2.329, -0.283) {};
    \node[vertex] at (-2.399, 1.296) {};
    \node[vertex] at (0.000, 0.000) {};
    \node[vertex] at (-2.329, 0.283) {};
    \node[vertex] at (-1.553, 1.813) {};
    \node[vertex] at (0.691, 1.699) {};
    \node[vertex] at (0.791, -2.500) {};
    \node[vertex] at (1.837, -0.112) {};
    \node[vertex] at (-1.775, -2.483) {};
    \node[vertex] at (1.775, 2.483) {};
    \node[vertex] at (-1.837, 0.112) {};
    \node[vertex] at (-0.489, 0.505) {};
    \node[vertex] at (-1.743, -1.970) {};
    \node[vertex] at (1.743, 1.970) {};
    \node[vertex] at (0.489, -0.505) {};
    \node[vertex] at (2.399, -1.296) {};
    \end{tikzpicture}
    \subcaption{$\E_{\lambda_n}$}
  \end{subfigure}
  \caption{Non-Cayley vertex-transitive $(20,10)$ graph: spring layout (left), PCA-projections of edge-isometric symmetrized embeddings on $\E_{\lambda_2}$ (center) and $\E_{\lambda_n}$ (right).}\label{fig:2010}
\end{figure}

\begin{example}
    Let $G$ be the non-Cayley, vertex-transitive graph $(20,10)$ on $n=20$ vertices from Mathematica (\textsc{HoG 56676}). This graph was previously unable to be certified exactly using the techniques in~\cite{steinerberger}. Under the full automorphism group $\Psi = \Aut(G)$ of order $160$, $G$ decomposes into two edge orbits of size 40 and 10. The eigenspace $\E_{\lambda_2}$ has dimension $7$. Attempting to find an orbit-isometric embedding without using the isotypic decomposition of $\E_{\lambda_2}$ yields a 5-dimensional ideal, and we were unable to read off a real solution from the solved Gr\"{o}bner basis~\cite{me}. Once one considers the isotypic decomposition of $\E_{\lambda_2}$, the problem becomes completely transparent: it splits into one 2-dimensional irreducible $X_1$ of complex type and one 5-dimensional irreducible $X_2$ of real type. Picking $\varphi_j \in X_j$, we get 
    \[\ell_\Psi(\varphi_1) = \paren{230 -90\sqrt{5}, 0}, \ \ell_\Psi(\varphi_2) = \paren{180 - 68\sqrt{5}, 45 - 17 \sqrt{5}}.\]
    Finding a real solution is now trivial as we can take the conical combination with $a_1 = 0$ and $a_2 = \frac{1}{58}(17 + 45\sqrt{5})$ to give us the orbit vector $\Om = (40,10)$. This means any vector in $X_2$ gives an orbit-isometric embedding. One can similarly find an orbit-isometric embedding on $\E_{\lambda_n}$ which also has dimension 7. We display the corresponding edge-isometric symmetrized embeddings on $\E_{\lambda_2}$ and $\E_{\lambda_n}$ in Figure~\ref{fig:2010}. Because the actual edge-isometric embeddings live in a higher dimensional space, we project to $\R^2$ using principal component analysis (PCA).
\end{example}

Remarkably, this method can even certify our 8000 vertex graph in Example~\ref{ex:8000}.

\begin{figure}[h]
        \centering
        \includegraphics[width=\textwidth]{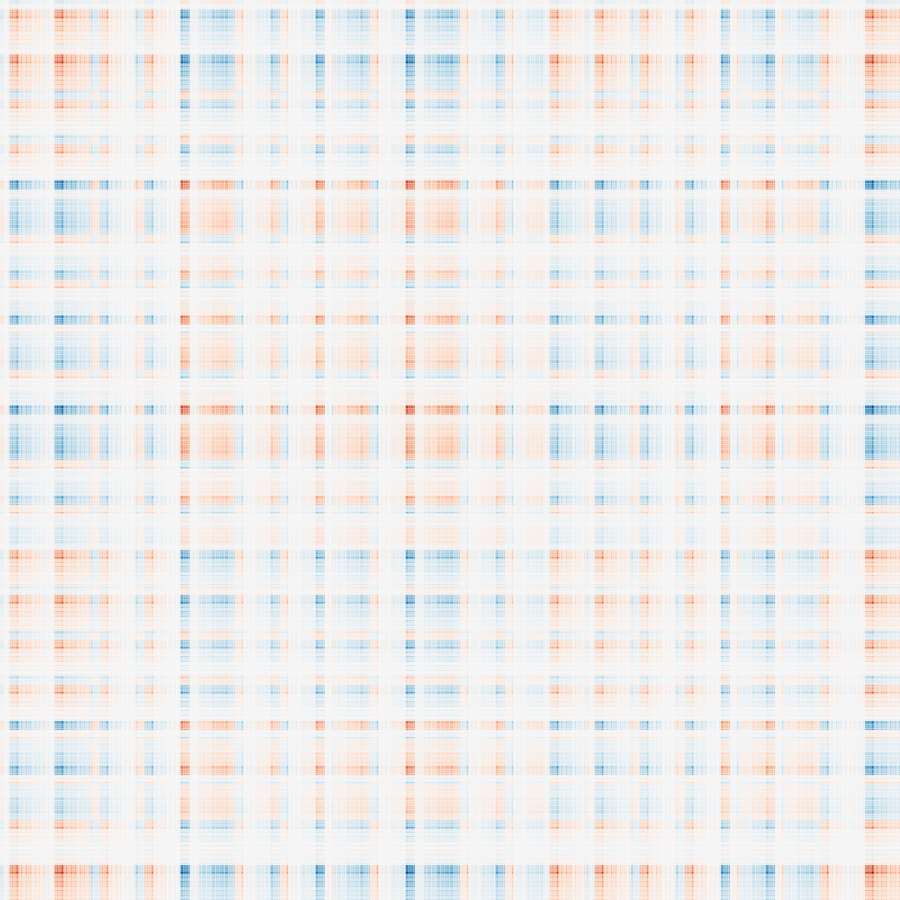}
        \caption{Exact rank-1 lower conformal rigidity certificate $\varphi \varphi^T$ to~\eqref{eq:sym-sdp} for the graph on 8000 vertices.}
        \label{fig:8000exact}
    \end{figure}

\begin{example}
    Let $G$ be the graph from Example~\ref{ex:8000} with $8000$ vertices. Recall that $\dim \E_{\lambda_2} = 10$. Under the full automorphism group $\Psi = \Aut(G)$, $\E_{\lambda_2}$ decomposes into six distinct irreducibles of real type: four 1-dimensional, one 2-dimensional, and one 4-dimensional. Thus, we can get an exact 1-dimensional orbit-isometric embedding $\varphi \in \E_{\lambda_2}$. The corresponding rank-1 SDP certificate $\varphi \varphi^T$ to~\eqref{eq:sym-sdp} is shown in Figure~\ref{fig:8000exact}. At this scale, the linear algebra of computing exact eigenvectors is actually one of the main bottlenecks in the computation.
\end{example}

Even when $\E_\lambda$ does not decompose into distinct irreducibles, it is clear that $\Cone\set{\ell_\Psi(\varphi_j) : 1 \leq j \leq h}$ is always a subset of $\ell_\Psi(\E_\lambda)$. Thus, we can always check this polyhedral cone for a solution. We are not guaranteed a solution this way, but in practice this can still quickly produce a certificate. There are six \textsc{HoG} graphs that satisfy Theorem~\ref{thm:convex-image}, but contain irreducibles with multiplicity. Their \textsc{HoG} IDs are \textsc{50489, 51449, 51450, 51484, 51485}, and \textsc{51486}. Even though these graphs do not satisfy the conditions of Theorem~\ref{thm:polyhedral-cone}, we were able to find an exact certificate in $\Cone\set{\ell_\Psi(\varphi_j) : 1 \leq j \leq h}$ for all of these graphs except for \textsc{HoG 50489}. The graph \textsc{HoG 50489} is likely upper conformally rigid with $\dim \E_{\lambda_n} = 14$ which decomposes into four isotypics of real type: 1-dimensional with multiplicity 1, 2-dimensional with multiplicity 2, 3-dimensional with multiplicity 1, and 3-dimensional with multiplicity 2. Under its automorphism group of size 48, $G$ has 10 edge orbits. Unfortunately, a system of 10 quadratics in 14 variables was already too large for us to find a Gr\"{o}bner basis with \textsc{SageMath}.

\begin{figure}[h]
\centering
\resizebox{\linewidth}{!}{%
\begin{tikzpicture}[
    x=4.0cm,
    y=1.9cm,
    box/.style = {
        rectangle, rounded corners, draw,
        minimum width=2.8cm,
        minimum height=0.75cm,
        align=center,
        font=\scriptsize
    },
    decision/.style = {
        diamond, draw,
        aspect=2.1,
        inner sep=1pt,
        align=center,
        font=\scriptsize
    },
    leaf/.style = {
        rectangle, rounded corners, draw, dashed,
        minimum width=2.8cm,
        minimum height=0.75cm,
        align=center,
        font=\scriptsize
    },
    arr/.style = {->, >=stealth, thin},
    label/.style = {font=\scriptsize\itshape, midway}
]

\node[box]  (sdp)    at (0,0)
{Symmetry reduced\\numerical SDP};

\node[leaf, fill=red!30] (notcr)  at (1,0)
{$G$ not\\conformally rigid};

\node[decision] (feasible) at (0,-1)
{Feasible?};

\node[leaf, fill=gray!30] (norank1) at (1,-1)
{No rank-1 solution\\guaranteed};

\node[decision] (rank1) at (0,-2)
{Rank-1 cert.\\guaranteed?\\{\normalfont\scriptsize (Thm.~\ref{thm:convex-image})}};

\node[decision] (distinct) at (1,-2)
{All irreps\\distinct?};

\node[box] (lp) at (2,-2)
{Polyhedral cone \\nonempty? (Thm~\ref{thm:polyhedral-cone})};

\node[box] (blockdiag) at (0,-3)
{Block-diagonalize via\\sym. adapted basis (\S\ref{subsec:block-diagonal})};

\node[box] (groebner) at (1,-3)
{Gr\"obner basis on\\block quadratic system};

\node[leaf, fill=gray!30] (noexact) at (0,-4)
{No exact certificate\\found};

\node[decision] (readable) at (1,-4)
{Real solution\\computable and\\ readable?};

\node[leaf, fill=green!30] (exact) at (2,-4)
{Exact certificate};

\draw[arr] (sdp) -- (feasible);
\draw[arr] (feasible) -- node[label,left] {Yes} (rank1);
\draw[arr] (feasible.east) -- ++(0.27,0) -- node[label,left] {No} ++(0,1) --  (notcr.west);

\draw[arr] (rank1) -- node[label,left] {Yes} (blockdiag);
\draw[arr] (rank1.east) -- ++(0.15,0) -- node[label, left] {No}++(0,1) -- (norank1.west);

\draw[arr] (blockdiag) -- (distinct);

\draw[arr] (distinct.east) -- node[label,above] {Yes} (lp.west);
\draw[arr] (distinct) -- node[label,right] {No} (groebner);

\draw[arr] (groebner) -- (readable);

\draw[arr] (readable.west) -- node[label,above] {No} (noexact.east);
\draw[arr] (readable.east) -- node[label,above] {Yes} (exact.west);

\draw[arr] (lp) --node[label,right] {Yes} (exact);
\draw[arr] (lp) --node[label,right] {No} ++(0,2) -- (notcr.east);

\end{tikzpicture}%
}\caption{Symmetry reduction flowchart}\label{fig:flowchart}
\end{figure}

\subsection{Symmetry Reduction Pipeline}
We now give a summary of our entire symmetry reduction pipeline shown in the flowchart shown in Figure~\ref{fig:flowchart}. We assume we have a candidate graph $G$ with a group of automorphisms $\Psi$, and we explain our computational procedure to determine if $G$ is conformally rigid. We begin by solving the symmetry reduced SDP~\eqref{eq:sym-sdp-param}, which should give us a strong indication but not a proof of whether $G$ is or is not conformally rigid. We have already seen that if the solver determines that~\eqref{eq:sym-sdp-param} is feasible, we still need to do more work to determine if $G$ is conformally rigid. If the solver determines $G$ to be infeasible, technically we still need to find an explicit edge weighting $w \neq \one$ such that $\lambda_2(w) > \lambda_2$ or $\lambda_n(w) < \lambda_n$. In contrast to proving conformal rigidity --- where it is difficult to turn an approximate certificate to an exact certificate --- disproving conformal rigidity is straightforward. We can find a certificate disproving conformal rigidity using the perturbation theory in \S~\ref{sec:perturbation-theory} or by solving the problem~\eqref{eq:sym-eigenvalues} to find the weights $w$ that optimize the relevant eigenvalue (see~\cite{gouveia, sun} for SDP formulation). If we only know approximate weights $w$ that increase $\lambda_2$ (decrease $\lambda_n$), we can round to a nearby rational point since the set of all $w$ such that $\lambda_2(w) > \lambda_2$ ($\lambda_n(w) < \lambda_n$) is open. Thus, with this added technicality, we can conclude $G$ is not conformally rigid if the solver determines that the SDP is infeasible.

If $G$ does have an SDP solution, we can be confident that it is conformally rigid, but need to do some more work to prove it. We can look at the isotypic decomposition of $\E_{\lambda}$ as a $\Psi$-representation to see if Theorem~\ref{thm:convex-image} applies. If it does not, then $G$ is not guaranteed to have a 1-dimensional orbit-isometric embedding on $\E_\lambda$, so none of our exact certification methods can prove that $G$ is conformally rigid. In this case, we can try to certify conformal rigidity using the techniques in~\cite{steinerberger} or by using some additional knowledge on the structure of $G$. 

If $G$ does satisfy Theorem~\ref{thm:convex-image}, then we know that we can prove or disprove the conformal rigidity of $G$ by determining if the system of quadratics in~\eqref{eq:grobner-param} has a solution. In the case where $G$ also satisfies Theorem~\ref{thm:polyhedral-cone}, we can rigorously determine the feasibility of this quadratic system exactly through a linear feasibility check. Otherwise, we can attempt to solve the quadratic system with Gr\"obner bases or cylindrical algebraic decomposition. If we can find an exact solution, $G$ is conformally rigid. If the quadratic system is too large or we cannot obtain a real solution from a Gr\"obner basis, then we again need to manually certify conformal rigidity some other way.

\subsection*{Acknowledgements} The author thanks Rekha Thomas and Stefan Steinerberger for carefully reading earlier drafts of this paper and for countless helpful comments.

\bibliographystyle{plain}
\bibliography{references}
\typeout{get arXiv to do 4 passes: Label(s) may have changed. Rerun}
\end{document}